\newtheorem{definition}{Definition}
\newtheorem{theorem}{Theorem}
\newtheorem{remark}{Remark}
\newtheorem{example}{Example}
\newtheorem{assumption}{Assumption}
\def\qed{ \ \vrule width.2cm height.2cm depth0cm\smallskip}
\newcommand{\la}{\langle}
\newcommand{\ra}{\rangle}
\newcommand{\dd}{\mathcal{\dagger}}
\newcommand{\ts}{\mathsf{T}}
\newcommand{\ba}{\begin{array}}
\newcommand{\ea}{\end{array}}
\newcommand{\be}{\begin{equation}}
\newcommand{\ee}{\end{equation}}
\newcommand{\bea}{\begin{eqnarray}}
\newcommand{\eea}{\end{eqnarray}}
\newcommand{\beaa}{\begin{eqnarray*}}
\newcommand{\eeaa}{\end{eqnarray*}}
\newcommand{\WH}{W_a}
\def\Vol{\pi}
\def\div{\mathbf{div}}
\def\D{\Delta}
\def\cC{{\cal C}}
\def\cF{{\cal F}}
\def\cH{{\cal H}}
\def\cI{{\cal I}}
\def\cJ{{\cal J}}
\def\cK{{\cal K}}
\def\cS{{\cal S}}
\def\hE{\mathbb{E}}
\def\hM{\mathbb{R}}
\def\hN{\mathbb{N}}
\def\hR{\mathbb{R}}
\def\pa{\partial}
\def\cd{\cdot}
\def\qed{ \hfill \vrule width.25cm height.25cm depth0cm\smallskip}
\newcommand{\basa}{\begin{assumption}}
\newcommand{\easa}{\end{assumption}}
\newcommand{\bas}{\begin{assum}}
\newcommand{\eas}{\end{assum}}
\def\1{{\bf 1}}
\def\:{\!:\!}
\begin{document}

\newtheorem{thm}{Theorem}[section]
\newtheorem{lem}[thm]{Lemma}
\newtheorem{nota}[thm]{Notation}
\newtheorem{cor}[thm]{Corollary}
\newtheorem{prop}[thm]{Proposition}
\newtheorem{rem}[thm]{Remark}
\newtheorem{eg}[thm]{Example}
\newtheorem{defn}[thm]{Definition}
\newtheorem{assum}[thm]{Assumption}

\renewcommand {\theequation}{\arabic{section}.\arabic{equation}}
\def\thesection{\arabic{section}}

\title{Entropy dissipation for degenerate stochastic differential equations via sub-Riemannian density manifold} 
\author{ Qi Feng\thanks{\noindent Department of
Mathematics, University of Michigan, Ann Arbor, 48109; email: qif@umich.edu}
~ and ~ Wuchen Li\thanks{ \noindent Department of
Mathematics, University of South Carolina;
email: wuchen@mailbox.sc.edu.} 
}
\date{}
\maketitle
\abstract{We study the dynamical behaviors of degenerate stochastic differential equations (SDEs). We select an auxiliary Fisher information functional as the Lyapunov functional. Using generalized Fisher information, we conduct the Lyapunov exponential convergence analysis of degenerate SDEs. We derive the convergence rate condition by generalized Gamma calculus. Examples of the generalized Bochner's formula are provided in the Heisenberg group, displacement group, and Martinet sub-Riemannian structure. We show that the generalized Bochner's formula follows a generalized second-order calculus of Kullback-Leibler divergence in density space embedded with a sub-Riemannian type optimal transport metric.}

\noindent\textbf{Keywords}: {Degenerate drift-diffusion process; Lyapunov methods; Auxiliary Fisher information; sub-Riemannian density manifold; Generalized Bochner's formula.}

\noindent\textbf{Mathematics Subject Classification (2010)}: {53C17, 60D05, 58B20}.

\noindent\textbf{Thanks}: {Wuchen Li is supported by AFOSR MURI FA9550-18-1-0502, AFOSR YIP award 2023, and NSF RTG: 2038080.}

\tableofcontents

\section{Introduction} 
Consider the following Stratonovich stochastic differential equation:
\begin{equation}\label{SDE framework1}
dX_t=b(X_t)dt+ \sqrt{2} a(X_t)\circ dB_t,    
\end{equation}
where $(B_t^1,B_t^2,\cdots, B_t^n)$ is { a $n$-dimensional Brownian motion in $\hR^n$, $a\in\hR^{n+m}\rightarrow\mathbb{R}^{(n+m)\times n}$ is a matrix-valued function, and $b:\hR^{n+m}\rightarrow  \mathbb{R}^{n+m}$ is a drift vector field.}
{The convergence analysis of SDE \eqref{SDE framework1} to its invariant distribution lies in the intersection of differential geometry, analysis, Lie group (subgroup in quantum mechanics), and probability. The convergence analysis also has broad applications in designing fast algorithms in artificial intelligence (AI), and Bayesian sampling/optimization problems. One key question arises: how fast does the probability density function of SDE \eqref{SDE framework1} converge to its invariant distribution?}

The Gamma calculus, also named Bakry-{\'E}mery iterative calculus \cite{bakryemery1985}, provides analytical approaches to derive the convergence rate for SDE \eqref{SDE framework1}. This lower bound is known as the Ricci curvature lower bound.  However, classical studies are limited to the non-degenerate diffusion coefficient matrix $a$. The classical Gamma calculus is no longer valid when $a$ is a degenerate matrix function; see the generalization of Bakry-{\'E}mery calculus in \cite{BaudoinGarofalo09}.

This paper presents a Lyapunov convergence analysis for the degenerate diffusion process. We select a class of $z$-Fisher information as the Lyapunov functional, where $z$ is a matrix function different from matrix $a$. We derive a generalized Gamma calculus by the dissipation of Lyapunov functional along the diffusion process. We then derive the generalized Bochner's formula and obtain the exponential convergence condition. Several concrete examples are presented: gradient-drift diffusions on the Heisenberg group, the displacement group, and the Martinet sub-Riemannian structure. Our approach extends the classical optimal transport geometry, in particular, the second-order calculus of relative entropy in density manifold studied in \cite{AC, LiG, otto2001, OV}. 

{The  generalized Gamma calculus is first introduced by Baudoin-Garofalo \cite{BaudoinGarofalo09} for sub-Riemannian manifolds. Related results are studied later in \cite{Baudoin2017, BBG, baudoin2019gamma, BGK, baudoinwang2012, Feng, Grong_2015_1, Grong_2015_2}. The commutative property of iteration of $\Gamma_1$ and $\Gamma_1^z$ (Hypothesis $1.2$ in \cite{BaudoinGarofalo09}) is crucial in the previous works.} Our algebraic condition \ref{assumption:main result} { does} not have this requirement. We can remove this commutative condition in {the} weak sense. Thus our results go beyond the step two-bracket generating condition. We present algebraic conditions for the existence of the generalized Bochner's formula. 
 
On the other hand, optimal transport on the sub-Riemannian manifold has been studied by \cite{agrachev2009optimal, figalli2010mass, Juilleet,KL}. 
An optimal transport metric on a sub-Riemannian manifold is proposed in \cite{Juilleet,KL}. In this case, the density manifold still forms an infinite-dimensional Riemannian manifold. The Monge-Amp{\`e}r{e} equation in sub-Riemannian settings is studied in \cite{figalli2010mass}. Our approach is different. We introduce the sub-Riemannian density manifold (SDM) and study its second-order geometric calculations of relative entropies in SDM. Using those, we propose new Gamma z calculus for degenerate stochastic differential equations and establish the generalized curvature dimension type bound. Besides, \cite{Lott_Villani, Strum} use the analytical property of optimal transport to formulate Ricci curvature lower bound in general metric space. Different from \cite{Lafferty, Lott_Villani, Strum}, we focus on the geometric calculations in density manifold introduced by the $z$ direction. Following the second-order geometric calculations in the density manifold, we formulate new Gamma calculus and the corresponding Ricci curvature tensor for the sub-Riemannian manifold. Besides, our derivation also relates to the entropy methods \cite{jungel2016entropy, MV}. Using entropy methods, \cite{ARNOLD20186843, AE} derive the convergence rate for degenerate drift-diffusion processes with constant diffusion coefficients $a$. Compared to previous works, we apply the entropy method with Gamma calculus and geometric calculations in density manifold. It derives a generalized Gamma calculus from the dissipation of auxiliary Fisher information. Several concrete examples of convergence conditions are derived in  Lie group-induced drift-diffusion processes.

We organize the paper below. We introduce the main result in Section \ref{section 2}. It is an explicit convergence rate condition for {the density of} degenerate SDEs in $L^1$ distance. In Section \ref{sec: example}, we provide three examples of the proposed convergence analysis, including gradient-drift diffusions on the Heisenberg group, the displacement group, and the Martinet sub-Riemannian structure. In Section \ref{section 4}, we present the Lyapunov analysis in the sub-Riemannian density manifold. The generalized Gamma calculus and { the proof of the generalized Bochner's formula} is presented in Section \ref{sec: generalized gamma z}. Some further discussions { for other functional inequalities} are presented in Section \ref{sec:func inq}.

\section{Main results}\label{section 2}
In this section, we present this paper's setting and main results. 
\subsection{Setting}
Consider a Stratonovich SDE 
\bea\label{SDE framework with drift}
dX_t=b(X_t)dt+ \sqrt{2} a(X_t)\circ dB_t,
\eea
 where $(B_t^1,B_t^2,\cdots, B_t^n)$ is a $n$-dimensional Brownian motion in $\hR^n$, $a: \mathbb R^{n+m}\rightarrow  \hR^{(n+m)\times n}$ is a matrix valued function, and $b: \hR^{n+m}\rightarrow \hR^{n+m}$ is a vector field. { We refer the definition of Stratonovich SDE to \cite{Karatzas}[Section 3.13]. According to \cite{baudoinflow}[Appendix A.7],  the SDE \eqref{SDE framework with drift} can also be written as the following It\^o SDE: }
\begin{equation}
 dX_{l,t}=\tilde b_l(X_t)dt+\sum_{i=1}^n  \sqrt{2} a_{li}(X_t) dB_t^i,\quad \text{for}\quad l =1,\cdots, n+m,
 \end{equation}
 where 
\begin{equation}
\tilde b_l=b_l+(\sum_{i=1}^n\nabla_{a_i}a_i)_l,\quad \text{for}\quad l=1,\cdots, n+m. 
 \end{equation}
{We denote $\{a_1,\cdots, a_n\}$ as the column vectors of matrix $a$}, and  $\sum_{i=1}^n\nabla_{a_i}a_i\in\mathbb{R}^{n+m}$ represents,  
\begin{equation}
(\sum_{i=1}^n\nabla_{a_i}a_i)_l=\sum_{i=1}^n\sum_{k=1}^{n+m} a_{ki}\frac{\partial a_{li}}{\partial x_k},\quad \text{for} \quad l=1,\cdots, n+m.    
\end{equation}
{ We denote $a^{\ts}$ as the transpose of matrix $a$, and denote $\{a^{\ts}_1,\cdots, a^{\ts}_n\}$ as the row vectors of matrix $a^{\ts}$. In particular, we have $a_{\hat i i}=a^{\ts}_{i\hat i}$, for $i=1,\cdots, n$ and $\hat i = 1,\cdots, n+m$. With some abuse of notation, we also denote $a^{\ts}_i$ as the vector fields corresponding to the row vectors $a^{\ts}_i$, for $i=1,\cdots, n$.
 We assume that $\{a^{\ts}_1(x),a^{\ts}_2(x),\cdots,a^{\ts}_n(x)\}$ satisfies the strong H\"ormander condition (or bracket generating condition):
 \begin{equation*} 
    \mathrm{Span}\Big\{a^{\ts}_1(x),\cdots, a^{\ts}_n(x),~ [a^{\ts}_{i_1},\cdots,[a^{\ts}_{i_{k-1}},a^{\ts}_{i_k}]\cdots](x),~1\leq i_1,\cdots,i_k\leq n, k\geq 2 \Big\}=\hR^{n+m},
\end{equation*}
where $[\cdot , \cdot ]$ represents the Lie bracket between two vector fields. The strong H\"ormander condition means that the Lie algebra generated by the vector fields $\{a^{\ts}_1(x),\cdots,a^{\ts}_n(x)\}$ is of full rank at every point $x\in\hR^{n+m}$ (see e.g.: \cite{Stroock}[Section 7.4]). This condition ensures the existence of a smooth probability density function of SDE \eqref{SDE framework with drift}, see the original proofs in \cite{Hormander, Bismut}. For simplicity of presentation, we assume the probability density function is strictly positive. Indeed, the positivity of the density follows from the H\"ormander condition \cite{benarous}, for the more technical conditions to show the positivity by using Malliavin calculus, we refer to \cite{barlownualart} and  \cite{BNOT}[Theorem 1.4 with $H$=1/2]}. Denote $X_t\sim \rho(t,x)$, where $\rho=\rho(t,x)$ is the probability density function of SDE \eqref{SDE framework with drift}. The density function $\rho$ satisfies the Fokker-Planck equation of SDE \eqref{SDE framework with drift}   
{
\begin{equation}\label{FPE}
\partial_t\rho(t,x)=-\nabla_x\cdot(\rho(t,x)\tilde b(x))+\sum_{i=1}^{n+m}\sum_{j=1}^{n+m}\frac{\partial^2}{\partial x_i\partial x_j} \Big((a(x)a(x)^{\ts})_{ij}\rho(t,x)\Big),
\end{equation}}
with a smooth initial condition 
\begin{equation*}
\rho_0(x)=\rho(0,x), \quad \int_{\mathbb{R}^{n+m}} \rho_0(x) dx=1,\quad \rho_0(x)> 0.    
\end{equation*}
In this paper, we assume that SDE \eqref{SDE framework with drift} has a unique invariant symmetric measure $\mu$, where $d\mu=\pi(x)dx$ with $\pi\in C^{\infty}(\mathbb{R}^{n+m})$. 
Here $\pi$ solves the equilibrium of Fokker-Planck equation \eqref{FPE}: 
{\begin{equation*}
    -\nabla_x\cdot(\pi(x)\tilde b(x))+\sum_{i=1}^{n+m}\sum_{j=1}^{n+m}\frac{\partial^2}{\partial x_i\partial x_j}\Big((a(x)a(x)^{\ts})_{ij} \pi(x)\Big)=0. 
\end{equation*}}
We study a particular class of the vector field $b$ for a given invariant distribution $\pi$.

\noindent{\textbf{Assumption (Gradient flow formulation):}} Suppose that $b$, $a$, and $\pi$ satisfy the relation: 
\begin{equation}\label{assu} 
b=a\otimes \nabla a+aa^{\ts}\nabla \log \pi,  
\end{equation}
where $a\otimes \nabla a\in\mathbb{R}^{n+m}$ represents, for $\hat k = 1,\cdots, n+m$,
\bea \label{Notation A}
(a\otimes \nabla a)_{\hat k} &=&\sum_{k=1}^n\sum_{k'=1}^{n+m}a_{\hat kk}\frac{\partial}{\partial x_{k'}}a_{k'k}. 
\eea
{In the It\^o formulation, $\tilde b$, $a$, and $\pi$ satisfy 
\begin{equation*}
 \tilde b_l=(aa^{\ts}\nabla \log \pi)_l+(a\otimes \nabla a)_l+(\sum_{i=1}^n\nabla_{a_i}a_i)_l,   
\end{equation*}
for $l=1,\cdots, n+m$.} In this case, we can reformulate equation \eqref{FPE} as 
\begin{equation}\label{FPE_New}
    \pa_t \rho(t,x)=\nabla\cdot\Big(\rho(t,x)a(x)a(x)^{\ts}\nabla\log\frac{\rho(t,x)}{\pi(x)}\Big). 
\end{equation} 
We leave the derivation of the formula \eqref{FPE_New} in the appendix. If $\rho(t,x)=\pi(x)$, then $\log\frac{\rho(t,x)}{\pi(x)}=0$, and $\pi$ is an invariant density function for SDE \eqref{SDE framework with drift}.
In section \ref{section 4}, we demonstrate that the Fokker-Planck equation \eqref{FPE}, or its equivalent formulation \eqref{FPE_New}, forms a ``horizontal'' gradient flow in sub-Riemannian density manifold. We design a Lyapunov functional to study the convergence behavior of this ``horizontal'' gradient flow \eqref{FPE_New}. 
{
\begin{remark}
Formula \eqref{FPE_New} can be written as  
\begin{equation*}
\partial_t(\log\rho(t,x)-\log \pi(x))\rho(t,x)=\nabla\cdot\Big(\rho(t,x)a(x)a(x)^{\ts}\nabla\log\frac{\rho(t,x)}{\pi(x)}\Big).
\end{equation*}
It has a weak formulation that 
\begin{equation*}
  \int_{\mathbb{R}^{n+m}} (\partial_t\log\frac{\rho(t,x)}{\pi(x)}, \phi(x))\rho(t,x) dx=-\int_{\mathbb{R}^{n+m}} \Big(\nabla\phi(x), a(x)a(x)^{\ts}\nabla\log\frac{\rho(t,x)}{\pi(x)}\Big)\rho(t,x)dx, 
\end{equation*}
where $\phi\in C^\infty(\mathbb{R}^{n+m})$ is a smooth test function. 
\end{remark}}
\begin{remark}[Non-gradient flow drift]
In fact, the proposed method is not limited to the gradient flow assumption of the drift vector field $b$ in \eqref{assu}. See details in \cite{FengLi2021}.   
\end{remark}

\subsection{Main result}
We now briefly sketch the main results. 
%Suppose equation \eqref{assu} holds with $b=0$. 
Denote a sub-elliptic operator $L\colon C^{\infty}(\mathbb{R}^{n+m})\rightarrow C^{\infty}(\mathbb{R}^{n+m})$ as follows: 
\begin{equation*}
 L f=\nabla\cdot(aa^{\ts}\nabla f)-\la a\otimes \nabla a, \nabla f\ra_{\mathbb{R}^{n+m}}+\la b,\nabla f\ra_{\mathbb{R}^{n+m}},
\end{equation*}
where $f\in C^{\infty}(\mathbb{R}^{n+m})$.  
%and 
%$$a\otimes \nabla a= \Big((a\otimes \nabla a)_{\hat k}\Big)_{\hat k=1}^{n+m} =\Big(\sum_{k=1}^n\sum_{k'=1}^{n+m}a_{\hat kk}\frac{\partial}{\partial x_{k'}}a_{k'k}\Big)_{\hat k=1}^{n+m}\in \mathbb{R}^{n+m}.$$ 
%\begin{lem}[Invariant measure]
%Suppose SDE \eqref{SDE framework1} is associated with a unique smooth symmetric invariant measure $\Vol(x)$, then 
%\beaa 
%a\otimes \nabla a=-aa^{\ts}\nabla \log \Vol.
%\eeaa
%See proof in Lemma \ref{lem}. 
%\end{lem}
%\begin{rem}
%For simplicity of presentation, we assume that the symmetric invariant measure $\Vol(x)$ exists for zero drift case. In general, our result can be extended to the general degenerate drift diffusion processes with unique symmetric invariant measure \eqref{SDE framework1}. 
%\end{rem}{}
%We are ready to present the following generalized iterative Gamma $z$ calculus. 
\begin{defn}[Generalized Gamma z calculus]\label{defn:tilde gamma 2 znew}
Consider a smooth matrix function $z: \hR^{n+m}\rightarrow \mathbb{R}^{(n+m)\times m}$. Denote Gamma one bilinear forms $\Gamma_{1},\Gamma_{1}^z\colon C^{\infty}(\mathbb{R}^{n+m})\times C^{\infty}(\mathbb{R}^{n+m})\rightarrow C^{\infty}(\mathbb{R}^{n+m}) $ as 
\begin{equation*}
\Gamma_{1}(f,g)=\la a^{\ts}\nabla f, a^{\ts}\nabla g\ra_{\hR^n},\quad \Gamma_{1}^z(f,g)=\la z^{\ts}\nabla f, z^{\ts}\nabla g\ra_{\hR^m}.
\end{equation*}
Define Gamma two bilinear forms $\Gamma_{2},\Gamma_{2}^{z,\Vol}\colon C^{\infty}(\mathbb{R}^{n+m})\times C^{\infty}(\mathbb{R}^{n+m})\rightarrow C^{\infty}(\mathbb{R}^{n+m}) $ as
\begin{equation*}
\Gamma_{2}(f,g)=\frac{1}{2}\Big[ L\Gamma_{1}(f,g)-\Gamma_{1}( Lf, g)-\Gamma_{1}(f,  Lg)\Big],    
\end{equation*}
and 
\bea
\label{old gamma two z}
{\Gamma}_{2}^{z,\Vol}(f,g)&=&\quad\frac{1}{2}\Big[ L\Gamma_1^{z}(f,g)-\Gamma_{1}^z( Lf,g)-\Gamma_{1}^z(f,  Lg)\label{Gz1}\Big]\\
&& \label{new term}+\div^{\Vol}_z\Big(\Gamma_{1,\nabla(aa^{\ts})}(f,g )\Big)-\div^{\Vol }_a\Big(\Gamma_{1,\nabla(zz^{\ts})}(f,g )\Big)\label{Gz2}.
  \eea 
Here $\div^{\Vol }_a$, $\div^{\Vol }_z$ are divergence operators defined by: 
\begin{equation*}
\div^{\Vol}_a(F)=\frac{1}{\Vol }\nabla\cdot(\Vol~ aa^{\ts} F), \quad\div^{\Vol }_z(F)=\frac{1}{\Vol }\nabla\cdot(\Vol ~zz^{\ts}F),
\end{equation*}
for any smooth vector field $F\in \mathbb{R}^{n+m}$, and $\Gamma_{\nabla (aa^{\ts})}$, $\Gamma_{\nabla (zz^{\ts})}$ are vector Gamma one bilinear forms defined as
\beaa
\Gamma_{1,\nabla(aa^{\ts)}}(f,g)&=&\la \nabla f,\nabla(aa^{\ts})\nabla g\ra=(\la \nabla f,\frac{\partial}{\partial x_{\hat k}}(aa^{\ts})\nabla g\ra)_{\hat k=1}^{n+m},\\
\Gamma_{1,\nabla(zz^{\ts)}}(f,g)&=&\la \nabla f,\nabla(aa^{\ts})\nabla g\ra=(\la \nabla f,\frac{\partial}{\partial x_{\hat k}}(zz^{\ts})\nabla g\ra)_{\hat k=1}^{n+m},
\eeaa
with 
	\beaa
\div^{\Vol }_z\Big(\Gamma_{\nabla(aa^{\ts})}f,g \Big)&=&\frac{\nabla\cdot(zz^{\ts}\Vol \la \nabla f,\nabla(aa^{\ts})\nabla g\ra ) }{\Vol },\\
\div^{\Vol }_a\Big(\Gamma_{\nabla(zz^{\ts})}f,g \Big)&=&\frac{\nabla\cdot(aa^{\ts}\Vol  \la \nabla f,\nabla(zz^{\ts})\nabla g\ra) }{\Vol }.
	\eeaa
\end{defn}

We next demonstrate that the summation of $\Gamma_2$ and $\Gamma_{2}^{z,\Vol}$ can induce the following decomposition and bilinear forms. They are naturally extensions of classical Bakry-{\'E}mery calculus in Riemannian manifold, i.e., non-degenerate matrix function $a$. 
\begin{nota} \label{notation}
For matrix function $a: \hR^{n+m}\rightarrow \hR^{(n+m)\times n}$, we define matrix $Q$ as
\bea\label{matrix Q}
Q=\begin{pmatrix}
	a^{\ts}_{11}a^{\ts}_{11}&\cdots &a^{\ts}_{1(n+m)}a^{\ts}_{1(n+m)}\\
	\cdots & a^{\ts}_{i\hat i}a^{\ts}_{k\hat k}&\cdots \\
	a^{\ts}_{n 1}a^{\ts}_{n 1} &\cdots &a^{\ts}_{n(n+m)}a^{\ts}_{n(n+m)} 
	\end{pmatrix}\in \mathbb{R}^{n^2\times(n+m)^2},
\eea
with $Q_{ik\hat i\hat k}=a^{\ts}_{i\hat i}a^{\ts}_{k\hat k}$. More precisely, for each row (resp. column) of $Q$, the row (resp. column)  indices of $ Q_{ik\hat i\hat k}$ follows $\sum_{i=1}^n\sum_{k=1}^n$  (resp. $\sum_{\hat i=1}^{n+1}\sum_{\hat k=1}^{n+m}$).  For matrix function $z: \hR^{n+m}\rightarrow \hR^{(n+m)\times m}$, we define matrix $P$ as 
\bea\label{matrix P}
P=\begin{pmatrix}
	z^{\ts}_{11}a^{\ts}_{11}&\cdots &z^{\ts}_{1(n+m)}a^{\ts}_{1(n+m)}\\
	\cdots & z^{\ts}_{i\hat i}a^{\ts}_{k\hat k}&\cdots \\
	z^{\ts}_{m\hat 1}a^{\ts}_{n\hat 1} &\cdots &z^{\ts}_{m(n+m)}a^{\ts}_{n(n+m)} 
	\end{pmatrix}\in \mathbb{R}^{(nm)\times(n+m)^2},
\eea
with { $P_{ik\hat i\hat k}=z^{\ts}_{k\hat k}a^{\ts}_{i\hat i}$}. For smooth function $f\in  C^{\infty}(\hR^{n+m})$,
for any $\hat i,\hat k,\hat j=1,\cdots,n+m$ and $i,k=1,\cdots,n$ $($or $1,\cdots, m)$,
we define vector $C\in\hR^{(n+m)^2\times 1}$ with components,
 \bea\label{vector C}
C_{\hat i\hat k}= \left[\sum_{i,k=1}^n\sum_{i'=1}^{n+m}\left(\la a^{\ts}_{i\hat i}a^{\ts}_{i i'} ( \frac{\partial a^{\ts}_{k\hat k}}{\partial x_{i'}}) ,(a^{\ts}\nabla)_kf\ra_{\hR^n} - \la a^{\ts}_{k i'}a^{\ts}_{i\hat k}\frac{\partial a^{\ts}_{i \hat i}}{\partial x_{i'}}   ,(a^{\ts}\nabla)_kf\ra_{\hR^n}\right)\right],
\eea
where we denote $(a^{\ts}\nabla)_kf=\sum_{k'=1}^{n+m}a^{\ts}_{kk'}\frac{\pa f}{\pa x_{k'}}$. We define vector $D\in\hR^{n^2\times 1}$ with components,
\bea\label{vector D}
D_{ik}=\sum_{\hat i,\hat k=1}^{n+m}a^{\ts}_{i\hat i}\frac{\partial a^{\ts}_{k\hat k}}{\partial x_{\hat i}}\frac{\partial f}{\partial x_{ \hat k}}, \quad \text{and} \quad D^{\ts}D=\sum_{i,k}D_{ik}D_{ik}.
\eea
 We define vector $F\in\hR^{(n+m)^2\times 1}$ with components,
 \bea\label{vector F}
F_{\hat i\hat k}= \left[\sum_{i=1}^n\sum_{k=1}^m\sum_{i'=1}^{n+m}\left(\la a^{\ts}_{i\hat i}a^{\ts}_{i i'} ( \frac{\partial z^{\ts}_{k\hat k}}{\partial x_{i'}}) ,(z^{\ts}\nabla)_kf\ra_{\hR^m} - \la z^{\ts}_{k i'}a^{\ts}_{i\hat k}\frac{\partial a^{\ts}_{i \hat i}}{\partial x_{i'}}   ,(z^{\ts}\nabla)_kf\ra_{\hR^m}\right)\right].
\eea
 We define vector $E\in\hR^{(n\times m)\times 1}$ with components,
\bea\label{vector E}
E_{ik}=\sum_{\hat i,\hat k=1}^{n+m}a^{\ts}_{i\hat i}\frac{\partial z^{\ts}_{k\hat k}}{\partial x_{\hat i}}\frac{\partial f}{\partial x_{ \hat k}}, \quad \text{and} \quad E^{\ts}E=\sum_{i,k}E_{ik}E_{ik}. 
\eea
We define vector $G\in\hR^{(n+m)^2\times 1}$ with components,
\bea\label{vector G}
G_{\hat i\hat j}&=& \sum_{i=1}^n\sum_{j=1}^m \sum_{j',\hat j,i',\hat i=1}^{n+m}\left[\left( z^{\ts}_{j\hat j} z^{\ts}_{j j'} \frac{\pa}{\pa x_{ j'}}a^{\ts}_{i\hat i}a^{\ts}_{ii'}\frac{\pa f}{\pa x_{i'}}+z^{\ts}_{j \hat j} z^{\ts}_{j j'} \frac{\pa}{\pa x_{ j'}}a^{\ts}_{i i'}\frac{\pa f}{\pa x_{ i'}}a^{\ts}_{i\hat i}\right)\right. \\
&&\left.\quad\quad\quad\quad\quad\quad-\left(a^{\ts}_{i\hat i}a^{\ts}_{i i'} \frac{\pa}{\pa x_{ i'}}z^{\ts}_{j\hat j}z^{\ts}_{jj'}\frac{\pa f}{\pa x_{j'}} +a^{\ts}_{i\hat i}a^{\ts}_{ii'} \frac{\pa}{\pa x_{ i'}}z^{\ts}_{j j'}\frac{\pa f}{\pa x_{ j'}}z^{\ts}_{j\hat j}\right)\right].\nonumber
\eea
We define $X$ as the vectorization of the Hessian matrix of function $f$: 
\bea\label{vector X}
X^{\ts}=\begin{pmatrix}
	\frac{\pa^2 f}{\pa x_{1} \pa x_{1}}&
	\cdots&
		\frac{\pa^2 f}{\pa x_{\hat i} \pa x_{\hat k}}&
		\cdots &
		\frac{\pa^2 f}{\pa x_{n+m} \pa x_{n+m}}
	\end{pmatrix}\in \mathbb{R}^{1\times (n+m)^2}.
\eea
\end{nota}{}

\begin{assum}\label{assumption:main result}
	Assume that there exists vectors $\Lambda_1, \Lambda_2\in \mathbb{R}^{(n+m)^2\times 1}$, such that
	\beaa
	(Q^{\ts}Q\Lambda_1+P^{\ts}P \Lambda_2)^{\ts}X&=&(F+C+G+Q^{\ts}D+P^{\ts}E)^{\ts}X.
	\eeaa
\end{assum}

\begin{definition}[Hessian matrix]
	\label{def: Ricci curvature}
	For smooth function $f\in C^{\infty}(\hR^{n+m})$, define a matrix function $\mathfrak R:\hR^{n+m}\rightarrow\hR^{(n+m)\times(n+m)}$ as 
	\bea
	\mathfrak R(\nabla f,\nabla f)=-\Lambda_1^{\ts}Q^{\ts}Q\Lambda_1-\Lambda_2^{\ts}P^{\ts}P\Lambda_2+D^{\ts}D+E^{\ts}E\nonumber 
	+(\mathfrak R_{ab}+\mathfrak R_{zb}+\mathfrak R_{\pi} ) (\nabla f,\nabla f),
	\eea 
where we define the following bilinear forms: 
\bea\label{new tensor R a with b}
\mathfrak{R}_{ab}(\nabla f,\nabla f)&=&\mathfrak{R}_{a}(\nabla f,\nabla f)-\sum_{i=1}^n \sum_{\hat i,\hat k=1}^{n+m}\la (a^{\ts}_{i\hat i}\frac{\pa b_{\hat k}}{\pa x_{\hat i}}\frac{\pa f}{\pa x_{\hat k}}-b_{\hat k}\frac{\pa a^{\ts}_{i\hat i}}{\pa x_{\hat k}} \frac{\pa f}{\pa x_{\hat i}} ),(a^{\ts}\nabla f)_i\ra_{\hR^n},\nonumber \\
\mathfrak{R}_a(\nabla f,\nabla f) \label{new tensor R a}
&=&\sum_{i,k=1}^n\sum_{i',\hat i,\hat k=1}^{n+m} \la a^{\ts}_{ii'} (\frac{\partial a^{\ts}_{i \hat i}}{\partial x_{i'}} \frac{\partial a^{\ts}_{k\hat k}}{\partial x_{\hat i}}\frac{\partial f}{\partial x_{\hat k}}) ,(a^{\ts}\nabla)_kf\ra_{\hR^n}  \nonumber\\
&&+\sum_{i,k=1}^n\sum_{i',\hat i,\hat k=1}^{n+m} \la a^{\ts}_{ii'}a^{\ts}_{i \hat i} (\frac{\partial }{\partial x_{i'}} \frac{\partial a^{\ts}_{k\hat k}}{\partial x_{\hat i}})(\frac{\partial f}{\partial x_{\hat k}}) ,(a^{\ts}\nabla)_kf\ra_{\hR^n} \nonumber\\
&&-\sum_{i,k=1}^n\sum_{i',\hat i,\hat k=1}^{n+m} \la a^{\ts}_{k\hat k}\frac{\partial a^{\ts}_{ii'}}{\partial x_{\hat k}} \frac{\partial a^{\ts}_{i \hat i}}{\partial x_{i'}} \frac{\partial f}{\partial x_{\hat i}})  ,(a^{\ts}\nabla)_kf\ra_{\hR^n}  \nonumber\\
&&-\sum_{i,k=1}^n\sum_{i',\hat i,\hat k=1}^{n+m} \la a^{\ts}_{k\hat k} a^{\ts}_{ii'} (\frac{\partial }{\partial x_{\hat k}} \frac{\partial a^{\ts}_{i \hat i}}{\partial x_{i'}}) \frac{\partial f}{\partial x_{\hat i}}  ,(a^{\ts}\nabla)_kf\ra_{\hR^n},\nonumber
\eea 
\bea 
\mathfrak{R}_{zb}(\nabla f,\nabla f)\label{new tensor R z}&=&\mathfrak{R}_{z}(\nabla f,\nabla f) -\sum_{i=1}^m \sum_{\hat i,\hat k=1}^{n+m}\la (z^{\ts}_{i\hat i}\frac{\pa b_{\hat k}}{\pa x_{\hat i}}\frac{\pa f}{\pa x_{\hat k}}-b_{\hat k}\frac{\pa z^{\ts}_{i\hat i}}{\pa x_{\hat k}} \frac{\pa f}{\pa x_{\hat i}} ),(z^{\ts}\nabla f)_i\ra_{\hR^m},\nonumber \\
\mathfrak{R}_z(\nabla f,\nabla f)\label{new tensor R z}&=&\sum_{i=1}^n\sum_{k=}^m\sum_{i',\hat i,\hat k=1}^{n+m} \la a^{\ts}_{ii'} (\frac{\partial a^{\ts}_{i \hat i}}{\partial x_{i'}} \frac{\partial z^{\ts}_{k\hat k}}{\partial x_{\hat i}}\frac{\partial f}{\partial x_{\hat k}}) ,(z^{\ts}\nabla)_kf\ra_{\hR^m}\nonumber \\
	&&+\sum_{i=1}^n\sum_{k=}^m\sum_{i',\hat i,\hat k=1}^{n+m} \la a^{\ts}_{ii'}a^{\ts}_{i \hat i} (\frac{\partial }{\partial x_{i'}} \frac{\partial z^{\ts}_{k\hat k}}{\partial x_{\hat i}})(\frac{\partial f}{\partial x_{\hat k}}) ,(z^{\ts}\nabla)_kf\ra_{\hR^m} \nonumber \\
	&&-\sum_{i=1}^n\sum_{k=1}^m\sum_{i',\hat i,\hat k=1}^{n+m} \la z^{\ts}_{k\hat k}\frac{\partial a^{\ts}_{ii'}}{\partial x_{\hat k}} \frac{\partial a^{\ts}_{i \hat i}}{\partial x_{i'}} \frac{\partial f}{\partial x_{\hat i}})  ,(z^{\ts}\nabla)_kf\ra_{\hR^m} \nonumber \\
&&-\sum_{i=1}^n\sum_{k=1}^m\sum_{i',\hat i,\hat k=1}^{n+m} \la z^{\ts}_{k\hat k} a^{\ts}_{ii'} (\frac{\partial }{\partial x_{\hat k}} \frac{\partial a^{\ts}_{i \hat i}}{\partial x_{i'}}) \frac{\partial f}{\partial x_{\hat i}}  ,(z^{\ts}\nabla)_kf\ra_{\hR^m},\nonumber
\eea 
and 
\bea 
\mathfrak{R}_{\pi}(\nabla f,\nabla f)\label{new tensor R Psi}&=&2\sum_{k=1}^m \sum_{i=1}^n\sum_{k',\hat k,\hat i,i'=1}^{n+m}\left[\frac{\pa }{\pa x_{k'}} z^{\ts}_{kk'} z^{\ts}_{k\hat k}\frac{\pa}{\pa x_{\hat k}}a^{\ts}_{i\hat i}\frac{\pa f}{\pa x_{\hat i}}a^{\ts}_{ii'}\frac{\pa f}{\pa x_{i'}}\right]\nonumber\\
 &&+2\sum_{k=1}^m \sum_{i=1}^n\sum_{k',\hat k,\hat i,i'=1}^{n+m}\left[z^{\ts}_{kk'}\frac{\pa }{\pa x_{k'}} z^{\ts}_{k\hat k} \frac{\pa}{\pa x_{\hat k}}a^{\ts}_{i\hat i}\frac{\pa f}{\pa x_{\hat i}}a^{\ts}_{ii'}\frac{\pa f}{\pa x_{i'}} \right.\nonumber\\
 &&\quad\quad\quad\quad\quad\quad\quad\quad\quad+z^{\ts}_{kk'} z^{\ts}_{k\hat k} \frac{\pa^2}{\pa x_{k'}\pa x_{\hat k}}a^{\ts}_{i\hat i}\frac{\pa f}{\pa x_{\hat i}}a^{\ts}_{ii'}\frac{\pa f}{\pa x_{i'}}\nonumber\\
&&\left.\quad\quad\quad\quad\quad\quad\quad\quad\quad+z^{\ts}_{kk'} z^{\ts}_{k\hat k} \frac{\pa}{\pa x_{\hat k}}a^{\ts}_{i\hat i}\frac{\pa f}{\pa x_{\hat i}}\frac{\pa }{\pa x_{k'}}a^{\ts}_{ii'}\frac{\pa f}{\pa x_{i'}} \right]\nonumber
 \eea 
 \bea 
&&+2\sum_{k=1}^m \sum_{i=1}^n\sum_{\hat k,\hat i,i'=1}^{n+m}(z^{\ts}\nabla\log\pi)_k \left[ z^{\ts}_{k\hat k}\frac{\pa}{\pa x_{\hat k}}a^{\ts}_{i\hat i}\frac{\pa f}{\pa x_{\hat i}}a^{\ts}_{ii'}\frac{\pa f}{\pa x_{i'}} \right] \nonumber\\
&&-2\sum_{j=1}^m\sum_{l=1}^n\sum_{ l',\hat l,\hat j,j'=1}^{n+m}\left[\frac{\pa }{\pa x_{l'}} a^{\ts}_{ll'} a^{\ts}_{l\hat l} \frac{\pa}{\pa x_{\hat l}}z^{\ts}_{j\hat j}\frac{\pa f}{\pa x_{\hat j}}z^{\ts}_{jj'}\frac{\pa f}{\pa x_{j'}} \right] \nonumber\\ 
 &&- 2\sum_{j=1}^m\sum_{l=1}^n\sum_{ l',\hat l,\hat j,j'=1}^{n+m}\left[ a^{\ts}_{ll'}\frac{\pa }{\pa x_{l'}}a^{\ts}_{l\hat l} \frac{\pa}{\pa x_{\hat l}}z^{\ts}_{j\hat j}\frac{\pa f}{\pa x_{\hat j}}z^{\ts}_{jj'}\frac{\pa f}{\pa x_{j'}}  \right.\nonumber\\
 &&\quad\quad\quad\quad\quad\quad\quad\quad\quad+a^{\ts}_{ll'}a^{\ts}_{l\hat l} \frac{\pa^2}{\pa x_{l'}\pa x_{\hat l}}z^{\ts}_{j\hat j}\frac{\pa f}{\pa x_{\hat j}}z^{\ts}_{jj'}\frac{\pa f}{\pa x_{j'}}\nonumber \\
&&\left.\quad\quad\quad\quad\quad\quad\quad\quad\quad+a^{\ts}_{ll'}a^{\ts}_{l\hat l} \frac{\pa}{\pa x_{\hat l}}z^{\ts}_{j\hat j}\frac{\pa f}{\pa x_{\hat j}}\frac{\partial}{\pa x_{l'}}z^{\ts}_{jj'}\frac{\pa f}{\pa x_{j'}}  \right] \nonumber \\
&&-2\sum_{j=1}^m\sum_{l=1}^n\sum_{\hat l,\hat j,j'=1}^{n+m}(a^{\ts}\nabla\log\pi)_l \left[ a^{\ts}_{l\hat l}\frac{\pa}{\pa x_{\hat l}}z^{\ts}_{j\hat j}\frac{\pa f}{\pa x_{\hat j}}z^{\ts}_{jj'}\frac{\pa f}{\pa x_{j'}} \right].\nonumber
\eea
Here we also denote $\mathfrak{R}=\mathfrak{R}(x)\in \mathbb{R}^{(n+m)\times(n+m)}$, such that $(\nabla f)^\ts\mathfrak{R}(x)\nabla f=\mathfrak{R}(\nabla f, \nabla f)$.
\end{definition}
{ The main theorem is presented below and its proof is postponed to Theorem \ref{thm: generalized gamma 2 z with drift} in Section \ref{sec: generalized gamma z}.}
\begin{theorem}[Generalized $z$ Bochner's formula]\label{thm1}
If Assumption \ref{assumption:main result} is satisfied, then the following decomposition holds
	\beaa
\Gamma_{2}(f,f)+\Gamma_{2}^{z,\Vol}(f,f)&=&\|\mathfrak{Hess}_{a,z}f\|^2
+\mathfrak{R}(\nabla f,\nabla f),
\eeaa 
where we define 
\beaa
\|\mathfrak{Hess}_{a,z}f\|^2&=&[X+\Lambda_1]^{\ts}Q^{\ts}Q[X+\Lambda_1]+[X+\Lambda_2]^{\ts}P^{\ts}P[X+\Lambda_2],\\
\mathfrak R(\nabla f,\nabla f)&=&-\Lambda_1^{\ts}Q^{\ts}Q\Lambda_1-\Lambda_2^{\ts}P^{\ts}P\Lambda_2+D^{\ts}D+E^{\ts}E \\
&&+\mathfrak{R}_{ab}(\nabla f,\nabla f) +\mathfrak{R}_{zb}(\nabla f,\nabla f)+\mathfrak{R}^{\Vol }(\nabla f,\nabla f).
\eeaa
\end{theorem}
%\begin{defn}[Generalized Curvature dimension bound]\label{def:CD k n}
%We name the generalized curvature-dimension inequality $CD(\kappa_1,\kappa_2, d)$ for degenerate diffusion process generator $L$ by
%\begin{equation*}
%\Gamma_{2}(f,f)+\Gamma_{2}^{z,\Vol}(f,f)\geq \kappa_1 \Gamma_{1}(f,f)+\kappa_2 \Gamma_{1}^z(f,f)+\frac{1}{d}\textrm{tr}(\mathfrak{Hess}_{a,z}f)^2.
%\end{equation*}
%In particular, the $CD(\kappa_1,\kappa_2, \infty)$ condition is equivalent to  
%\begin{equation*}
%\mathfrak{R}(\nabla f,\nabla f)\succeq \kappa_1\Gamma_1(f,f)+\kappa_2 \Gamma_1^z(f,f).  %\end{equation*}
%For simplicity, we shall always denote $\kappa=\min\{\kappa_1,\kappa_2\}$, and refer to $CD(\kappa,\kappa,d)$ as the one used below.
%\end{defn}

We are now ready to prove the convergence property of degenerate drift diffusion process \eqref{SDE framework1} and related functional inequalities. 
%%Here we only focus on the $CD(\kappa,\kappa , \infty)$ case. 
Denote the Kullback–Leibler divergence as
\begin{equation*}
    \mathrm{D}_{\mathrm{KL}}(\rho\|\pi):=\int_{\hM^{n+m}}\rho(x)\log\frac{\rho(x)}{\Vol(x)} dx. 
\end{equation*}
Denote the $a,z$--relative Fisher information functional as  
\beaa
\mathrm{I}_{a,z}(\rho):=\int_{\mathbb{R}^{n+m}} (\nabla \log\frac{\rho}{\Vol}, aa^{\ts}\nabla\log\frac{\rho}{\Vol})\rho dx+\int_{\mathbb{R}^{n+m}} (\nabla \log\frac{\rho}{\Vol}, zz^{\ts}\nabla\log\frac{\rho}{\Vol})\rho dx.
\eeaa

\begin{theorem}[Exponential convergence in $L^1$ distance]\label{prop1.4}
Suppose there exists a constant $\kappa>0$, such that
\begin{equation*}
\mathfrak R\succeq \kappa(aa^{\ts}+zz^{\ts}).
\end{equation*}
 Let $\rho_0$ be a smooth initial distribution and $\rho=\rho(t,x)$ be the probability density function of \eqref{SDE framework1}. Then $\rho$ converges to the invariant measure $\Vol$ in the sense of
\begin{equation*}
\mathrm{I}_{a,z}(\rho) \leq e^{-2\kappa t} \mathrm{I}_{a,z}(\rho_0). 
\end{equation*}
In addition, 
    \begin{equation*}
 \int_{{\hM}^{n+m}} |\rho(t,x)-\pi(x)| dx\leq \sqrt{2  \mathrm{D}_{\mathrm{KL}}(\rho_0\|\pi) }e^{-\kappa t}.
\end{equation*}
\end{theorem}
{ The proof of Theorem \ref{prop1.4} is postponed to Proposition \eqref{prop:z log-soblov}.}

\begin{rem}[Functional inequalities]\label{prop1.5}
%The following inequalities holds:
%\begin{itemize}
%\item[(i)]
Suppose $\mathfrak R\succeq \kappa(aa^{\ts}+zz^{\ts})$ with $\kappa>0$,   
then the z-log-Sobolev inequalities holds:
\begin{equation*}
\int_{\hM^{n+m}}\rho\log\frac{\rho}{\Vol} dx\leq \frac{1}{2\kappa}
\mathrm{I}_{a,z}(\rho),
\end{equation*}
for any smooth density function $\rho$. 
\end{rem}

\begin{rem}
In literature \cite{BaudoinGarofalo09}, the $\Gamma_{2,z}$ operator is defined by \eqref{old gamma two z}, i.e. $\Gamma_{2}^z(f,f)=\frac{1}{2}L\Gamma_1^z(f,f)-\Gamma_1^z(Lf,f)$.
In fact, this definition is under the assumption of $\Gamma_{1}(\Gamma_{1}^z(f,f),f)=\Gamma_{1}^z(\Gamma_{1}(f,f),f)$. This assumption holds true only for the special choice of $a$ and $z$. In generalized Gamma z calculus,
we introduce a new term \eqref{new term}, which removes the assumption $\Gamma_{1}(\Gamma_{1}^z(f,f),f)=\Gamma_{1}^z(\Gamma_{1}(f,f),f)$. In fact, in the paper, we show that \eqref{new term} is exactly the new bilinear form behind assumption in \cite{baudoin2016wasserstein, Baudoin2017} by considering the weak form. See details in Remark \ref{iterative Gamma conditon}.
\end{rem}

\begin{rem}
Following \cite{FengLi2021}[Assumption 1], we know that for any $i\in \{1,\cdots,n\}$ and $k\in\{1,\cdots,m\}$, if
\begin{align}\label{assump}
	z^{\ts}_k\nabla a^{\ts}_i\in \text{Span}\{a^{\ts}_1,\cdots,a^{\ts}_n\},
\end{align} 
there exists vectors $\widehat\Lambda_1$ and $\widehat\Lambda_2$, such that the Hessian operator associated with the generator of the SDE and the metric $(aa^{\ts})^{\dd}$ could be represented as
\[
\|\mathfrak{Hess}f\|^2=[\mathsf Q\mathsf X+\widehat \Lambda_1]^{\ts} [\mathsf Q\mathsf X+\widehat\Lambda_1]+[\mathsf P\mathsf X+\widehat\Lambda_2]^{\ts} [\mathsf P\mathsf X+\widehat\Lambda_2].
\]
Furthermore, we have the following relation
\begin{align*}
& [\mathsf Q\mathsf X+\widehat\Lambda_1]^{\ts} [\mathsf Q\mathsf X+\widehat\Lambda_1]+[\mathsf P\mathsf X+\widehat\Lambda_2]^{\ts}[\mathsf P\mathsf X+\widehat\Lambda_2]-\widehat\Lambda_1^{\ts}\widehat\Lambda_1-\widehat\Lambda_2^{\ts}\widehat\Lambda_2\\
=& [\mathsf X+ \Lambda_1]^{\ts}\mathsf Q^{\ts}\mathsf Q [\mathsf X+\Lambda_1]+[\mathsf X+\Lambda_2]^{\ts}\mathsf P^{\ts}\mathsf P [\mathsf X+\Lambda_2]-\Lambda_1^{\ts}\mathsf Q^{\ts}\mathsf Q\Lambda_1-\Lambda_2^{\ts}\mathsf P^{\ts}\mathsf P\Lambda_2,
\end{align*}
if there exist $\Lambda_1$ and $\Lambda_2$ as in Assumption \ref{assumption:main result}, such that 
\begin{align}
	\label{condition}
	\widehat \Lambda_1^{\ts} = \Lambda_1^{\ts}\mathsf Q^{\ts}\quad \text{and}\quad  \widehat\Lambda_2^{\ts} =\Lambda_2^{\ts}\mathsf P^{\ts}.
\end{align}
 The Assumption \ref{assumption:main result} is true if condition \ref{assump} and \ref{condition} hold. See detailed connections in \cite{FengLi2021}[Remark 11].  
\end{rem}
%\begin{rem}
%Under Assumption \ref{Assump: main result 2},  \ref{Assump: main result 3} and \ref{assumption:main result}, we derive three types of generalized curvature dimension bounds. Assumption \ref{Assump: main result 2} provides sufficient condition for horizontal directions only. Assumption \ref{Assump: main result 3} introduces directions $z$ and is valid for the case when the term \eqref{new term} vanishes. In particular, the term \eqref{new term}$=0$ is equivalent to $\Gamma_1(f,\Gamma_1^z(f,f))=\Gamma^z_1(f,\Gamma_1(f,f))$ in the weak sense.
%Assumption \ref{assumption:main result} provides the most general condition for the generalized curvature dimension bound.  
%\end{rem}{}
%\begin{rem}
%Hence the generalized curvature dimension bound extends the results shown in \cite{BaudoinGarofalo09}. \end{rem}
%\begin{rem}
%We remark that our tensor is coordinate independent. This is because our derivation of Gamma calculus is based on the dissipation of Fisher information along density flows of diffusion processes. Notice that the entropy and densities are coordinate independent, so as the generalized Gamma $z$ calculus. See details in subsection \ref{sec5.2}.  
%\end{rem}

\section{Examples}\label{sec: example}
In this section, we consider the following degenerate drift-diffusion process
\begin{equation}\label{SDE framework12}
dX_t=-a(X_t)a(X_t)^{\ts}\nabla V(X_t)dt+\sqrt{2}a(X_t)\circ dB_t,
\end{equation}
where $a: \hR^{n+m}\rightarrow \mathbb{R}^{(n+m)\times n}$ is a matrix valued function, for  $n, m\in \mathbb Z_+$, and $V\in C^{\infty}(\mathbb{R}^{n+m})$ is a smooth potential function. We denote the invariant measure of SDE \eqref{SDE framework12} as $\pi$. We further assume that 
\beaa 
-aa^{\ts}\nabla V=a\otimes \nabla a+aa^{\ts}\nabla \log \pi.
\eeaa 
The above assumption holds for later on three examples. 

\begin{rem}\label{rem: canonital drift}
For $V=0$, the invariant measure $\pi$ in the above assumption exists if $\{a_1,\cdots,a_n\}$ forms  left-invariant structures on unimodular Lie groups. In this case, the sub-Laplacian is a sum of squares of horizontal vector fields and invariant measure is also symmetric. The Stratonovich SDE \eqref{SDE framework12} defines the horizontal Brownian motion on sub-Riemannian structure $(\hM^{n+m},\tau, (aa^{\ts})^{\dd}|_{\tau})$ and the $\Vol$ is the volume form associated with horizontal Laplacian. In general, if the Lie group structure is not unimodular, the drift $b\neq 0$. \cite{agrachev2012, barilari2013formula, baudoin2019integration, eldredge2018, elworthy1982, GL2016, GL2017, Malliavin}. See the related studies on log-Sobolev inequality in \cite{baudoin2015log, inglis2009logarithmic}. 
\end{rem}
\begin{rem}
It is also worth mentioning that many sub-Riemannian manifolds are non-compact. 
Hence there may not exist a positive constant $\kappa$ for both classical $\Gamma_1$ and $\Gamma^z_1$ directions in the non-compact domain. The non-compactness of the domain brings additional difficulties. To prove the associated inequalities in this case, we need to extend the result derived in \cite{baudoin2012log, wang1997logarithmic}. This is a direction for future work.  
\end{rem}

\begin{rem}
{ It is known that the Heisenberg group is an example of Lie groups in quantum mechanics \cite{woit2017quantum}. In future work, we shall investigate the general convergence analysis of SDEs in Lie groups and their connections with quantum SDEs.}
\end{rem}

\subsection{Heisenberg group}
In this subsection, we apply our general theory to the well-known example in sub-Riemannian geometry, which is the Heisenberg group. A related LSI for the horizontal Wiener measure has been studied in  \cite{baudoin2012log}. Recall briefly that the Heisenberg group $\mathbb H^1$ admits left invariant vector fields: $X=\frac{\partial}{\partial x}-\frac{1}{2}y\frac{\partial}{\partial z},\quad Y=\frac{\partial}{\partial y}+\frac{1}{2}x\frac{\partial}{\partial z},\quad Z=\frac{\partial}{\partial z}$. Here $\{X,Y,Z\}$ forms an orthonormal basis for the tangent bundle of $\mathbb H^1$. In this case, $\Vol=e^{-V}$. In particular, $X$ and $Y$ generate the horizontal distribution $\tau$. To fit into our general  theory from the previous section, we take matrices $a$ and $z$ as below
\bea\label{heisenberg a and z}
a^{\ts}=\begin{pmatrix}{}1&0&-y/2\\
0&1&x/2
\end{pmatrix},\quad z^{\ts}=(0,0,1).
\eea
In particular, we have 
\beaa
a^{\ts}\nabla f=\Big((a^{\ts}\nabla)_1f,(a^{\ts}\nabla)_2f\Big)^{\ts},\quad (a^{\ts}\nabla)_1f=(\frac{\pa f}{\pa x}-\frac{y}{2}\frac{\pa f}{\pa z}),\quad (a^{\ts}\nabla)_2f=(\frac{\pa f}{\pa y}+\frac{x}{2}\frac{\pa f}{\pa z}).
\eeaa 

We have the following proposition for Heisenberg group following Theorem \ref{thm1}.
\begin{prop}\label{prop heisenberg} For any smooth function $f\in C^{\infty}(\mathbb{H}^1)$, one has
\beaa
\Gamma_2(f,f)+\Gamma_2^{z,\pi}(f,f)&=&\|\mathfrak{Hess}_{a,z}f\|^2+\mathfrak{R}(\nabla f,\nabla f),
\eeaa
where 
\beaa
% |\mathfrak{Hess}_{a,z}f|^2&=&[X+\Lambda_1]^{\ts}Q^{\ts}Q[X+\Lambda_1]+[X+\Lambda_2]^{\ts}P^{\ts}P[X+\Lambda_2];\\
\Lambda_1^{\ts}&=&(0,0,0,0,0,0,0,0,0); \\
\Lambda_2^{\ts}&=&(0,0,0,0,0,0,(a^{\ts}\nabla)_2f,-(a^{\ts}\nabla)_1f,0);\\
% \eeaa
% \beaa
&&\mathfrak{R}_{ab}(\nabla f,\nabla f)-\Lambda_1^{\ts}Q^{\ts}Q\Lambda_1-\Lambda_2^{\ts}P^{\ts}P\Lambda_2+D^{\ts}D+E^{\ts}E\\
&=&-\Gamma_1(f,f)+\frac{1}{2}\Gamma_1^z(f,f)-(a^{\ts}\nabla)_1V\pa_z f(a^{\ts}\nabla)_2f+(a^{\ts}\nabla)_2V\pa_z f(a^{\ts}\nabla)_1f\\
&&+\Big[\frac{\pa^2 V}{\pa x\pa x}+\frac{y^2}{4}\frac{\pa^2 V}{\pa z\pa z}-y\frac{\pa^2 V}{\pa x\pa z}\Big] |(a^{\ts}\nabla)_1 f|^2\\
&&+\Big[\frac{\pa^2 V}{\pa y\pa y}+\frac{x^2}{4}\frac{\pa^2 V}{\pa z\pa z}+x\frac{\pa^2 V}{\pa y\pa z}\Big] |(a^{\ts}\nabla)_2 f|^2\\
&&+2\Big[\frac{\pa^2 V}{\pa x\pa y}+\frac{x}{2}\frac{\pa^2 V}{\pa x\pa z}-\frac{y}{2}\frac{\pa^2 V}{\pa y\pa z}-\frac{xy}{4}\frac{\pa^2 V}{\pa z\pa z}\Big] (a^{\ts}\nabla)_1 f(a^{\ts}\nabla)_2 f;\\
\mathfrak{R}_{zb}(\nabla f, \nabla f)&=& \Big(\frac{\pa^2 V}{\pa x\pa z }-\frac{y}{2}\frac{\pa^2 V}{\pa z\pa z}\Big)(a^{\ts}\nabla)_1 f (z^{\ts}\nabla)_1 f+\Big(\frac{\pa^2 V}{\pa y\pa z}+\frac{x}{2}\frac{\pa^2 V}{\pa z\pa z}\Big)(z^{\ts}\nabla)_1 f(a^{\ts}\nabla)_2 f;\\
\mathfrak{R}_{\pi}(\nabla f, \nabla f)&=&0.
\eeaa 
\end{prop}
{ 
The proof of Proposition of \ref{prop heisenberg} follows from the proof of Theorem \ref{thm1} (i.e. Theorem \ref{thm: generalized gamma 2 z with drift}), Lemma \ref{heisenberg vector matrix}, Lemma \ref{heisenberg hess}, and Lemma \ref{heisenberg tensor}. The following convergence result follows directly from Theorem \ref{prop1.4}.}
\begin{prop} If there exists $\kappa>0$ as shown in Theorem \ref{prop1.4}, the exponential dissipation result in $L^1$ distance holds:
\begin{equation*}
    \int |\rho(t,x)-\pi(x)| dx=O(e^{-\kappa t}). 
\end{equation*}
\end{prop}
We next formulate the curvature tensor into a matrix format. Denote 
\bea \label{vector U 3d}
\mathsf{U}=\Big(
(a^{\ts}\nabla)_1f , (a^{\ts}\nabla)_2f , (z^{\ts}\nabla)_1f
\Big)_{3\times 1},
\eea  and denote $\mathbf{I}_{3\times 3}$ as the identity matrix. With a little abuse of notation,
there exists a symmetric matrix $\mathfrak{R}$ such that we can represent the tensor as below.
\bea \label{matrix form for R}
\mathfrak{R}(\nabla f,\nabla f)=(\mathsf U)^{\ts}\cd\mathfrak{R}\cd\mathsf U,
\eea
which implies that 
\beaa
\mathfrak{R}\succeq \kappa (aa^{\ts}+zz^{\ts})
\Rightarrow \mathfrak{R}(\nabla f,\nabla f)\succeq  \kappa (\Gamma_1(f,f)+\Gamma_1^z(f,f)).
\eeaa
In other words, we need to estimate the smallest eigenvalue of matrix $\mathfrak R$. We next present the formulation of matrix $\mathfrak R$ for the Heisenberg group as follows. \\ 
\begin{cor}\label{matrix R heisenberg}
 The matrix $\mathfrak{R}$ associated  with Heisenberg group has the following form
\beaa
\mathfrak{R}_{11}&=& \Big[\frac{\pa^2 V}{\pa x\pa x}+\frac{y^2}{4}\frac{\pa^2 V}{\pa z\pa z}-y\frac{\pa^2 V}{\pa x\pa z}\Big]-1;\\
\mathfrak{R}_{22}&=&\Big[\frac{\pa^2 V}{\pa y\pa y}+\frac{x^2}{4}\frac{\pa^2 V}{\pa z\pa z}+x\frac{\pa^2 V}{\pa y\pa z}\Big]-1;\quad \mathfrak{R}_{33}=\frac{1}{2};  \\
\mathfrak{R}_{12}&=&\mathfrak{R}_{21}=\Big[\frac{\pa^2 V}{\pa x\pa y}+\frac{x}{2}\frac{\pa^2 V}{\pa x\pa z}-\frac{y}{2}\frac{\pa^2 V}{\pa y\pa z}-\frac{xy}{4}\frac{\pa^2 V}{\pa z\pa z}\Big];\\
\mathfrak{R}_{13}&=&\mathfrak{R}_{31}=\frac{1}{2}(a^{\ts}\nabla)_2V+\frac{1}{2}\Big(\frac{\pa^2 V}{\pa x\pa z }-\frac{y}{2}\frac{\pa^2 V}{\pa z\pa z}\Big);\\
\mathfrak{R}_{23}&=&\mathfrak{R}_{32}=-\frac{1}{2}(a^{\ts}\nabla)_1V+\frac{1}{2}\Big(\frac{\pa^2 V}{\pa y\pa z}+\frac{x}{2}\frac{\pa^2 V}{\pa z\pa z}\Big).
\eeaa
\end{cor}
\begin{proof} 
The explicit form of matrix $\mathfrak R$ follows from the definition in Theorem \ref{thm1} and the notation in  \eqref{vector U 3d} and \eqref{matrix form for R}. We have
\beaa
\mathfrak R(\nabla f,\nabla f)&=&-\Lambda_1^{\ts}Q^{\ts}Q\Lambda_1-\Lambda_2^{\ts}P^{\ts}P\Lambda_2+D^{\ts}D+E^{\ts}E \\
&&+\mathfrak{R}_{ab}(\nabla f,\nabla f) +\mathfrak{R}_{zb}(\nabla f,\nabla f)+\mathfrak{R}^{\Vol }(\nabla f,\nabla f)\\
&=&(\mathsf U)^{\ts}\cd\mathfrak{R}\cd\mathsf U.
\eeaa 
Plugging the explicit representation from Proposition \ref{prop heisenberg} into the above formula, and applying matrix symmetrization for the off-diagonal terms, we get the desired matrix $\mathfrak R$.
\end{proof}
% \begin{eg}
% Based on the matrix $\mathfrak{R}$ derived above, we show a specific examples with potential function $V=x^2+\frac{y^2+z^2}{2}$ as shown in Figure 1. We have the following matrix.
% \[\mathfrak{R}=
% \begin{pmatrix}
% 1+\frac{y^2}{4}& \frac{-xy}{4}&\frac{y+xz}{4}\\
% \frac{-xy}{4}& \frac{x^2}{4}& \frac{yz-3x}{4}\\
% \frac{y+xz}{4}& \frac{yz-3x}{4}& \frac{1}{2}
% \end{pmatrix}.
% \]
% \end{eg}
%
%\begin{figure}[h]
%    \includegraphics[scale=0.35]{z0}\includegraphics[scale=0.35]{z10}
%    \caption{Illustration of the Ricci curvature for the Heisenberg group. 
%Here $V(x,y,z)=x^2+\frac{y^2+z^2}{2}$. We plot the smallest eigenvalue of $\mathfrak R$ on the region $[-0.2,0.2]^2$. Left is for $z=0$. Right is for $z=10$. From the numerical calculation, we find that the smallest eigenvalue of $\mathfrak R$ at a given point in $[-0.2, 0.2]^2$ is around $0.0078$, when $z=10$. 
%    }
%    \label{figure}
%\end{figure}
Next, we present the three key lemmas.
\begin{lem}\label{heisenberg vector matrix}
For Heisenberg group, we have 
\beaa
Q&=&\left(
\begin{array}{ccccccccc}
 1 & 0 & -\frac{y}{2} & 0 & 0 & 0 & -\frac{y}{2} & 0 & \frac{y^2}{4} \\
 0 & 1 & \frac{x}{2} & 0 & 0 & 0 & 0 & -\frac{y}{2} & -\frac{x y}{4} \\
 0 & 0 & 0 & 1 & 0 & -\frac{y}{2} & \frac{x}{2} & 0 & -\frac{x y}{4} \\
 0 & 0 & 0 & 0 & 1 & \frac{x}{2} & 0 & \frac{x}{2} & \frac{x^2}{4} \\
\end{array}
\right);\\
P&=&\left(
\begin{array}{ccccccccc}
 0 & 0 & 0 & 0 & 0 & 0 & 1 & 0 & -\frac{y}{2} \\
 0 & 0 & 0 & 0 & 0 & 0 & 0 & 1 & \frac{\text{x}}{2} \\
\end{array}
\right);\\
D^{\ts}&=&(0,\frac{1}{2}\pa_zf,-\frac{1}{2}\pa_zf,0);\quad E^{\ts}=(0,0);\quad F^{\ts}=G^{\ts}=(0,0,0,0,0,0,0,0,0);\\
C^{\ts}&=&(0,0,\frac{x}{4}\pa_zf+\frac{1}{2}\pa_y f,0,0,\frac{y}{4}\pa_z f-\frac{1}{2}\pa_x f,\frac{x}{4}\pa_zf+\frac{1}{2}\pa_yf,\frac{y}{4}\pa_zf-\frac{1}{2}\pa_xf,-\frac{y}{2}\pa_yf-\frac{x}{2}\pa_xf).
\eeaa
\end{lem}{}
\begin{proof}
The proof of this lemma follows from routine computations. Plugging the matrices $a$ and $z$ from \eqref{heisenberg a and z} into Notation \ref{notation}, we get the desired vectors and matrices. We skip the detailed computation here. \qed 
\end{proof}
\begin{lem}\label{heisenberg hess} On $\mathbb H^1$, vectors $F$ and $G$ are zero vectors, we have
\beaa
&&[QX+D]^{\ts}[QX+D]+[PX+E]^{\ts}[PX+E]+2C^{\ts}X\\
&=&\|\mathfrak{Hess}_{a,z}f\|^2-\Lambda_1^{\ts}Q^{\ts}Q\Lambda_1-\Lambda_2^{\ts}P^{\ts}P\Lambda_2+D^{\ts}D+E^{\ts}E.
\eeaa
In particular, we have 
\beaa
\|\mathfrak{Hess}_{a,z}f\|^2&=&[X+\Lambda_1]^{\ts}Q^{\ts}Q[X+\Lambda_1]+[X+\Lambda_2]^{\ts}P^{\ts}P[X+\Lambda_2];\\
\Lambda_1^{\ts}&=&(0,0,0,0,0,0,0,0,0); \\
\Lambda_2^{\ts}&=&(0,0,0,0,0,0,(a^{\ts}\nabla)_2f,-(a^{\ts}\nabla)_1f,0);\\
-\Lambda_1^{\ts}Q^{\ts}Q\Lambda_1-\Lambda_2^{\ts}P^{\ts}P\Lambda_2+D^{\ts}D+E^{\ts}E)&=&-\Gamma_1(f,f)+\frac{1}{2}\Gamma_1^z(f,f).
\eeaa
\end{lem}{}

\begin{lem}\label{heisenberg tensor}
By routine computations, we obtain
\beaa
\mathfrak{R}_{ab}(\nabla f,\nabla f)&=&-(a^{\ts}\nabla)_1V\pa_z f(a^{\ts}\nabla)_2f+(a^{\ts}\nabla)_2V\pa_z f(a^{\ts}\nabla)_1f\\
&&+\Big[\frac{\pa^2 V}{\pa x\pa x}+\frac{y^2}{4}\frac{\pa^2 V}{\pa z\pa z}-y\frac{\pa^2 V}{\pa x\pa z}\Big] |(a^{\ts}\nabla)_1 f|^2\\
&&+\Big[\frac{\pa^2 V}{\pa y\pa y}+\frac{x^2}{4}\frac{\pa^2 V}{\pa z\pa z}+x\frac{\pa^2 V}{\pa y\pa z}\Big] |(a^{\ts}\nabla)_2 f|^2\\
&&+2\Big[\frac{\pa^2 V}{\pa x\pa y}+\frac{x}{2}\frac{\pa^2 V}{\pa x\pa z}-\frac{y}{2}\frac{\pa^2 V}{\pa y\pa z}-\frac{xy}{4}\frac{\pa^2 V}{\pa z\pa z}\Big] (a^{\ts}\nabla)_1 f(a^{\ts}\nabla)_2 f;\\
\mathfrak{R}_{zb}(\nabla f,\nabla f)&=& \Big(\frac{\pa^2 V}{\pa x\pa z }-\frac{y}{2}\frac{\pa^2 V}{\pa z\pa z}\Big)(a^{\ts}\nabla)_1 f(z^{\ts}\nabla)_1 f+\Big(\frac{\pa^2 V}{\pa y\pa z}+\frac{x}{2}\frac{\pa^2 V}{\pa z\pa z}\Big)(z^{\ts}\nabla)_1 f(a^{\ts}\nabla)_2 f;\\
\mathfrak{R}_{\pi}(\nabla f,\nabla f)&=&0.
\eeaa
\end{lem}{}

\begin{proof}
[Proof of Lemma \ref{heisenberg hess}]
We first have 
\beaa
2C^{\ts}X&=&\sum_{\hat i,\hat k=1}^32C^{\ts}_{\hat i\hat k}X_{\hat i\hat k}\\
&=&2\Big[\frac{\pa^2 f}{\pa x\pa z}\frac{(a^{\ts}\nabla)_2f}{2}-\frac{\pa^2 f}{\pa y\pa z}\frac{(a^{\ts}\nabla)_1f}{2}+\frac{\pa^2 f}{\pa z\pa x}\frac{(a^{\ts}\nabla)_2f}{2}\Big]\\
&&-2\Big[\frac{\pa^2 f}{\pa z\pa y}\frac{(a^{\ts}\nabla)_1f}{2}+\frac{\pa^2 f}{\pa z\pa z}(\frac{y}{2}\pa_yf+\frac{x}{2}\pa_xf)\Big]\\
&=&2\frac{\pa^2 f}{\pa x\pa z}(a^{\ts}\nabla)_2f-2\frac{\pa^2 f}{\pa y\pa z}(a^{\ts}\nabla)_1f-2\frac{\pa^2 f}{\pa z\pa z}(\frac{y}{2}\pa_yf+\frac{x}{2}\pa_xf)\\
&=&2(a^{\ts}\nabla)_2f\left[\frac{\pa^2 f}{\pa x\pa z}-\frac{y}{2}\frac{\pa^2 f}{\pa z\pa z}\right]-2(a^{\ts}\nabla )_1f\left[\frac{\pa^2 f}{\pa y\pa z}+\frac{x}{2}\frac{\pa^2 f}{\pa z\pa z}\right].
\eeaa
By direct computations, we have
\beaa
&&[QX+D]^{\ts}[QX+D]+[PX+E]^{\ts}[PX+E] +2C^{\ts}X\\
&=& \left[\frac{\pa^2 f}{\pa x\pa x}-y\frac{\pa^2 f}{\pa x\pa z}+\frac{y^2}{4}\frac{\pa^2 f}{\pa z\pa z} \right]^2 +\left[\frac{\pa^2 f}{\pa x\pa y}+\frac{x}{2}\frac{\pa^2 f}{\pa x\pa z}-\frac{y}{2}\frac{\pa^2 f}{\pa y\pa z}-\frac{xy}{4}\frac{\pa^2f}{\pa z\pa z}+\frac{1}{2}\pa_z f \right]^2\\
&&+\left[\frac{\pa^2 f}{\pa x\pa y}-\frac{y}{2}\frac{\pa^2 f}{\pa y\pa z}+\frac{x}{2}\frac{\pa^2 f}{\pa x\pa z}-\frac{xy}{4}\frac{\pa^2f}{\pa z\pa z}-\frac{1}{2}\pa_z f\right]^2
+\left[\frac{\pa^2 f}{\pa y\pa y}+x\frac{\pa^2 f}{\pa y\pa z}+\frac{x^2}{4}\frac{\pa^2f}{\pa z\pa z} \right]^2\\
&&+ \left[\frac{\pa^2 f}{\pa x\pa z}-\frac{y}{2}\frac{\pa^2 f}{\pa z\pa z}\right]^2+\left[\frac{\pa^2 f}{\pa y\pa z}+\frac{x}{2}\frac{\pa^2 f}{\pa z\pa z}\right]^2\\
&&+2(a^{\ts}\nabla )_2f\left[\frac{\pa^2 f}{\pa x\pa z}-\frac{y}{2}\frac{\pa^2 f}{\pa z\pa z}\right]-2(a^{\ts}\nabla )_1f\left[\frac{\pa^2 f}{\pa y\pa z}+\frac{x}{2}\frac{\pa^2 f}{\pa z\pa z}\right].
\eeaa
Completing squares for the cross terms involving the type of $``\nabla f\nabla^2 f"$ and following the reformulation as below
\beaa
&&\left[\frac{\pa^2 f}{\pa x\pa y}+\frac{x}{2}\frac{\pa^2 f}{\pa x\pa z}-\frac{y}{2}\frac{\pa^2 f}{\pa y\pa z}-\frac{xy}{4}\frac{\pa^2f}{\pa z\pa z}+\frac{1}{2}\pa_z f \right]^2\\
&&+\left[\frac{\pa^2 f}{\pa x\pa y}-\frac{y}{2}\frac{\pa^2 f}{\pa y\pa z}+\frac{x}{2}\frac{\pa^2 f}{\pa x\pa z}-\frac{xy}{4}\frac{\pa^2f}{\pa z\pa z}-\frac{1}{2}\pa_z f\right]^2\\
&=&2\left[\frac{\pa^2 f}{\pa x\pa y}-\frac{y}{2}\frac{\pa^2 f}{\pa y\pa z}+\frac{x}{2}\frac{\pa^2 f}{\pa x\pa z}-\frac{xy}{4}\frac{\pa^2f}{\pa z\pa z} \right]^2+\frac{1}{2}|\pa_z f|^2,
\eeaa
we have 
\beaa
&&[QX+D]^{\ts}[QX+D]+[PX+E]^{\ts}[PX+E] +2C^{\ts}X\\
&=& \left[\frac{\pa^2 f}{\pa x\pa x}-y\frac{\pa^2 f}{\pa x\pa z}+\frac{y^2}{4}\frac{\pa^2 f}{\pa z\pa z} \right]^2+2 \left[\frac{\pa^2 f}{\pa x\pa y}-\frac{y}{2}\frac{\pa^2 f}{\pa y\pa z}+\frac{x}{2}\frac{\pa^2 f}{\pa x\pa z}-\frac{xy}{4}\frac{\pa^2f}{\pa z\pa z}\right]^2\\
&&+\left[\frac{\pa^2 f}{\pa y\pa y}+x\frac{\pa^2 f}{\pa y\pa z}+\frac{x^2}{4}\frac{\pa^2f}{\pa z\pa z}\right]^2+ \left[\frac{\pa^2 f}{\pa x\pa z}-\frac{y}{2}\frac{\pa^2 f}{\pa z\pa z}+(a^{\ts}\nabla)_2f\right]^2\\
&&+\left[\frac{\pa^2 f}{\pa y\pa z}+\frac{x}{2}\frac{\pa^2 f}{\pa z\pa z}-(a^{\ts}\nabla)_1f\right]^2-|(a^{\ts}\nabla )_2f|^2-|(a^{\ts}\nabla )_1f|^2+\frac{1}{2} |(z^{\ts}\nabla )_1f|^2.
\eeaa

The sum of square terms give $\|\mathfrak{Hess}_{a,z}\|^2_{\mathrm F}$, hence $\Lambda_1$ and $\Lambda_2$. The remainders generate $-\Lambda_1^{\ts}Q^{\ts}Q\Lambda_1-\Lambda_2^{\ts}P^{\ts}P\Lambda_2+D^{\ts}D+E^{\ts}E$, which equals $-\Gamma_1(f,f)+\frac{1}{2}\Gamma_1^z(f,f)$.
\qed 
\end{proof}

We are now left to compute the tensors. 

\begin{proof}[Proof of Lemma \ref{heisenberg tensor}]
By direct computation, we have 
\beaa
\mathfrak{R}_a(\nabla f,\nabla f)
&=&\sum_{i,k=1}^2\sum_{i',\hat i,\hat k=1}^{3} \la a^{\ts}_{ii'} (\frac{\partial a^{\ts}_{i \hat i}}{\partial x_{i'}} \frac{\partial a^{\ts}_{k\hat k}}{\partial x_{\hat i}}\frac{\partial f}{\partial x_{\hat k}}) ,(a^{\ts}\nabla)_kf\ra_{\hR^2}  \nonumber\\
&&+\sum_{i,k=2}^2\sum_{i',\hat i,\hat k=1}^{3} \la a^{\ts}_{ii'}a^{\ts}_{i \hat i} (\frac{\partial }{\partial x_{i'}} \frac{\partial a^{\ts}_{k\hat k}}{\partial x_{\hat i}})(\frac{\partial f}{\partial x_{\hat k}}) ,(a^{\ts}\nabla)_kf\ra_{\hR^2} \nonumber\\ 
&&-\sum_{i,k=1}^2\sum_{i',\hat i,\hat k=1}^{3} \la a^{\ts}_{k\hat k}\frac{\partial a^{\ts}_{ii'}}{\partial x_{\hat k}} \frac{\partial a^{\ts}_{i \hat i}}{\partial x_{i'}} \frac{\partial f}{\partial x_{\hat i}})  ,(a^{\ts}\nabla)_kf\ra_{\hR^2}  \nonumber\\
&&-\sum_{i,k=1}^2\sum_{i',\hat i,\hat k=1}^{3} \la a^{\ts}_{k\hat k} a^{\ts}_{ii'} (\frac{\partial }{\partial x_{\hat k}} \frac{\partial a^{\ts}_{i \hat i}}{\partial x_{i'}}) \frac{\partial f}{\partial x_{\hat i}}  ,(a^{\ts}\nabla)_kf\ra_{\hR^2},  \nonumber\\
&=& \cI_1+\cI_2+\cI_3+\cI_4.
\eeaa
For the four terms above, we have 
\beaa
\cI_1&=&\sum_{i=1}^2\sum_{i',\hat i,\hat k=1}^{3}  a^{\ts}_{ii'} (\frac{\partial a^{\ts}_{i \hat i}}{\partial x_{i'}} \frac{\partial a^{\ts}_{1\hat k}}{\partial x_{\hat i}}\frac{\partial f}{\partial x_{\hat k}})(a^{\ts}\nabla)_1f+\sum_{i=1}^2\sum_{i',\hat i,\hat k=1}^{3}  a^{\ts}_{ii'} (\frac{\partial a^{\ts}_{i \hat i}}{\partial x_{i'}} \frac{\partial a^{\ts}_{2\hat k}}{\partial x_{\hat i}}\frac{\partial f}{\partial x_{\hat k}}) (a^{\ts}\nabla)_2f=0\\
\cI_2&=&\sum_{i=2}^n\sum_{i',\hat i,\hat k=1}^{3} a^{\ts}_{ii'}a^{\ts}_{i \hat i} (\frac{\partial }{\partial x_{i'}} \frac{\partial a^{\ts}_{1\hat k}}{\partial x_{\hat i}})(\frac{\partial f}{\partial x_{\hat k}}) (a^{\ts}\nabla)_1f+\sum_{i=2}^n\sum_{i',\hat i,\hat k=1}^{3} a^{\ts}_{ii'}a^{\ts}_{i \hat i} (\frac{\partial }{\partial x_{i'}} \frac{\partial a^{\ts}_{2\hat k}}{\partial x_{\hat i}})(\frac{\partial f}{\partial x_{\hat k}})(a^{\ts}\nabla)_2f=0\\
\cI_3&=&-\sum_{i=1}^2\sum_{i',\hat i,\hat k=1}^{3}  a^{\ts}_{1\hat k}\frac{\partial a^{\ts}_{ii'}}{\partial x_{\hat k}} \frac{\partial a^{\ts}_{i \hat i}}{\partial x_{i'}} \frac{\partial f}{\partial x_{\hat i}})(a^{\ts}\nabla)_1f-\sum_{i=1}^2\sum_{i',\hat i,\hat k=1}^{3}  a^{\ts}_{2\hat k}\frac{\partial a^{\ts}_{ii'}}{\partial x_{\hat k}} \frac{\partial a^{\ts}_{i \hat i}}{\partial x_{i'}} \frac{\partial f}{\partial x_{\hat i}})(a^{\ts}\nabla)_2f=0\\
\cI_4&=&-\sum_{i=1}^2\sum_{i',\hat i,\hat k=1}^{3}  a^{\ts}_{1\hat k} a^{\ts}_{ii'} (\frac{\partial }{\partial x_{\hat k}} \frac{\partial a^{\ts}_{i \hat i}}{\partial x_{i'}}) \frac{\partial f}{\partial x_{\hat i}}(a^{\ts}\nabla)_1f-\sum_{i=1}^2\sum_{i',\hat i,\hat k=1}^{3}  a^{\ts}_{2\hat k} a^{\ts}_{ii'} (\frac{\partial }{\partial x_{\hat k}} \frac{\partial a^{\ts}_{i \hat i}}{\partial x_{i'}}) \frac{\partial f}{\partial x_{\hat i}}(a^{\ts}\nabla)_2f=0.
\eeaa
Similar computation applies to the tensor terms $\mathfrak{R}_{\pi}$ and $\mathfrak{R}_{zb}$. Since $z$ is a constant matrix, we get
\beaa
\mathfrak{R}_{zb}(\nabla f,\nabla f)&=&-\sum_{\hat i,\hat k=1}^{3}\la (z^{\ts}_{1\hat i}\frac{\pa b_{\hat k}}{\pa x_{\hat i}}\frac{\pa f}{\pa x_{\hat k}}-b_{\hat k}\frac{\pa z^{\ts}_{1\hat i}}{\pa x_{\hat k}} \frac{\pa f}{\pa x_{\hat i}} ),(z^{\ts}\nabla f)_1\ra_{\hR},\\
\mathfrak{R}_{\pi}&=&0.
\eeaa
We now compute the tensor terms involving the drift $b$. For the drift term in tensor $\mathfrak{R}_{ab}$, taking $b=-aa^{\ts}\nabla V$, which means  $b=-(a_{\hat k k}a^{\ts}_{kk'}\frac{\pa V}{\pa x_{k'}})_{\hat k=1,2,3}$ in local coordinates,
\beaa
\mathfrak{R}_b^a
&=&\sum_{i,k=1}^2\sum_{\hat i,\hat k,k'=1}^3\left[a^{\ts}_{i\hat i}\frac{\pa a^{\ts}_{k\hat k} }{\pa x_{\hat i}}a^{\ts}_{kk'}\frac{\pa V}{\pa x_{k'}} \frac{\pa f}{\pa x_{\hat k}}(a^{\ts}\nabla)_i f \right]\\ 
&&+\sum_{i,k=1}^2\sum_{\hat i,\hat k,k'=1}^3\left[a^{\ts}_{i\hat i}\frac{\pa a^{\ts}_{kk'}}{\pa x_{\hat i}}a^{\ts}_{k\hat k}\frac{\pa V}{\pa x_{k'}} \frac{\pa f}{\pa x_{\hat k}}(a^{\ts}\nabla)_i f\right]\\
&&+\sum_{i,k=1}^2\sum_{\hat i,\hat k,k'=1}^3\left[ a^{\ts}_{i\hat i}a^{\ts}_{k\hat k}a^{\ts}_{kk'}\frac{\pa^2 V}{\pa x_{\hat i}\pa x_{k'}}\frac{\pa f}{\pa x_{\hat k}}(a^{\ts}\nabla)_i f\right]
\eeaa 
\beaa 
&&-\sum_{i,k=1}^2\sum_{\hat i,\hat k,k'=1}^3\left[a^{\ts}_{k\hat k}a^{\ts}_{kk'}\frac{\pa a^{\ts}_{i\hat i}}{\pa x_{\hat k}}\frac{\pa V}{\pa x_{k'}} \frac{\pa f}{\pa x_{\hat i}}(a^{\ts}\nabla)_if\right] \\
&=& \cJ_1+\cJ_2+\cJ_3+\cJ_4.
\eeaa
We now derive the explicit formulas for the above four terms. 
\beaa
\cJ_1
&=&\sum_{\hat i,\hat k,k'=1}^3\left[ a^{\ts}_{1\hat i}\frac{\pa a^{\ts}_{1\hat k} }{\pa x_{\hat i}}a^{\ts}_{1k'}\frac{\pa V}{\pa x_{k'}} \frac{\pa f}{\pa x_{\hat k}}(a^{\ts}\nabla)_1 f+a^{\ts}_{2\hat i}\frac{\pa a^{\ts}_{1\hat k} }{\pa x_{\hat i}}a^{\ts}_{1k'}\frac{\pa V}{\pa x_{k'}} \frac{\pa f}{\pa x_{\hat k}}(a^{\ts}\nabla)_2 f\right]\\
&&+\sum_{\hat i,\hat k,k'=1}^3\left[ a^{\ts}_{1\hat i}\frac{\pa a^{\ts}_{2\hat k} }{\pa x_{\hat i}}a^{\ts}_{2k'}\frac{\pa V}{\pa x_{k'}} \frac{\pa f}{\pa x_{\hat k}}(a^{\ts}\nabla)_1 f+a^{\ts}_{2\hat i}\frac{\pa a^{\ts}_{2\hat k} }{\pa x_{\hat i}}a^{\ts}_{2k'}\frac{\pa V}{\pa x_{k'}} \frac{\pa f}{\pa x_{\hat k}}(a^{\ts}\nabla)_2 f\right]\\
% &=&[\sum_{k'=1}^3\frac{1}{2}a^{\ts}_{1k'}\frac{\pa V}{\pa x_{k'}}\frac{\pa f}{\pa x_3}(a^{\ts}\nabla)_1f+0 ]-2[0+(-\frac{1}{2})\sum_{k'=1}^3a^{\ts}_{2k'}\frac{\pa V}{\pa x_{k'}}\frac{\pa f}{\pa x_3}(a^{\ts }\nabla)_2f ]\\
&=&-\frac{1}{2}(a^{\ts}\nabla)_1V\pa_zf(a^{\ts}\nabla)_2f +\frac{1}{2}(a^{\ts}\nabla)_2V\pa_z f(a^{\ts }\nabla)_1f;\\ 
\cJ_2&=&\sum_{\hat i,\hat k,k'=1}^3\left[a^{\ts}_{1\hat i}\frac{\pa a^{\ts}_{1k'}}{\pa x_{\hat i}}a^{\ts}_{1\hat k}\frac{\pa V}{\pa x_{k'}} \frac{\pa f}{\pa x_{\hat k}}(a^{\ts}\nabla)_1 f+a^{\ts}_{2\hat i}\frac{\pa a^{\ts}_{1k'}}{\pa x_{\hat i}}a^{\ts}_{1\hat k}\frac{\pa V}{\pa x_{k'}} \frac{\pa f}{\pa x_{\hat k}}(a^{\ts}\nabla)_2 f\right]\\
&&\sum_{\hat i,\hat k,k'=1}^3\left[a^{\ts}_{1\hat i}\frac{\pa a^{\ts}_{2k'}}{\pa x_{\hat i}}a^{\ts}_{2\hat k}\frac{\pa V}{\pa x_{k'}} \frac{\pa f}{\pa x_{\hat k}}(a^{\ts}\nabla)_1 f+a^{\ts}_{2\hat i}\frac{\pa a^{\ts}_{2k'}}{\pa x_{\hat i}}a^{\ts}_{2\hat k}\frac{\pa V}{\pa x_{k'}} \frac{\pa f}{\pa x_{\hat k}}(a^{\ts}\nabla)_2 f\right]\\
&=&-\frac{1}{2}\frac{\pa V}{\pa z}(a^{\ts}\nabla)_1f(a^{\ts}\nabla)_2f+\frac{1}{2}\frac{\pa V}{\pa z}(a^{\ts}\nabla)_1f(a^{\ts}\nabla)_2f=0;\\ 
\cJ_3&=&\sum_{\hat i,\hat k,k'=1}^3\left[ a^{\ts}_{1\hat i}a^{\ts}_{1\hat k}a^{\ts}_{1k'}\frac{\pa^2 V}{\pa x_{\hat i}\pa x_{k'}}\frac{\pa f}{\pa x_{\hat k}}(a^{\ts}\nabla)_1 f+a^{\ts}_{2\hat i}a^{\ts}_{1\hat k}a^{\ts}_{1k'}\frac{\pa^2 V}{\pa x_{\hat i}\pa x_{k'}}\frac{\pa f}{\pa x_{\hat k}}(a^{\ts}\nabla)_2 f\right]\\
&&\sum_{\hat i,\hat k,k'=1}^3\left[ a^{\ts}_{1\hat i}a^{\ts}_{2\hat k}a^{\ts}_{2k'}\frac{\pa^2 V}{\pa x_{\hat i}\pa x_{k'}}\frac{\pa f}{\pa x_{\hat k}}(a^{\ts}\nabla)_1 f+a^{\ts}_{2\hat i}a^{\ts}_{2\hat k}a^{\ts}_{2k'}\frac{\pa^2 V}{\pa x_{\hat i}\pa x_{k'}}\frac{\pa f}{\pa x_{\hat k}}(a^{\ts}\nabla)_2 f\right]\\
&=&\sum_{\hat i,k'=1}^3\left[ a^{\ts}_{1\hat i}a^{\ts}_{1k'}\frac{\pa^2 V}{\pa x_{\hat i}\pa x_{k'}} |(a^{\ts}\nabla)_1 f|^2+a^{\ts}_{2\hat i}a^{\ts}_{1k'}\frac{\pa^2 V}{\pa x_{\hat i}\pa x_{k'}}(a^{\ts}\nabla)_1 f(a^{\ts}\nabla)_2 f\right]\\
&&\sum_{\hat i,k'=1}^3\left[ a^{\ts}_{1\hat i}a^{\ts}_{2k'}\frac{\pa^2 V}{\pa x_{\hat i}\pa x_{k'}}(a^{\ts}\nabla)_2 f(a^{\ts}\nabla)_1 f+a^{\ts}_{2\hat i}a^{\ts}_{2k'}\frac{\pa^2 V}{\pa x_{\hat i}\pa x_{k'}}|(a^{\ts}\nabla)_2 f|^2\right]
\eeaa 
\beaa 
&=&\Big[\frac{\pa^2 V}{\pa x\pa x}+\frac{y^2}{4}\frac{\pa^2 V}{\pa z\pa z}-y\frac{\pa^2 V}{\pa x\pa z}\Big] |(a^{\ts}\nabla)_1 f|^2+\Big[\frac{\pa^2 V}{\pa y\pa y}+\frac{x^2}{4}\frac{\pa^2 V}{\pa z\pa z}+x\frac{\pa^2 V}{\pa y\pa z}\Big] |(a^{\ts}\nabla)_2 f|^2\\
&&+2\Big[\frac{\pa^2 V}{\pa x\pa y}+\frac{x}{2}\frac{\pa^2 V}{\pa x\pa z}-\frac{y}{2}\frac{\pa^2 V}{\pa y\pa z}-\frac{xy}{4}\frac{\pa^2 V}{\pa z\pa z}\Big] (a^{\ts}\nabla)_1 f(a^{\ts}\nabla)_2 f;\\
\cJ_4&=&-\sum_{\hat i,\hat k,k'=1}^3\left[a^{\ts}_{1\hat k}a^{\ts}_{1k'}\frac{\pa a^{\ts}_{1\hat i}}{\pa x_{\hat k}}\frac{\pa V}{\pa x_{k'}} \frac{\pa f}{\pa x_{\hat i}}(a^{\ts}\nabla)_1f+a^{\ts}_{1\hat k}a^{\ts}_{1k'}\frac{\pa a^{\ts}_{2\hat i}}{\pa x_{\hat k}}\frac{\pa V}{\pa x_{k'}} \frac{\pa f}{\pa x_{\hat i}}(a^{\ts}\nabla)_2f\right]\\
&&-\sum_{\hat i,\hat k,k'=1}^3\left[a^{\ts}_{2\hat k}a^{\ts}_{2k'}\frac{\pa a^{\ts}_{1\hat i}}{\pa x_{\hat k}}\frac{\pa V}{\pa x_{k'}} \frac{\pa f}{\pa x_{\hat i}}(a^{\ts}\nabla)_1f+a^{\ts}_{2\hat k}a^{\ts}_{2k'}\frac{\pa a^{\ts}_{2\hat i}}{\pa x_{\hat k}}\frac{\pa V}{\pa x_{k'}} \frac{\pa f}{\pa x_{\hat i}}(a^{\ts}\nabla)_2f\right]\\
&=&-\frac{1}{2}(a^{\ts}\nabla)_1V\pa_z f(a^{\ts}\nabla)_2f+\frac{1}{2}(a^{\ts}\nabla)_2V\pa_z f(a^{\ts}\nabla)_1f.
\eeaa
Summing up the above formulas, we get $\mathfrak{R}_{ab}$. We now compute the drift tensor term of $\mathfrak{R}_{zb}$.
By taking $b=-aa^{\ts}\nabla V$, we have
\beaa
\mathfrak{R}_{b}^z(\nabla f,\nabla f) 
&=&-\sum_{\hat i,\hat k=1}^{3}\left[ z^{\ts}_{1\hat i}\frac{\pa b_{\hat k}}{\pa x_{\hat i}}\frac{\pa f}{\pa x_{\hat k}}(z^{\ts}\nabla f)_1-b_{\hat k}\frac{\pa z^{\ts}_{i\hat i}}{\pa x_{\hat k}} \frac{\pa f}{\pa x_{\hat i}} (z^{\ts}\nabla f)_1\right]\\ 
&=&\sum_{k=1}^2\sum_{\hat i,\hat k,k'=1}^3\left[z^{\ts}_{1\hat i}\frac{\pa a^{\ts}_{k\hat k} }{\pa x_{\hat i}}a^{\ts}_{kk'}\frac{\pa V}{\pa x_{k'}} \frac{\pa f}{\pa x_{\hat k}}(z^{\ts}\nabla)_1 f \right]\\
&&+\sum_{k=1}^2\sum_{\hat i,\hat k,k'=1}^3\left[z^{\ts}_{1\hat i}\frac{\pa a^{\ts}_{kk'}}{\pa x_{\hat i}}a^{\ts}_{k\hat k}\frac{\pa V}{\pa x_{k'}} \frac{\pa f}{\pa x_{\hat k}}(z^{\ts}\nabla)_1 f\right]\\
&&+\sum_{k=1}^2\sum_{\hat i,\hat k,k'=1}^3\left[ z^{\ts}_{1\hat i}a^{\ts}_{k\hat k}a^{\ts}_{kk'}\frac{\pa^2 V}{\pa x_{\hat i}\pa x_{k'}}\frac{\pa f}{\pa x_{\hat k}}(z^{\ts}\nabla)_1 f\right]\\
&&-\sum_{k=1}^2\sum_{\hat i,\hat k,k'=1}^3\left[a^{\ts}_{k\hat k}a^{\ts}_{kk'}\frac{\pa z^{\ts}_{1\hat i}}{\pa x_{\hat k}}\frac{\pa V}{\pa x_{k'}} \frac{\pa f}{\pa x_{\hat i}}(z^{\ts}\nabla)_1f\right] \\
&=& \cJ_1^z+\cJ_2^z+\cJ_3^z+\cJ_4^z.
\eeaa
We further compute as blow by taking advantage of the constant matrix $z$,
\beaa
\cJ_1^z&=&\sum_{k=1}^2\sum_{\hat i,\hat k,k'=1}^3\left[z^{\ts}_{1\hat i}\frac{\pa a^{\ts}_{k\hat k} }{\pa x_{\hat i}}a^{\ts}_{kk'}\frac{\pa V}{\pa x_{k'}} \frac{\pa f}{\pa x_{\hat k}}(z^{\ts}\nabla)_1 f \right]=0;\\
\cJ_2^z&=&\sum_{k=1}^2\sum_{\hat i,\hat k,k'=1}^3\left[z^{\ts}_{1\hat i}\frac{\pa a^{\ts}_{kk'}}{\pa x_{\hat i}}a^{\ts}_{k\hat k}\frac{\pa V}{\pa x_{k'}} \frac{\pa f}{\pa x_{\hat k}}(z^{\ts}\nabla)_1 f\right]=0;\\
\cJ_4^z&=&-\sum_{k=1}^2\sum_{\hat i,\hat k,k'=1}^3\left[a^{\ts}_{k\hat k}a^{\ts}_{kk'}\frac{\pa z^{\ts}_{1\hat i}}{\pa x_{\hat k}}\frac{\pa V}{\pa x_{k'}} \frac{\pa f}{\pa x_{\hat i}}(z^{\ts}\nabla)_1f\right]=0\\
\cJ_3^z&=&\sum_{k=1}^2\sum_{\hat i,\hat k,k'=1}^3\left[ z^{\ts}_{1 \hat i}a^{\ts}_{k\hat k}a^{\ts}_{kk'}\frac{\pa^2 V}{\pa x_{\hat i}\pa x_{k'}}\frac{\pa f}{\pa x_{\hat k}}(z^{\ts}\nabla)_1 f\right]\\
&=& \Big(\frac{\pa^2 V}{\pa x\pa z }-\frac{y}{2}\frac{\pa^2 V}{\pa z\pa z}\Big)(a^{\ts}\nabla)_1 f (z^{\ts}\nabla)_1 f+\Big(\frac{\pa^2 V}{\pa y\pa z}+\frac{x}{2}\frac{\pa^2 V}{\pa z\pa z}\Big)(z^{\ts}\nabla)_1 f(a^{\ts}\nabla)_2 f. 
%&=&\Big(\frac{\pa^2 V}{\pa x\pa z }-\frac{y}{2}\frac{\pa^2 V}{\pa z\pa z}\Big)|(a^{\ts}\nabla)_1 f|^2+\Big(\frac{\pa^2 V}{\pa y\pa z}+\frac{x}{2}\frac{\pa^2 V}{\pa z\pa z}\Big)(a^{\ts}\nabla)_1 f(a^{\ts}\nabla)_2 f.
\eeaa
The proof is thus completed. \qed 
\end{proof}{}

%%%%%%%%%%%%%%%%%%%%%%%
%%%%%%%%%%%%%%%%%%%%%%%%%%%%%%%%%%%%%%%%%%%
\subsection{Displacement group}
In this subsection, we derive the generalized curvature dimension bound for displacement group, which is one example of three dimensional solvable Lie groups. We adapt the general setting from  \cite{baudoin2015subelliptic} below. Denote $\mathfrak{g}$ as the three dimensional solvable Lie algebra and denote $H\subset \mathfrak{g}$ as the horizontal subspace satisfying H\"ormander's condition, then for a given inner product $\la\cd ,\cd\ra$ on $H$, there exists a canonical basis $\{X,Y,Z\}$ for $(\mathfrak{g},H,\la\cd,\cd\ra)$, such that $\{X,Y\}$ forms an orthonormal basis for $H$ and satisfies the following Lie bracket generating condition for parameters $\alpha$ and $\beta\ge 0$:
\[
[X,Y]=Z,\quad [X,Z]=\alpha Y+\beta Z,\quad [Y,Z]=0.
\]
When the parameter $\alpha=0$ and $\beta\neq 0$, the Lie algebra $\mathfrak{g}$ has a faithful representation. In particular, it is shown in \cite{baudoin2015subelliptic} that the elements of $\mathfrak{g}$, in local coordinates $(\theta,x,y)$, corresponds to the following left-invariant differential operators:
\beaa
X=\frac{\partial}{\partial{\theta}},\quad Y=e^{\beta\theta}\frac{\partial}{\pa x}+\frac{\pa}{\pa y},\quad R=-\beta\frac{\pa}{\pa y},
\eeaa
with the following relation
\beaa
[X,Y]=\beta Y+R,\quad [X,R]=0,\quad [Y,R]=0.
\eeaa
In terms of local coordinates $(\theta,x,y)$, we have 
\beaa
X=\begin{pmatrix}
	1\\
	0\\
	0
\end{pmatrix},\quad Y=\begin{pmatrix}
	0\\
	e^{\beta\theta}\\
	1
\end{pmatrix},\quad R=\begin{pmatrix}
	0\\
	0\\
	-\beta 
\end{pmatrix}.
\eeaa
The corresponding Lie group of this special Lie algebra $\mathfrak{g}$ is called displacement group, denoted as $\mathsf G$. We choose $\{X,Y\}$ as the horizontal orthonormal basis for subalgebra $H$.
To fit into the general framework from the previous section, we take 
\bea\label{matrix a and z SE2}
a=(X,Y)=\begin{pmatrix}1&0\\
0&e^{\beta \theta}\\
0&1
\end{pmatrix}, \quad a^{\ts}= \begin{pmatrix}{}
1&0&0\\
0&e^{\beta \theta}&1
\end{pmatrix},\quad z^{\ts}=\begin{pmatrix}{}0&0&-g(\theta,x,y)
\end{pmatrix},
\eea
with $g(\theta,x,y)\neq 0$. Our focus here is to derive the curvature tensor in terms of $\pi=\frac{1}{Z}e^{-V}$.  We then use $(aa^{\ts})^{\dd}_{|H}$ as the horizontal metric on $H$. Thus the sub-Riemannian structure is given by $(\mathsf G,H,(aa^{\ts})^{\dd}_{|H})$. 
By direct computations, it is easy to show that, for general smooth function $f$, $\Gamma_1(f,\Gamma_1^z(f,f))\neq\Gamma_1^z(f,\Gamma_1(f,f))$. Hence classical Gamma $z$ calculus proposed in \cite{BaudoinGarofalo09} can not be extended for this case to derive zLSI. Thus we need to compute vector $G$ and the tensor term $\mathfrak{R}_{\pi}$. Following Theorem \ref{thm1}, we have the following z-Bochner's formula for $\mathsf G$.   
\begin{prop}\label{prop se2} For any smooth function $f\in C^{\infty}(\mathsf G)$, one has
\beaa
\Gamma_2(f,f)+\Gamma_2^{z,\pi}(f,f)&=&\|\mathfrak{Hess}_{a,z}f\|^2+\mathfrak{R}(\nabla f,\nabla f),
\eeaa
where %we have 
% \beaa
% |\mathfrak{Hess}_{a,z}f|^2
% &=&\left[ \frac{\pa^2 f}{\pa \theta\pa \theta}\right]^2+\left[ e^{2\beta\theta} \frac{\pa^2 f}{\pa x\pa x}+2e^{\beta\theta}\frac{\pa^2 f}{\pa x\pa y}+\frac{\pa^2 f}{\pa y\pa y}-\beta \pa_{\theta}f\right]^2\\
% &&+2\left[e^{\beta \theta}\frac{\pa^2 f}{\pa \theta \pa x}+\frac{\pa^2 f}{\pa \theta\pa y}+\beta e^{\beta\theta}\frac{\pa f}{\pa x} +\frac{\beta}{2}\pa_yf\right]^2\\
% &&+\Big[g\frac{\pa^2 f}{\pa \theta\pa y}+\pa_{\theta}g\pa_yf-\frac{\beta(a^{\ts}\nabla)_2f}{g}-\pa_{\theta}g\pa_yf \Big]^2\\
% &&+\Big[ge^{\beta\theta}\frac{\pa^2 f}{\pa x\pa y}+g\frac{\pa^2 f}{\pa y\pa y} +(a^{\ts}\nabla)_2g\pa_yf+\frac{\beta \pa_{\theta}f}{g}-(a^{\ts}\nabla)_2g\pa_yf \Big]^2.
% \eeaa
\beaa
% |\mathfrak{Hess}_{a,z}f|^2&=&[X+\Lambda_1]^{\ts}Q^{\ts}Q[X+\Lambda_1]+[X+\Lambda_2]^{\ts}P^{\ts}P[X+\Lambda_2];\\
\Lambda_1^{\ts}&=& (0,\beta\pa_xf,\frac{\beta\pa_yf}{2},\beta\pa_xf,0,0,\frac{\beta\pa_yf}{2},0,-\beta\pa_{\theta}f); \\
\Lambda_2^{\ts}&=& (0,0,0,0,0,0,\lambda_6,0,\lambda_9);\\
\lambda_6&=&\frac{\pa_{\theta}g\pa_yf}{g}-\frac{\beta(a^{\ts}\nabla)_2f}{g^2}-\frac{\pa_{\theta}g\pa_yf}{g};\\ \lambda_9&=&\frac{(a^{\ts}\nabla)_2g\pa_yf}{g}+\frac{\beta \pa_{\theta}f}{g^2}-\frac{(a^{\ts}\nabla)_2g\pa_yf}{g};
\eeaa
And
\beaa
&&\mathfrak{R}_{ab}(\nabla f,\nabla f)-\Lambda_1^{\ts}Q^{\ts}Q\Lambda_1-\Lambda_2^{\ts}P^{\ts}P\Lambda_2+D^{\ts}D+E^{\ts}E)\\
&=&\Gamma_1(\log g,\log g)\Gamma_1^z(f,f)-\beta^2(1+\frac{1}{g^2})\Gamma_1(f,f)+\frac{\beta^2}{2g^2}\Gamma_1^z(f,f)\\
&&+\beta^2e^{\beta \theta}\frac{\pa f}{ \pa x}(a^{\ts}\nabla )_2f +\beta e^{\beta\theta} (a^{\ts}\nabla)_2V\frac{\pa f}{\pa x}(a^{\ts}\nabla )_1 f  +\beta e^{\beta \theta}\frac{\pa V}{\pa x}(a^{\ts}\nabla)_2f(a^{\ts}\nabla)_1f \\
&&+\frac{\pa^2 V}{\pa \theta\pa \theta} |(a^{\ts}\nabla)_1 f|^2+2(e^{\beta\theta}\frac{\pa^2 V}{\pa \theta\pa x}+\frac{\pa^2 V}{\pa \theta \pa y} )(a^{\ts}\nabla)_1 f(a^{\ts}\nabla)_2 f\\
&&+\sum_{\hat i,k'=1}^3a^{\ts}_{2\hat i}a^{\ts}_{2k'}\frac{\pa^2 V}{\pa x_{\hat i}\pa x_{k'}})|(a^{\ts}\nabla)_2 f|^2-\beta e^{\beta\theta}(a^{\ts}\nabla)_1V \frac{\pa f}{\pa x}(a^{\ts}\nabla)_2f;\\
\mathfrak{R}_{zb}(\nabla f,\nabla f)&=& \sum_{i=1}^2\sum_{i',\hat i=1}^{3}  a^{\ts}_{ii'}a^{\ts}_{i \hat i} \frac{\pa^2 z^{\ts}_{13}}{\partial x_{i'}\partial x_{\hat i}}\pa_yf (z^{\ts}\nabla)_1f -\sum_{k=1}^2(a^{\ts}\nabla)_kz^{\ts}_{13}(a^{\ts}\nabla)_kV\pa_yf(z^{\ts}\nabla)_1f
\\ && -g\frac{\pa^2 V}{\pa \theta\pa y}(a^{\ts}\nabla)_1 f(z^{\ts}\nabla)_1 f-g(e^{\beta\theta}\frac{\pa^2 V}{\pa x\pa y}+\frac{\pa^2 V}{\pa y\pa y} )(a^{\ts}\nabla)_2 f(z^{\ts}\nabla)_1 f;\\
\mathfrak{R}_{\pi}(\nabla f,\nabla f)&=& -2\sum_{l=1}^2\sum_{l',\hat l=1}^3a^{\ts}_{ll'}a^{\ts}_{l\hat l}\frac{\pa^2 z^{\ts}_{13}}{\pa x_{l'}\pa x_{\hat l}}\pa_yf (z^{\ts}\nabla)_1f\\
&&-2\Gamma_1(\log \pi,\log g)|(z^{\ts}\nabla)_1f|^2-2\Gamma_1(\log g,\log g)|(z^{\ts}\nabla)_1f|^2.
\eeaa
In particular, we have  
\beaa
\sum_{\hat i,k'=1}^3a^{\ts}_{2\hat i}a^{\ts}_{2k'}\frac{\pa^2 V}{\pa x_{\hat i}\pa x_{k'}}|(a^{\ts}\nabla)_2 f|^2&=&\Big[e^{2\beta\theta}\frac{\pa^2 V}{\pa x\pa x}+2e^{\beta \theta}\frac{\pa^2 V}{\pa x\pa y}+\frac{\pa^2 V}{\pa y\pa y}\Big] |(a^{\ts}\nabla)_2 f|^2;\\
\sum_{i=1}^2\sum_{i',\hat i=1}^{3}  a^{\ts}_{ii'}a^{\ts}_{i \hat i} \frac{\pa^2 z^{\ts}_{13}}{\partial x_{i'}\partial x_{\hat i}}\pa_yf (z^{\ts}\nabla)_1f&=&\Big[\frac{\pa ^2 g}{\pa \theta\pa \theta}+e^{2\beta\theta}\frac{\pa ^2 g}{\pa x\pa x}+\frac{\pa ^2 g}{\pa y\pa y}+2e^{\beta\theta}\frac{\pa ^2 g}{\pa x\pa y}\Big]\frac{|(z^{\ts}\nabla)_1f|^2}{g}.
\eeaa
\end{prop}{}
{ The proof of Proposition \ref{prop se2} follows from
the proof of Theorem \ref{thm1} (i.e. Theorem \ref{thm: generalized gamma 2 z with drift}), Lemma \ref{lemma: vector and mat for se2}, Lemma \ref{SE2 hess} and Lemma \ref{SE2 tensor} below. The following convergence result follows directly from Theorem \ref{prop1.4}.} 
\begin{prop} If there exists $\kappa>0$ as shown in Theorem \ref{prop1.4}, the exponential dissipation result in $L^1$ distance holds:
\begin{equation*}
    \int |\rho(t,x)-\pi(x)| dx=O(e^{-\kappa t}). 
\end{equation*}
\end{prop}
Similarly, we formulate the curvature tensor into a matrix format of $\mathfrak R$. Using the fact $e^{\beta \theta}\frac{\partial f}{\partial x}=(a^{\ts}\nabla)_2f+\frac{1}{g}(z^{\ts}\nabla f)_1f$, we have the following representation.
\begin{cor}
The matrix $\mathfrak{R}$ associated with $\mathsf G$ has the following representation
\beaa
\mathfrak{R}_{11}&=& \frac{\pa^2 V}{\pa \theta\pa \theta}-\beta^2(1+\frac{1}{g^2});\\
\mathfrak{R}_{22}&=& \Big[e^{2\beta\theta}\frac{\pa^2 V}{\pa x\pa x}+2e^{\beta \theta}\frac{\pa^2 V}{\pa x\pa y}+\frac{\pa^2 V}{\pa y\pa y}\Big]-\frac{\beta^2}{g^2}-\beta(a^{\ts}\nabla)_1V;\\
\mathfrak{R}_{33}&=&\frac{\beta^2}{2g^2}-\Gamma_1(\log g,\log g)-2\Gamma_1(\log \pi,\log g)-\Gamma_1(\log g,V)-\frac{1}{g}\Big[\frac{\pa ^2 g}{\pa \theta\pa \theta}+e^{2\beta\theta}\frac{\pa ^2 g}{\pa x\pa x}+\frac{\pa ^2 g}{\pa y\pa y}+2e^{\beta\theta}\frac{\pa ^2 g}{\pa x\pa y}\Big];  \\
\mathfrak{R}_{12}&=&\mathfrak{R}_{21}=\frac{1}{2}\Big(\beta e^{\beta \theta}\frac{\pa V}{\pa x}+2(e^{\beta\theta}\frac{\pa^2 V}{\pa \theta\pa x}+\frac{\pa^2 V}{\pa \theta \pa y} )+\beta (a^{\ts}\nabla)_2V \Big);\\
\mathfrak{R}_{13}&=&\mathfrak{R}_{31}=\frac{1}{2}\Big(\frac{\beta}{g}(a^{\ts}\nabla)_2V-g\frac{\pa^2 V}{\pa \theta\pa y}\Big);\\
\mathfrak{R}_{23}&=&\mathfrak{R}_{32}=-\frac{1}{2}\Big(\frac{\beta}{g}(a^{\ts}\nabla)_1-\frac{\beta^2}{g} \Big)-\frac{1}{2}g(e^{\beta\theta}\frac{\pa^2 V}{\pa x\pa y}+\frac{\pa^2 V}{\pa y\pa y} ).
\eeaa
\end{cor}{}
\begin{proof}
The derivation for the explicit form of matrix $\mathfrak R$ follows from a similar equivalent representation as shown in the proof of Corollary \ref{matrix R heisenberg} and the explicit bilinear terms derived in Proposition \ref{prop se2}. \qed 
\end{proof}

\begin{rem}
By taking $g(\theta,x,y)=\beta$ as a constant, Proposition \ref{prop se2} reduces to a simple version, in particular, the tensors reduce to be 
\beaa
&&\mathfrak{R}_{ab}(\nabla f,\nabla f)-\Lambda_1^{\ts}Q^{\ts}Q\Lambda_1-\Lambda_2^{\ts}P^{\ts}P\Lambda_2+D^{\ts}D+E^{\ts}E)\\
&=&-(1+\beta^2)\Gamma_1(f,f)+\frac{1}{2}\Gamma_1^z(f,f)\\
&&+\beta^2e^{\beta \theta}\frac{\pa f}{ \pa x}(a^{\ts}\nabla )_2f +\beta e^{\beta\theta} (a^{\ts}\nabla)_2V\frac{\pa f}{\pa x}(a^{\ts}\nabla )_1 f  +\beta e^{\beta \theta}\frac{\pa V}{\pa x}(a^{\ts}\nabla)_2f(a^{\ts}\nabla)_1f \\
&&+\frac{\pa^2 V}{\pa \theta\pa \theta} |(a^{\ts}\nabla)_1 f|^2+2(e^{\beta\theta}\frac{\pa^2 V}{\pa \theta\pa x}+\frac{\pa^2 V}{\pa \theta \pa y} )(a^{\ts}\nabla)_1 f(a^{\ts}\nabla)_2 f\\
&&+\sum_{\hat i,k'=1}^3a^{\ts}_{2\hat i}a^{\ts}_{2k'}\frac{\pa^2 V}{\pa x_{\hat i}\pa x_{k'}})|(a^{\ts}\nabla)_2 f|^2-\beta e^{\beta\theta}(a^{\ts}\nabla)_1V \frac{\pa f}{\pa x}(a^{\ts}\nabla)_2f;\\
\mathfrak{R}_{zb}(\nabla f,\nabla f)&=& -\beta\frac{\pa^2 V}{\pa \theta\pa y}(a^{\ts}\nabla)_1 f(z^{\ts}\nabla)_1 f-\beta(e^{\beta\theta}\frac{\pa^2 V}{\pa x\pa y}+\frac{\pa^2 V}{\pa y\pa y} )(a^{\ts}\nabla)_2 f(z^{\ts}\nabla)_1 f;\\
\mathfrak{R}_{\pi}(\nabla f,\nabla f)&=&0.
\eeaa
\end{rem}{}

%{
%Below, we focus on computing the lower bound of the sub-Riemannian Ricci curvature around the origin $(0,0,0)$ for a particular potential function $V=\theta^2+\frac{x^2}{2}+\frac{y^2}{2}$. This is equivalent to find the smallest eigenvalue of $\mathfrak R$, i.e. $\kappa=\lambda_{\min}(\mathfrak R)$. 
%Note that, at the origin, the matrix $\mathfrak R$ has the following form
%\[
%\mathfrak{R}= \begin{pmatrix}
%1-\beta^2& 0& 0\\
%0&1-\beta^2&-\beta\\
%0&-\beta&\frac{1}{2}
%\end{pmatrix}.
%\]
%Notice when $\beta^2+\beta<1$, then $\mathfrak R$ is a 
%diagonally dominant matrix, whose smallest eigenvalue is positive. For example, taking $\beta=0.1$, we have $\lambda_{\min}(\mathfrak R)>0$. Notice that matrix $\mathfrak R$ is continuous w.r.t. $(x,y,\theta)$, so as its eigenvalue. Thus around a small neighbourhood of the origin (e.g. torus $\mathbb T^3_{\varepsilon}$ with radius $\varepsilon$ and periodic boundary condition, for small $\varepsilon>0$), we have the entropy dissipation properties for the corresponding Fokker-Planck equation with periodic boundary condition.
%}

%%%%%%%%%%%%%%%%%%%%%%%%%%%
{ Next, we present the following three key lemmas. }
\begin{lem}\label{lemma: vector and mat for se2}
For displacement group $\mathsf G$, we have 
\beaa
Q&=&\left(
\begin{array}{ccccccccc}
 1 & 0 & 0 & 0 & 0 & 0 & 0 & 0 & 0 \\
 0 & e^{\beta \theta} & 1 & 0 & 0 & 0 & 0 & 0 & 0 \\
 0 & 0 & 0 & e^{\beta \theta} & 0 & 0 & 1 & 0 & 0 \\
 0 & 0 & 0 & 0 & e^{2 \beta \theta} & e^{\beta \theta} & 0 & e^{\beta \theta} & 1 \\
\end{array}
\right);\\
P&=&\left(
\begin{array}{ccccccccc}
 0 & 0 & 0 & 0 & 0 & 0 & -g(\theta,x,y) & 0 & 0 \\
 0 & 0 & 0 & 0 & 0 & 0 & 0 & -g(\theta,x,y) e^{\beta \theta} & -g(\theta,x,y) \\
\end{array}
\right);\\
D^{\ts}&=&(0,\beta e^{\beta\theta}\pa_xf ,0,0),\quad E^{\ts}=(- \pa_yf \pa_{\theta}g , -\pa_yf \pa_yg-e^{\beta \theta} \pa_yf \pa_xg);\\
% F^{\ts}&=&G^{\ts}=(0,0,0,0,0,0,0,0,0);\\
C^{\ts}&=&(0,\beta e^{\beta \theta}\pa_y f+\beta e^{2\beta\theta} \pa_xf,0,0,-\beta e^{2\beta \theta}\pa_{\theta} f,-\beta e^{\beta \theta}\pa_{\theta} f,0,0,0).
\eeaa
$F=\left(
\begin{array}{c}
 0 \\
 0 \\
 g\pa_{\theta}g \pa_yf \\
 0 \\
 0 \\
 e^{\beta \theta} g \pa_yf \pa_yg+e^{2 \beta \theta} g \pa_yf \pa_x g \\
 0 \\
 0 \\
 g \pa_yf \pa_yg+e^{\beta \theta} g \pa_y f \pa_x g \\
\end{array}
\right),\quad G=\left(
\begin{array}{c}
 0 \\
 0 \\
 -2 g \pa_yf \pa_{\theta}g \\
 0 \\
 0 \\
 -2 e^{\beta \theta} g \pa_yf \pa_yg-2 e^{2 \beta \theta} g \pa_yf \pa_xg\\
 0 \\
 0 \\
 -2 g \pa_yf \pa_yg-2 e^{\beta \theta} g \pa_yf \pa_xg \\
\end{array}
\right)$.
\end{lem}{}
\begin{proof}
The proof follows by plugging matrices $a$ and $z$ from \eqref{matrix a and z SE2} into Notation \ref{notation}.\qed 
\end{proof}

\begin{lem}\label{SE2 hess} On displacement group $\mathsf G$, we have
\beaa
&&[QX+D]^{\ts}[QX+D]+[PX+E]^{\ts}[PX+E]+2[C^{\ts}+F^{\ts}+G^{\ts}]X\\
&=&\|\mathfrak{Hess}_{a,z}f\|^2-\Lambda_1^{\ts}Q^{\ts}Q\Lambda_1-\Lambda_2^{\ts}P^{\ts}P\Lambda_2+D^{\ts}D+E^{\ts}E.
\eeaa
In particular, we have 
\beaa
\|\mathfrak{Hess}_{a,z}f\|^2&=&[X+\Lambda_1]^{\ts}Q^{\ts}Q[X+\Lambda_1]+[X+\Lambda_2]^{\ts}P^{\ts}P[X+\Lambda_2];\\
\Lambda_1^{\ts}&=& (0,\beta\pa_xf,\frac{\beta\pa_yf}{2},\beta\pa_xf,0,0,\frac{\beta\pa_yf}{2},0,-\beta\pa_{\theta}f); \\
\Lambda_2^{\ts}&=& (0,0,0,0,0,0,\lambda_6,0,\lambda_9);\\
\lambda_6&=&\frac{\pa_{\theta}g\pa_yf}{g}-\frac{\beta(a^{\ts}\nabla)_2f}{g^2}-\frac{\pa_{\theta}g\pa_yf}{g};
\eeaa 
\beaa \lambda_9&=&\frac{(a^{\ts}\nabla)_2g\pa_yf}{g}+\frac{\beta \pa_{\theta}f}{g^2}-\frac{(a^{\ts}\nabla)_2g\pa_yf}{g};\\
&&-\Lambda_1^{\ts}Q^{\ts}Q\Lambda_1-\Lambda_2^{\ts}P^{\ts}P\Lambda_2+D^{\ts}D+E^{\ts}E\\
&=&\Gamma_1(\log g,\log g)\Gamma_1^z(f,f)-\beta^2(1+\frac{1}{g^2})\Gamma_1(f,f)+\frac{\beta^2}{2g^2}\Gamma_1^z(f,f).
\eeaa
\end{lem}{}
\begin{lem}\label{SE2 tensor}
By routine computations, we obtain
\beaa
\mathfrak{R}_{ab}(\nabla f,\nabla f)&=&\beta^2e^{\beta \theta}\frac{\pa f}{ \pa x}(a^{\ts}\nabla )_2f +\beta e^{\beta\theta} (a^{\ts}\nabla)_2V\frac{\pa f}{\pa x}(a^{\ts}\nabla )_1 f  +\beta e^{\beta \theta}\frac{\pa V}{\pa x}(a^{\ts}\nabla)_2f(a^{\ts}\nabla)_1f \\
&&+\frac{\pa^2 V}{\pa \theta\pa \theta} |(a^{\ts}\nabla)_1 f|^2+2(e^{\beta\theta}\frac{\pa^2 V}{\pa \theta\pa x}+\frac{\pa^2 V}{\pa \theta \pa y} )(a^{\ts}\nabla)_1 f(a^{\ts}\nabla)_2 f\\
&&+\sum_{\hat i,k'=1}^3a^{\ts}_{2\hat i}a^{\ts}_{2k'}\frac{\pa^2 V}{\pa x_{\hat i}\pa x_{k'}}|(a^{\ts}\nabla)_2 f|^2-\beta e^{\beta\theta}(a^{\ts}\nabla)_1V \frac{\pa f}{\pa x}(a^{\ts}\nabla)_2f;\\
\mathfrak{R}_{zb}(\nabla f,\nabla f)&=& \sum_{i=1}^2\sum_{i',\hat i=1}^{3}  a^{\ts}_{ii'}a^{\ts}_{i \hat i} \frac{\pa^2 z^{\ts}_{1\hat k}}{\partial x_{i'}\partial x_{\hat i}}\pa_yf (z^{\ts}\nabla)_1f -\sum_{k=1}^2(a^{\ts}\nabla)_kz^{\ts}_{13}(a^{\ts}\nabla)_kV\pa_yf(z^{\ts}\nabla)_1f
\\ && -g\frac{\pa^2 V}{\pa \theta\pa y}(a^{\ts}\nabla)_1 f(z^{\ts}\nabla)_1 f-g(e^{\beta\theta}\frac{\pa^2 V}{\pa x\pa y}+\frac{\pa^2 V}{\pa y\pa y} )(a^{\ts}\nabla)_2 f(z^{\ts}\nabla)_1 f;\\
\mathfrak{R}_{\pi}(\nabla f,\nabla f)&=& -2\sum_{l=1}^2\sum_{l',\hat l=1}^3a^{\ts}_{ll'}a^{\ts}_{l\hat l}\frac{\pa^2 z^{\ts}_{13}}{\pa x_{l'}\pa x_{\hat l}}\pa_yf (z^{\ts}\nabla)_1f-2\sum_{l=1}^2\sum_{l',\hat l=1}^3a^{\ts}_{ll'}a^{\ts}_{l\hat l}\frac{\pa z^{\ts}_{13}}{\pa x_{\hat l}}\frac{\pa z^{\ts}_{13}}{\pa x_{l'}} |\pa_yf|^2\\
&&-2 \sum_{l=1}^2\sum_{\hat l=1}^3(a^{\ts}\nabla)_l\log \pi a^{\ts}_{l\hat l}\frac{\pa z^{\ts}_{13}}{\pa x_{\hat l}}\pa_yf (z^{\ts}\nabla)_1f.
\eeaa
\end{lem}{}
\begin{proof}[Proof of Lemma \ref{SE2 hess}]
According to Lemma \ref{lemma: vector and mat for se2} and observe the fact that $G=-2F$ and $(a^{\ts}\nabla)_2f=e^{\beta\theta}\pa_xf+\pa_yf$, we first have 
\beaa
2C^{\ts}X&=&2[\beta e^{\beta \theta}\pa_y f+\beta e^{2\beta\theta} \pa_xf]\frac{\pa^2 f}{\pa \theta \pa x}+2[-\beta e^{2\beta \theta}\pa_{\theta} f]\frac{\pa^2 f}{\pa x \pa x}+2[-\beta e^{\beta \theta}\pa_{\theta} f]\frac{\pa^2 f}{\pa x\pa y};\\
2[F^{\ts}+G^{\ts}]X&=&-2\Big(g\pa_{\theta}g\pa_yf\frac{\pa^2 f}{\pa \theta \pa y}+e^{\beta\theta}g (a^{\ts}\nabla)_2g\pa_yf\frac{\pa^2 f}{\pa x\pa y}+g(a^{\ts}\nabla)_2g\pa_yf\frac{\pa^2 f}{\pa y\pa y} \Big).
\eeaa
By direct computations, we have 
\beaa
&&[QX+D]^{\ts}[QX+D]+ [PX+E]^{\ts}[PX+E]+2C^{\ts}X+2F^{\ts}X+2G^{\ts}X\\
&=&
\left[ \frac{\pa^2 f}{\pa \theta\pa \theta}\right]^2+\left[ e^{2\beta\theta} \frac{\pa^2 f}{\pa x\pa x}+2e^{\beta\theta}\frac{\pa^2 f}{\pa x\pa y}+\frac{\pa^2 f}{\pa y\pa y}\right]^2+\left[e^{\beta \theta}\frac{\pa^2 f}{\pa \theta \pa x}+\frac{\pa^2 f}{\pa \theta\pa y}+\beta e^{\beta\theta}\frac{\pa f}{\pa x} \right]^2\\
&&+\left[e^{\beta \theta}\frac{\pa^2 f}{\pa \theta \pa x}+\frac{\pa^2 f}{\pa \theta\pa y} \right]^2+\left[-g\frac{\pa^2 f}{\pa \theta\pa y} -\pa_yf\pa_{\theta}g\right]^2+ \left[-g e ^{\beta\theta}\frac{\pa^2 f}{\pa x\pa y} -g\frac{\pa^2f}{\pa y\pa y}-(a^{\ts}\nabla)_2g \pa_yf\right]^2\\
&&+2[\beta e^{\beta \theta}\pa_y f+\beta e^{2\beta\theta} \pa_xf]\frac{\pa^2 f}{\pa \theta \pa x}+2[-\beta e^{2\beta \theta}\pa_{\theta} f]\frac{\pa^2 f}{\pa x \pa x}+2[-\beta e^{\beta \theta}\pa_{\theta} f]\frac{\pa^2 f}{\pa x\pa y}\\
&&-2\Big(g\pa_{\theta}g\pa_yf\frac{\pa^2 f}{\pa \theta \pa y}+e^{\beta\theta}g (a^{\ts}\nabla)_2g\pa_yf\frac{\pa^2 f}{\pa x\pa y}+g(a^{\ts}\nabla)_2g\pa_yf\frac{\pa^2 f}{\pa y\pa y} \Big)
\eeaa 
\beaa 
&=&\left[ \frac{\pa^2 f}{\pa \theta\pa \theta}\right]^2+\left[ e^{2\beta\theta} \frac{\pa^2 f}{\pa x\pa x}+2e^{\beta\theta}\frac{\pa^2 f}{\pa x\pa y}+\frac{\pa^2 f}{\pa y\pa y}\right]^2+\left[e^{\beta \theta}\frac{\pa^2 f}{\pa \theta \pa x}+\frac{\pa^2 f}{\pa \theta\pa y}+\beta e^{\beta\theta}\frac{\pa f}{\pa x} \right]^2\\
&&+\left[e^{\beta \theta}\frac{\pa^2 f}{\pa \theta \pa x}+\frac{\pa^2 f}{\pa \theta\pa y} \right]^2+\left[-g\frac{\pa^2 f}{\pa \theta\pa y} -\pa_yf\pa_{\theta}g\right]^2+ \left[-g e ^{\beta\theta}\frac{\pa^2 f}{\pa x\pa y} -g\frac{\pa^2f}{\pa y\pa y}-(a^{\ts}\nabla)_2g \pa_yf\right]^2\\
&&+2\beta (a^{\ts}\nabla)_2f\Big[ e^{\beta \theta} \frac{\pa^2 f}{\pa \theta \pa x}+\frac{\pa^2 f}{\pa \theta \pa y}\Big]-2\beta (a^{\ts}\nabla)_2f\frac{\pa^2 f}{\pa \theta \pa y}-2g\pa_{\theta}g\pa_yf\frac{\pa^2 f}{\pa \theta \pa y}\\
&&-2\beta \pa_{\theta}f\Big[2e^{\beta\theta}\frac{\pa^2 f}{\pa x\pa y}+e^{2\beta\theta}\frac{\pa^2 f}{\pa x\pa x}+\frac{\pa^2 f}{\pa y\pa y} \Big]+2\beta \pa_{\theta}f\Big[e^{\beta\theta}\frac{\pa^2 f}{\pa x\pa y}+\frac{\pa^2 f}{\pa y\pa y} \Big]\\
&&-2g(a^{\ts}\nabla)_2g\pa_yf\Big[e^{\beta\theta}\frac{\pa^2f}{\pa x\pa y}+\frac{\pa^2 f}{\pa y\pa y}\Big].
\eeaa
Completing square for the above terms, we have 
\beaa 
&&[QX+D]^{\ts}[QX+D]+[PX+E]^{\ts}[PX+E]+2C^{\ts}X+2F^{\ts}X+2G^{\ts}X\\
&=&\left[ \frac{\pa^2 f}{\pa \theta\pa \theta}\right]^2+\left[ e^{2\beta\theta} \frac{\pa^2 f}{\pa x\pa x}+2e^{\beta\theta}\frac{\pa^2 f}{\pa x\pa y}+\frac{\pa^2 f}{\pa y\pa y}-\beta \pa_{\theta}f\right]^2-\beta^2|\pa_{\theta}f|^2\\
&&+\left[e^{\beta \theta}\frac{\pa^2 f}{\pa \theta \pa x}+\frac{\pa^2 f}{\pa \theta\pa y}+\beta e^{\beta\theta}\frac{\pa f}{\pa x} \right]^2
+\left[e^{\beta \theta}\frac{\pa^2 f}{\pa \theta \pa x}+\frac{\pa^2 f}{\pa \theta\pa y}+\beta (a^{\ts}\nabla)_2f \right]^2-\beta^2|(a^{\ts}\nabla)_2f|^2\\
&&+\Big[g\frac{\pa^2 f}{\pa \theta\pa y}+\pa_{\theta}g\pa_yf-\frac{\beta(a^{\ts}\nabla)_2f}{g}-\pa_{\theta}g\pa_yf \Big]^2-\Big[\frac{\beta(a^{\ts}\nabla)_2f}{g}+\pa_{\theta}g\pa_yf \Big]^2\\
&&+\Big[ge^{\beta\theta}\frac{\pa^2 f}{\pa x\pa y}+g\frac{\pa^2 f}{\pa y\pa y} +(a^{\ts}\nabla)_2g\pa_yf+\frac{\beta \pa_{\theta}f}{g}-(a^{\ts}\nabla)_2g\pa_yf \Big]^2-\Big[\frac{\beta \pa_{\theta}f}{g}-(a^{\ts}\nabla)_2g\pa_yf \Big]^2\\
&&+2\Big[\frac{\beta(a^{\ts}\nabla)_2f}{g}+\pa_{\theta}g\pa_yf \Big]\pa_{\theta}g\pa_yf-2\pa_yf(a^{\ts}\nabla)_2g\times \Big[ \frac{\beta \pa_{\theta}f}{g}-(a^{\ts}\nabla)_2g\pa_yf \Big]
\eeaa
The first order terms generate $-\Lambda_1^{\ts}Q^{\ts}Q\Lambda_1-\Lambda_2^{\ts}P^{\ts}P\Lambda_2+D^{\ts}D+E^{\ts}E$ and the sum of square terms generate vectors $\Lambda_1$ and $\Lambda_2$. We further formulate the above two terms as below 
\beaa
&&\left[e^{\beta \theta}\frac{\pa^2 f}{\pa \theta \pa x}+\frac{\pa^2 f}{\pa \theta\pa y}+\beta e^{\beta\theta}\frac{\pa f}{\pa x} \right]^2
+\left[e^{\beta \theta}\frac{\pa^2 f}{\pa \theta \pa x}+\frac{\pa^2 f}{\pa \theta\pa y}+\beta (a^{\ts}\nabla)_2f \right]^2\\
&=&2\left[e^{\beta \theta}\frac{\pa^2 f}{\pa \theta \pa x}+\frac{\pa^2 f}{\pa \theta\pa y}+\beta e^{\beta\theta}\frac{\pa f}{\pa x}+\frac{\beta}{2}\pa_yf \right]^2+\frac{\beta^2}{2}|\pa_yf|^2.
\eeaa
Adding $\frac{\beta^2}{2}|\pa_yf|^2$ into the term $-\Lambda_1^{\ts}Q^{\ts}Q\Lambda_1-\Lambda_2^{\ts}P^{\ts}P\Lambda_2+D^{\ts}D+E^{\ts}E$ again, we further expand as below, 
\beaa
&&-\Lambda_1^{\ts}Q^{\ts}Q\Lambda_1-\Lambda_2^{\ts}P^{\ts}P\Lambda_2+D^{\ts}D+E^{\ts}E\\
&=&-\beta^2[|\pa_{\theta}f|^2+|(a^{\ts}\nabla)_2f|^2]-\Big[\frac{\beta \pa_{\theta}f}{g}-(a^{\ts}\nabla)_2g\pa_yf \Big]^2-\Big[\frac{\beta(a^{\ts}\nabla)_2f}{g}+\pa_{\theta}g\pa_yf \Big]^2\\
&&+2\Big[\frac{\beta(a^{\ts}\nabla)_2f}{g}+\pa_{\theta}g\pa_yf \Big]\pa_{\theta}g\pa_yf-2\pa_yf(a^{\ts}\nabla)_2g\times \Big[ \frac{\beta \pa_{\theta}f}{g}-(a^{\ts}\nabla)_2g\pa_yf \Big]+\frac{\beta^2}{2}|\pa_yf|^2\\ 
&=& -\beta^2\Gamma_1(f,f)-\frac{\beta^2}{g^2}|(a^{\ts}\nabla)_1f|^2
-|(a^{\ts}\nabla)_2(\log g)|^2 |(z^{\ts}\nabla)_1f|^2-2\frac{\beta}{g}(a^{\ts}\nabla)_2\log g(a^{\ts}\nabla)_1f(z^{\ts}\nabla)_1f\\
&&-\frac{\beta^2}{g^2}|(a^{\ts}\nabla)_2f|^2-|(a^{\ts}\nabla)_1\log g|^2|(z^{\ts}\nabla)_1f|^2+2\frac{\beta}{g} (a^{\ts}\nabla)_1\log g (a^{\ts}\nabla)_2f(z^{\ts}\nabla)_1f
\eeaa 
\beaa 
&&-2\frac{\beta}{g}(a^{\ts}\nabla)_1\log g(a^{\ts}\nabla)_2f (z^{\ts}\nabla)_1f
+2|(a^{\ts}\nabla)_1\log g|^2|(z^{\ts}\nabla)_1f|^2\\
&&+2\frac{\beta}{g}(a^{\ts}\nabla)_2\log g(a^{\ts}\nabla)_1f(z^{\ts}\nabla)_1f+2|(a^{\ts}\nabla)_2\log g|^2|(z^{\ts}\nabla)_1f|^2+\frac{\beta^2}{2g^2}\Gamma_1^z(f,f).
\eeaa
By grouping the bilinear terms of $\nabla f$, we get 
\beaa
&&-\Lambda_1^{\ts}Q^{\ts}Q\Lambda_1-\Lambda_2^{\ts}P^{\ts}P\Lambda_2+D^{\ts}D+E^{\ts}E\\
&=&\Gamma_1(\log g,\log g)\Gamma_1^z(f,f)-\beta^2(1+\frac{1}{g^2})\Gamma_1(f,f)+\frac{\beta^2}{2g^2}\Gamma_1^z(f,f).
\eeaa
\qed
\end{proof}{}

We are now left to compute the three tensor terms.

\begin{proof}[Proof of Lemma \ref{SE2 tensor}]
For displacement group $\mathbf{G}$, we have $n=2$ and $m=1$. Recall from Theorem \ref{thm1}, we denote 
$\mathfrak{R}_{ab}(\nabla f,\nabla f)=\mathfrak{R}_a(\nabla f,\nabla f)+\mathfrak{R}_b(\nabla f,\nabla f)$ where $\mathfrak{R}_b(\nabla f,\nabla f)$ represents the tensor term involving drift $b$. We thus have
\beaa
\mathfrak{R}_a(\nabla f,\nabla f)
&=&\sum_{i,k=1}^2\sum_{i',\hat i,\hat k=1}^{3} \la a^{\ts}_{ii'} (\frac{\partial a^{\ts}_{i \hat i}}{\partial x_{i'}} \frac{\partial a^{\ts}_{k\hat k}}{\partial x_{\hat i}}\frac{\partial f}{\partial x_{\hat k}}) ,(a^{\ts}\nabla)_kf\ra_{\hR^2}  \nonumber\\
&&+\sum_{i,k=2}^n\sum_{i',\hat i,\hat k=1}^{3} \la a^{\ts}_{ii'}a^{\ts}_{i \hat i} (\frac{\partial }{\partial x_{i'}} \frac{\partial a^{\ts}_{k\hat k}}{\partial x_{\hat i}})(\frac{\partial f}{\partial x_{\hat k}}) ,(a^{\ts}\nabla)_kf\ra_{\hR^2} \nonumber\\
&&-\sum_{i,k=1}^2\sum_{i',\hat i,\hat k=1}^{3} \la a^{\ts}_{k\hat k}\frac{\partial a^{\ts}_{ii'}}{\partial x_{\hat k}} \frac{\partial a^{\ts}_{i \hat i}}{\partial x_{i'}} \frac{\partial f}{\partial x_{\hat i}})  ,(a^{\ts}\nabla)_kf\ra_{\hR^2}  \nonumber\\
&&-\sum_{i,k=1}^2\sum_{i',\hat i,\hat k=1}^{3} \la a^{\ts}_{k\hat k} a^{\ts}_{ii'} (\frac{\partial }{\partial x_{\hat k}} \frac{\partial a^{\ts}_{i \hat i}}{\partial x_{i'}}) \frac{\partial f}{\partial x_{\hat i}}  ,(a^{\ts}\nabla)_kf\ra_{\hR^2},  \nonumber\\
&=& \cI_1+\cI_2+\cI_3+\cI_4.
\eeaa
By direct computations, we have
\beaa
\cI_1&=&\sum_{i=1}^2\sum_{i',\hat i,\hat k=1}^{3}  \Big[ a^{\ts}_{ii'} (\frac{\partial a^{\ts}_{i \hat i}}{\partial x_{i'}} \frac{\partial a^{\ts}_{1\hat k}}{\partial x_{\hat i}}\frac{\partial f}{\partial x_{\hat k}})(a^{\ts}\nabla)_1f+  a^{\ts}_{ii'} (\frac{\partial a^{\ts}_{i \hat i}}{\partial x_{i'}} \frac{\partial a^{\ts}_{2\hat k}}{\partial x_{\hat i}}\frac{\partial f}{\partial x_{\hat k}}) (a^{\ts}\nabla)_2f\Big]=0 ; \\
\cI_2&=&\sum_{i=2}^n\sum_{i',\hat i,\hat k=1}^{3} \Big[ a^{\ts}_{ii'}a^{\ts}_{i \hat i} (\frac{\partial }{\partial x_{i'}} \frac{\partial a^{\ts}_{1\hat k}}{\partial x_{\hat i}})(\frac{\partial f}{\partial x_{\hat k}}) (a^{\ts}\nabla)_1f+ a^{\ts}_{ii'}a^{\ts}_{i \hat i} (\frac{\partial }{\partial x_{i'}} \frac{\partial a^{\ts}_{2\hat k}}{\partial x_{\hat i}})(\frac{\partial f}{\partial x_{\hat k}})(a^{\ts}\nabla)_2f\Big]\\
&=&a^{\ts}_{11}a^{\ts}_{11}\frac{\pa^2}{\pa \theta \pa \theta}a^{\ts}_{22}\frac{\pa f}{ \pa x}(a^{\ts}\nabla )_2f=\beta^2e^{\beta \theta}\frac{\pa f}{ \pa x}(a^{\ts}\nabla )_2f;  \\
\cI_3&=&-\sum_{i=1}^2\sum_{i',\hat i,\hat k=1}^{3} \Big[ a^{\ts}_{1\hat k}\frac{\partial a^{\ts}_{ii'}}{\partial x_{\hat k}} \frac{\partial a^{\ts}_{i \hat i}}{\partial x_{i'}} \frac{\partial f}{\partial x_{\hat i}})(a^{\ts}\nabla)_1f+  a^{\ts}_{2\hat k}\frac{\partial a^{\ts}_{ii'}}{\partial x_{\hat k}} \frac{\partial a^{\ts}_{i \hat i}}{\partial x_{i'}} \frac{\partial f}{\partial x_{\hat i}})(a^{\ts}\nabla)_2f\Big]=0;\\
\cI_4&=&-\sum_{i=1}^2\sum_{i',\hat i,\hat k=1}^{3} \Big[ a^{\ts}_{1\hat k} a^{\ts}_{ii'} (\frac{\partial }{\partial x_{\hat k}} \frac{\partial a^{\ts}_{i \hat i}}{\partial x_{i'}}) \frac{\partial f}{\partial x_{\hat i}}(a^{\ts}\nabla)_1f+ a^{\ts}_{2\hat k} a^{\ts}_{ii'} (\frac{\partial }{\partial x_{\hat k}} \frac{\partial a^{\ts}_{i \hat i}}{\partial x_{i'}}) \frac{\partial f}{\partial x_{\hat i}}(a^{\ts}\nabla)_2f\Big]=0.
\eeaa
%%%%%%%%%%%%%%%%%%%
For the drift term in tensor $\mathfrak{R}_{ab}$, taking $b=-aa^{\ts}\nabla V$, we get
\beaa
\mathfrak{R}_b^a
&=&\sum_{i,k=1}^2\sum_{\hat i,\hat k,k'=1}^3\left[a^{\ts}_{i\hat i}\frac{\pa a^{\ts}_{k\hat k} }{\pa x_{\hat i}}a^{\ts}_{kk'}\frac{\pa V}{\pa x_{k'}} \frac{\pa f}{\pa x_{\hat k}}(a^{\ts}\nabla)_i f \right]\\
&&+\sum_{i,k=1}^2\sum_{\hat i,\hat k,k'=1}^3\left[a^{\ts}_{i\hat i}\frac{\pa a^{\ts}_{kk'}}{\pa x_{\hat i}}a^{\ts}_{k\hat k}\frac{\pa V}{\pa x_{k'}} \frac{\pa f}{\pa x_{\hat k}}(a^{\ts}\nabla)_i f\right]\\
&&+\sum_{i,k=1}^2\sum_{\hat i,\hat k,k'=1}^3\left[ a^{\ts}_{i\hat i}a^{\ts}_{k\hat k}a^{\ts}_{kk'}\frac{\pa^2 V}{\pa x_{\hat i}\pa x_{k'}}\frac{\pa f}{\pa x_{\hat k}}(a^{\ts}\nabla)_i f\right]\\
&&-\sum_{i,k=1}^2\sum_{\hat i,\hat k,k'=1}^3\left[a^{\ts}_{k\hat k}a^{\ts}_{kk'}\frac{\pa a^{\ts}_{i\hat i}}{\pa x_{\hat k}}\frac{\pa V}{\pa x_{k'}} \frac{\pa f}{\pa x_{\hat i}}(a^{\ts}\nabla)_if\right] \\
&=& \cJ_1+\cJ_2+\cJ_3+\cJ_4.
\eeaa
Plugging into the matrix $a^{\ts}$, we get
\beaa
\cJ_1
%&=&-2\sum_{i,k=1}^2\sum_{\hat i,\hat k,k'=1}^3\left[a^{\ts}_{i\hat i}\frac{\pa a^{\ts}_{k\hat k} }{\pa x_{\hat i}}a^{\ts}_{kk'}\frac{\pa V}{\pa x_{k'}} \frac{\pa f}{\pa x_{\hat k}}(a^{\ts}\nabla)_i f \right]\\
% &=&-2\sum_{k=1}^2\sum_{\hat i,\hat k,k'=1}^3\left[ a^{\ts}_{1\hat i}\frac{\pa a^{\ts}_{k\hat k} }{\pa x_{\hat i}}a^{\ts}_{kk'}\frac{\pa V}{\pa x_{k'}} \frac{\pa f}{\pa x_{\hat k}}(a^{\ts}\nabla)_1 f+a^{\ts}_{2\hat i}\frac{\pa a^{\ts}_{k\hat k} }{\pa x_{\hat i}}a^{\ts}_{kk'}\frac{\pa V}{\pa x_{k'}} \frac{\pa f}{\pa x_{\hat k}}(a^{\ts}\nabla)_2 f\right] \\
&=&\sum_{\hat i,\hat k,k'=1}^3\left[ a^{\ts}_{1\hat i}\frac{\pa a^{\ts}_{1\hat k} }{\pa x_{\hat i}}a^{\ts}_{1k'}\frac{\pa V}{\pa x_{k'}} \frac{\pa f}{\pa x_{\hat k}}(a^{\ts}\nabla)_1 f+a^{\ts}_{2\hat i}\frac{\pa a^{\ts}_{1\hat k} }{\pa x_{\hat i}}a^{\ts}_{1k'}\frac{\pa V}{\pa x_{k'}} \frac{\pa f}{\pa x_{\hat k}}(a^{\ts}\nabla)_2 f\right]\\
&&+\sum_{\hat i,\hat k,k'=1}^3\left[ a^{\ts}_{1\hat i}\frac{\pa a^{\ts}_{2\hat k} }{\pa x_{\hat i}}a^{\ts}_{2k'}\frac{\pa V}{\pa x_{k'}} \frac{\pa f}{\pa x_{\hat k}}(a^{\ts}\nabla)_1 f+a^{\ts}_{2\hat i}\frac{\pa a^{\ts}_{2\hat k} }{\pa x_{\hat i}}a^{\ts}_{2k'}\frac{\pa V}{\pa x_{k'}} \frac{\pa f}{\pa x_{\hat k}}(a^{\ts}\nabla)_2 f\right]\\
&=&\beta e^{\beta\theta} (a^{\ts}\nabla)_2V\frac{\pa f}{\pa x}(a^{\ts}\nabla )_1 f;
\eeaa
\beaa
\cJ_2
&=&\sum_{\hat i,\hat k,k'=1}^3\left[a^{\ts}_{1\hat i}\frac{\pa a^{\ts}_{1k'}}{\pa x_{\hat i}}a^{\ts}_{1\hat k}\frac{\pa V}{\pa x_{k'}} \frac{\pa f}{\pa x_{\hat k}}(a^{\ts}\nabla)_1 f+a^{\ts}_{2\hat i}\frac{\pa a^{\ts}_{1k'}}{\pa x_{\hat i}}a^{\ts}_{1\hat k}\frac{\pa V}{\pa x_{k'}} \frac{\pa f}{\pa x_{\hat k}}(a^{\ts}\nabla)_2 f\right]\\
&&+\sum_{\hat i,\hat k,k'=1}^3\left[a^{\ts}_{1\hat i}\frac{\pa a^{\ts}_{2k'}}{\pa x_{\hat i}}a^{\ts}_{2\hat k}\frac{\pa V}{\pa x_{k'}} \frac{\pa f}{\pa x_{\hat k}}(a^{\ts}\nabla)_1 f+a^{\ts}_{2\hat i}\frac{\pa a^{\ts}_{2k'}}{\pa x_{\hat i}}a^{\ts}_{2\hat k}\frac{\pa V}{\pa x_{k'}} \frac{\pa f}{\pa x_{\hat k}}(a^{\ts}\nabla)_2 f\right]\\
&=&\beta e^{\beta \theta}\frac{\pa V}{\pa x}(a^{\ts}\nabla)_2f(a^{\ts}\nabla)_1f;
\eeaa
\beaa
\cJ_3
&=&\sum_{\hat i,k'=1}^3\left[ a^{\ts}_{1\hat i}a^{\ts}_{1k'}\frac{\pa^2 V}{\pa x_{\hat i}\pa x_{k'}} |(a^{\ts}\nabla)_1 f|^2+a^{\ts}_{2\hat i}a^{\ts}_{1k'}\frac{\pa^2 V}{\pa x_{\hat i}\pa x_{k'}}(a^{\ts}\nabla)_1 f(a^{\ts}\nabla)_2 f\right]\\
&&+\sum_{\hat i,k'=1}^3\left[ a^{\ts}_{1\hat i}a^{\ts}_{2k'}\frac{\pa^2 V}{\pa x_{\hat i}\pa x_{k'}}(a^{\ts}\nabla)_2 f(a^{\ts}\nabla)_1 f+a^{\ts}_{2\hat i}a^{\ts}_{2k'}\frac{\pa^2 V}{\pa x_{\hat i}\pa x_{k'}}|(a^{\ts}\nabla)_2 f|^2\right]\\
&=&\frac{\pa^2 V}{\pa \theta\pa \theta} |(a^{\ts}\nabla)_1 f|^2+2(e^{\beta\theta}\frac{\pa^2 V}{\pa \theta\pa x}+\frac{\pa^2 V}{\pa \theta \pa y} )(a^{\ts}\nabla)_1 f(a^{\ts}\nabla)_2 f\\
&&+\sum_{\hat i,k'=1}^3a^{\ts}_{2\hat i}a^{\ts}_{2k'}\frac{pa^2 V}{\pa x_{\hat i}\pa x_{k'}})|(a^{\ts}\nabla)_2 f|^2;
\eeaa

\beaa
\cJ_4
&=&-\sum_{\hat i,\hat k,k'=1}^3\left[a^{\ts}_{1\hat k}a^{\ts}_{1k'}\frac{\pa a^{\ts}_{1\hat i}}{\pa x_{\hat k}}\frac{\pa V}{\pa x_{k'}} \frac{\pa f}{\pa x_{\hat i}}(a^{\ts}\nabla)_1f+a^{\ts}_{1\hat k}a^{\ts}_{1k'}\frac{\pa a^{\ts}_{2\hat i}}{\pa x_{\hat k}}\frac{\pa V}{\pa x_{k'}} \frac{\pa f}{\pa x_{\hat i}}(a^{\ts}\nabla)_2f\right]\\
&&-\sum_{\hat i,\hat k,k'=1}^3\left[a^{\ts}_{2\hat k}a^{\ts}_{2k'}\frac{\pa a^{\ts}_{1\hat i}}{\pa x_{\hat k}}\frac{\pa V}{\pa x_{k'}} \frac{\pa f}{\pa x_{\hat i}}(a^{\ts}\nabla)_1f+a^{\ts}_{2\hat k}a^{\ts}_{2k'}\frac{\pa a^{\ts}_{2\hat i}}{\pa x_{\hat k}}\frac{\pa V}{\pa x_{k'}} \frac{\pa f}{\pa x_{\hat i}}(a^{\ts}\nabla)_2f\right]\\
&=&-\beta e^{\beta\theta}(a^{\ts}\nabla)_1V \frac{\pa f}{\pa x}(a^{\ts}\nabla)_2f.
\eeaa
 Combining the above computations, we get the tensor $\mathfrak{R}_{ab}$.  Now we turn to the second tensor $\mathfrak{R}_{zb}$, which has the following form,
 \beaa
\mathfrak{R}_{zb}(\nabla f,\nabla f)&=&\sum_{i=1}^2\sum_{i',\hat i,\hat k=1}^{3} \la a^{\ts}_{ii'} (\frac{\partial a^{\ts}_{i \hat i}}{\partial x_{i'}} \frac{\partial z^{\ts}_{1\hat k}}{\partial x_{\hat i}}\frac{\partial f}{\partial x_{\hat k}}) ,(z^{\ts}\nabla)_1f\ra_{\hR}\nonumber \\
	&&+\sum_{i=1}^2\sum_{i',\hat i,\hat k=1}^{3} \la a^{\ts}_{ii'}a^{\ts}_{i \hat i} (\frac{\partial }{\partial x_{i'}} \frac{\partial z^{\ts}_{1\hat k}}{\partial x_{\hat i}})(\frac{\partial f}{\partial x_{\hat k}}) ,(z^{\ts}\nabla)_1f\ra_{\hR} \nonumber \\
	&&-\sum_{i=1}^2\sum_{i',\hat i,\hat k=1}^{3} \la z^{\ts}_{1\hat k}\frac{\partial a^{\ts}_{ii'}}{\partial x_{\hat k}} \frac{\partial a^{\ts}_{i \hat i}}{\partial x_{i'}} \frac{\partial f}{\partial x_{\hat i}})  ,(z^{\ts}\nabla)_1f\ra_{\hR} \nonumber \\
&&-\sum_{i=1}^2\sum_{i',\hat i,\hat k=1}^{3} \la z^{\ts}_{1\hat k} a^{\ts}_{ii'} (\frac{\partial }{\partial x_{\hat k}} \frac{\partial a^{\ts}_{i \hat i}}{\partial x_{i'}}) \frac{\partial f}{\partial x_{\hat i}}  ,(z^{\ts}\nabla)_1f\ra_{\hR}\nonumber \\
&&-\sum_{i=1}^2 \sum_{\hat i,\hat k=1}^{3}\la (z^{\ts}_{1\hat i}\frac{\pa b_{\hat k}}{\pa x_{\hat i}}\frac{\pa f}{\pa x_{\hat k}}-b_{\hat k}\frac{\pa z^{\ts}_{1\hat i}}{\pa x_{\hat k}} \frac{\pa f}{\pa x_{\hat i}} ),(z^{\ts}\nabla f)_1\ra_{\hR},\nonumber\\
&=&\cI_1^z+\cI_2^z+\cI_3^z+\cI_4^z+\mathfrak R^{z}_b(\nabla f,\nabla f).
\eeaa 
where we denote further that 
%%%%%%%%%%%%%%%%%%%
\beaa
\mathfrak{R}_{b}^z(\nabla f,\nabla f)=-\sum_{\hat i,\hat k=1}^{3} (z^{\ts}_{1\hat i}\frac{\pa b_{\hat k}}{\pa x_{\hat i}}\frac{\pa f}{\pa x_{\hat k}}-b_{\hat k}\frac{\pa z^{\ts}_{i\hat i}}{\pa x_{\hat k}} \frac{\pa f}{\pa x_{\hat i}} )(z^{\ts}\nabla f)_1.
\eeaa
By taking $b=-aa^{\ts}\nabla V$, we further obtain that
\beaa
\mathfrak{R}_{b}^z(\nabla f,\nabla f)
&=& -\sum_{\hat i,\hat k=1}^{3}\left[ z^{\ts}_{1\hat i}\frac{\pa b_{\hat k}}{\pa x_{\hat i}}\frac{\pa f}{\pa x_{\hat k}}(z^{\ts}\nabla f)_1-b_{\hat k}\frac{\pa z^{\ts}_{i\hat i}}{\pa x_{\hat k}} \frac{\pa f}{\pa x_{\hat i}} (z^{\ts}\nabla f)_1\right]\\
&=&\sum_{k=1}^2\sum_{\hat i,\hat k,k'=1}^3\left[z^{\ts}_{1\hat i}\frac{\pa a^{\ts}_{k\hat k} }{\pa x_{\hat i}}a^{\ts}_{kk'}\frac{\pa V}{\pa x_{k'}} \frac{\pa f}{\pa x_{\hat k}}(z^{\ts}\nabla)_1 f \right]\\
&&+\sum_{k=1}^2\sum_{\hat i,\hat k,k'=1}^3\left[z^{\ts}_{1\hat i}\frac{\pa a^{\ts}_{kk'}}{\pa x_{\hat i}}a^{\ts}_{k\hat k}\frac{\pa V}{\pa x_{k'}} \frac{\pa f}{\pa x_{\hat k}}(z^{\ts}\nabla)_1 f\right]\\ 
&&+\sum_{k=1}^2\sum_{\hat i,\hat k,k'=1}^3\left[ z^{\ts}_{1\hat i}a^{\ts}_{k\hat k}a^{\ts}_{kk'}\frac{\pa^2 V}{\pa x_{\hat i}\pa x_{k'}}\frac{\pa f}{\pa x_{\hat k}}(z^{\ts}\nabla)_1 f\right]
\eeaa 
\beaa 
&&-\sum_{k=1}^2\sum_{\hat i,\hat k,k'=1}^3\left[a^{\ts}_{k\hat k}a^{\ts}_{kk'}\frac{\pa z^{\ts}_{1\hat i}}{\pa x_{\hat k}}\frac{\pa V}{\pa x_{k'}} \frac{\pa f}{\pa x_{\hat i}}(z^{\ts}\nabla)_1f\right] \\
&=& \cJ_1^z+\cJ_2^z+\cJ_3^z+\cJ_4^z.
\eeaa
By direct computations, it is not hard to observe that
\beaa 
\cI_1^z&=& \sum_{i=1}^2\sum_{i',\hat i,\hat k=1}^{3} \la a^{\ts}_{ii'} (\frac{\partial a^{\ts}_{i \hat i}}{\partial x_{i'}} \frac{\partial z^{\ts}_{1\hat k}}{\partial x_{\hat i}}\frac{\partial f}{\partial x_{\hat k}}) ,(z^{\ts}\nabla)_1f\ra_{\hR}=0 \\
	\cI_2^z&=&\sum_{i=1}^2\sum_{i',\hat i,\hat k=1}^{3} \la a^{\ts}_{ii'}a^{\ts}_{i \hat i} (\frac{\partial }{\partial x_{i'}} \frac{\partial z^{\ts}_{1\hat k}}{\partial x_{\hat i}})(\frac{\partial f}{\partial x_{\hat k}}),(z^{\ts}\nabla)_1f\ra_{\hR}\\
	&&= \sum_{i=1}^2\sum_{i',\hat i=1}^{3}  a^{\ts}_{ii'}a^{\ts}_{i \hat i} \frac{\pa^2 z^{\ts}_{1\hat k}}{\partial x_{i'}\partial x_{\hat i}}\pa_yf (z^{\ts}\nabla)_1f \\
	\cI_3^z&=&-\sum_{i=1}^2\sum_{i',\hat i,\hat k=1}^{3} \la z^{\ts}_{1\hat k}\frac{\partial a^{\ts}_{ii'}}{\partial x_{\hat k}} \frac{\partial a^{\ts}_{i \hat i}}{\partial x_{i'}} \frac{\partial f}{\partial x_{\hat i}})  ,(z^{\ts}\nabla)_1f\ra_{\hR}=0 \nonumber \\
\cI_4^z&=&-\sum_{i=1}^2\sum_{i',\hat i,\hat k=1}^{3} \la z^{\ts}_{1\hat k} a^{\ts}_{ii'} (\frac{\partial }{\partial x_{\hat k}} \frac{\partial a^{\ts}_{i \hat i}}{\partial x_{i'}}) \frac{\partial f}{\partial x_{\hat i}}  ,(z^{\ts}\nabla)_1f\ra_{\hR}=0,\nonumber 
\eeaa 
and 
\beaa 
\cJ_1^z&=&\sum_{k=1}^2\sum_{\hat i,\hat k,k'=1}^3\left[z^{\ts}_{1\hat i}\frac{\pa a^{\ts}_{k\hat k} }{\pa x_{\hat i}}a^{\ts}_{kk'}\frac{\pa V}{\pa x_{k'}} \frac{\pa f}{\pa x_{\hat k}}(z^{\ts}\nabla)_1 f \right]=0;\\
\cJ_2^z&=&\sum_{k=1}^2\sum_{\hat i,\hat k,k'=1}^3\left[z^{\ts}_{1\hat i}\frac{\pa a^{\ts}_{kk'}}{\pa x_{\hat i}}a^{\ts}_{k\hat k}\frac{\pa V}{\pa x_{k'}} \frac{\pa f}{\pa x_{\hat k}}(z^{\ts}\nabla)_1 f\right]=0;\\
\cJ_4^z&=&-\sum_{k=1}^2\sum_{\hat i,\hat k,k'=1}^3\left[a^{\ts}_{k\hat k}a^{\ts}_{kk'}\frac{\pa z^{\ts}_{1\hat i}}{\pa x_{\hat k}}\frac{\pa V}{\pa x_{k'}} \frac{\pa f}{\pa x_{\hat i}}(z^{\ts}\nabla)_1f\right]\\
&=& -\sum_{k=1}^2(a^{\ts}\nabla)_kz^{\ts}_{13}(a^{\ts}\nabla)_kV\pa_yf(z^{\ts}\nabla)_1f.
\eeaa 
\beaa 
\cJ_3^z&=& \sum_{k=1}^2\sum_{\hat i,\hat k,k'=1}^3\left[ z^{\ts}_{1 \hat i}a^{\ts}_{k\hat k}a^{\ts}_{kk'}\frac{\pa^2 V}{\pa x_{\hat i}\pa x_{k'}}\frac{\pa f}{\pa x_{\hat k}}(z^{\ts}\nabla)_1 f\right]\\
&=&\sum_{\hat i,\hat k,k'=1}^3\left[ z^{\ts}_{1\hat i}a^{\ts}_{1\hat k}a^{\ts}_{1k'}\frac{\pa^2 V}{\pa x_{\hat i}\pa x_{k'}}\frac{\pa f}{\pa x_{\hat k}}(z^{\ts}\nabla)_1 f+ z^{\ts}_{1\hat i}a^{\ts}_{2\hat k}a^{\ts}_{2k'}\frac{\pa^2 V}{\pa x_{\hat i}\pa x_{k'}}\frac{\pa f}{\pa x_{\hat k}}(z^{\ts}\nabla)_1 f\right]\\
&=&\sum_{\hat i,k'=1}^3\left[ z^{\ts}_{1\hat i}a^{\ts}_{1k'}\frac{\pa^2 V}{\pa x_{\hat i}\pa x_{k'}} (a^{\ts}\nabla)_1 f(z^{\ts}\nabla)_1 f+ z^{\ts}_{1\hat i}a^{\ts}_{2k'}\frac{\pa^2 V}{\pa x_{\hat i}\pa x_{k'}}(a^{\ts}\nabla)_2 f(z^{\ts}\nabla)_1 f\right]\\
&=&-g\frac{\pa^2 V}{\pa \theta\pa y}(a^{\ts}\nabla)_1 f(z^{\ts}\nabla)_1 f-g(e^{\beta\theta}\frac{\pa^2 V}{\pa x\pa y}+\frac{\pa^2 V}{\pa y\pa y} )(a^{\ts}\nabla)_2 f(z^{\ts}\nabla)_1 f;
\eeaa 
Now we are left to compute the term $\mathfrak{R}_{\pi}$. Recall that, 
\beaa%\label{tensor R vol}
\mathfrak{R}_{\pi}(\nabla f,\nabla f)&=&2\sum_{k=1}^1 \sum_{i=1}^2\sum_{k',\hat k,\hat i,i'=1}^{3}\left[\frac{\pa }{\pa x_{k'}} z^{\ts}_{kk'} z^{\ts}_{k\hat k}\frac{\pa}{\pa x_{\hat k}}a^{\ts}_{i\hat i}\frac{\pa f}{\pa x_{\hat i}}a^{\ts}_{ii'}\frac{\pa f}{\pa x_{i'}}\right]\\
 &&+2\sum_{k=1}^1 \sum_{i=1}^2\sum_{k',\hat k,\hat i,i'=1}^{3}\left[z^{\ts}_{kk'}\frac{\pa }{\pa x_{k'}} z^{\ts}_{k\hat k} \frac{\pa}{\pa x_{\hat k}}a^{\ts}_{i\hat i}\frac{\pa f}{\pa x_{\hat i}}a^{\ts}_{ii'}\frac{\pa f}{\pa x_{i'}} \right.\nonumber\\
 &&\quad\quad\quad\quad\quad\quad\quad\quad+z^{\ts}_{kk'} z^{\ts}_{k\hat k} \frac{\pa^2}{\pa x_{k'}\pa x_{\hat k}}a^{\ts}_{i\hat i}\frac{\pa f}{\pa x_{\hat i}}a^{\ts}_{ii'}\frac{\pa f}{\pa x_{i'}}\nonumber \\
&&\left.\quad\quad\quad\quad\quad\quad\quad\quad+z^{\ts}_{kk'} z^{\ts}_{k\hat k} \frac{\pa}{\pa x_{\hat k}}a^{\ts}_{i\hat i}\frac{\pa f}{\pa x_{\hat i}}\frac{\pa }{\pa x_{k'}}a^{\ts}_{ii'}\frac{\pa f}{\pa x_{i'}} \right].\nonumber \\
&&+2\sum_{k=1}^1 \sum_{i=1}^2\sum_{\hat k,\hat i,i'=1}^{3}(z^{\ts}\nabla\log\pi)_k \left[ z^{\ts}_{k\hat k}\frac{\pa}{\pa x_{\hat k}}a^{\ts}_{i\hat i}\frac{\pa f}{\pa x_{\hat i}}a^{\ts}_{ii'}\frac{\pa f}{\pa x_{i'}} \right] \nonumber\\
&&-2\sum_{j=1}^1\sum_{l=1}^2\sum_{ l',\hat l,\hat j,j'=1}^{3}\left[\frac{\pa }{\pa x_{l'}} a^{\ts}_{ll'} a^{\ts}_{l\hat l} \frac{\pa}{\pa x_{\hat l}}z^{\ts}_{j\hat j}\frac{\pa f}{\pa x_{\hat j}}z^{\ts}_{jj'}\frac{\pa f}{\pa x_{j'}} \right] \nonumber\\
 &&- 2\sum_{j=1}^1\sum_{l=1}^2\sum_{ l',\hat l,\hat j,j'=1}^{3}\left[ a^{\ts}_{ll'}\frac{\pa }{\pa x_{l'}}a^{\ts}_{l\hat l} \frac{\pa}{\pa x_{\hat l}}z^{\ts}_{j\hat j}\frac{\pa f}{\pa x_{\hat j}}z^{\ts}_{jj'}\frac{\pa f}{\pa x_{j'}}  \right.\nonumber\\
 &&\quad\quad\quad\quad\quad\quad\quad\quad+a^{\ts}_{ll'}a^{\ts}_{l\hat l} \frac{\pa^2}{\pa x_{l'}\pa x_{\hat l}}z^{\ts}_{j\hat j}\frac{\pa f}{\pa x_{\hat j}}z^{\ts}_{jj'}\frac{\pa f}{\pa x_{j'}}\nonumber \\
&&\left.\quad\quad\quad\quad\quad\quad\quad\quad+a^{\ts}_{ll'}a^{\ts}_{l\hat l} \frac{\pa}{\pa x_{\hat l}}z^{\ts}_{j\hat j}\frac{\pa f}{\pa x_{\hat j}}\frac{\partial}{\pa x_{l'}}z^{\ts}_{jj'}\frac{\pa f}{\pa x_{j'}}  \right] \nonumber \\
&&-2\sum_{j=1}^1\sum_{l=1}^2\sum_{\hat l,\hat j,j'=1}^{3}(a^{\ts}\nabla\log\pi)_l \left[ a^{\ts}_{l\hat l}\frac{\pa}{\pa x_{\hat l}}z^{\ts}_{j\hat j}\frac{\pa f}{\pa x_{\hat j}}z^{\ts}_{jj'}\frac{\pa f}{\pa x_{j'}} \right]\nonumber\\
&=&\sum_{i=1}^{10}\cK_i.
\eeaa
By direct computation, we get 
\beaa
\cK_1&=&0, \quad \cK_2=0,\quad \cK_3=0,\quad \cK_4=0,\quad \cK_5=0,\quad \cK_6=0,\quad \cK_7=0;\\
\cK_8&=& -2\sum_{l=1}^2\sum_{l',\hat l=1}^3a^{\ts}_{ll'}a^{\ts}_{l\hat l}\frac{\pa^2 z^{\ts}_{13}}{\pa x_{l'}\pa x_{\hat l}}\pa_yf (z^{\ts}\nabla)_1f;
\eeaa 
\beaa 
\cK_9&=&-2\sum_{l=1}^2\sum_{l',\hat l=1}^3a^{\ts}_{ll'}a^{\ts}_{l\hat l}\frac{\pa z^{\ts}_{13}}{\pa x_{\hat l}}\frac{\pa z^{\ts}_{13}}{\pa x_{l'}} |\pa_yf|^2=-2\Gamma_1(\log g,\log g)|(z^{\ts}\nabla)_1f|^2;\\
\cK_{10}&=&-2 \sum_{l=1}^2\sum_{\hat l=1}^3(a^{\ts}\nabla)_l\log \pi a^{\ts}_{l\hat l}\frac{\pa z^{\ts}_{13}}{\pa x_{\hat l}}\pa_yf (z^{\ts}\nabla)_1f=-2\Gamma_1(\log \pi,\log g)|(z^{\ts}\nabla)_1f|^2.
\eeaa 
\end{proof}{}
%%%%%%%%%%%%%%%%%%%%%

\subsection{Martinet flat sub-Riemannian structure}
In this part, we apply our result to Martinet flat sub-Riemannian structure, which satisfies the bracket generating condition and has non-equiregular sub-Riemannian structure (see \cite{barilari2013formula}). The sub-Riemannian structure is defined on $\mathbb R^3$ through the kernel of one-form $\eta :=dz-\frac{1}{2}y^2dx.$ A global orthonormal basis for the horizontal distribution $\mathcal{H}$ adapt the following differential operator representation, in local coordinates $(x,y,z)$,  
\[
X=\frac{\pa}{\pa x}+\frac{y^2}{2}\frac{\pa}{\pa z},\quad Y=\frac{\pa}{\pa y}.
\]
The commutative relation gives
\beaa
[X,Y]=-yZ,\quad [Y,[X,Y]]=Z,\quad \text{where}\quad Z=\frac{\pa}{\pa z}.
\eeaa
% According to \cite{barilari2013formula}, the canonical Popp's volume, up to a scalar constant, has the form
% \beaa
% \frac{1}{|y|}dx\wedge dy\wedge dz,\quad \text{when}\quad y\neq 0.
% \eeaa
To apply in our framework, we take
\bea \label{matrix a and z martinet}
a=\begin{pmatrix}
	1&0\\
	0&1\\
	\frac{y^2}{2}&0
\end{pmatrix},\quad  a^{\ts}=\begin{pmatrix}
	1&0&\frac{y^2}{2}\\
	0&1&0
\end{pmatrix},\quad z^{\ts}=(0,0,1),\quad aa^{\ts}=\begin{pmatrix}
	1&0&\frac{y^2}{2}\\
	0&1&0\\
	\frac{y^2}{2}&0&\frac{y^4}{4}
\end{pmatrix}.
\eea
Thus the sub-Riemanian structure has the form $(\mathbb M,\mathcal H, (aa^{\ts})^{\dd}_{|\mathcal{H}})$.

\begin{prop}
In this setting, 
\begin{equation*}
\Vol=e^{-\frac{y^2}{2}-V},   
\end{equation*}
then 
\[
-aa^{\ts}\nabla \log \Vol=a\otimes\nabla a+aa^{\ts}\nabla V.
\]
\end{prop}
\begin{proof}
The poof follows from the observation that,
\[
a\otimes\nabla a= \begin{pmatrix}
0&
y&
0
\end{pmatrix}^{\ts}, \quad aa^{\ts}\nabla \log e^{-\frac{y^2}{2}}= \begin{pmatrix}
0&
y&
0
\end{pmatrix}^{\ts}.
\]\qed 
\end{proof}
Similar to the previous displacement group case, we have the following identity.
\begin{prop}\label{prop Martinet}  For any smooth function $f\in C^{\infty}(\mathbb{M})$, one has
\beaa
\Gamma_2(f,f)+\Gamma_2^{z,\pi}(f,f)&=&\|\mathfrak{Hess}_{a,z}f\|^2+\mathfrak{R}(\nabla f,\nabla f),
\eeaa
where %we have 
% \beaa
% |\mathfrak{Hess}_{a,z}f|^2&=& \left[\frac{\pa^2 f}{\pa x\pa x}+\frac{y^2}{2}\frac{\pa^2 f}{\pa x\pa z} +\frac{y^2}{2}\frac{\pa^2 f}{\pa z \pa x}+\frac{y^4}{4}\frac{\pa^2 f}{\pa z \pa z}\right]^2 +2\left[\frac{\pa^2 f}{\pa y \pa x}+\frac{y^2}{2}\frac{\pa^2 f}{\pa z\pa y}+\frac{1}{2}y\pa_zf\right]^2\\
% &&+\left[\frac{\pa^2 f}{\pa y\pa y} \right]^2
% +\left[\frac{\pa^2 f}{\pa z\pa x}+\frac{y^2}{2}\frac{\pa^2 f}{\pa z\pa z}-y\pa_y f \right]^2+\left[ \frac{\pa^2 f}{\pa z\pa y}+(\frac{y^3}{2}\pa_zf+y\pa_xf)\right]^2.
% \eeaa
\beaa
% |\mathfrak{Hess}_{a,z}f|^2&=&[X+\Lambda_1]^{\ts}Q^{\ts}Q[X+\Lambda_1]+[X+\Lambda_2]^{\ts}P^{\ts}P[X+\Lambda_2];\\
\Lambda_1^{\ts}&=&(0,y\pa_zf/2,0,y\pa_zf/2,0,0,0,0,0); \\
\Lambda_2^{\ts}&=&(0,0,0,0,0,0,-y\pa_yf,\frac{y^3}{2}\pa_zf+y\pa_xf,0);\\
&&\mathfrak{R}_{ab}(\nabla f,\nabla f)-\Lambda_1^{\ts}Q^{\ts}Q\Lambda_1-\Lambda_2^{\ts}P^{\ts}P\Lambda_2+D^{\ts}D+E^{\ts}E\\
&=&\frac{y^2}{2}\Gamma_1^z(f,f)-y^2\Gamma_1(f,f)\\
&&+\frac{\pa f}{\pa z}(a^{\ts}\nabla )_1f+y(a^{\ts}\nabla)_1V \frac{\pa f}{\pa z}(a^{\ts}\nabla)_2f +y \frac{\pa V}{\pa z}(a^{\ts}\nabla)_1f(a^{\ts}\nabla)_2f \\
&&+\sum_{\hat i,k'=1}^3a^{\ts}_{1\hat i}a^{\ts}_{1k'}\frac{\pa^2 V}{\pa x_{\hat i}\pa x_{k'}}|(a^{\ts}\nabla)_1 f|^2+2(\frac{\pa^2 V}{\pa x\pa y}+\frac{y^2}{2}\frac{\pa^2 V}{\pa y \pa z} )(a^{\ts}\nabla)_1 f(a^{\ts}\nabla)_2 f\\
&&+\frac{\pa^2 V}{\pa y\pa y}|(a^{\ts}\nabla)_2 f|^2-y\frac{\pa V}{\pa y}\frac{\pa f}{\pa z}(a^{\ts}\nabla)_1f;\\ 
\mathfrak{R}_{zb}(\nabla f,\nabla f)&=& (\frac{\pa^2 V}{\pa x\pa z}+\frac{y^2}{2}\frac{\pa^2 V}{\pa z\pa z})(a^{\ts}\nabla)_1 f(z^{\ts}\nabla)_1 f+\frac{\pa^2 V}{\pa y\pa z}(a^{\ts}\nabla)_2 f(z^{\ts}\nabla)_1 f;\\
\mathfrak{R}_{\pi}(\nabla f,\nabla f)&=&0.
\eeaa
In particular, we have 
\beaa
\sum_{\hat i,k'=1}^3a^{\ts}_{1\hat i}a^{\ts}_{1k'}\frac{\pa^2 V}{\pa x_{\hat i}\pa x_{k'}}|(a^{\ts}\nabla)_1 f|^2=\Big(\frac{\pa^2 V}{\pa x\pa x}+y^2\frac{\pa^2 V}{\pa x\pa z}+\frac{y^4}{4}\frac{\pa^2 V}{\pa z\pa z} \Big)|(a^{\ts}\nabla)_1 f|^2.
\eeaa 
\end{prop}{}

{The proof of Proposition \ref{prop Martinet} follows from the proof of Theorem \ref{thm1} (i.e. Theorem \ref{thm: generalized gamma 2 z with drift}), Lemma \ref{martinet}, Lemma \ref{martinet hess}, and Lemma \ref{tensor martinet} below. The following convergence results is a direct consequence of Theorem \ref{prop1.4}.
}

\begin{prop} If there exists $\kappa>0$ as shown in Theorem \ref{prop1.4}, the exponential dissipation result in $L^1$ distance holds:
\begin{equation*}
    \int |\rho(t,x)-\pi(x)| dx=O(e^{-\kappa t}). 
\end{equation*}
\end{prop}
Similarly, we summarize the sub-Riemannian Ricci tensor in terms of $\mathfrak R$ as follows. 
\begin{cor} The matrix $\mathfrak{R}$ associated  with Martinet sub-Riemannian structure has the following form
\beaa
\mathfrak{R}_{11}&=&\Big(\frac{\pa^2 V}{\pa x\pa x}+y^2\frac{\pa^2 V}{\pa x\pa z}+\frac{y^4}{4}\frac{\pa^2 V}{\pa z\pa z} \Big)-y^2;\\
\mathfrak{R}_{22}&=&\frac{\pa^2 V}{\pa y\pa y}-y^2;\quad \mathfrak{R}_{33}=\frac{y^2}{2};  \\
\mathfrak{R}_{12}&=&\mathfrak{R}_{21}=\frac{y}{2}\frac{\pa V}{\pa z}+(\frac{\pa^2 V}{\pa x\pa y}+\frac{y^2}{2}\frac{\pa^2 V}{\pa y\pa z});\\
\mathfrak{R}_{13}&=&\mathfrak{R}_{31}=\frac{1}{2}-\frac{y}{2}\frac{\pa V}{\pa y}+\frac{1}{2}(\frac{\pa^2 V}{\pa x\pa z}+\frac{y^2}{2}\frac{\pa^2 V}{\pa z\pa z}) ;\quad \mathfrak{R}_{23}=\mathfrak{R}_{32}=\frac{1}{2}y(a^{\ts}\nabla)_1V+\frac{1}{2}\frac{\pa^2 V}{\pa y\pa z}.
\eeaa
\end{cor}{}
\begin{proof}
The proof follows from the similar equivalent matrix formulation as shown in the proof of Corollary \ref{matrix R heisenberg} and the explicit bilinear forms in Proposition \ref{prop Martinet}. \qed 

\end{proof}
% \begin{eg}
% Based on the matrix $\mathfrak{R}$ derived above, we show a specific examples with potential function $V=\frac{x^2+y^2}{2}$. We have the following matrix,
% \[\mathfrak{R}=
% \begin{pmatrix}
% 1-y^2&0&\frac{1}{2}-\frac{y^2}{2}\\
% 0& 1-y^2& \frac{xy}{2}\\
% \frac{1}{2}-\frac{y^2}{2}& \frac{xy}{2}& \frac{y^2}{2}
% \end{pmatrix}.
% \]
% \end{eg}
%In particular, let $x=0$, we observe that $y^2>1/2$ is sufficient to obtain the positive smallest 
%
% \begin{figure}[h]
% \centering
%    \includegraphics[scale=0.4]{MT.eps}
%    \caption{Illustration of the Ricci curvature for the Martinet sub-Riemannian structure. 
%Here $V(x,y,z)=\frac{x^2+y^2}{2}$. We plot the smallest eigenvalue of $A$ on the region $[-1,1]^2$. From the numerical calculation, we find that the smallest eigenvalue of $A$ at a given point in $[-1, 1]^2$ is around $0.1744$. 
%    }
%    \label{figure}
%\end{figure}

%%%%%%%%%%%%%%%%%%%%%%%%%%%
%%%%%%%%%%%%%%%%%%%%%%%%%%%
%%%%%%%%%%%%%%%%%%%%%%%%%%%%%%%
Next, we prove the following three key lemmas.
\begin{lem}\label{martinet} For Martinet sub-Riemannian structure $(\mathbb M,\mathcal H, (aa^{\ts})^{\dd}_{|\mathcal{H}})$, we have
\beaa
Q&=&\left(
\begin{array}{ccccccccc}
 1 & 0 & \frac{y^2}{2} & 0 & 0 & 0 & \frac{y^2}{2} & 0 & \frac{y^4}{4} \\
 0 & 1 & 0 & 0 & 0 & 0 & 0 & \frac{y^2}{2} & 0 \\
 0 & 0 & 0 & 1 & 0 & \frac{y^2}{2} & 0 & 0 & 0 \\
 0 & 0 & 0 & 0 & 1 & 0 & 0 & 0 & 0 \\
\end{array}
\right);\\ 
P&=&\begin{pmatrix}
	0&0&0&0&0&0&1&0&y^2/2\\
	0&0&0&0&0&0&0&1&0
\end{pmatrix};\\
C^{\ts}&=&(0,0,0,0,0,\frac{y^3}{2}\pa_zf+y\pa_xf,-y\pa_yf,0,-\frac{y^3}{2}\pa_yf);\\
D^{\ts}&=&(0,0,y\pa_zf,0),\quad E^{\ts}=(0,0);\\
F^{\ts}&=&G^{\ts}=(0,0,0,0,0,0,0,0,0).
\eeaa
\end{lem}{}
\begin{proof} Plugging matrices $a$ and $z$ from \eqref{matrix a and z martinet} into Notation \ref{notation}, we complete the proof. \qed 
\end{proof}

\begin{lem}\label{martinet hess} For Martinet sub-Riemannian structure, $F$ and $G$ are zero vectors, we have
\beaa
&&[QX+D]^{\ts}[QX+D]+[PX+E]^{\ts}[PX+E]+2C^{\ts}X\\
&=&\|\mathfrak{Hess}_{a,z}f\|^2-\Lambda_1^{\ts}Q^{\ts}Q\Lambda_1-\Lambda_2^{\ts}P^{\ts}P\Lambda_2+D^{\ts}D+E^{\ts}E.
\eeaa
In particular, we have
\beaa
\|\mathfrak{Hess}_{a,z}f\|^2&=&[X+\Lambda_1]^{\ts}Q^{\ts}Q[X+\Lambda_1]+[X+\Lambda_2]^{\ts}P^{\ts}P[X+\Lambda_2];\\
\Lambda_1^{\ts}&=&(0,y\pa_zf/2,0,y\pa_zf/2,0,0,0,0,0); \\
\Lambda_2^{\ts}&=&(0,0,0,0,0,0,-y\pa_yf,\frac{y^3}{2}\pa_zf+y\pa_xf,0);\\
-\Lambda_1^{\ts}Q^{\ts}Q\Lambda_1-\Lambda_2^{\ts}P^{\ts}P\Lambda_2+D^{\ts}D+E^{\ts}E
&=&\frac{y^2}{2}\Gamma_1^z(f,f)-y^2\Gamma_1(f,f).
\eeaa
\end{lem}{}
\begin{lem}\label{tensor martinet}
By routine computations, we obtain
\beaa
\mathfrak{R}_{ab}(\nabla f,\nabla f)&=&\frac{\pa f}{\pa z}(a^{\ts}\nabla )_1f+y(a^{\ts}\nabla)_1V \frac{\pa f}{\pa z}(a^{\ts}\nabla)_2f +y \frac{\pa V}{\pa z}(a^{\ts}\nabla)_1f(a^{\ts}\nabla)_2f \\
&&+\sum_{\hat i,k'=1}^3a^{\ts}_{1\hat i}a^{\ts}_{1k'}\frac{\pa^2 V}{\pa x_{\hat i}\pa x_{k'}}|(a^{\ts}\nabla)_1 f|^2+2(\frac{\pa^2 V}{\pa x\pa y}+\frac{y^2}{2}\frac{\pa^2 V}{\pa y \pa z} )(a^{\ts}\nabla)_1 f(a^{\ts}\nabla)_2 f\\
&&+\frac{\pa^2 V}{\pa y\pa y}|(a^{\ts}\nabla)_2 f|^2-y\frac{\pa V}{\pa y}\frac{\pa f}{\pa z}(a^{\ts}\nabla)_1f;\\
\mathfrak{R}_{zb}(\nabla f,\nabla f)&=& (\frac{\pa^2 V}{\pa x\pa z}+\frac{y^2}{2}\frac{\pa^2 V}{\pa z\pa z})(a^{\ts}\nabla)_1 f (z^{\ts}\nabla)_1 f+\frac{\pa^2 V}{\pa y\pa z}(a^{\ts}\nabla)_2 f(z^{\ts}\nabla)_1 f;\\
\mathfrak{R}_{\pi}(\nabla f,\nabla f)&=&0.
\eeaa
\end{lem}{}
\begin{proof}[Proof of Lemma \ref{martinet hess}] Since $F$ and $G$ are zero vectors, we have
\beaa
2C^{\ts}X=2\Big[\frac{\pa^2 f}{\pa y\pa z}(\frac{y^3}{2}\pa_zf+y\pa_xf)-\frac{\pa^2 f}{\pa x \pa z}(y\pa_yf)-\frac{\pa^2 f}{\pa z\pa z}(\frac{y^3}{2}\pa_yf)\Big].
\eeaa
By routine computation, we observe that 
\beaa
&&[QX+D]^{\ts}[QX+D]+[PX+E]^{\ts}[PX+E]+2C^{\ts}X\\
&=&\left[\frac{\pa^2 f}{\pa x\pa x}+\frac{y^2}{2}\frac{\pa^2 f}{\pa x\pa z} +\frac{y^2}{2}\frac{\pa^2 f}{\pa z \pa x}+\frac{y^4}{4}\frac{\pa^2 f}{\pa z \pa z}\right]^2 +\left[\frac{\pa^2 f}{\pa y \pa x}+\frac{y^2}{2}\frac{\pa^2 f}{\pa z\pa y}+y\pa_zf\right]^2\\
&&+\left[\frac{\pa^2 f}{\pa y \pa x}+\frac{y^2}{2}\frac{\pa^2 f}{\pa z\pa y}\right]^2+\left[\frac{\pa^2 f}{\pa y\pa y} \right]^2\\
&&+\left[\frac{\pa^2 f}{\pa z\pa x}+\frac{y^2}{2}\frac{\pa^2 f}{\pa z\pa z} \right]^2+\left[ \frac{\pa^2 f}{\pa z\pa y}\right]^2\\
&&+2\frac{\pa^2 f}{\pa y\pa z}(\frac{y^3}{2}\pa_zf+y\pa_xf)-2\frac{\pa^2 f}{\pa x \pa z}(y\pa_yf)-2\frac{\pa^2 f}{\pa z\pa z}(\frac{y^3}{2}\pa_yf)\\ 
&=&\left[\frac{\pa^2 f}{\pa x\pa x}+\frac{y^2}{2}\frac{\pa^2 f}{\pa x\pa z} +\frac{y^2}{2}\frac{\pa^2 f}{\pa z \pa x}+\frac{y^4}{4}\frac{\pa^2 f}{\pa z \pa z}\right]^2 +\left[\frac{\pa^2 f}{\pa y \pa x}+\frac{y^2}{2}\frac{\pa^2 f}{\pa z\pa y}+y\pa_zf\right]^2\\
&&+\left[\frac{\pa^2 f}{\pa y \pa x}+\frac{y^2}{2}\frac{\pa^2 f}{\pa z\pa y}\right]^2+\left[\frac{\pa^2 f}{\pa y\pa y} \right]^2\\
&&+\left[\frac{\pa^2 f}{\pa z\pa x}+\frac{y^2}{2}\frac{\pa^2 f}{\pa z\pa z}-y\pa_y f \right]^2+\left[ \frac{\pa^2 f}{\pa z\pa y}+(\frac{y^3}{2}\pa_zf+y\pa_xf)\right]^2\\
&&-y^2|\pa_y f|^2-(\frac{y^3}{2}\pa_zf+y\pa_xf)^2\\
&=&|\mathfrak{Hess}_{a,z}f|^2+\frac{y^2}{2}\Gamma_1^z(f,f)-y^2\Gamma_1(f,f),
\eeaa 
where we use the fact
\beaa 
\left[\frac{\pa^2 f}{\pa y \pa x}+\frac{y^2}{2}\frac{\pa^2 f}{\pa z\pa y}+y\pa_zf\right]^2+\left[\frac{\pa^2 f}{\pa y \pa x}+\frac{y^2}{2}\frac{\pa^2 f}{\pa z\pa y}\right]^2=2\left[\frac{\pa^2 f}{\pa y \pa x}+\frac{y^2}{2}\frac{\pa^2 f}{\pa z\pa y}+\frac{1}{2}y\pa_zf\right]^2+\frac{y^2}{2}|\pa_zf|^2.
\eeaa 
The proof is thus completed.
\qed 
\end{proof}
We are now left to compute the three tensor terms.

\begin{proof}[Proof of Lemma \ref{tensor martinet}]
Similar to the proof of Lemma \ref{SE2 tensor}, we have 
\beaa
\mathfrak{R}_a(\nabla f,\nabla f)
&=&\sum_{i,k=1}^2\sum_{i',\hat i,\hat k=1}^{3} \la a^{\ts}_{ii'} (\frac{\partial a^{\ts}_{i \hat i}}{\partial x_{i'}} \frac{\partial a^{\ts}_{k\hat k}}{\partial x_{\hat i}}\frac{\partial f}{\partial x_{\hat k}}) ,(a^{\ts}\nabla)_kf\ra_{\hR^2}  \nonumber \\ 
&&+\sum_{i,k=2}^n\sum_{i',\hat i,\hat k=1}^{3} \la a^{\ts}_{ii'}a^{\ts}_{i \hat i} (\frac{\partial }{\partial x_{i'}} \frac{\partial a^{\ts}_{k\hat k}}{\partial x_{\hat i}})(\frac{\partial f}{\partial x_{\hat k}}) ,(a^{\ts}\nabla)_kf\ra_{\hR^2} \nonumber\\
&&-\sum_{i,k=1}^2\sum_{i',\hat i,\hat k=1}^{3} \la a^{\ts}_{k\hat k}\frac{\partial a^{\ts}_{ii'}}{\partial x_{\hat k}} \frac{\partial a^{\ts}_{i \hat i}}{\partial x_{i'}} \frac{\partial f}{\partial x_{\hat i}})  ,(a^{\ts}\nabla)_kf\ra_{\hR^2}  \nonumber\\
&&-\sum_{i,k=1}^2\sum_{i',\hat i,\hat k=1}^{3} \la a^{\ts}_{k\hat k} a^{\ts}_{ii'} (\frac{\partial }{\partial x_{\hat k}} \frac{\partial a^{\ts}_{i \hat i}}{\partial x_{i'}}) \frac{\partial f}{\partial x_{\hat i}}  ,(a^{\ts}\nabla)_kf\ra_{\hR^2},  \nonumber\\
&=& \cI_1+\cI_2+\cI_3+\cI_4.
\eeaa
By direct computations, we have
\beaa
\cI_1&=&\sum_{i=1}^2\sum_{i',\hat i,\hat k=1}^{3} \Big[ a^{\ts}_{ii'} (\frac{\partial a^{\ts}_{i \hat i}}{\partial x_{i'}} \frac{\partial a^{\ts}_{1\hat k}}{\partial x_{\hat i}}\frac{\partial f}{\partial x_{\hat k}})(a^{\ts}\nabla)_1f+  a^{\ts}_{ii'} (\frac{\partial a^{\ts}_{i \hat i}}{\partial x_{i'}} \frac{\partial a^{\ts}_{2\hat k}}{\partial x_{\hat i}}\frac{\partial f}{\partial x_{\hat k}}) (a^{\ts}\nabla)_2f\Big]=0;\\
\cI_2&=&\sum_{i=2}^n\sum_{i',\hat i,\hat k=1}^{3}\Big[  a^{\ts}_{ii'}a^{\ts}_{i \hat i} (\frac{\partial }{\partial x_{i'}} \frac{\partial a^{\ts}_{1\hat k}}{\partial x_{\hat i}})(\frac{\partial f}{\partial x_{\hat k}}) (a^{\ts}\nabla)_1f+ a^{\ts}_{ii'}a^{\ts}_{i \hat i} (\frac{\partial }{\partial x_{i'}} \frac{\partial a^{\ts}_{2\hat k}}{\partial x_{\hat i}})(\frac{\partial f}{\partial x_{\hat k}})(a^{\ts}\nabla)_2f\Big]\\
&=&a^{\ts}_{22}a^{\ts}_{22}\frac{\pa^2}{\pa y \pa y}a^{\ts}_{13}\frac{\pa f}{\pa z}(a^{\ts}\nabla )_1f=\frac{\pa f}{\pa z}(a^{\ts}\nabla )_1f ;  \\
\cI_3&=&-\sum_{i=1}^2\sum_{i',\hat i,\hat k=1}^{3} \Big[  a^{\ts}_{1\hat k}\frac{\partial a^{\ts}_{ii'}}{\partial x_{\hat k}} \frac{\partial a^{\ts}_{i \hat i}}{\partial x_{i'}} \frac{\partial f}{\partial x_{\hat i}})(a^{\ts}\nabla)_1f+  a^{\ts}_{2\hat k}\frac{\partial a^{\ts}_{ii'}}{\partial x_{\hat k}} \frac{\partial a^{\ts}_{i \hat i}}{\partial x_{i'}} \frac{\partial f}{\partial x_{\hat i}})(a^{\ts}\nabla)_2f\Big]=0;\\
\cI_4&=&-\sum_{i=1}^2\sum_{i',\hat i,\hat k=1}^{3} \Big[  a^{\ts}_{1\hat k} a^{\ts}_{ii'} (\frac{\partial }{\partial x_{\hat k}} \frac{\partial a^{\ts}_{i \hat i}}{\partial x_{i'}}) \frac{\partial f}{\partial x_{\hat i}}(a^{\ts}\nabla)_1f+  a^{\ts}_{2\hat k} a^{\ts}_{ii'} (\frac{\partial }{\partial x_{\hat k}} \frac{\partial a^{\ts}_{i \hat i}}{\partial x_{i'}}) \frac{\partial f}{\partial x_{\hat i}}(a^{\ts}\nabla)_2f\Big]=0.
\eeaa
%%%%%%%%%%%%%%%%%%%
For the drift term, we take $b=-aa^{\ts}\nabla V$,
\beaa
\mathfrak{R}_b^a
&=&\sum_{i,k=1}^2\sum_{\hat i,\hat k,k'=1}^3\left[a^{\ts}_{i\hat i}\frac{\pa a^{\ts}_{k\hat k} }{\pa x_{\hat i}}a^{\ts}_{kk'}\frac{\pa V}{\pa x_{k'}} \frac{\pa f}{\pa x_{\hat k}}(a^{\ts}\nabla)_i f +a^{\ts}_{i\hat i}\frac{\pa a^{\ts}_{kk'}}{\pa x_{\hat i}}a^{\ts}_{k\hat k}\frac{\pa V}{\pa x_{k'}} \frac{\pa f}{\pa x_{\hat k}}(a^{\ts}\nabla)_i f\right]\\
&&+\sum_{i,k=1}^2\sum_{\hat i,\hat k,k'=1}^3\left[ a^{\ts}_{i\hat i}a^{\ts}_{k\hat k}a^{\ts}_{kk'}\frac{\pa^2 V}{\pa x_{\hat i}\pa x_{k'}}\frac{\pa f}{\pa x_{\hat k}}(a^{\ts}\nabla)_i f\right]\\
&&-\sum_{i,k=1}^2\sum_{\hat i,\hat k,k'=1}^3\left[a^{\ts}_{k\hat k}a^{\ts}_{kk'}\frac{\pa a^{\ts}_{i\hat i}}{\pa x_{\hat k}}\frac{\pa V}{\pa x_{k'}} \frac{\pa f}{\pa x_{\hat i}}(a^{\ts}\nabla)_if\right] \\
&=& \cJ_1+\cJ_2+\cJ_3+\cJ_4.
\eeaa
Plugging into the matrices of $a^{\ts}$, we get
\beaa
\cJ_1
%&=&-2\sum_{i,k=1}^2\sum_{\hat i,\hat k,k'=1}^3\left[a^{\ts}_{i\hat i}\frac{\pa a^{\ts}_{k\hat k} }{\pa x_{\hat i}}a^{\ts}_{kk'}\frac{\pa V}{\pa x_{k'}} \frac{\pa f}{\pa x_{\hat k}}(a^{\ts}\nabla)_i f \right]\\
% &=&-2\sum_{k=1}^2\sum_{\hat i,\hat k,k'=1}^3\left[ a^{\ts}_{1\hat i}\frac{\pa a^{\ts}_{k\hat k} }{\pa x_{\hat i}}a^{\ts}_{kk'}\frac{\pa V}{\pa x_{k'}} \frac{\pa f}{\pa x_{\hat k}}(a^{\ts}\nabla)_1 f+a^{\ts}_{2\hat i}\frac{\pa a^{\ts}_{k\hat k} }{\pa x_{\hat i}}a^{\ts}_{kk'}\frac{\pa V}{\pa x_{k'}} \frac{\pa f}{\pa x_{\hat k}}(a^{\ts}\nabla)_2 f\right] \\
&=&\sum_{\hat i,\hat k,k'=1}^3\left[ a^{\ts}_{1\hat i}\frac{\pa a^{\ts}_{1\hat k} }{\pa x_{\hat i}}a^{\ts}_{1k'}\frac{\pa V}{\pa x_{k'}} \frac{\pa f}{\pa x_{\hat k}}(a^{\ts}\nabla)_1 f+a^{\ts}_{2\hat i}\frac{\pa a^{\ts}_{1\hat k} }{\pa x_{\hat i}}a^{\ts}_{1k'}\frac{\pa V}{\pa x_{k'}} \frac{\pa f}{\pa x_{\hat k}}(a^{\ts}\nabla)_2 f\right]\\
&&+\sum_{\hat i,\hat k,k'=1}^3\left[ a^{\ts}_{1\hat i}\frac{\pa a^{\ts}_{2\hat k} }{\pa x_{\hat i}}a^{\ts}_{2k'}\frac{\pa V}{\pa x_{k'}} \frac{\pa f}{\pa x_{\hat k}}(a^{\ts}\nabla)_1 f+a^{\ts}_{2\hat i}\frac{\pa a^{\ts}_{2\hat k} }{\pa x_{\hat i}}a^{\ts}_{2k'}\frac{\pa V}{\pa x_{k'}} \frac{\pa f}{\pa x_{\hat k}}(a^{\ts}\nabla)_2 f\right]\\
&=& a^{\ts}_{22}\frac{\pa a^{\ts}_{13}}{\pa y}(a^{\ts}\nabla)_1V \frac{\pa f}{\pa z}(a^{\ts}\nabla)_2f=y(a^{\ts}\nabla)_1V \frac{\pa f}{\pa z}(a^{\ts}\nabla)_2f;
\eeaa
\beaa
\cJ_2
&=&\sum_{\hat i,\hat k,k'=1}^3\left[a^{\ts}_{1\hat i}\frac{\pa a^{\ts}_{1k'}}{\pa x_{\hat i}}a^{\ts}_{1\hat k}\frac{\pa V}{\pa x_{k'}} \frac{\pa f}{\pa x_{\hat k}}(a^{\ts}\nabla)_1 f+a^{\ts}_{2\hat i}\frac{\pa a^{\ts}_{1k'}}{\pa x_{\hat i}}a^{\ts}_{1\hat k}\frac{\pa V}{\pa x_{k'}} \frac{\pa f}{\pa x_{\hat k}}(a^{\ts}\nabla)_2 f\right]\\
&&+\sum_{\hat i,\hat k,k'=1}^3\left[a^{\ts}_{1\hat i}\frac{\pa a^{\ts}_{2k'}}{\pa x_{\hat i}}a^{\ts}_{2\hat k}\frac{\pa V}{\pa x_{k'}} \frac{\pa f}{\pa x_{\hat k}}(a^{\ts}\nabla)_1 f+a^{\ts}_{2\hat i}\frac{\pa a^{\ts}_{2k'}}{\pa x_{\hat i}}a^{\ts}_{2\hat k}\frac{\pa V}{\pa x_{k'}} \frac{\pa f}{\pa x_{\hat k}}(a^{\ts}\nabla)_2 f\right]\\
&=&y \frac{\pa V}{\pa z}(a^{\ts}\nabla)_1f(a^{\ts}\nabla)_2f;
\eeaa
\beaa
\cJ_3
&=&\sum_{\hat i,k'=1}^3\left[ a^{\ts}_{1\hat i}a^{\ts}_{1k'}\frac{\pa^2 V}{\pa x_{\hat i}\pa x_{k'}} |(a^{\ts}\nabla)_1 f|^2+a^{\ts}_{2\hat i}a^{\ts}_{1k'}\frac{\pa^2 V}{\pa x_{\hat i}\pa x_{k'}}(a^{\ts}\nabla)_1 f(a^{\ts}\nabla)_2 f\right]\\
&&+\sum_{\hat i,k'=1}^3\left[ a^{\ts}_{1\hat i}a^{\ts}_{2k'}\frac{\pa^2 V}{\pa x_{\hat i}\pa x_{k'}}(a^{\ts}\nabla)_2 f(a^{\ts}\nabla)_1 f+a^{\ts}_{2\hat i}a^{\ts}_{2k'}\frac{\pa^2 V}{\pa x_{\hat i}\pa x_{k'}}|(a^{\ts}\nabla)_2 f|^2\right]\\
&=&\sum_{\hat i,k'=1}^3a^{\ts}_{1\hat i}a^{\ts}_{1k'}\frac{\pa^2 V}{\pa x_{\hat i}\pa x_{k'}}|(a^{\ts}\nabla)_1 f|^2+2(\frac{\pa^2 V}{\pa x\pa y}+\frac{y^2}{2}\frac{\pa^2 V}{\pa y \pa z} )(a^{\ts}\nabla)_1 f(a^{\ts}\nabla)_2 f\\
&&+\frac{\pa^2 V}{\pa y\pa y}|(a^{\ts}\nabla)_2 f|^2;
\eeaa

\beaa
\cJ_4
&=&-\sum_{\hat i,\hat k,k'=1}^3\left[a^{\ts}_{1\hat k}a^{\ts}_{1k'}\frac{\pa a^{\ts}_{1\hat i}}{\pa x_{\hat k}}\frac{\pa V}{\pa x_{k'}} \frac{\pa f}{\pa x_{\hat i}}(a^{\ts}\nabla)_1f+a^{\ts}_{1\hat k}a^{\ts}_{1k'}\frac{\pa a^{\ts}_{2\hat i}}{\pa x_{\hat k}}\frac{\pa V}{\pa x_{k'}} \frac{\pa f}{\pa x_{\hat i}}(a^{\ts}\nabla)_2f\right]
\eeaa 
\beaa 
&&-\sum_{\hat i,\hat k,k'=1}^3\left[a^{\ts}_{2\hat k}a^{\ts}_{2k'}\frac{\pa a^{\ts}_{1\hat i}}{\pa x_{\hat k}}\frac{\pa V}{\pa x_{k'}} \frac{\pa f}{\pa x_{\hat i}}(a^{\ts}\nabla)_1f+a^{\ts}_{2\hat k}a^{\ts}_{2k'}\frac{\pa a^{\ts}_{2\hat i}}{\pa x_{\hat k}}\frac{\pa V}{\pa x_{k'}} \frac{\pa f}{\pa x_{\hat i}}(a^{\ts}\nabla)_2f\right]\\ 
&=&-y\frac{\pa V}{\pa y}\frac{\pa f}{\pa z}(a^{\ts}\nabla)_1f.
\eeaa
 Combing the above computations, we get the tensor $\mathfrak{R}_{ab}$. Now we turn to the second tensor $\mathfrak{R}_{zb}$.
%%%%%%%%%%%%%%%%%%%
Since $z^{\ts}=(0,0,1)$, it is obvious to see that only the drift term of the tensor $\mathfrak{R}_{zb}$ remains, where we denote
\beaa
\mathfrak{R}_{b}^z(\nabla f,\nabla f)=-\sum_{\hat i,\hat k=1}^{3} (z^{\ts}_{1\hat i}\frac{\pa b_{\hat k}}{\pa x_{\hat i}}\frac{\pa f}{\pa x_{\hat k}}-b_{\hat k}\frac{\pa z^{\ts}_{1\hat i}}{\pa x_{\hat k}} \frac{\pa f}{\pa x_{\hat i}} )(z^{\ts}\nabla )_1f.
\eeaa
By taking $b=-aa^{\ts}\nabla V$, we further obtain that
\beaa
\mathfrak{R}_{b}^z(\nabla f,\nabla f)
&=&  -\sum_{\hat i,\hat k=1}^{3}\left[ z^{\ts}_{1\hat i}\frac{\pa b_{\hat k}}{\pa x_{\hat i}}\frac{\pa f}{\pa x_{\hat k}}(z^{\ts}\nabla f)_1-b_{\hat k}\frac{\pa z^{\ts}_{1\hat i}}{\pa x_{\hat k}} \frac{\pa f}{\pa x_{\hat i}} (z^{\ts}\nabla f)_1\right]\\
&=&\sum_{k=1}^2\sum_{\hat i,\hat k,k'=1}^3\left[z^{\ts}_{1\hat i}\frac{\pa a^{\ts}_{k\hat k} }{\pa x_{\hat i}}a^{\ts}_{kk'}\frac{\pa V}{\pa x_{k'}} \frac{\pa f}{\pa x_{\hat k}}(z^{\ts}\nabla)_1 f\right]\\
&&+\sum_{k=1}^2\sum_{\hat i,\hat k,k'=1}^3\left[z^{\ts}_{1\hat i}\frac{\pa a^{\ts}_{kk'}}{\pa x_{\hat i}}a^{\ts}_{k\hat k}\frac{\pa V}{\pa x_{k'}} \frac{\pa f}{\pa x_{\hat k}}(z^{\ts}\nabla)_1 f\right]\\ 
&&+\sum_{k=1}^2\sum_{\hat i,\hat k,k'=1}^3\left[ z^{\ts}_{1\hat i}a^{\ts}_{k\hat k}a^{\ts}_{kk'}\frac{\pa^2 V}{\pa x_{\hat i}\pa x_{k'}}\frac{\pa f}{\pa x_{\hat k}}(z^{\ts}\nabla)_1 f\right]\\
&&-\sum_{k=1}^2\sum_{\hat i,\hat k,k'=1}^3\left[a^{\ts}_{k\hat k}a^{\ts}_{kk'}\frac{\pa z^{\ts}_{1\hat i}}{\pa x_{\hat k}}\frac{\pa V}{\pa x_{k'}} \frac{\pa f}{\pa x_{\hat i}}(z^{\ts}\nabla)_1f\right] \\
&=& \cJ_1^z+\cJ_2^z+\cJ_3^z+\cJ_4^z.
\eeaa
By direct computations, it is not hard to observe that
\beaa
\cJ_1^z&=&\sum_{k=1}^2\sum_{\hat i,\hat k,k'=1}^3\left[z^{\ts}_{1\hat i}\frac{\pa a^{\ts}_{k\hat k} }{\pa x_{\hat i}}a^{\ts}_{kk'}\frac{\pa V}{\pa x_{k'}} \frac{\pa f}{\pa x_{\hat k}}(z^{\ts}\nabla)_1 f \right]=0;\\
\cJ_2^z&=&\sum_{k=1}^2\sum_{\hat i,\hat k,k'=1}^3\left[z^{\ts}_{1\hat i}\frac{\pa a^{\ts}_{kk'}}{\pa x_{\hat i}}a^{\ts}_{k\hat k}\frac{\pa V}{\pa x_{k'}} \frac{\pa f}{\pa x_{\hat k}}(z^{\ts}\nabla)_1 f\right]=0;\\
\cJ_4^z&=&-\sum_{k=1}^2\sum_{\hat i,\hat k,k'=1}^3\left[a^{\ts}_{k\hat k}a^{\ts}_{kk'}\frac{\pa z^{\ts}_{1\hat i}}{\pa x_{\hat k}}\frac{\pa V}{\pa x_{k'}} \frac{\pa f}{\pa x_{\hat i}}(z^{\ts}\nabla)_1f\right]=0.
\eeaa
The only non-zero term has the following form,
\beaa
\cJ_3^z&=&\sum_{k=1}^2\sum_{\hat i,\hat k,k'=1}^3\left[ z^{\ts}_{1 \hat i}a^{\ts}_{k\hat k}a^{\ts}_{kk'}\frac{\pa^2 V}{\pa x_{\hat i}\pa x_{k'}}\frac{\pa f}{\pa x_{\hat k}}(z^{\ts}\nabla)_1 f\right]\\
&=&\sum_{\hat i,\hat k,k'=1}^3\left[ z^{\ts}_{1\hat i}a^{\ts}_{1\hat k}a^{\ts}_{1k'}\frac{\pa^2 V}{\pa x_{\hat i}\pa x_{k'}}\frac{\pa f}{\pa x_{\hat k}}(z^{\ts}\nabla)_1 f+ z^{\ts}_{1\hat i}a^{\ts}_{2\hat k}a^{\ts}_{2k'}\frac{\pa^2 V}{\pa x_{\hat i}\pa x_{k'}}\frac{\pa f}{\pa x_{\hat k}}(z^{\ts}\nabla)_1 f\right]
\eeaa 
\beaa 
&=&\sum_{\hat i,k'=1}^3\left[ z^{\ts}_{1\hat i}a^{\ts}_{1k'}\frac{\pa^2 V}{\pa x_{\hat i}\pa x_{k'}} (a^{\ts}\nabla)_1 f(z^{\ts}\nabla)_1 f+ z^{\ts}_{1\hat i}a^{\ts}_{2k'}\frac{\pa^2 V}{\pa x_{\hat i}\pa x_{k'}}(a^{\ts}\nabla)_2 f(z^{\ts}\nabla)_1 f\right]\\
&=&(\frac{\pa^2 V}{\pa x\pa z}+\frac{y^2}{2}\frac{\pa^2 V}{\pa z\pa z})(a^{\ts}\nabla)_1 f(z^{\ts}\nabla)_1 f+\frac{\pa^2 V}{\pa y\pa z}(a^{\ts}\nabla)_2 f(z^{\ts}\nabla)_1 f.
\eeaa
Since matrix $z^{\ts}$ is a constant matrix and matrix $a^{\ts}$ contains only variable $y$, it is easy to observe that 
\[
\mathfrak{R}_{\pi}(\nabla f,\nabla f)=0.
\]
\qed 
\end{proof}{}

\section{Lyapunov analysis in Sub-Riemannian Density manifold}\label{section 4}
In this section, we illustrate the motivation in this paper. This is to design a matrix condition, whose smallest eigenvalue characterizes the convergence rate of degenerate SDE. 

The outline of this section is given below. Consider a density space over the sub-Riemannian manifold. The finite dimensional sub-Rimennian structure introduces the density space the {\em infinite dimensional sub-Riemannian structure}. We name it the {\em sub-Riemannian density manifold} (SDM). We provide the geometric calculations in SDM. We study the Fokker-Planck equation as the sub-Riemannian gradient flow in SDM. We derive the equivalence relation between second order calculus of relative entropy in SDM and generalized Gamma z calculus.  
\subsection{Sub-Riemannian Density manifold} 
Given a finite dimensional sub-Riemannian manifold $(\hM^{n+m}, \tau, g_\tau)$ with $g_\tau=(aa^{\ts})^{\dd}$, consider the probability density space 
\begin{equation*}
\mathcal{P}(\hM^{n+m})=\Big\{\rho(x)\in C^{\infty}(\hM^{n+m})\colon \int \rho(x)dx=1,\quad \rho(x)\geq 0\Big\}.
\end{equation*} 
Consider the tangent space at $\rho\in \mathcal{P}(\hM^{n+m})$:
\[
T_{\rho}\mathcal{P}(\hM^{n+m})=\{\sigma(x)\in C^{\infty}(\hM^{n+m})\colon \int \sigma(x)dx=0 \}.
\]
We introduce the sub-Riemannian structure in probability density space $\mathcal{P}(\hM^{n+m})$. 
 \begin{defn}[sub-Riemannian Wasserstein metric tensor]\label{eqn:riemannian whole density space}
 The $L^2$ sub-Riemannian-Wasserstein metric $g^{W_a}_\rho\colon {T_\rho}\mathcal{P}(\hM^{n+m})\times {T_\rho}\mathcal{P}(\hM^{n+m})\rightarrow \mathbb{R}$ is defined by
 \begin{equation*}
 g^{W_a}_\rho(\sigma_1, \sigma_2)=\int \Big(\sigma_1(x), (-\Delta^a_\rho)^{\dd}\sigma_2(x)\Big) dx.
  \end{equation*}
Here $\sigma_1, \sigma_2\in T_\rho\mathcal{P}(\hM^{n+m})$, $(\cdot, \cdot)$ is the metric on $\hM^{n+m}$ and
 $(\Delta^a_\rho)^{\dd}\colon {T_\rho}\mathcal{P}(\hM^{n+m})\rightarrow{T_\rho}\mathcal{P}(\hM^{n+m})$ is the pseudo-inverse of the sub-elliptic operator $$\Delta^a_\rho = \nabla\cdot(\rho aa^{\ts}\nabla).$$
 \end{defn}
 For some special choices of $a$ as studied in \cite{KL} or $aa^{\ts}$ forming a positive definite matrix, then $\Delta^a_{\rho}$ is an elliptic operator. In this case, $(\mathcal{P}(\hM^{n+m}), g^{W_a})$ still forms a Riemannian density manifold. In general, given a sub-Riemannian manifold $(\hM^{n+m}, (aa^{\ts})^{\dd})$, $\Delta_{\rho}^a$ is only a sub-elliptic operator. Thus $(\mathcal{P}(\hM^{n+m}), g^{W_a})$ forms an infinite-dimensional sub-Riemannian manifold. 

We next present the sub-Riemannian calculus in $(\mathcal{P}(\hM^{n+m}), g^{W_a})$, including both geodesics and Hessian operator in tangent bundle. 
Consider an identification map: $$V\colon C^{\infty}(\hM^{n+m})\rightarrow T_\rho\mathcal{P}(\hM^{n+m}),\quad V_\Phi=-\Delta_{\rho}^a\Phi= -\nabla\cdot(\rho aa^{\ts} \nabla \Phi).$$
Here $\Phi \in T_\pi\mathcal{P}(\hM^{n+m})=C^{\infty}(\hM^{n+m})/\sim$. This $T_\pi\mathcal{P}(\hM^{n+m})$ is the cotangent space in SDM and $\sim$ represents a constant shift relation. 
Thus
\begin{equation*}
g^{W_a}_\rho(V_{\Phi_1}, V_{\Phi_2})=\int  \Gamma_{1}(\Phi_1,\Phi_2)\rho(x)dx.
\end{equation*}
In other words, 
\begin{equation}\label{dual}
\begin{split}
g^{W_a}_\rho(V_{\Phi_1}, V_{\Phi_2})=& \int V_{\Phi_1} (-\Delta^a_\rho)^{\dd}V_{\Phi_2} dx\\
=&\int  \Phi_1 (-\Delta_{\rho}^a)(-\Delta_{\rho}^a)^{\dd}(-\Delta_{\rho}^a) \Phi_2 dx\\
=&\int (\Phi_1 ,-\Delta_\rho^a\Phi_2) dx\\
=&\int \Phi_1 (-\nabla\cdot (\rho aa^{\ts}\nabla\Phi_2) dx\\
=&\int  (a^{\ts} \nabla\Phi_1, a^{\ts}\nabla\Phi_2)\rho dx,\end{split}
\end{equation}
where the second equality holds by $(-\Delta_{\rho}^a)(-\Delta_{\rho}^a)^{\dd}(-\Delta_{\rho}^a)=-\Delta_\rho^a$ and the last equality holds by the integration by parts. 

We next derive several basic geometric calculations in SDM. 
\begin{prop}[Geodesics in SDM]
The sub-Riemannian geodesics in cotangent bundle forms 
\begin{equation}\label{geo}
\begin{cases}
\partial_t\rho_t+\nabla\cdot(\rho_t aa^{\ts} \nabla \Phi_t)=0,\\
\partial_t\Phi_t+ \frac{1}{2}(a^{\ts}\nabla\Phi_t, a^{\ts}\nabla\Phi_t)=0.
\end{cases}
\end{equation}
\end{prop}
\begin{proof}
We consider the Lagrangian formulation of geodesics in density. Here the minimization of the geometric action functional forms
\begin{equation*}
\mathcal{L}(\rho_t, \partial_t\rho_t)=\int_0^1\int  \frac{1}{2}(\partial_t\rho_t, (-\Delta^a_{\rho_t})^{\dd}\partial_t\rho_t)dxdt, 
\end{equation*}
where $\rho_t=\rho(t,x)$ is a density path with fixed boundary points $\rho_0$, $\rho_1$. Then the Euler-Lagrange equation in density space forms
\begin{equation*}\label{EL}
\frac{\partial}{\partial t}\delta_{\partial_t\rho_t}\mathcal{L}(\rho_t, \partial_t\rho_t)=\delta_{\rho_t} \mathcal{L}(\rho_t, \partial_t\rho_t),
\end{equation*}
where $\delta_{\partial_t\rho_t}$ is the $L^2$ first variation w.r.t. $\partial_t\rho_t$ and $\delta_{\rho_t}$ is the $L^2$ first variation w.r.t. $\rho_t$. Here
\begin{equation}\label{EL}
\begin{split}
\partial_t \Big((-\Delta^a_{\rho_t})^{\dd}\partial_t\rho_t\Big)=&\delta_\rho \int \frac{1}{2}\Big(\partial_t\rho, (-\Delta^a_{\rho_t})^{\dd}\partial_t\rho_t\Big)dx\\
=&-{\frac{1}{2} \Big(a^\ts\nabla(-\Delta_{\rho}^a)^\dd\partial_t\rho_t,a^\ts\nabla(-\Delta_{\rho}^a)^\dd\partial_t\rho_t\Big)},
\end{split}
\end{equation}
where the last equality uses the following fact 
\begin{equation*}
\partial_t\Big(\Delta_{\rho_t}^a\Big)^{\dd}=-\Big(\Delta^a_{\rho_t}\Big)^{\dd}\cdot\Delta^a_{\partial_t\rho_t}\cdot\Big(\Delta^a_{\rho_t}\Big)^{\dd},
\end{equation*}
Denote $\partial_t\rho_t=-\Delta^a_{\rho_t}\Phi_t$, then the Euler-Lagrange equation \eqref{EL} forms the sub-Riemannian geodesics flow \eqref{geo}. {In other words, 
\begin{equation*}
\partial_t\Phi+\frac{1}{2}(a^{\ts}\nabla\Phi, a^{\ts}\nabla\Phi)=0.    
\end{equation*}}
\end{proof}

\begin{prop}[Gradient and Hessian operators in SDM]\label{hessian operators}
Given a functional $\mathcal{F}\colon \mathcal{P}(\hM^{n+m})\rightarrow \mathbb{R}$, the gradient operator of $\mathcal{F}$ in $(\mathcal{P}, g^{W_a})$ satisfies 
\begin{equation*}
grad_{W_a}\mathcal{F}(\rho)=-\nabla\cdot(\rho aa^{\ts}\nabla\delta\mathcal{F}(\rho)).
\end{equation*}
And the Hessian operator of $\mathcal{F}$ in $(\mathcal{P}, g^{W_a})$ satisfies
\begin{equation*}
\begin{split}
&Hess_{W_a} \cF(V_{\Phi},V_{\Phi})\\
=&\int \int (a(y)^{\ts}\nabla_y)(a(x)^{\ts}\nabla_x)\delta^2\mathcal{F}(\rho)(x,y) \Big(a(x)^{\ts}\nabla_x\Phi(x), a(y)^{\ts}\nabla_y\Phi(y)\Big) \rho(x)\rho(y) dx dy \\
&+\int \textrm{Hess}_a\delta\mathcal{F}(\rho)(\Phi, \Phi)\rho dx,
\end{split}
\end{equation*}
where 
\begin{equation*}
\textrm{Hess}_{a}\delta\mathcal{F}(\rho)(\Phi,\Phi)=\frac{1}{2}\Big\{2\Gamma_{1}(\Gamma_{1}(\delta\mathcal{F}, \Phi), \Phi) - \Gamma_{1}(\Gamma_{1}(\Phi, \Phi), \delta\mathcal{F})\Big\}.
\end{equation*}
\end{prop}
\begin{proof}
We first derive the sub-Riemannian gradient operator. {We recall the identification map by $-\Delta_\rho^a\Phi=-\nabla\cdot(\rho aa^{\ts}\nabla\Phi)$. Hence the gradient operator in SDM satisfies} 
\begin{equation*}
\begin{split}
\textrm{grad}_{W_a}\mathcal{F}(\rho)=&\Big((-\Delta^a_{\rho})^{\dd}\Big)^{\dd}\frac{\delta}{\delta\rho(x)}\mathcal{F}(\rho)\\
=&-\Delta^a_{\rho}\frac{\delta}{\delta\rho(x)}\mathcal{F}(\rho)\\
=&-\nabla\cdot(\rho aa^{\ts}\nabla \frac{\delta}{\delta\rho(x)}\mathcal{F}(\rho)).
\end{split}
\end{equation*}
And the Hessian operator in SDM satisfies 
\begin{equation*}
\textrm{Hess}_{W_a}\mathcal{F}(\rho)(V_\Phi, V_\Phi)=\frac{d^2}{dt^2}\mathcal{F}(\rho_t)|_{t=0},
\end{equation*}
where $(\rho_t, \Phi_t)$ satisfies the geodesics equation \eqref{geo} with $\rho_0=\rho$, $\Phi_0=\Phi$. Notice the fact that 
\begin{equation*}
\begin{split}
\frac{d}{dt}\mathcal{F}(\rho_t)|_{t=0}=&\int \partial_t\rho_t\delta\mathcal{F}(\rho)dx|_{t=0}\\
=&\int  (-\nabla\cdot(\rho aa^{\ts}\nabla\Phi))\delta\mathcal{F}(\rho)dx\\
=&\int  (a^{\ts}\nabla\delta\mathcal{F}(\rho), a^{\ts}\nabla\Phi)\rho dx.
\end{split}
\end{equation*}
In addition, 
\begin{equation}\label{Hess}
\begin{split}
&\frac{d^2}{dt^2}\mathcal{F}(\rho_t)|_{t=0}=\frac{d}{dt}\int  (a^{\ts}\nabla\delta\mathcal{F}(\rho_t), a^{\ts}\nabla\Phi_t)\rho_t dx|_{t=0}\\
=&\int  \int    \delta^2\mathcal{F}(\rho)(x,y) \partial_t\rho_t(x)\partial_t\rho_t(y)dxdy\\
&+ \int  (a^{\ts}\nabla\delta\mathcal{F}(\rho_t), a^{\ts}\nabla \partial_t\Phi_t)\rho_t dx\\
&+\int  (a^{\ts}\nabla\delta\mathcal{F}(\rho_t), a^{\ts}\nabla \partial_t\Phi_t)\partial_t\rho_t dx|_{t=0}\\
=&\int \int  \delta^2\mathcal{F}(\rho)(x,y) \nabla\cdot(\rho aa^{\ts}\nabla\Phi)(x) \nabla\cdot(\rho aa^{\ts}\nabla\Phi)(y)dxdy\\
&-\frac{1}{2}\int  (a^{\ts}\nabla\delta\mathcal{F}(\rho), a^{\ts}\nabla \Gamma_{1}(\Phi, \Phi))\rho dx\\
&+\int  \Gamma_{1}(\Phi, \delta\mathcal{F}(\rho)) \Big(-\nabla\cdot(\rho aa^{\ts}\nabla\Phi)\Big)dx\\
=&\int \int (a(y)^{\ts}\nabla_y)(a(x)^{\ts}\nabla_x)\delta^2\mathcal{F}(\rho)(x,y) \Big(a(x)^{\ts}\nabla_x\Phi(x), a(y)^{\ts}\nabla_y\Phi(y)\Big) \rho(x)\rho(y) dx dy \\
+&\frac{1}{2}\int \Big\{2\Gamma_{1}(\Gamma_{1}(\delta\mathcal{F}, \Phi), \Phi)- \Gamma_{1}(\Gamma_{1}(\Phi, \Phi), \delta\mathcal{F})\Big\}\rho dx,
\end{split}
\end{equation}
where the last equality holds by the integration by parts formula.  
\end{proof}

%\subsection{Gamma calculus via Hessian operator of relative entropy in SDM}\label{sec5.2}
We next show the equivalence relation between Hessian of relative entropy in SDM and classical Gamma two operator. We first demonstrate the relation among $L^*$, $\Delta_a$ and gradient operator of entropy. In particular, we show that the Fokker-Planck equation is a sub-Riemannian gradient flow in SDM. Denote the KL divergence as %relative entropy 
\begin{equation}\label{D}
\mathrm{D}(\rho)=\mathrm{D}_{\mathrm{KL}}(\rho\|\pi)=\int \rho(x)\log\frac{\rho(x)}{\Vol(x)}dx. 
\end{equation}

\begin{prop}[Gradient flow]
%Given an energy function $\mathcal{F}\colon \mathcal{P}(\hM)\rightarrow \mathbb{R}$, the sub-Riemannian gradient flow of $\mathcal{F}$ in SDM satisfies 
%\begin{equation}\label{sgd}
%\frac{\partial}{\partial t}\rho(t,x)=\nabla\cdot(\rho(t,x)a(x)a(x)^{\ts}\nabla\frac{\delta}{\delta\rho(x)}\mathcal{F}(\rho)).
%\end{equation}
%In particular, consider the energy $\mathcal{F}$ as 
%Denote the negative Boltzmann-Shannon entropy
%\begin{equation*}
%\mathrm{H}(\rho)=\int  \rho(x)\log\rho(x)dx,
%\end{equation*}
The negative gradient operator in $(\mathcal{P}, g^{W_a})$ forms 
\begin{equation*}
-\textrm{grad}_{\WH}\mathrm{D}(\rho)=L^*\rho=\nabla \cdot (\rho aa^{\ts}\nabla\log\frac{\rho}{\pi}).
\end{equation*}
In addition, the sub-Riemannian gradient flow of $\mathrm{D}(\rho)$ in $(\mathcal{P}, g^{W_a})$ forms the Fokker-Planck equation 
\begin{equation}\label{newFPE}
\partial_t\rho=\nabla\cdot(\rho aa^{\ts}\nabla\log\frac{\rho}{\pi}).
\end{equation}
\end{prop}
\begin{proof}
We first derive the sub-Riemannian gradient operator of entropy and relative entropy. 
%Notice the fact that 
%\begin{equation*}
%\delta_{\rho(x)}\mathrm{H}(\rho)=\log\rho(x)+1,
%\end{equation*}
%and
Notice 
\begin{equation*}
\delta_{\rho(x)}\mathrm{D}(\rho)=\log \rho(x)+1- \log \Vol(x).
\end{equation*}
Thus 
%\begin{equation*}
%\textrm{grad}_{\WH}\mathrm{H}(\rho)=-\nabla\cdot(\rho aa^{\ts}\nabla(\log\rho+1))=-\nabla\cdot(\rho aa^{\ts}\nabla\log\rho)=\nabla\cdot(aa^{\ts}\nabla\rho),
%\end{equation*}
%and 
\begin{equation*}
\textrm{grad}_{\WH}\mathrm{D}(\rho)=-\nabla\cdot(\rho aa^{\ts}\nabla\log\rho)+\nabla\cdot(\rho aa^{\ts}\nabla\log\Vol),
\end{equation*}
where $\rho\nabla\log\rho=\rho\frac{\nabla\rho}{\rho}=\nabla\rho$. Following the gradient flow formulation
$$\frac{\partial\rho_t}{\partial t}=-\textrm{grad}_{W_a}\mathrm{D}(\rho_t)=L^*\rho_t,$$
we finish the derivation of \eqref{newFPE}. \qed 
\end{proof}
%\begin{rem}
%Here the gradient flow \eqref{newFPE} and Fokker-Planck equation of \eqref{FP equation} is different up to a scalar two. All the dynamical behaviors of \eqref{newFPE} and \eqref{FP equation} are same. For the simplicity of presentation, we mainly use \eqref{newFPE} in this section.
%\end{rem}

We next demonstrate that the Hessian of relative entropy (KL divergence) is equivalent to the classical Bakry-{\'E}mery calculus. 
%%such as $\Gamma_{2,a}$ (defined in \eqref{gamma 2 a}) and $\Gamma_2$ (defined in \eqref{gamma 2 L}). 
\begin{prop}[Hessian of Entropy and Bakry-{\'E}mery calculus]\label{prop1}
Given $\Phi_1$, $\Phi_2\in C^{\infty}(\hM^{n+m})$, then
%\begin{equation*}
%Hess_{W_a} \mathrm{H}(\rho)(V_{\Phi},V_{\Phi})=\int \Gamma_{2,a}(\Phi, \Phi)\rho(x)dx,
%\end{equation*}
%and 
\begin{equation*}
\begin{split}
Hess_{W_a} \mathrm{D}(\rho)(V_{\Phi},V_{\Phi})=&\int \Gamma_2(\Phi, \Phi)\rho(x)dx.
\end{split}
\end{equation*}
\end{prop}
\begin{proof}
We first derive the Hessian of $\mathrm{D}(\rho)$ in SDM. Notice the fact $\delta^2\mathrm{D}(\rho)(x,y)=\frac{1}{\rho}\delta_{x=y}$. For simplicity, we denote $\delta^2\mathrm{D}(\rho)=\frac{1}{\rho(x)}$. 
By using \eqref{Hess}, we have
\begin{equation}\label{Hessa}
\begin{split}
\textrm{Hess}_{\WH}\mathrm{D}(\rho)(V_{\Phi}, V_{\Phi})=&\int  \delta^2\mathrm{D}(\rho)(x)\Big(\nabla\cdot(\rho aa^{\ts}\nabla\Phi)\Big)^2 dx \hspace{2cm} (a)\\
&-\frac{1}{2}\int  (a^{\ts}\nabla\delta\mathrm{D}(\rho), a^{\ts}\nabla \Gamma_{1}(\Phi, \Phi))\rho dx\hspace{1.4cm}(b)\\
&+\int  \Gamma_{1}(\Phi, \delta\mathrm{D}(\rho)) \Big(-\nabla\cdot(\rho aa^{\ts}\nabla\Phi)\Big)dx. \hspace{0.8cm}(c)
\end{split}
\end{equation}
We next rewrite \eqref{Hessa} into iterative Gamma calculus. We first show 
\begin{equation*}
\begin{split}
(a)+(c)=&\int  \Big(\delta^2\mathrm{D}(\rho)\nabla\cdot(\rho aa^{\ts}\nabla\Phi)- \Gamma_{1}(\Phi, \delta\mathrm{D}(\rho))\Big)\nabla\cdot(\rho aa^{\ts}\nabla\Phi)\rho dx\\
=&\int  \Big(\frac{1}{\rho}\nabla\cdot(\rho aa^{\ts}\nabla\Phi)- (aa^{\ts}\nabla\log\frac{\rho}{\Vol }, \nabla\Phi)\Big)\nabla\cdot(\rho aa^{\ts}\nabla\Phi)\rho dx\\
=&\int  \Big(\frac{1}{\rho}(\nabla\rho, aa^{\ts}\nabla\Phi)+ \nabla\cdot(aa^{\ts}\nabla\Phi)- (aa^{\ts}\nabla\log\frac{\rho}{\Vol }, \nabla\Phi)\Big) \nabla\cdot(\rho aa^{\ts}\nabla\Phi) \rho dx\\
=&\int  \Big((\nabla\log\rho, aa^{\ts}\nabla\Phi)+ \nabla\cdot(aa^{\ts}\nabla\Phi)\\
&\quad-(\nabla\log\rho, aa^{\ts}\nabla\Phi)+(aa^\ts\nabla\log\pi, \nabla\Phi)\Big) \nabla\cdot(\rho aa^{\ts}\nabla\Phi)\rho dx\\
=&\int  \Big((\nabla\cdot(aa^{\ts}\nabla\Phi)+(aa^\ts\nabla\log\pi, \nabla\Phi)\Big)\nabla\cdot(\rho aa^{\ts}\nabla\Phi)\rho dx\\
=&\int  L\Phi \nabla\cdot(\rho aa^{\ts}\nabla\Phi) dx\\
=&-\int  \Gamma_{1}(L\Phi, \Phi)\rho dx,
\end{split}
\end{equation*}
where the fourth equality uses the fact that $\frac{\nabla\rho}{\rho}=\nabla\log\rho$, while the last equality follows the integration by parts.

We second show 
\begin{equation*}
\begin{split}
(b)=&-\frac{1}{2}\int  (a^{\ts}\nabla\delta\mathrm{D}(\rho), a^{\ts}\nabla \Gamma_{1}(\Phi, \Phi))\rho dx\\
=&\frac{1}{2}\int  \Gamma_{1}(\Phi, \Phi)) \nabla\cdot(\rho aa^{\ts}\nabla\delta\mathrm{D}(\rho)) dx\\
=&\frac{1}{2}\int  \Gamma_{1}(\Phi, \Phi)) L^*\rho dx\\
=&\frac{1}{2}\int  L\Gamma_{1}(\Phi, \Phi)) \rho dx,
\end{split}
\end{equation*}
where the second equality applies the fact that $L^*\rho=\nabla\cdot(\rho aa^{\ts}\nabla\delta\mathrm{D})$, while the last inequality uses the dual relation between Kolomogrov operators $L$ and $L^*$ in $L^2(\rho)$, i.e.  
\begin{equation*}
\int  f(x) L^*\rho(t,x)dx= \int Lf(x)\rho(t,x) dx, \qquad \textrm{for any $f\in C_c^{\infty}(\hM^{n+m})$}.
\end{equation*}
Combining the equality of $(a), (b), (c)$, we prove the result. 
%Similarly, we can show that the Hessian operator of $\mathrm{H}(\rho)$ in $(\mathcal{P}, g)$ equals to the expectation of $\Gamma_{2,a}$ operator. 
\end{proof}

{\begin{rem}
We remark that the above formulations in term of $aa^{\ts}$ hold for both Riemannian and sub-Riemannian density manifolds. Here the major difference is that whether matrix function $a$ is full rank or degenerate. In this sense, all formulas derived in this subsection recover the classical Bakry-Emery calculus. However, the classical Hessian operator of entropy is not enough to study the convergence behavior of degenerate diffusion processes. Shortly, we use a modified Lyapunov functional and derive a tensor for the gradient flow in SDM. It provides the convergence rate of the degenerate diffusion process. 
\end{rem}}

\subsection{Gamma z calculus via second order calculus of relative entropy in SDM}\label{section3.3}
In this subsection, we introduce the motivation of our new Gamma z calculus from SDM viewpoint. Consider the SDM gradient flow \eqref{newFPE}
$$\partial_t\rho_t=\Delta_{\rho_t}^a\delta \mathrm{D}(\rho_t).$$ 
When $a$ is degenerate matrix, the classical relative Fisher information $\mathrm{I}_a$ may not be the Lyapunov functional. In other words, along the gradient flow, it is possible that  
$\frac{d}{dt}\mathrm{I}_a(\rho_t)\geq 0$. 

To handle this issue, a new Lyapunov function is considered. It is to add a new direction $z$ into the relative Fisher information functional. Denote $\Delta_\rho^z=\nabla\cdot(\rho zz^{\ts}\nabla)$ and $\mathrm{I}_{z}(\rho)=\int \Big(\delta\mathrm{D}, (-\Delta_\rho^z)\delta\mathrm{D}\Big) dx$. Construct 
 \begin{equation*}
\mathrm{I}_{a,z}(\rho):=\mathrm{I}_a(\rho)+\mathrm{I}_z(\rho)=\int \Big(\delta\mathrm{D}, (-\Delta_\rho^a-\Delta_\rho^z)\delta\mathrm{D}\Big) dx.
\end{equation*}
We next prove the following proposition.
\begin{prop}
\begin{equation*}
\frac{d}{dt}\mathrm{I}_{a,z}(\rho_t)=- 2\int \Big(\Gamma_2(\delta \mathrm{D}, \delta\mathrm{D})+ \tilde \Gamma_{2}^z(\delta \mathrm{D}, \delta\mathrm{D}))\Big)\rho_t dx, 
\end{equation*}
where 
\begin{equation}
\begin{split}\label{new gamma tilde}
\tilde\Gamma_{2}^z(\Phi, \Phi):=&\frac{1}{2}L(\Gamma_1^z(\Phi, \Phi))-\Gamma_{1}(L_z\Phi, \Phi),
%=&\frac{1}{2}L(\Gamma_1^z(\Phi, \Phi))-\Gamma_{1}(\Delta_z\Phi, \Phi)-\Gamma_{1}(\Gamma_1^z(\log \Vol, \Phi), \Phi)\\
\end{split}
\end{equation}
with the notation $\Delta_z=\nabla\cdot(zz^{\ts}\nabla)$ and $L_z=\nabla\cdot(zz^{\ts}\nabla)+(\nabla\log\Vol, zz^{\ts} \nabla)$.
\end{prop}
\begin{proof}
For simplicity of notation, we denote $\rho=\rho_t$. Notice the fact that 
\begin{equation*}
\frac{d}{dt}\mathrm{I}_{a,z}(\rho)=\frac{d}{dt}\mathrm{I}_a(\rho)+\frac{d}{dt}\mathrm{I}_z(\rho). 
\end{equation*}
From Proposition \ref{prop1}, we have 
\begin{equation*}
\begin{split}
\frac{d^2}{dt^2}\mathrm{I}_a(\rho)=&-2\textrm{Hess}_{g_a}\mathrm{D}(V_{\delta \mathrm{D}}, V_{\delta\mathrm{D}})\\
=&-2\int \Gamma_2(\delta\mathrm{D}, \delta\mathrm{D})\rho dx. 
\end{split}
\end{equation*}
We only need to show the following claim. 

\noindent\textbf{Claim:}
\begin{equation*}
\frac{d}{dt}\mathrm{I}_z(\rho)= -2 \int \tilde \Gamma_{2}^z(\delta \mathrm{D}, \delta\mathrm{D})\rho dx.
\end{equation*}
\begin{proof}[Proof of Claim]
The proof is similar to the ones in Proposition \ref{prop1}. We need to take care of $z$ direction.  
Notice 
\begin{equation*}
\begin{split}
\frac{d}{dt}\mathrm{I}_z(\rho)=&2 \int \delta^2\mathrm{D}\Big((-\Delta_\rho^z\delta\mathrm{D}), \partial_t\rho\Big) dx+\int (\nabla \delta\mathrm{D}, zz^{\ts}\nabla\delta\mathrm{D}) \partial_t\rho dx\\
=&2 \int \delta^2\mathrm{D} \Big((-\Delta_\rho^z\delta\mathrm{D}), \Delta_\rho^a\delta\mathrm{D}\Big) dx+\int (\nabla \delta\mathrm{D}, zz^{\ts}\nabla\delta\mathrm{D}) (\Delta_\rho^a\delta\mathrm{D}) dx\\
=&-2 \int \frac{1}{\rho} \nabla\cdot(\rho aa^{\ts}\nabla \delta\mathrm{D})\nabla\cdot(\rho zz^{\ts}\nabla\delta\mathrm{D}) dx \hspace{2cm} (I)\\
&+\int (\nabla \delta\mathrm{D}, zz^{\ts}\nabla\delta\mathrm{D}) \nabla\cdot(\rho aa^{\ts}\nabla\delta\mathrm{D}) dx \hspace{2.2cm}(II)
\end{split}
\end{equation*}
We next estimate (I), (II) separately. For (I), we notice the fact that 
\begin{equation*}
\begin{split}
\frac{1}{\rho}\nabla\cdot(\rho zz^{\ts}\nabla\delta\mathrm{D})=&(\nabla\log\rho, zz^{\ts}\nabla\delta\mathrm{D})+\nabla\cdot(zz^{\ts}\nabla\delta\mathrm{D})\\
=&(\nabla\log\frac{\rho}{\pi}, zz^{\ts}\nabla\delta\mathrm{D})+(\nabla\log\pi, zz^{\ts}\nabla\delta\mathrm{D})+\nabla\cdot(zz^{\ts}\nabla\delta\mathrm{D})\\
=&(\nabla\delta\mathrm{D}, zz^{\ts}\nabla\delta\mathrm{D})+(\nabla\log\pi, zz^{\ts}\nabla\delta\mathrm{D})+\nabla\cdot(zz^{\ts}\nabla\delta\mathrm{D}).
\end{split}
\end{equation*}
Thus 
\begin{equation*}
\begin{split}
(I)=&-2 \int \frac{1}{\rho} \nabla\cdot(\rho aa^{\ts}\nabla \delta\mathrm{D})\nabla\cdot(\rho zz^{\ts}\nabla\delta\mathrm{D}) dx\\
=&  -2\int  \nabla\cdot(\rho aa^{\ts}\nabla \delta\mathrm{D}) \Big((\nabla\delta\mathrm{D}, zz^{\ts}\nabla\delta\mathrm{D})+(\nabla\log\pi, zz^{\ts}\nabla\delta\mathrm{D})+\nabla\cdot(zz^{\ts}\nabla\delta\mathrm{D})\Big) dx\\
=&  -2\int  (\nabla\delta\mathrm{D}, zz^{\ts}\nabla\delta\mathrm{D}) L^*\rho+\nabla\cdot(\rho aa^{\ts}\nabla \delta\mathrm{D}) \Big((\nabla\log\pi, zz^{\ts}\nabla\delta\mathrm{D})+\nabla\cdot(zz^{\ts}\nabla\delta\mathrm{D})\Big) dx\\
 =&-2\int L (\nabla\delta\mathrm{D}, zz^{\ts}\nabla\delta\mathrm{D}) \rho dx \\
 &+2\int  \Big\{\nabla\Big((\nabla\log\pi, zz^{\ts}\nabla\delta\mathrm{D})+\nabla\cdot(zz^{\ts}\nabla\delta\mathrm{D})\Big), aa^{\ts}\nabla \delta\mathrm{D}\Big\}\rho dx,
\end{split}
\end{equation*}
where the last equality holds by integration by parts. 

For (II), we have 
\begin{equation*}
\begin{split}
(II)=&\int (\nabla \delta\mathrm{D}, zz^{\ts}\nabla\delta\mathrm{D}) \nabla\cdot(\rho aa^{\ts}\nabla\delta\mathrm{D}) dx\\
=&\int (\nabla \delta\mathrm{D}, zz^{\ts}\nabla\delta\mathrm{D}) L^*\rho dx\\
=& \int L (\nabla \delta\mathrm{D}, zz^{\ts}\nabla\delta\mathrm{D}) \rho dx.
\end{split}
\end{equation*}
Combining (I) and (II), we have
\begin{equation*}
\tilde\Gamma_{2}^z(\Phi, \Phi)=\frac{1}{2}L(\Gamma_1^z(\Phi, \Phi))-\Gamma_{1}(\Delta_z\Phi, \Phi)-\Gamma_{1}(\Gamma_1^z(\log \Vol, \Phi), \Phi).
\end{equation*}
Using the notation $L_z=\Delta_z+(\nabla\log\Vol, zz^{\ts}\nabla)$, we finish the proof. 
\end{proof}
\end{proof}

We next prove that $\tilde\Gamma_{2}^z$ and $\Gamma_2^{z,\Vol}$ in Definition \ref{defn:tilde gamma 2 znew} agrees each other in the weak form along the gradient flow. 
\begin{prop} 
Denote $\Phi=\delta \mathcal{D}(\rho)$, then
\begin{equation*}
\int \tilde\Gamma_{2}^z(\Phi, \Phi) \rho dx= \int \Gamma_2^{z,\Vol}(\Phi, \Phi)\rho dx.
\end{equation*}
\end{prop}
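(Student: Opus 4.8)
The plan is to show that the two operators $\tilde\Gamma_{2}^z$ and $\Gamma_2^{z,\Vol}$ produce the same integral against $\rho\,dx$ when $\Phi=\delta\mathrm{D}(\rho)$, and the crucial observation is that at such $\Phi$ we have $\nabla\Phi = \nabla\log\frac{\rho}{\Vol}$, so that $\rho$ itself is recovered from $\Phi$ and $\Vol$. First I would recall from Definition \ref{defn:tilde gamma 2 znew} that
\beaa
\Gamma_2^{z,\Vol}(\Phi,\Phi)=\Gamma_2^z(\Phi,\Phi)+\div^{\Vol}_z\big(\Gamma_{\nabla(aa^{\ts})}(\Phi,\Phi)\big)-\div^{\Vol}_a\big(\Gamma_{\nabla(zz^{\ts})}(\Phi,\Phi)\big),
\eeaa
while the preceding proposition gives
\beaa
\tilde\Gamma_{2}^z(\Phi,\Phi)=\tfrac{1}{2}L\Gamma_1^z(\Phi,\Phi)-\Gamma_1(\Delta_z\Phi,\Phi)-\Gamma_1(\Gamma_1^z(\log\Vol,\Phi),\Phi),
\eeaa
with $\Gamma_2^z(\Phi,\Phi)=\tfrac12 L\Gamma_1^z(\Phi,\Phi)-\Gamma_1^z(L\Phi,\Phi)$. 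The common term $\tfrac12 L\Gamma_1^z$ cancels from both sides of the desired identity, so the whole problem reduces to proving, in the weak form $\int(\cdot)\rho\,dx$, that the remaining pieces match.

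The key step is a weak-form integration-by-parts computation. After subtracting the shared $\tfrac12 L\Gamma_1^z$ term, I would reduce the claim to showing
\beaa
\int\Big[-\Gamma_1(\Delta_z\Phi,\Phi)-\Gamma_1(\Gamma_1^z(\log\Vol,\Phi),\Phi)\Big]\rho\,dx
=\int\Big[-\Gamma_1^z(L\Phi,\Phi)+\div^{\Vol}_z(\cdots)-\div^{\Vol}_a(\cdots)\Big]\rho\,dx.
\eeaa
Here I would exploit the duality relation $\int f L^*\rho\,dx=\int (Lf)\rho\,dx$ together with the invariant-measure identity $a\otimes\nabla a=-aa^{\ts}\nabla\log\Vol$ from Lemma \ref{lem}, which lets me convert the divergence operators $\div^{\Vol}_z,\div^{\Vol}_a$ (weighted by $\Vol$) into integrals against $\rho$. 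The mechanism is essentially the one already carried out in the proof of Lemma \ref{non-commutative lemma}: there the non-commutative difference $\Gamma_1(\log g,\Gamma_1^z(\log g,\log g))-\Gamma_1^z(\log g,\Gamma_1(\log g,\log g))$ was rewritten exactly as the difference of the two divergence terms $\div_z(\Gamma_{\nabla(aa^{\ts})})-\div_a(\Gamma_{\nabla(zz^{\ts})})$ in weak form. I would specialize that lemma to the transition-kernel-free case where the relevant measure is $\Vol$ rather than $\rho(s,x,\cdot)$, i.e. replace $\rho$ by $\Vol$ in the divergence weights, and identify $\log g$ with $\Phi=\delta\mathrm{D}(\rho)$.

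The concrete steps, in order, are: (i) expand both operators and cancel the shared $\tfrac12 L\Gamma_1^z$ contribution; (ii) insert $\nabla\Phi=\nabla\log\frac{\rho}{\Vol}$ so that $L_z\Phi=\Delta_z\Phi+(\nabla\log\Vol,zz^{\ts}\nabla\Phi)$ and its $a$-analog can be written as divergences against $\rho$; (iii) apply integration by parts to move derivatives onto $\rho\,\Vol^{-1}$ factors, using Lemma \ref{lem} to replace $\nabla\log\Vol$ contributions, thereby generating precisely the terms $\div^{\Vol}_z(\Gamma_{\nabla(aa^{\ts})})-\div^{\Vol}_a(\Gamma_{\nabla(zz^{\ts})})$; and (iv) invoke the weak identity of Lemma \ref{non-commutative lemma} (with $\Vol$ in place of $\rho(s,\cdot,\cdot)$) to close the equality. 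The main obstacle I anticipate is the bookkeeping of the second-order ``$\partial^2 f\,\partial f$'' terms: one must verify that the Hessian-type contributions hidden inside $\Gamma_1(\Delta_z\Phi,\Phi)$ and inside $\div^{\Vol}_z(\Gamma_{\nabla(aa^{\ts})})$ coincide after integration by parts, which is exactly where the $2G^{\ts}X$ and $\mathfrak{R}^{\Vol}$ split from Lemma \ref{div z- div a} must be reconciled; keeping the weak form throughout (rather than a pointwise identity) is what makes this reconciliation possible, since the problematic non-commutative term only vanishes upon integration against $\rho$.
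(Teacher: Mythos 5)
Your proposal is correct and follows essentially the same route as the paper: decompose $\tilde\Gamma_{2}^z=\Gamma_2^z+\big(\Gamma_1^z(L\Phi,\Phi)-\Gamma_1(L_z\Phi,\Phi)\big)$, cancel the shared $\tfrac12 L\Gamma_1^z$ term, and show by integration by parts against $\rho\,dx$ (using $\nabla\Phi=\nabla\log\tfrac{\rho}{\Vol}$ and $a\otimes\nabla a=-aa^{\ts}\nabla\log\Vol$) that the remainder equals $\div^{\Vol}_z(\Gamma_{\nabla(aa^{\ts})})-\div^{\Vol}_a(\Gamma_{\nabla(zz^{\ts})})$ in weak form. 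The paper carries out this integration by parts directly rather than citing Lemma \ref{non-commutative lemma} under the substitution $g\rho\mapsto\rho$, $\rho\mapsto\Vol$, but the computation is the same one, so this is only a difference in packaging.
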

\begin{proof}
To prove the proposition, we rewrite $\tilde\Gamma_{2}^z$ as follows. 
\begin{equation*}
\begin{split}
 \tilde\Gamma_{2}^z(\Phi, \Phi)=&\frac{1}{2}L(\Gamma_1^z(\Phi, \Phi))-\Gamma_{1}(L_z\Phi, \Phi)\\%-\Gamma_{1}(\Gamma_1^z(\log \pi, \Phi), \Phi)\\
 =&\frac{1}{2}L(\Gamma_1^z(\Phi, \Phi))-\Gamma_1^z(L\Phi,\Phi)\\
 &+\Gamma_1^z(L\Phi,\Phi)-\Gamma_{1}(L_z\Phi, \Phi). 
\end{split}
\end{equation*}
Here we need to prove the following equality. 

\noindent\textbf{Claim:}
\begin{equation*}
\begin{split}
&\int \Big\{\Gamma_1^z(L\Phi,\Phi)-\Gamma_{1}(L_z\Phi, \Phi)\Big\}\rho dx\\=&\int  \rho\Big\{\frac{1}{\Vol}\nabla\cdot\Big(zz^{\ts}\Vol\big(\nabla\Phi, \nabla(aa^{\ts})\nabla\Phi\big)\Big)-\frac{1}{\Vol}\nabla\cdot\Big(aa^{\ts}\Vol\big(\nabla\Phi, \nabla(zz^{\ts})\nabla\Phi\big)\Big)\Big\} dx.
\end{split}
\end{equation*}
\begin{proof}[Proof of Claim]
For simplicity of notation, let 
\begin{equation*}
L^*\rho=\nabla\cdot(aa^{\ts}{\Vol}\nabla\frac{\rho}{\Vol})=\nabla\cdot(\rho aa^{\ts}\nabla\log\frac{\rho}{\Vol})    
\end{equation*}
and 
\begin{equation*}
L^*_{z}\rho=\nabla\cdot(zz^{\ts}{\Vol}\nabla\frac{\rho}{\Vol})=\nabla\cdot(\rho zz^{\ts}\nabla\log\frac{\rho}{\Vol}).    
\end{equation*}
The following property is also used in the proof. For any smooth test function $f$ and $\Phi=\log\frac{\rho}{\Vol}$, then
\begin{equation*}
    \int L^*_z\rho f dx=-\int \Gamma_1^z(f,\Phi)\rho dx, \quad     \int L^*\rho f dx=-\int \Gamma_{1}(f,\Phi)\rho dx.
\end{equation*}
Notice $\Phi=\log\frac{\rho}{\Vol}$, then
\begin{equation*}
\begin{split}
&\int \Gamma_1^z(L\Phi, \Phi)\rho dx\\=&\int (\nabla (\nabla\cdot(aa^{\ts}\nabla\Phi)-(A,\nabla\Phi)), zz^{\ts}\nabla\Phi)\rho dx\\
=&\int (\nabla (\nabla\cdot(aa^{\ts}\nabla\Phi)), zz^{\ts}\nabla\Phi)\rho dx
-\int (\nabla (A,\nabla\Phi), zz^{\ts}\nabla\Phi)\rho dx.\\
& \hspace{2cm}(a1) \hspace{4cm} (a2)
\end{split}
\end{equation*}
Here 
\begin{equation*}
  \begin{split}
 (a1)=&\int \Big(\nabla (\nabla\cdot(aa^{\ts}\nabla\Phi)), zz^{\ts}\nabla\Phi\Big)\rho dx  \\
=&-\int \nabla\cdot(aa^{\ts}\nabla\Phi) \nabla\cdot(\rho zz^{\ts}\nabla\Phi) dx\\
=&-\int \nabla\cdot(aa^{\ts}\nabla\log\frac{\rho}{\Vol}) \nabla\cdot(\rho zz^{\ts}\nabla\log\frac{\rho}{\Vol}) dx\\
=&-\int \nabla\cdot(\frac{1}{\rho}aa^{\ts}\Vol\nabla\frac{\rho}{\Vol}) \nabla\cdot(\rho zz^{\ts}\nabla\log\frac{\rho}{\Vol}) dx
\end{split}
\end{equation*}

\begin{equation*}
\begin{split}
=&-\int \Big\{(\nabla\frac{1}{\rho}, aa^{\ts}\Vol\nabla\frac{\rho}{\Vol})+\frac{1}{\rho}\nabla\cdot(aa^{\ts}\Vol\nabla\frac{\rho}{\Vol})\Big\} \nabla\cdot(\rho zz^{\ts}\nabla\log\frac{\rho}{\Vol}) dx\\
=&-\int (\nabla\frac{1}{\rho}, aa^{\ts}\Vol\nabla\frac{\rho}{\Vol})L_z^*\rho dx-\int \frac{1}{\rho}L^*\rho L_z^*\rho dx\\ 
=&\int \frac{1}{\rho^2}(\nabla\rho, aa^{\ts}\Vol\nabla\frac{\rho}{\Vol})L_z^*\rho dx-\int \frac{1}{\rho}L^*\rho L_z^*\rho dx\\
=&\int (\nabla\log\rho, aa^{\ts}\nabla\log\frac{\rho}{\Vol})L_z^*\rho dx-\int \frac{1}{\rho}L^*\rho L_z^*\rho dx\\ 
=&\int (\nabla\log\frac{\rho}{\Vol}, aa^{\ts}\nabla\log\frac{\rho}{\Vol})L_z^*\rho dx\\
&+\int (\nabla\log{\Vol}, aa^{\ts}\nabla\log\frac{\rho}{\Vol})L_z^*\rho dx
-\int \frac{1}{\rho}L^*\rho L_z^*\rho dx\\
=&-\int \Big(\nabla\big(\nabla\log\frac{\rho}{\Vol}, aa^{\ts}\nabla\log\frac{\rho}{\Vol}\big), zz^{\ts}\nabla\log\frac{\rho}{\Vol}\Big)\rho dx\\
&-\int \Gamma_1^z((\nabla\log{\Vol}, aa^{\ts}\nabla\log\frac{\rho}{\Vol}), \log\frac{\rho}{\Vol})\rho dx-\int \frac{1}{\rho}L^*\rho L_z^*\rho dx\\
=& -\int \Big(\big(\nabla\log\frac{\rho}{\Vol}, \nabla(aa^{\ts})\nabla\log\frac{\rho}{\Vol}\big), zz^{\ts}\nabla\frac{\rho}{\Vol}\Big)\Vol dx\\
&-\int 2\nabla^2\log\frac{\rho}{\Vol}\Big(aa^{\ts}\nabla\log\frac{\rho}{\Vol}, zz^{\ts}\nabla\log\frac{\rho}{\Vol}\Big)\rho dx\\
&-\int \Gamma_1^z((\nabla\log{\Vol}, aa^{\ts}\nabla\log\frac{\rho}{\Vol}), \log\frac{\rho}{\Vol})\rho dx-\int \frac{1}{\rho}L^*\rho L_z^*\rho dx\\
=& \int \nabla\cdot\Big(zz^{\ts}\Vol\Big(\big(\nabla\log\frac{\rho}{\Vol}, \nabla(aa^{\ts})\nabla\log\frac{\rho}{\Vol}\big)\Big)\frac{1}{\Vol}\rho dx\\
&-\int 2\nabla^2\log\frac{\rho}{\Vol}\Big(aa^{\ts}\nabla\log\frac{\rho}{\Vol}, zz^{\ts}\nabla\log\frac{\rho}{\Vol}\Big)\rho dx\\
&-\int \Gamma_1^z((\nabla\log{\Vol}, aa^{\ts}\nabla\log\frac{\rho}{\Vol}), \log\frac{\rho}{\Vol})\rho dx-\int \frac{1}{\rho}L^*\rho L_z^*\rho dx.
\end{split}
\end{equation*}
Notice the fact 
\begin{equation*}
\begin{split}
(a2)=&-\int (\nabla (A,\nabla\Phi), zz^{\ts}\nabla\Phi)\rho dx\\
=&\int \Big(\nabla(\nabla\log\Vol, aa^{\ts}\nabla\Phi), zz^{\ts}\nabla \Phi\Big)\rho dx\\
=&\int \Gamma_{1}(\Gamma_1^z(\Phi, \Phi),\Phi)\rho dx.
\end{split}
\end{equation*}
Hence 
\begin{equation*}
\begin{split}
\int \Gamma_1^z(L\Phi,\Phi)\rho dx=&(a1)+(a2)\\
=& \int \nabla\cdot\Big(zz^{\ts}\Vol\Big(\big(\nabla\log\frac{\rho}{\Vol}, \nabla(aa^{\ts})\nabla\log\frac{\rho}{\Vol}\big)\Big)\frac{1}{\Vol}\rho dx\\
&-\int 2\nabla^2\log\frac{\rho}{\Vol}\Big(aa^{\ts}\nabla\log\frac{\rho}{\Vol}, zz^{\ts}\nabla\log\frac{\rho}{\Vol}\Big)\rho dx\\
&-\int \frac{1}{\rho}L^*\rho L_z^*\rho dx.
\end{split}
\end{equation*}
Similarly, by switching $a$ and $z$, we have
\begin{equation*}
\begin{split}
\int \Gamma_1(L_z\Phi,\Phi)\rho dx
=& \int \nabla\cdot\Big(aa^{\ts}\Vol\Big(\big(\nabla\log\frac{\rho}{\Vol}, \nabla(zz^{\ts})\nabla\log\frac{\rho}{\Vol}\big)\Big)\frac{1}{\Vol}\rho dx\\
&-\int 2\nabla^2\log\frac{\rho}{\Vol}\Big(aa^{\ts}\nabla\log\frac{\rho}{\Vol}, zz^{\ts}\nabla\log\frac{\rho}{\Vol}\Big)\rho dx\\
&-\int \frac{1}{\rho}L^*\rho L_z^*\rho dx.
\end{split}
\end{equation*}
Combining the above derivation, we finish the proof. 
\end{proof}
\end{proof}
\begin{rem}\label{iterative Gamma conditon}
From the proof, we can show the following identity: Denote $\Phi=\delta\mathrm{D}$, then
\begin{equation*}
\begin{split}
&\int \Big(\Gamma_1^z(L\Phi,\Phi)-\Gamma_{1}(L_z\Phi, \Phi)\Big)\rho dx\\
=&\int \Big(\Gamma_1(\Gamma_1^z(\Phi,\Phi),\Phi)-\Gamma_{1}^z(\Gamma_1(\Phi,\Phi),\Phi)\Big)\rho dx.
\end{split}
\end{equation*}
So it is clear that if the commutative assumption $\Gamma_{1}(\Gamma_{1}^z(\Phi,\Phi),\Phi)=\Gamma_1^z(\Gamma(\Phi,\Phi),\Phi )$ holds, the above quantity equals zero. In this case, 
\begin{equation*}
\int \Gamma_2^{z,\Vol}(\Phi, \Phi)\rho dx= \int \Gamma_2^z(\Phi, \Phi)\rho dx.
\end{equation*}
This means that under the commutative assumption, the generalized Gamma z calculus agrees with the classical one \cite{BaudoinGarofalo09} in the weak sense.  
\end{rem}

With the generalized Gamma z calculus, we are ready to prove the convergence properties and functional inequalities for degenerate drift diffusion processes. %Here we first prove the following fact.
\begin{prop}\label{prop:z log-soblov}
Suppose $\Gamma_2+\Gamma_2^{z,\Vol}\succeq \kappa (\Gamma_1+\Gamma_1^z)$ with $\kappa>0$. Denote $\rho_t$ as the solution of sub-Riemannian gradient flow \eqref{newFPE}, then
\begin{equation*}
\frac{d}{dt}\Big(I_a(\rho_t)+I_z(\rho_t)\Big)\leq -2\kappa \Big(I_a(\rho_t)+I_z(\rho_t)\Big).
\end{equation*}
In addition, the z-log-Sobolev inequalities holds:
\begin{equation*}
\int_{\hM^{n+m}}\rho\log\frac{\rho}{\Vol} dx\leq \frac{1}{2\kappa}
\mathrm{I}_{a,z}(\rho),
\end{equation*}
for any smooth density function $\rho$. 

Finally, 
    \begin{equation*}
 \int_{{\hM}^{n+m}} |\rho(t,x)-\pi(x)| dx\leq \sqrt{2  \mathrm{D}(\rho_0) }e^{-\kappa t}.
\end{equation*}
\end{prop}

\begin{proof}
Here the proof is very similar to the one in previous section. Again, consider the sub-Riemannian gradient flow in SDM. 
 $$\partial_t\rho_t=-\textrm{grad}_{W_a}\mathrm{D}(\rho_t).$$ 
We know that the log-Sobolev inequality relates to the ratio of $\frac{d}{dt}\mathrm{D}(\rho_t)$ and $\frac{d^2}{dt^2}\mathrm{D}(\rho_t)$. If we can not estimate a ratio $\kappa>0$, such that 
\begin{equation*}
\frac{d}{dt}\mathrm{I}_a(\rho_t)\leq -2\kappa \mathrm{I}_a(\rho_t),
\end{equation*}
We construct the other Lyapunov function 
\begin{equation*}
\mathrm{I}_{a,z}(\rho)=\mathrm{I}_a(\rho)+\mathrm{I}_z(\rho).    
\end{equation*}
Thus along the SDM gradient flow \eqref{newFPE}, we have
\begin{equation*}
\frac{d}{dt}\mathrm{I}_{a,z}(\rho_t)=-2\int \Big(\Gamma_2(\delta\mathrm{D}, \delta\mathrm{D}) +\Gamma_{2}^{z,\Vol}(\delta\mathrm{D},\delta\mathrm{D}) \Big)\rho_t dx.   \end{equation*}
If $\Gamma_2+\Gamma_2^{z,\Vol}\succeq \kappa(\Gamma_1+\Gamma_1^z)$, then 
\begin{equation*}\label{iq}
    \frac{d}{dt}\mathrm{I}_{a,z}(\rho_t)\leq -2\kappa \mathrm{I}_{a,z}(\rho_t). 
\end{equation*}
The convergence result follows directly from the Gronwall's equality. 

We next prove the $z$-log-Sobolev inequality. Since
\begin{equation*}
-\frac{d}{dt}\mathrm{D}(\rho_t)=\mathrm{I}_a(\rho_t)\leq \mathrm{I}_{a,z}(\rho_t),
\end{equation*}
then \eqref{iq} implies the fact that, denote $\rho_0=\rho$, then
\begin{equation*}
\begin{split}
    -\mathrm{I}_{a,z}(\rho)=&\int_0^{\infty}\frac{d}{dt}\mathrm{I}_{a,z}(\rho_t)dt\\
    \leq & -2\kappa \int_0^{\infty}\mathrm{I}_{a,z}(\rho_t)dt=-2\kappa \int_0^\infty\Big(\mathrm{I}_a(\rho_t)+\mathrm{I}_z(\rho_t)\Big)dt\\
    \leq &-2\kappa \int_0^\infty\mathrm{I}_a(\rho_t)dt\\
     =&-2\kappa  \int_0^{\infty}(-\frac{d}{dt}\mathrm{D}(\rho_t))dt\\
    =&-2\kappa\mathrm{D}(\rho). 
    \end{split}
\end{equation*}
Thus $\mathrm{I}_{a,z}(\rho)\geq 2\kappa \mathrm{D}(\rho)$. Hence we prove all results by the fact that $\mathfrak R\succeq \kappa(\Gamma_1+\Gamma_1^z)$ implies $\Gamma_2+\Gamma_2^{z,\Vol}\succeq \kappa(\Gamma_1+\Gamma_1^z)$. In a word, the generalized Gamma z calculus implies the z-log-Sobolev equality (zLSI):
$$\mathfrak R\succeq \kappa(\Gamma_1+\Gamma_1^z)  \Rightarrow \frac{d}{dt}\mathrm{I}_{a,z}(\rho_t)\leq -2\kappa \mathrm{I}_{a,z}(\rho_t)\Rightarrow \textrm{zLSI}.$$

We last prove the exponential convergence in $L^1$ distance. Notice that 
\begin{equation*}
\mathrm{D}_{\mathrm{KL}}(\rho_t\|\pi)\leq \frac{1}{2\lambda}\mathcal{I}_{a,z}(\rho_t\|\pi)\leq \frac{1}{2\lambda}e^{-2\lambda t}\mathcal{I}_{a,z}(\rho_0\|\pi).     
\end{equation*}
We apply an inequality between KL divergence and $L_1$ distance. In other words, 
\begin{equation*}
    \int_{\mathbb{R}^{n+m}} |\rho(t,x)-\pi(x)| dx\leq \sqrt{2\mathrm{D}_{\mathrm{KL}}(\rho\|\pi)}.
\end{equation*}
This finishes the proof. 
\end{proof}

\begin{rem}
It is worth mentioning with our derivation of Gamma z calculus is not a direct Hessian operator of entropy in SDM. In fact, it combines both the second order calculus in SDM and the property of $L^2$ Hessian operator of entropy. See similar relations in mean-field Bakry-{\'E}mery calculus \cite{Li2019_diffusion}. 
\end{rem}

\section{Generalized Gamma $z$ calculus}\label{sec: generalized gamma z}
In this section, we introduce the generalized Gamma $z$ calculus. For any smooth functions $f,g: \hR^{n+m}\rightarrow \hR$, the diffusion operator associated with SDE \eqref{SDE framework with drift} is denoted as
\beaa
Lf=\Delta_a f -A\nabla f+b\nabla f,
\eeaa
where we denote $A= a\otimes \nabla a$ and
\beaa
\Delta_af=\nabla \cdot (aa^{\ts}\nabla f).
\eeaa
When $b=0$, we denote the diffusion operator as
\beaa
\widetilde Lf&=&\Delta_a f -a\otimes \nabla a\nabla f.
\eeaa
We first define the Carr\'e de Champ operator $\Gamma_1$ associated with the above second order diffusion operators. It is easy to check that $\Delta_a$, $\widetilde L$, $L$ share the same $\Gamma_1$,
\bea\label{gamma 1 L}
\Gamma_{1}(f,g)=\la a^{\ts}\nabla f,a^{\ts}\nabla f \ra_{\hR^n}.
\eea
Similarly, we introduce the $\Gamma_1^z$ operator in the direction of  $z=(z_1,\cdots,z_m)$ below,
\bea \label{gamma 1 z}
\Gamma_1^z=\la z^{\ts}\nabla f , z^{\ts}\nabla f\ra_{\hR^m},
\eea
Next, we define the iterative $\Gamma_2$ and $\Gamma_2^z$ for operator $L$ ($\widetilde L$ resp.) below,
\bea\label{gamma 2}
\Gamma_{2, L}(f,f)=\frac{1}{2} L\Gamma_1^z(f,f)-\Gamma_1^z( Lf,f).
\eea
	\bea\label{gamma 2 z}
\Gamma_{2, L}^z(f,f)=\frac{1}{2} L\Gamma_1^z(f,f)-\Gamma_1^z( Lf,f).
\eea
\begin{defn}\label{def: generalized gamma 2 z with drift} 
	We define generalized Gamma $z$ for operator $L$ below,
	\bea
	\label{generalized gamma 2 z with drift}
	{\Gamma}_{2, L}^{z,\pi}(f,f)&=&\Gamma_{2, L}^z(f,f)+\div^{\pi}_z(\Gamma_{\nabla(aa^{\ts})}f,f )-\div^{\pi}_a(\Gamma_{\nabla(zz^{\ts})}f,f),
	\eea
For matrix $a\in\hR^{n\times(n+m)}$ and $z\in\hR^{m\times(n+m)}$, we denote the divergence operator as
	\beaa
\div^{\pi}_z(\Gamma_{\nabla(aa^{\ts})}f,f )&=&\frac{\nabla\cdot(zz^{\ts}\pi \Gamma_{\nabla(aa^{\ts})}(f,f)) }{\pi}, \\
\div^{\pi}_a(\Gamma_{\nabla(zz^{\ts})}f,f )&=&\frac{\nabla\cdot(aa^{\ts}\pi \Gamma_{\nabla(zz^{\ts}) }(f,f)) }{\pi},
	\eeaa
and
	\beaa
	 \Gamma_{\nabla(aa^{\ts)}}(f,f)=\la \nabla f,\nabla(aa^{\ts})\nabla f\ra,\quad \text{and}
	\quad \Gamma_{\nabla(zz^{\ts)}}(f,f)=\la \nabla f,\nabla(zz^{\ts})\nabla f\ra.
	\eeaa
Here we denote $\pi$ as the invariant distribution associated with the operator $L$.
\end{defn}
\begin{rem}
	In particular, we have the following local coordinates representation. 
\bea\label{gamma vector notation a z}
\la \nabla f,\nabla(aa^{\ts})\nabla f\ra&=&\la \nabla f,\frac{\pa }{\pa x_{\hat k}}(aa^{\ts})\nabla f\ra_{\hat k=1}^{n+m} = 2\la a^{\ts}\nabla f,\frac{\pa}{\pa x_{\hat k}}a^{\ts}\nabla f\ra_{\hat k=1}^{n+m}\nonumber \\
&=&\left(2\sum_{i=1}^n\sum_{\hat i,i'=1}^{n+m}\frac{\pa}{\pa x_{\hat k}}a^{\ts}_{i\hat i}\frac{\pa f}{\pa x_{\hat i}}a^{\ts}_{ii'}\frac{\pa f}{\pa x_{i'}}\right)_{\hat k=1}^{n+m},\nonumber \\
\la\nabla f,\nabla(zz^{\ts})\nabla f\ra&=&\left(2\sum_{j=1}^n\sum_{\hat j,j'=1}^{n+m}\frac{\pa}{\pa x_{\hat k}}z^{\ts}_{j\hat j}\frac{\pa f}{\pa x_{\hat j}}z^{\ts}_{jj'}\frac{\pa f}{\pa x_{j'}}\right)_{\hat k=1}^{n+m}.
\eea
\end{rem}
We first present the following key lemmas.
\begin{lem} \label{div z- div a}
\beaa
\div^{\pi}_z(\Gamma_{\nabla(aa^{\ts})}f,f )-\div^{\pi}_a(\Gamma_{\nabla(zz^{\ts})}f,f )&=&\mathfrak{R}^{\pi}(f,f)+2G^{\ts}X,
\eeaa
where $X$, $G$ are defined in Notation \ref{notation} and $\mathfrak R^{\pi}$ is defined in Definition \ref{def: Ricci curvature}. 
\end{lem}
\begin{lem} \label{gamma 2 estimates for Lp}
	\beaa
	&&\Gamma_{2,\widetilde L}(f,f)=X^{\ts}Q^{\ts}QX+2D^{\ts}QX+2C^{\ts}X+D^{\ts}D+\mathfrak{R}_a(\nabla f,\nabla f),
	\eeaa
where $Q,X,C,D$ are introduced in Notation \ref{notation} and $\mathfrak R_a$ is defined in Definition \ref{def: Ricci curvature}.
\end{lem}
\begin{lem}\label{thm: gamma z}
	\beaa
	\Gamma_{2,\widetilde L}^z(f,f)=X^{\ts}P^{\ts}PX+2E^{\ts}PX+2F^{\ts}X+E^{\ts}E+\mathfrak{R}_z(\nabla f,\nabla f).
	\eeaa
where $P,X,F,E$ are introduced in Notation \ref{notation} and $\mathfrak R_z$ is defined in Definition \ref{def: Ricci curvature}.
\end{lem}

We then have the following main Theorem. In order to distinguish the operator $L$ and $\widetilde L$, we rewrite Theorem \ref{thm1} as below and 
 { with some abuse of notation, we denote $\Gamma_{2}(f,f)=\Gamma_{2, L}(f,f)$ and ${\Gamma}_{2}^{z,\pi}(f,f)={\Gamma}_{2, L}^{z,\pi}(f,f)$.}
\begin{thm}\label{thm: generalized gamma 2 z with drift}[z-Bochner's formula] For smooth function $f:\hR^{n+m}\rightarrow\hR$, assume that Assumption \ref{assumption:main result} holds, then
	\beaa\Gamma_{2, L}(f,f)+{\Gamma}_{2, L}^{z,\pi}(f,f)&=&\|\mathfrak{Hess}_{a,z}f\|^2
+\mathfrak{R}(\nabla f,\nabla f),
\eeaa
where
\beaa 
\|\mathfrak{Hess}_{a,z}f\|^2&=&[X+\Lambda_1]^{\ts}Q^{\ts}Q[X+\Lambda_1]+[X+\Lambda_2]^{\ts}P^{\ts}P[X+\Lambda_2]\\
\mathfrak R(\nabla f,\nabla f)&=&-\Lambda_1^{\ts}Q^{\ts}Q\Lambda_1-\Lambda_2^{\ts}P^{\ts}P\Lambda_2+D^{\ts}D+E^{\ts}E\\
&&+\mathfrak R_{ab}(\nabla f,\nabla f)+\mathfrak R_{zb}(\nabla f,\nabla f)+\mathfrak R_{\pi}(\nabla f,\nabla f).
\eeaa 
All the terms are defined in Notation \ref{notation} and Definition \ref{def: Ricci curvature}.
\end{thm}
\begin{proof}
By Definition \ref{def: generalized gamma 2 z with drift} and formula \eqref{gamma 2}, \eqref{gamma 2 z}, we have 
\beaa
&&{\Gamma}_{2, L}(f,f)+\tilde{\Gamma}_{2, L}^{z,\pi}(f,f)\\
&=&{\Gamma}_{2, L}(f,f)+\Gamma_{2, L}^z(f,f)+\div^{\pi}_z(\Gamma_{\nabla(aa^{\ts})}f,f )-\div^{\pi}_a(\Gamma_{\nabla(zz^{\ts})}f,f ).
\eeaa
We compute the above terms explicitly in the following four steps.

\noindent \textbf{Step 1:}
\beaa
\Gamma_{2, L}(f,f)&=&\frac{1}{2}( L\Gamma_{1, L}(f,f)-2\Gamma_{1, L}( Lf,f))\\
	&=&\frac{1}{2} \Delta_a\Gamma_{1}(f,f)-\frac{1}{2}A\nabla \Gamma_{1}(f,f)+\frac{1}{2}b\nabla \Gamma_{1}(f,f)\\
	&&-\Gamma_{1}((\Delta_a -A\nabla+b\nabla)f,f)\\
	&=&\Gamma_{2,\widetilde L}(f,f)+[\frac{1}{2}b\nabla \Gamma_{1}(f,f)-\Gamma_{1}(b\nabla f,f)].
	\eeaa
The term $\Gamma_{2,\widetilde L}(f,f)$ follows from Lemma \ref{gamma 2 estimates for Lp}. We are left for the other two terms,
\beaa
\frac{1}{2}b\nabla \Gamma_{1}(f,f)&=&\frac{1}{2}\sum_{\hat k=1}^{n+m}b_{\hat k}\frac{\pa}{\pa x_{\hat k}}(\la a^{\ts}\nabla f,a^{\ts}\nabla f\ra_{\hR^n})\\
&=&\sum_{\hat k,\hat i=1}^{n+m}\sum_{i=1}^n(b_{\hat k}\frac{\pa a^{\ts}_{i\hat i}}{\pa x_{\hat k}}\frac{\pa f}{\pa x_{\hat i}}+b_{\hat k}a^{\ts}_{i\hat i}\frac{\pa^2f}{\pa x_{\hat k}\pa x_{\hat i}})(a^{\ts}\nabla f)_i,
\eeaa
and
\beaa
-\Gamma_{1}(b\nabla f,f)&=&-\la a^{\ts}\nabla(b\nabla f),a^{\ts}\nabla f\ra _{\hR^n}\\
&=&-\sum_{i=1}^n\sum_{\hat k,\hat i=1}^{n+m}(a^{\ts}_{i\hat i}\frac{\pa b_{\hat k}}{\pa x_{\hat i}}\frac{\pa f}{\pa x_{\hat k}} +a^{\ts}_{i\hat i}b_{\hat k}\frac{\pa^2f}{\pa x_{\hat i}\pa x_{\hat k}}) (a^{\ts}\nabla f)_i.
\eeaa

\noindent\textbf{Step 2:}
 \beaa
\Gamma_{2, L}^z(f,f)&=&\frac{1}{2}( L\Gamma_1^z(f,f)-2\Gamma_1^z( Lf,f))\\
	&=&\frac{1}{2} \Delta_a\Gamma_1^z(f,f)-\frac{1}{2}A\nabla \Gamma_1^z(f,f)+\frac{1}{2}b\nabla \Gamma_1^z(f,f)\\
	&&-\Gamma_1^z((\Delta_a -A\nabla+b\nabla)f,f)\\
	&=&\Gamma_{2,\widetilde L}^z(f,f)+[\frac{1}{2}b\nabla \Gamma_1^z(f,f)-\Gamma_1^z(b\nabla f,f)].
	\eeaa
 The term $\Gamma_{2,\widetilde L}^z(f,f)$ follows from Lemma \ref{thm: gamma z}. We are left to compute the last two terms,
\beaa
\frac{1}{2}b\nabla \Gamma_1^z(f,f)&=&-\frac{1}{2}\sum_{\hat k=1}^{n+m}b_{\hat k}\frac{\pa}{\pa x_{\hat k}}(\la z^{\ts}\nabla f,z^{\ts}\nabla f\ra_{\hR^m})\\
&=&\sum_{\hat k,\hat i=1}^{n+m}\sum_{i=1}^m(b_{\hat k}\frac{\pa z^{\ts}_{i\hat i}}{\pa x_{\hat k}}\frac{\pa f}{\pa x_{\hat i}}+b_{\hat k}z^{\ts}_{i\hat i}\frac{\pa^2f}{\pa x_{\hat k}\pa x_{\hat i}})(z^{\ts}\nabla f)_i,
\eeaa
and
\beaa
-\Gamma_1^z(b\nabla f,f)&=&-\la z^{\ts}\nabla(b\nabla f),z^{\ts}\nabla f\ra _{\hR^n}\\
&=&-\sum_{i=1}^m\sum_{\hat k,\hat i=1}^{n+m}(z^{\ts}_{i\hat i}\frac{\pa b_{\hat k}}{\pa x_{\hat i}}\frac{\pa f}{\pa x_{\hat k}} +z^{\ts}_{i\hat i}b_{\hat k}\frac{\pa^2f}{\pa x_{\hat i}\pa x_{\hat k}}) (z^{\ts}\nabla f)_i.
\eeaa

\noindent\textbf{Step 3:}
Following Lemma  \ref{div z- div a}, which will be proved shortly next section, we have 
\beaa
\div^{\pi}_z(\Gamma_{\nabla(aa^{\ts})}f,f )-\div^{\pi}_a(\Gamma_{\nabla(zz^{\ts})}f,f )&=&\mathfrak{R}^{\pi}(f,f)+2G^{\ts}X,
\eeaa
where $X$, $G$ are defined in Notation \ref{notation} and $\mathfrak R^{\pi}$ is defined in Definition \ref{def: Ricci curvature}. 

\noindent\textbf{Step 4:}
Combining the above terms $\Gamma_{2,\widetilde L}(f,f)$ in Lemma \ref{gamma 2 estimates for Lp}, $\Gamma_{2,\widetilde L}^z(f,f)$ in Lemma \ref{thm: gamma z}, and $\mathfrak{R}^{\pi}(f,f)+2G^{\ts}X$, we have
\beaa
&&\Gamma_{2,\widetilde L}(f,f)+\Gamma^z_{2, \widetilde L}(f,f)+\mathfrak{R}^{\pi}(f,f)+2G^{\ts}X\\
&=&{X^{\ts}P^TPX+2E^TPX+2F^TX+E^TE}\\
	&&+X^TQ^TQX+2D^TQX+2C^TX+D^TD+2G^{\ts}X\\
	&&+\mathfrak{R}_{a}(\nabla f,\nabla f) +\mathfrak{R}_z(\nabla f,\nabla f)+\mathfrak{R}_{\pi}(\nabla f,\nabla f)\\
	&=&X^T[P^{\ts}P+Q^{\ts}Q]X+2[G^{\ts}+F^{\ts}+C^{\ts}]X+2[E^{\ts}P+D^{\ts}Q]X+D^{\ts}D+E^{\ts}E\\
	&&+\mathfrak{R}_{a}(\nabla f,\nabla f) +\mathfrak{R}_z(\nabla f,\nabla f)+\mathfrak{R}_{\pi}(\nabla f,\nabla f).
		\eeaa
Assuming that Assumption \ref{assumption:main result}  is satisfied, we get 
\beaa
&&\Gamma_{2,\widetilde L}(f,f)+\Gamma^z_{2, \widetilde L}(f,f)+\mathfrak{R}^{\pi}(f,f)+2G^{\ts}X\\
&=&[X+\Lambda_1]^{\ts}Q^{\ts}Q[X+\Lambda_1]+[X+\Lambda_2]^{\ts}P^{\ts}P[X+\Lambda_2]\\
&&+\mathfrak{R}_{a}(\nabla f,\nabla f) +\mathfrak{R}_z(\nabla f,\nabla f)+\mathfrak{R}^{\pi}(\nabla f,\nabla f)-\Lambda_1^{\ts}Q^{\ts}Q\Lambda_1-\Lambda_2^{\ts}P^{\ts}P\Lambda_2+D^{\ts}D+E^{\ts}E.
\eeaa
Adding the drift terms from \textbf{Step 1} and \textbf{Step 2}, we get $\mathfrak R_{ab}$ and $\mathfrak R_{zb}$, which finishes the proof.
	\qed 
\end{proof}

\subsection{Proof of Lemma \ref{div z- div a}}
\begin{lem} 
\bea
\div^{\pi}_z(\Gamma_{\nabla(aa^{\ts})}f,f )-\div^{\pi}_a(\Gamma_{\nabla(zz^{\ts})}f,f )&=&\mathfrak{R}^{\pi}(f,f)+2G^{\ts}X,
\eea
where $X$, $G$ are defined in Notation \ref{notation} and $\mathfrak R^{\pi}$ is defined in Definition \ref{def: Ricci curvature}. 
\end{lem}

\begin{proof}
For the first term in the above lemma, we have
\beaa
&&\div^{\pi}_z(\Gamma_{\nabla(aa^{\ts})}f,f )\\
&=&\frac{\nabla\cdot(zz^{\ts}\pi \Gamma_{\nabla(aa^{\ts)}}(f,f)) }{\pi}\\
&=&\sum_{ k'=1}^{n+m} \frac{1}{\pi}\frac{\pa }{\pa x_{k'}} \left[\sum_{k=1}^mz_{k'k}\left(\pi\sum_{\hat k=1}^{n+m} z^{\ts}_{k\hat k} \left(\Gamma_{\nabla(aa^{\ts)}}(f,f))\right)_{\hat k} \right) \right]\\
&=&\sum_{ k'=1}^{n+m}\sum_{k=1}^m\left[\frac{\pa }{\pa x_{k'}} z_{k'k}\left(\sum_{\hat k=1}^{n+m} z^{\ts}_{k\hat k} \left(\Gamma_{\nabla(aa^{\ts)}}(f,f))\right)_{\hat k} \right)+z_{k'k}\frac{\pa }{\pa x_{k'}}\left(\sum_{\hat k=1}^{n+m} z^{\ts}_{k\hat k} \left(\Gamma_{\nabla(aa^{\ts)}}(f,f))\right)_{\hat k} \right) \right]  \\
&&+\sum_{ k'=1}^{n+m} \sum_{k=1}^m\frac{\pa }{\pa x_{k'}}\log\pi \left[z_{k'k}\sum_{\hat k=1}^{n+m} z^{\ts}_{k\hat k} \left(\Gamma_{\nabla(aa^{\ts)}}(f,f))\right)_{\hat k} \right]\\
&=&\sum_{ k'=1}^{n+m}\sum_{k=1}^m\left[\frac{\pa }{\pa x_{k'}} z^{\ts}_{kk'}\left(\sum_{\hat k=1}^{n+m} z^{\ts}_{k\hat k} \left(\Gamma_{\nabla(aa^{\ts)}}(f,f))\right)_{\hat k} \right)+z^{\ts}_{kk'}\frac{\pa }{\pa x_{k'}}\left(\sum_{\hat k=1}^{n+m} z^{\ts}_{k\hat k} \left(\Gamma_{\nabla(aa^{\ts)}}(f,f))\right)_{\hat k} \right) \right]  \\
&&+ \sum_{k=1}^m (z^{\ts}\nabla\log\pi)_k \left[\sum_{\hat k=1}^{n+m} z^{\ts}_{k\hat k} \left(\Gamma_{\nabla(aa^{\ts)}}(f,f))\right)_{\hat k} \right],
\eeaa
where $\Gamma_{\nabla(aa^{\ts)}}(f,f))_{\hat k}$ is defined in \eqref{gamma vector notation a z}. Plugging in \eqref{gamma vector notation a z}, we further get 
\bea\label{div a gamma z}
&&\div^{\pi}_z(\Gamma_{\nabla(aa^{\ts})}f,f )\nonumber \\
&=&\sum_{ k'=1}^{n+m}\sum_{k=1}^m\left[\frac{\pa }{\pa x_{k'}} z^{\ts}_{kk'}\left(\sum_{\hat k=1}^{n+m} z^{\ts}_{k\hat k} \left(2\sum_{i=1}^n\sum_{\hat i,i'=1}^{n+m}\frac{\pa}{\pa x_{\hat k}}a^{\ts}_{i\hat i}\frac{\pa f}{\pa x_{\hat i}}a^{\ts}_{ii'}\frac{\pa f}{\pa x_{i'}}\right) \right)\right]\nonumber\\
&&+\sum_{ k'=1}^{n+m}\sum_{k=1}^m \left[z^{\ts}_{kk'}\frac{\pa }{\pa x_{k'}}\left(\sum_{\hat k=1}^{n+m} z^{\ts}_{k\hat k} \left(2\sum_{i=1}^n\sum_{\hat i,i'=1}^{n+m}\frac{\pa}{\pa x_{\hat k}}a^{\ts}_{i\hat i}\frac{\pa f}{\pa x_{\hat i}}a^{\ts}_{ii'}\frac{\pa f}{\pa x_{i'}}\right) \right) \right] \nonumber \\
&&+ \sum_{k=1}^m (z^{\ts}\nabla\log\pi)_k \left[\sum_{\hat k=1}^{n+m} z^{\ts}_{k\hat k}\left(2\sum_{i=1}^n\sum_{\hat i,i'=1}^{n+m}\frac{\pa}{\pa x_{\hat k}}a^{\ts}_{i\hat i}\frac{\pa f}{\pa x_{\hat i}}a^{\ts}_{ii'}\frac{\pa f}{\pa x_{i'}}\right) \right]\nonumber
\eea 
\bea
&=&2\sum_{k=1}^m \sum_{i=1}^n\sum_{k',\hat k,\hat i,i'=1}^{n+m}\left[\frac{\pa }{\pa x_{k'}} z^{\ts}_{kk'} z^{\ts}_{k\hat k}\frac{\pa}{\pa x_{\hat k}}a^{\ts}_{i\hat i}\frac{\pa f}{\pa x_{\hat i}}a^{\ts}_{ii'}\frac{\pa f}{\pa x_{i'}}\right]\cdots \cS^z_1\nonumber\\
&&+2\sum_{k=1}^m \sum_{i=1}^n\sum_{k',\hat k,\hat i,i'=1}^{n+m}\left[z^{\ts}_{kk'}\frac{\pa }{\pa x_{k'}}\left( z^{\ts}_{k\hat k} \frac{\pa}{\pa x_{\hat k}}a^{\ts}_{i\hat i}\frac{\pa f}{\pa x_{\hat i}}a^{\ts}_{ii'}\frac{\pa f}{\pa x_{i'}}\right) \right]\cdots \cS^z_2\nonumber \\
&&+2\sum_{k=1}^m \sum_{i=1}^n\sum_{\hat k,\hat i,i'=1}^{n+m}(z^{\ts}\nabla\log\pi)_k \left[ z^{\ts}_{k\hat k}\frac{\pa}{\pa x_{\hat k}}a^{\ts}_{i\hat i}\frac{\pa f}{\pa x_{\hat i}}a^{\ts}_{ii'}\frac{\pa f}{\pa x_{i'}} \right]\cdots \cS^z_3 \nonumber\\
&=&\cS^z_1+\cS^z_2+\cS^z_3.
\eea
By further expanding  $\cS^z_2$, we get
\beaa
\cS^z_2&=&2\sum_{k=1}^m \sum_{i=1}^n\sum_{k',\hat k,\hat i,i'=1}^{n+m}\left[z^{\ts}_{kk'}\frac{\pa }{\pa x_{k'}}\left( z^{\ts}_{k\hat k} \frac{\pa}{\pa x_{\hat k}}a^{\ts}_{i\hat i}\frac{\pa f}{\pa x_{\hat i}}a^{\ts}_{ii'}\frac{\pa f}{\pa x_{i'}}\right) \right] \\
&=&2\sum_{k=1}^m \sum_{i=1}^n\sum_{k',\hat k,\hat i,i'=1}^{n+m}\left[z^{\ts}_{kk'}\frac{\pa }{\pa x_{k'}} z^{\ts}_{k\hat k} \frac{\pa}{\pa x_{\hat k}}a^{\ts}_{i\hat i}\frac{\pa f}{\pa x_{\hat i}}a^{\ts}_{ii'}\frac{\pa f}{\pa x_{i'}}+z^{\ts}_{kk'} z^{\ts}_{k\hat k} \frac{\pa^2}{\pa x_{k'}\pa x_{\hat k}}a^{\ts}_{i\hat i}\frac{\pa f}{\pa x_{\hat i}}a^{\ts}_{ii'}\frac{\pa f}{\pa x_{i'}} \right.\\
&&\quad\quad\quad\quad\quad\quad\quad\quad +z^{\ts}_{kk'} z^{\ts}_{k\hat k} \frac{\pa}{\pa x_{\hat k}}a^{\ts}_{i\hat i}\frac{\pa^2 f}{\pa x_{k'}\pa x_{\hat i}}a^{\ts}_{ii'}\frac{\pa f}{\pa x_{i'}}+z^{\ts}_{kk'} z^{\ts}_{k\hat k} \frac{\pa}{\pa x_{\hat k}}a^{\ts}_{i\hat i}\frac{\pa f}{\pa x_{\hat i}}a^{\ts}_{ii'}\frac{\pa^2 f}{\pa x_{k'}\pa x_{i'}}\\
&&\left.\quad\quad\quad\quad\quad\quad\quad\quad+z^{\ts}_{kk'} z^{\ts}_{k\hat k} \frac{\pa}{\pa x_{\hat k}}a^{\ts}_{i\hat i}\frac{\pa f}{\pa x_{\hat i}}\frac{\pa }{\pa x_{k'}}a^{\ts}_{ii'}\frac{\pa f}{\pa x_{i'}} \right].
\eeaa
 Similarly, we get 
\bea\label{div z gamma a}
&&\div^{\pi}_a(\Gamma_{\nabla(zz^{\ts})}f,f )\nonumber\\
&=&\sum_{ l'=1}^{n+m}\sum_{l=1}^n\left[\frac{\pa }{\pa x_{l'}} a^{\ts}_{ll'}\left(\sum_{\hat l=1}^{n+m} a^{\ts}_{l\hat l} \left(\Gamma_{\nabla(zz^{\ts)}}(f,f))\right)_{\hat l} \right)+a^{\ts}_{ll'}\frac{\pa }{\pa x_{l'}}\left(\sum_{\hat l=1}^{n+m} a^{\ts}_{l\hat l} \left(\Gamma_{\nabla(zz^{\ts)}}(f,f))\right)_{\hat l} \right) \right]  \nonumber\\
&&+ \sum_{l=1}^n(a^{\ts}\nabla\log\pi)_l \left[\sum_{\hat l=1}^{n+m}\left( a^{\ts}_{l\hat l} \Gamma_{\nabla(zz^{\ts)}}(f,f))\right)_{\hat l} \right]\nonumber\\
&=&2\sum_{j=1}^m\sum_{l=1}^n\sum_{ l',\hat l,\hat j,j'=1}^{n+m}\left[\frac{\pa }{\pa x_{l'}} a^{\ts}_{ll'} a^{\ts}_{l\hat l} \frac{\pa}{\pa x_{\hat l}}z^{\ts}_{j\hat j}\frac{\pa f}{\pa x_{\hat j}}z^{\ts}_{jj'}\frac{\pa f}{\pa x_{j'}} \right]\cdots \cS^a_1 \nonumber\\
&&+ 2\sum_{j=1}^m\sum_{l=1}^n\sum_{ l',\hat l,\hat j,j'=1}^{n+m}\left[a^{\ts}_{ll'}\frac{\pa }{\pa x_{l'}}\left(a^{\ts}_{l\hat l} \frac{\pa}{\pa x_{\hat l}}z^{\ts}_{j\hat j}\frac{\pa f}{\pa x_{\hat j}}z^{\ts}_{jj'}\frac{\pa f}{\pa x_{j'}} \right) \right] \cdots \cS^a_2\nonumber \\
&&+2\sum_{j=1}^m\sum_{l=1}^n\sum_{\hat l,\hat j,j'=1}^{n+m}(a^{\ts}\nabla\log\pi)_l \left[ a^{\ts}_{l\hat l}\frac{\pa}{\pa x_{\hat l}}z^{\ts}_{j\hat j}\frac{\pa f}{\pa x_{\hat j}}z^{\ts}_{jj'}\frac{\pa f}{\pa x_{j'}} \right]\cdots \cS^a_3\nonumber\\
&=&\cS^a_1+\cS^a_2+\cS^a_3,
\eea
where we also get  
\beaa
\cS_2^a&=&2\sum_{j=1}^m\sum_{l=1}^n\sum_{ l',\hat l,\hat j,j'=1}^{n+m}\left[a^{\ts}_{ll'}\frac{\pa }{\pa x_{l'}}\left(a^{\ts}_{l\hat l} \frac{\pa}{\pa x_{\hat l}}z^{\ts}_{j\hat j}\frac{\pa f}{\pa x_{\hat j}}z^{\ts}_{jj'}\frac{\pa f}{\pa x_{j'}} \right) \right] 
\eeaa 
\beaa 
&=&2\sum_{j=1}^m\sum_{l=1}^n\sum_{ l',\hat l,\hat j,j'=1}^{n+m}\left[ a^{\ts}_{ll'}\frac{\pa }{\pa x_{l'}}a^{\ts}_{l\hat l} \frac{\pa}{\pa x_{\hat l}}z^{\ts}_{j\hat j}\frac{\pa f}{\pa x_{\hat j}}z^{\ts}_{jj'}\frac{\pa f}{\pa x_{j'}} +a^{\ts}_{ll'}a^{\ts}_{l\hat l} \frac{\pa^2}{\pa x_{l'}\pa x_{\hat l}}z^{\ts}_{j\hat j}\frac{\pa f}{\pa x_{\hat j}}z^{\ts}_{jj'}\frac{\pa f}{\pa x_{j'}}  \right.\\
&&\quad\quad\quad\quad\quad\quad\quad\quad+a^{\ts}_{ll'}a^{\ts}_{l\hat l} \frac{\pa}{\pa x_{\hat l}}z^{\ts}_{j\hat j}\frac{\pa^2 f}{\pa x_{l'}\pa x_{\hat j}}z^{\ts}_{jj'}\frac{\pa f}{\pa x_{j'}} +a^{\ts}_{ll'}a^{\ts}_{l\hat l} \frac{\pa}{\pa x_{\hat l}}z^{\ts}_{j\hat j}\frac{\pa f}{\pa x_{\hat j}}z^{\ts}_{jj'}\frac{\pa^2 f}{\pa x_{l'}\pa x_{j'}} \\
&&\left.\quad\quad\quad\quad\quad\quad\quad\quad+a^{\ts}_{ll'}a^{\ts}_{l\hat l} \frac{\pa}{\pa x_{\hat l}}z^{\ts}_{j\hat j}\frac{\pa f}{\pa x_{\hat j}}\frac{\partial}{\pa x_{l'}}z^{\ts}_{jj'}\frac{\pa f}{\pa x_{j'}}  \right].
\eeaa
Combining all the terms above, we have 
\beaa
\div^{\pi}_z(\Gamma_{\nabla(aa^{\ts})}f,f )-\div^{\pi}_a(\Gamma_{\nabla(zz^{\ts})}f,f )
= \cS^z_1+\cS^z_2+\cS^z_3-(\cS^a_1+\cS^a_2+\cS^a_3).
\eeaa
By direct computations, we separate the above terms into two groups based on ``$\pa f\pa f$" and ``$\pa^2f\pa f$". We denote $\mathfrak{R}^{\pi}(f,f)$ as the sum of all ``$\pa f\pa f$" terms and denote $2G^{\ts}X$ as the sum of all ``$\pa^2f\pa f$" terms. Switching indices for the terms in $2G^{\ts}X$ to match $\frac{\pa^2 f}{\pa x_{\hat i}\pa x_{\hat j}}$, we get the following 
\beaa
&&2G^{\ts}X\\
&=&2\sum_{k=1}^m \sum_{i=1}^n\sum_{k',\hat k,\hat i,i'=1}^{n+m}\left[z^{\ts}_{kk'} z^{\ts}_{k\hat k} \frac{\pa}{\pa x_{\hat k}}a^{\ts}_{i\hat i}\frac{\pa^2 f}{\pa x_{k'}\pa x_{\hat i}}a^{\ts}_{ii'}\frac{\pa f}{\pa x_{i'}}+z^{\ts}_{kk'} z^{\ts}_{k\hat k} \frac{\pa}{\pa x_{\hat k}}a^{\ts}_{i\hat i}\frac{\pa f}{\pa x_{\hat i}}a^{\ts}_{ii'}\frac{\pa^2 f}{\pa x_{k'}\pa x_{i'}}\right]\nonumber\\
&&-2\sum_{j=1}^m\sum_{l=1}^n\sum_{ l',\hat l,\hat j,j'=1}^{n+m}\left[ a^{\ts}_{ll'}a^{\ts}_{l\hat l} \frac{\pa}{\pa x_{\hat l}}z^{\ts}_{j\hat j}\frac{\pa^2 f}{\pa x_{l'}\pa x_{\hat j}}z^{\ts}_{jj'}\frac{\pa f}{\pa x_{j'}} +a^{\ts}_{ll'}a^{\ts}_{l\hat l} \frac{\pa}{\pa x_{\hat l}}z^{\ts}_{j\hat j}\frac{\pa f}{\pa x_{\hat j}}z^{\ts}_{jj'}\frac{\pa^2 f}{\pa x_{l'}\pa x_{j'}} \right]\nonumber\\
&=&2\sum_{j=1}^m \sum_{i=1}^n\sum_{j',\hat j,\hat i,i'=1}^{n+m}\left[ z^{\ts}_{jj'} z^{\ts}_{j\hat j} \frac{\pa}{\pa x_{\hat j}}a^{\ts}_{i\hat i}\frac{\pa^2 f}{\pa x_{j'}\pa x_{\hat i}}a^{\ts}_{ii'}\frac{\pa f}{\pa x_{i'}}+z^{\ts}_{jj'} z^{\ts}_{j\hat j} \frac{\pa}{\pa x_{\hat j}}a^{\ts}_{i\hat i}\frac{\pa f}{\pa x_{\hat i}}a^{\ts}_{ii'}\frac{\pa^2 f}{\pa x_{j'}\pa x_{i'}}\right]\nonumber\\
&&-2\sum_{j=1}^m\sum_{i=1}^n\sum_{ i',\hat i,\hat j,j'=1}^{n+m}\left[ a^{\ts}_{ii'}a^{\ts}_{i\hat i} \frac{\pa}{\pa x_{\hat i}}z^{\ts}_{j\hat j}\frac{\pa^2 f}{\pa x_{i'}\pa x_{\hat j}}z^{\ts}_{jj'}\frac{\pa f}{\pa x_{j'}} +a^{\ts}_{ii'}a^{\ts}_{i\hat i} \frac{\pa}{\pa x_{\hat i}}z^{\ts}_{j\hat j}\frac{\pa f}{\pa x_{\hat j}}z^{\ts}_{jj'}\frac{\pa^2 f}{\pa x_{i'}\pa x_{j'}} \right]\nonumber\\
&=&2\sum_{j=1}^m \sum_{i=1}^n\sum_{j',\hat j,\hat i,i'=1}^{n+m}\left[ z^{\ts}_{j\hat j} z^{\ts}_{j j'} \frac{\pa}{\pa x_{ j'}}a^{\ts}_{i\hat i}\frac{\pa^2 f}{\pa x_{\hat j}\pa x_{\hat i}}a^{\ts}_{ii'}\frac{\pa f}{\pa x_{i'}}+z^{\ts}_{j \hat j} z^{\ts}_{j j'} \frac{\pa}{\pa x_{ j'}}a^{\ts}_{i i'}\frac{\pa f}{\pa x_{ i'}}a^{\ts}_{i\hat i}\frac{\pa^2 f}{\pa x_{\hat j}\pa x_{\hat i}}\right]\nonumber\\
&&-2\sum_{j=1}^m\sum_{i=1}^n\sum_{ i',\hat i,\hat j,j'=1}^{n+m}\left[ a^{\ts}_{i\hat i}a^{\ts}_{i i'} \frac{\pa}{\pa x_{ i'}}z^{\ts}_{j\hat j}\frac{\pa^2 f}{\pa x_{\hat i}\pa x_{\hat j}}z^{\ts}_{jj'}\frac{\pa f}{\pa x_{j'}} +a^{\ts}_{i\hat i}a^{\ts}_{ii'} \frac{\pa}{\pa x_{ i'}}z^{\ts}_{j j'}\frac{\pa f}{\pa x_{ j'}}z^{\ts}_{j\hat j}\frac{\pa^2 f}{\pa x_{\hat i}\pa x_{\hat j}} \right]\nonumber\\
&=&2\sum_{\hat i,\hat j=1}^{n+m}\frac{\pa^2 f}{\pa x_{\hat i}\pa x_{\hat j}} \left[ \sum_{i=1}^n\sum_{j=1}^m\sum_{j',\hat j,i',\hat i=1}^{n+m} \left[\left( z^{\ts}_{j\hat j} z^{\ts}_{j j'} \frac{\pa}{\pa x_{ j'}}a^{\ts}_{i\hat i}a^{\ts}_{ii'}\frac{\pa f}{\pa x_{i'}}+z^{\ts}_{j \hat j} z^{\ts}_{j j'} \frac{\pa}{\pa x_{ j'}}a^{\ts}_{i i'}\frac{\pa f}{\pa x_{ i'}}a^{\ts}_{i\hat i}\right)\right.\right.\\
&&\left.\left.\quad\quad\quad\quad\quad\quad\quad\quad\quad\quad\quad-\left(a^{\ts}_{i\hat i}a^{\ts}_{i i'} \frac{\pa}{\pa x_{ i'}}z^{\ts}_{j\hat j}z^{\ts}_{jj'}\frac{\pa f}{\pa x_{j'}} +a^{\ts}_{i\hat i}a^{\ts}_{ii'} \frac{\pa}{\pa x_{ i'}}z^{\ts}_{j j'}\frac{\pa f}{\pa x_{ j'}}z^{\ts}_{j\hat j}\right) \right]\right].
\eeaa
The first equality follows from the quantities we obtain previously, the second equality follows from switching $``k"$ to $``j"$ and $``l"$ to $``i"$, the third equality follows from switching between $``i'"$ and $``\hat i"$, $``j'"$ and $``\hat j"$. Thus the proof is completed.
\qed
\end{proof}
%%%%%%%%%%%%%%%%

%%%%%%%%%%%%%%%%
\subsection{Proof of Lemma \ref{gamma 2 estimates for Lp}}
From now on, we keep the following notation: $a^{\ts}\nabla f=\sum_{i=1}^n\sum_{\hat i=1}^{n+m}a^{\ts}_{i\hat i}{\frac{\partial}{\partial x_{\hat i}}}f$. Furthermore, we fix the notation for $a,a^{\ts}$ with relation $a_{\hat ii}=a^{\ts}_{i\hat i}$ for $i=1,\cdots,n$ and $\hat i=1,\cdots,n+m.$ Here we denote $a^{\ts}_{i\hat i}:=(a^{\ts})_{i\hat i}.$
Recall that, we define
\beaa 
\Gamma_{2,\widetilde L}(f,f)=\frac{1}{2}\Big( \widetilde L\Gamma_{1}(f,f)-2\Gamma_{1}(\widetilde Lf,f)\Big).
\eeaa

Next, we are ready to prove the following lemma. 

\begin{lem}
	\beaa
	&&\Gamma_{2,\widetilde L}(f,f)=X^{\ts}Q^{\ts}QX+2D^{\ts}QX+2C^{\ts}X+D^{\ts}D+\mathfrak{R}_a(\nabla f,\nabla f),
	\eeaa
where $Q,X,C,D$ are introduced in Notation \ref{notation} and $\mathfrak R_a$ is defined in Definition \ref{def: Ricci curvature}.
\end{lem}
\begin{proof}
	We plug in the operator $\widetilde L$ to our definition for $\Gamma_2$, 
	\beaa
	\Gamma_{2,\widetilde L}(f,f)&=&\frac{1}{2} \Delta_a\Gamma_{1}(f,f)-\frac{1}{2}A\nabla \Gamma_{1}(f,f)-\Gamma_{1}((\Delta_a -A\nabla)f,f)\\
	&=&\frac{1}{2}\Delta_a \Gamma_{1}(f,f)-\Gamma_{1}(\Delta_a f,f)-\frac{1}{2}A\nabla \Gamma_{1}(f,f)+\Gamma_{1}(A\nabla f,f).
	\eeaa
Now we compute the last two terms of the above equation. With $A=a\otimes\nabla a$, we get
\beaa
-\frac{1}{2}A\nabla \Gamma_{1}(f,f)&=& -\frac{1}{2}\sum_{\hat k=1}^{n+m}A_{\hat k}\nabla_{\frac{\partial}{\partial x_{\hat k}}}\la a^{\ts}\nabla f,a^{\ts}\nabla f\ra_{\hR^n}\\
&=&-\sum_{\hat k=1}^{n+m}\la A_{\hat k} (\nabla_{\frac{\partial}{\partial x_{\hat k}}} a^{\ts})\nabla f,a^{\ts}\nabla f\ra_{\hR^n}-\sum_{\hat k=1}^{n+m}\la A_{\hat k} a^{\ts}(\nabla_{\frac{\partial}{\partial x_{\hat k}}} \nabla f),a^{\ts}\nabla f\ra_{\hR^n}\\
&=&\textbf{J}_1+\textbf{J}_2,
\eeaa
and 
\beaa
\Gamma_{1}(A\nabla f,f)&=&\la a^{\ts}\nabla (\sum_{\hat k=1}^{n+m}A_{\hat k}\nabla_{\frac{\partial}{\partial x_{\hat k}}} f ) ,a^{\ts}\nabla f\ra_{\hR^n}\\
&=& \la a^{\ts}(\sum_{\hat k=1}^{n+m}A_{\hat k}\nabla \nabla_{\frac{\partial}{\partial x_{\hat k}}} f ) ,a^{\ts}\nabla f\ra_{\hR^n}+ \la a^{\ts}(\sum_{\hat k=1}^{n+m}\nabla A_{\hat k} \nabla_{\frac{\partial}{\partial x_{\hat k}}} f ) ,a^{\ts}\nabla f\ra_{\hR^n}\\
&=& \textbf{J}_3+\textbf{J}_4.
\eeaa
It is easy to see 
\beaa
\textbf{J}_2+\textbf{J}_3=0.\quad
\eeaa
We now expand $\textbf{J}_1$ and $\textbf{J}_4$ into local coordinates, 
\bea\label{J 1 term}
\textbf{J}_1=-\sum_{l=1}^{n}\left(a^{\ts}\nabla f \right)_l\left(\sum_{l',\hat k=1}^{n+m} \sum_{k=1}^n\sum_{k'=1}^{n+m}a_{\hat kk} \nabla_{\frac{\partial}{\partial x_{k'}}}a_{k'k}  \nabla_{\frac{\partial}{\partial x_{\hat k}}} a_{l l'}  \nabla_{\frac{\partial}{\partial x_{l'}}}f\right),
\eea
and
\bea\label{J 4 term}
\textbf{J}_4&=&\sum_{l=1}^n(a^{\ts}\nabla f)_l  \left(\sum_{l'=1}^{n+m} a^{\ts}_{ll'}(\sum_{\hat k=1}^{n+m}\nabla_{\frac{\partial}{\partial x_{l'}}}(\sum_{k=1}^n\sum_{k'=1}^{n+m}a_{\hat kk} \nabla_{\frac{\partial}{\partial x_{k'}}}a_{k'k})\nabla_{\frac{\partial}{\partial x_{\hat k}}} f ) \right) \nonumber \\
&=&\sum_{l=1}^n(a^{\ts}\nabla f)_l\left(\sum_{k=1}^n\sum_{l'=1}^{n+m}\sum_{\hat k,k'=1}^{n+m} a^{\ts}_{ll'}\nabla_{\frac{\partial}{\partial x_{l'}}}a_{\hat kk}  \nabla_{\frac{\partial}{\partial x_{k'}}}a_{k'k}\nabla_{\frac{\partial}{\partial x_{\hat k}}}f \right)\nonumber\\
&&+\sum_{l=1}^n(a^{\ts}\nabla f)_l\left(\sum_{k=1}^n\sum_{l'=1}^{n+m}\sum_{\hat k,k'=1}^{n+m}a^{\ts}_{ll'}a_{\hat kk} (\nabla_{\frac{\partial}{\partial x_{l'}}} \nabla_{\frac{\partial}{\partial x_{k'}}}a_{k'k})\nabla_{\frac{\partial}{\partial x_{\hat k}}}f \right).
\eea
Applying Lemma \ref{finite dim Bochner formula} which will be proved shortly below, we have 
	\beaa	
	&&\frac{1}{2}\Delta_a \Gamma_{1}(f,f)-\Gamma_{1}(\Delta_a f,f)\\
	&=&\frac{1}{2}(a^{\ts}\nabla\circ (a^{\ts}\nabla |a^{\ts}\nabla f|^2 ))-\la a^{\ts} \nabla ((a^{\ts}\nabla) \circ(a^{\ts}\nabla f)),a^{\ts}\nabla f\ra_{\hR^n} \\
	&& 
	+\sum_{l=1}^n(a^{\ts}\nabla f)_l\left(\sum_{\hat i,\hat k,l'=1}^{n+m}\sum_{k=1}^n \left(\frac{\partial}{\partial x_{\hat i}}a_{\hat ik} a^{\ts}_{k\hat k}   (\frac{\partial}{\partial x_{\hat k}} a^{\ts}_{ll'}\frac{\partial}{\partial x_{l'}}f)
	 -a^{\ts}_{ll'}  \frac{\partial}{\partial x_{\hat i}}a_{\hat ik} (\frac{\partial}{\partial x_{l'}}a^{\ts}_{k\hat k} \frac{\partial}{\partial x_{\hat k}} f) \right)\right)\nonumber\\
	 &&-\la \mathbf B_{n\times n}a^{\ts}\nabla f,a^{\ts}\nabla f\ra_{\hR^n} ,
	\eeaa
	where 
	\beaa
	\la \mathbf B_{n\times n} a^{\ts}\nabla f,a^{\ts}\nabla f\ra_{\hR^n}=\sum_{l=1}^n (a^{\ts}\nabla f)_l\left(\sum_{k=1}^n \sum_{l=1}^n\sum_{i=1}^{n+m} \sum_{k',j'=1}^{n+m} a^{\ts}_{lj'}\frac{\partial^2}{\partial x_ix_j'}a_{ik}  (a^{\ts}_{kk'}\frac{\partial}{\partial x_{k'}} f)\right).
	\eeaa
	Thus, combining with \eqref{J 1 term} and \eqref{J 4 term} , we have 
	\beaa
	\Gamma_{2,\widetilde L}(f,f)&=&\frac{1}{2}\Delta_a \Gamma_{1}(f,f)-\Gamma_{1}(\Delta_a f,f)+\textbf{J}_1+\textbf{J}_4\\
	&=&\frac{1}{2}(a^{\ts}\nabla\circ (a^{\ts}\nabla |a^{\ts}\nabla f|^2 ))-\la a^{\ts} \nabla [(a^{\ts}\nabla) \circ(a^{\ts}\nabla f)],a^{\ts}\nabla f\ra_{\hR^n} .
	\eeaa
	where the last term follows from Lemma \ref{Gamma_2_h} below. The proof is thus completed. 
 \qed
\end{proof}

\begin{lem}\label{finite dim Bochner formula}
	\bea\label{gamma 2 relation}
	&&\frac{1}{2}\Delta_a \Gamma_{1}(f,f)-\Gamma_{1}(\Delta_a f,f)\\
	&=&\frac{1}{2}(a^{\ts}\nabla\circ (a^{\ts}\nabla |a^{\ts}\nabla f|^2 ))-\la a^{\ts} \nabla ([(a^{\ts}\nabla) \circ(a^{\ts}\nabla f)]),a^{\ts}\nabla f\ra_{\hR^n}\nonumber \\
	&&-\la \textbf{B}_{n\times n}  a^{\ts}\nabla f, a^{\ts}\nabla f\ra_{\hR^n}+\textbf{B}_0.\nonumber
	\eea
Here the local representation for $\mathbf{B}_{n\times n}$ and $\textbf{B}_0$  are given as follows. For $l,k=1,\cdots,n$, we denote 
\bea\label{B and B_0}
\mathbf B_{lk}&=&\sum_{j'=1}^{n+m} a^{\ts}_{lj'}\sum_{i=1}^{n+m}\frac{\partial^2}{\partial x_i\partial x_{j'}} a_{ik}=\sum_{j'=1}^{n+m} a^{\ts}_{lj'}\sum_{i=1}^{n+m}\frac{\partial^2}{\partial x_i\partial x_{j'}} a^{\ts}_{ki},\\
\textbf{B}_0&=&\sum_{l=1}^n(a^{\ts}\nabla f)_l\left(\sum_{\hat i,\hat k,l'=1}^{n+m}\sum_{k=1}^n  \left(\frac{\partial}{\partial x_{\hat i}}a_{\hat ik} a^{\ts}_{k\hat k}   (\frac{\partial}{\partial x_{\hat k}} a^{\ts}_{ll'}\frac{\partial}{\partial x_{l'}}f) -a^{\ts}_{ll'}  \frac{\partial}{\partial x_{\hat i}}a_{\hat ik} (\frac{\partial}{\partial x_{l'}}a^{\ts}_{k\hat k} \frac{\partial}{\partial x_{\hat k}} f) \right)\right).\nonumber
\eea
We introduce the following notation $($convention$)$ that, for any function $F$ 
	\bea\label{notation circle}
	(a^{\ts}\nabla)\circ (a^{\ts}\nabla F)=\sum_{i=1}^n(a^{\ts}\nabla)_i(a^{\ts}\nabla F)_i=\sum_{i=1}^n\sum_{\hat i,i'=1}^{n+m}(a^{\ts}_{i\hat i}\frac{\pa }{\pa x_{\hat i}})(a^{\ts}_{ii'}\frac{\pa F}{\pa x_{i'}}).
	\eea
	\end{lem}
% \begin{rem}\label{rmk}
% For simplification of notation, we keep the following convention
% 	\bea\label{gamma horizontal}
% \Gamma_{2,\cH}(f,f)
% =\frac{1}{2}(a^{\ts}\nabla\circ (a^{\ts}\nabla |a^{\ts}\nabla f|^2 ))-\la a^{\ts} \nabla ((a^{\ts}\nabla) \circ(a^{\ts}\nabla f)),a^{\ts}\nabla f\ra_{\hR^n}.
% \eea
% For relation \eqref{gamma 2 relation}, we denote short as
% \bea
% 	\Gamma_{2,a}(f,f)-\Gamma_{2,\cH}(f,f)=-\la\textbf{B}a^{\ts}\nabla f,a^{\ts}\nabla f\ra +\textbf{B}_0=\textbf{Hess}( \log \Vol)(\nabla f, \nabla f). 
% 		\eea
% In particular, from  Proposition \ref{prop1}, we know $-\la \mathbf Ba^{\ts} \nabla f,a^{\ts} \nabla f\ra +\mathbf B_0$ is the Hessian operator of log volume in sub-Riemannian manifold.   
% \end{rem}

%%%%%%%%%%%%%

\begin{proof}[Proof of Lemma \ref{finite dim Bochner formula} ]
By our definition above, we have
\beaa
\Delta_a\Gamma_{1}(f,f)&=&\nabla \cdot (aa^{\ts} \nabla \la a^{\ts}\nabla f,a^{\ts}\nabla f \ra_{\hR^n})\\
&=&\nabla \cdot (a F)\\
&=& \sum_{\hat i=1}^{n+m} \frac{\partial}{\partial x_{\hat i}} (\sum_{k=1}^na_{\hat ik}F_k)\\
&=& \sum_{\hat i=1}^{n+m} \sum_{k=1}^n(\frac{\partial}{\partial x_{\hat i}} a_{\hat ik}F_k+a_{\hat ik} \frac{\partial}{\partial x_{\hat i}}F_k )\\
&=& \sum_{\hat i=1}^{n+m}\sum_{k=1}^n(\frac{\partial}{\partial x_{\hat i}} a_{\hat ik}F_k)+a^{\ts}\nabla\circ (a^{\ts}\nabla (a^{\ts}\nabla f)^2 ),
\eeaa
where we denote 
\beaa
F&=&a^{\ts} \nabla \la a^{\ts}\nabla f,a^{\ts}\nabla f \ra_{\hR^n}\\
&=&  a^{\ts}\nabla\sum_{l=1}^n (\sum_{\hat l=1}^{n+m} a^{\ts}_{l\hat l}\frac{\partial}{\partial x_{\hat l}}f)^2\\
&=& \left( \sum_{\hat k=1}^{n+m}a^{\ts}_{k\hat k}\frac{\partial}{\partial x_{\hat j}} \sum_{l=1}^n (\sum_{\hat l=1}^{n+m} a^{\ts}_{l\hat l}\frac{\partial}{\partial x_{\hat l}}f)^2\right)_{k=1,\cdots,n}\\
&=&(F_1,F_2,\cdots, F_n)^{\ts}.
\eeaa
 So we have 
 \bea\label{gamma 2 --1}
 \Delta_a\Gamma_{1} (f,f)&=&\sum_{\hat i=1}^{n+m}\sum_{k=1}^n\left(\frac{\partial}{\partial x_{\hat i}} a_{\hat ik}\left( \sum_{\hat k=1}^{n+m}a^{\ts}_{k\hat k}\frac{\partial}{\partial x_{\hat j}} \sum_{l=1}^n (\sum_{\hat l=1}^{n+m} a^{\ts}_{l\hat l}\frac{\partial}{\partial x_{\hat l}}f)^2\right)\right)\nonumber\\
 &&+a^{\ts}\nabla\circ  (a^{\ts}\nabla (a^{\ts}\nabla f)^2 ) \nonumber\\
 &=&\sum_{k=1}^n\sum_{\hat i=1}^{n+m}\left(\frac{\partial}{\partial x_{\hat i}} a_{\hat ik}\left( \sum_{\hat k=1}^{n+m}a^{\ts}_{k\hat k}\frac{\partial}{\partial x_{\hat k}} (a^{\ts}\nabla f)^2\right)\right)\nonumber\\
 && +(a^{\ts}\nabla)\circ (a^{\ts}\nabla (a^{\ts}\nabla f)^2 )\nonumber \\
 &=&\nabla a \circ (a^{\ts}\nabla (a^{\ts}\nabla f)^2)+(a^{\ts}\nabla)\circ (a^{\ts}\nabla (a^{\ts}\nabla f)^2 ).
 \eea
Next, we compute the following quantity. 
\beaa
\Gamma_{1}(\Delta_af,f)&=&\la a^{\ts} \nabla (\nabla\cdot (aa^{\ts}\nabla f) ),a^{\ts}\nabla f\ra_{\hR^n}, 
\eeaa
where we have
\beaa
\nabla\cdot (aa^{\ts} \nabla f)=\nabla \cdot (\sum_{k=1}^n\sum_{\hat k=1}^{n+m}a_{\hat ik}a^{\ts}_{k\hat k}\frac{\partial}{\partial x_{\hat k}}f)
=\sum_{\hat i=1}^{n+m} \frac{\partial}{\partial x_{\hat i}} (\sum_{k=1}^n\sum_{\hat k=1}^{n+m}a_{\hat ik}a^{\ts}_{k\hat k}\frac{\partial}{\partial x_{\hat k}}f) 
\eeaa 
\beaa 
&=& \sum_{\hat i,\hat k=1}^{n+m}\sum_{k=1}^n\frac{\partial}{\partial x_{\hat i}}a_{\hat ik} (a^{\ts}_{k\hat k} \frac{\partial}{\partial x_{\hat k}} f)+\sum_{\hat i,\hat k=1}^{n+m}\sum_{k=1}^n a_{\hat ik} \frac{\partial}{\partial x_{\hat i}} ( a^{\ts}_{k\hat k} \frac{\partial}{\partial x_j} f)\\ 
&=& \sum_{\hat i,\hat k=1}^{n+m}\sum_{k=1}^n\frac{\partial}{\partial x_{\hat i}}a_{\hat ik} (a^{\ts}_{k\hat k} \frac{\partial}{\partial x_{\hat k}} f)+(a^{\ts}\nabla)\circ (a^{\ts}\nabla f)\\
&=&\nabla a\circ  (a^{\ts}\nabla f)+(a^{\ts}\nabla)\circ (a^{\ts}\nabla f).
\eeaa
We continue with our computation as below, 
\bea\label{gamma 2 --2}
&&\Gamma_{1}(\Delta_a f,f)\nonumber \\
&=&\la a^{\ts} \nabla \left[ \sum_{\hat i,\hat k=1}^{n+m}\sum_{k=1}^n\frac{\partial}{\partial x_{\hat i}}a_{\hat ik} (a^{\ts}_{k\hat k} \frac{\partial}{\partial x_{\hat k}} f)+(a^{\ts}\nabla)\circ (a^{\ts}\nabla f)\right],a^{\ts}\nabla f\ra_{\hR^n}\nonumber \\
&=&\la a^{\ts}\nabla \left[  \sum_{\hat i,\hat k=1}^{n+m}\sum_{k=1}^n\frac{\partial}{\partial x_{\hat i}}a_{\hat ik} (a^{\ts}_{k\hat k} \frac{\partial}{\partial x_{\hat k}} f)\right], a^{\ts}\nabla f\ra_{\hR^n}\nonumber\\
&& +\la a^{\ts}\nabla ((a^{\ts}\nabla)\circ (a^{\ts}\nabla f)),a^{\ts}\nabla f\ra_{\hR^n}\nonumber\\
&=& \la a^{\ts} \nabla \left(\nabla a\cdot (a^{\ts}\nabla f)\right),a^{\ts}\nabla f\ra_{\hR^n} +\la a^{\ts} \nabla ((a^{\ts}\nabla) \circ(a^{\ts}\nabla f)),a^{\ts}\nabla f\ra_{\hR^n}\nonumber\\
&=&\la  \left(a^{\ts} \nabla\nabla a\cdot (a^{\ts}\nabla f)\right),a^{\ts}\nabla f\ra_{\hR^n}+\la \left(\nabla a\cdot (a^{\ts} \nabla (a^{\ts}\nabla f))\right),a^{\ts}\nabla f\ra_{\hR^n}\nonumber\\
&&+\la a^{\ts} \nabla ((a^{\ts}\nabla)\circ (a^{\ts}\nabla f)),a^{\ts}\nabla f\ra_{\hR^n}.
\eea
From the above, combining \eqref{gamma 2 --1}  and  \eqref{gamma 2 --2} we further get 
\beaa
&&\frac{1}{2}\Delta_a \Gamma_{1} (f,f)-\Gamma_{1} (\Delta_a f,f)\\
&=&\frac{1}{2}(a^{\ts}\nabla\circ (a^{\ts}\nabla |a^{\ts}\nabla f|^2 ))-\la a^{\ts} \nabla ((a^{\ts}\nabla)\circ (a^{\ts}\nabla f)),a^{\ts}\nabla f\ra_{\hR^n} \\
&&+\frac{1}{2} \sum_{\hat i=1}^{n+m}\sum_{k=1}^n\left(\frac{\partial}{\partial x_{\hat i}} a_{\hat ik}\left( \sum_{\hat k=1}^{n+m}a^{\ts}_{k\hat k}\frac{\partial}{\partial x_{\hat j}} \sum_{l=1}^n (\sum_{\hat l=1}^{n+m} a^{\ts}_{l\hat l}\frac{\partial}{\partial x_{\hat l}}f)^2\right)\right)\\
&&-\la a^{\ts}\nabla \left[  \sum_{\hat i,\hat k=1}^{n+m}\sum_{k=1}^n\frac{\partial}{\partial x_{\hat i}}a_{\hat ik} (a^{\ts}_{k\hat k} \frac{\partial}{\partial x_{\hat k}} f)\right], a^{\ts}\nabla f\ra_{\hR^n}\\
&=&\frac{1}{2}(a^{\ts}\nabla\circ (a^{\ts}\nabla |a^{\ts}\nabla f|^2 ))-\la a^{\ts} \nabla ((a^{\ts}\nabla) \circ(a^{\ts}\nabla f)),a^{\ts}\nabla f\ra_{\hR^n} \\
&&+\sum_{\hat i=1}^{n+m}\sum_{k=1}^n\left(\frac{\partial}{\partial x_{\hat i}}a_{\hat ik}\sum_{\hat k=1}^{n+m} a^{\ts}_{k\hat k}\sum_{l=1}^n (\sum_{\hat l=1}^{n+m} a^{\ts}_{l\hat l}\frac{\partial}{\partial x_{\hat l}}f) \frac{\partial}{\partial x_{\hat k}} (\sum_{\hat l=1}^{n+m} a^{\ts}_{l\hat l}\frac{\partial}{\partial x_{\hat l}}f) \right)\cdots \textbf{I}\\
&&-\sum_{l=1}^n \left((\sum_{\hat l=1}^{n+m}a^{\ts}_{l\hat l}\frac{\partial}{\partial x_{\hat l}} f) \left(\sum_{l'=1}^{n+m}a^{\ts}_{ll'}\frac{\partial}{\partial x_{l'}}\left[  \sum_{\hat i,\hat k=1}^{n+m}\sum_{k=1}^n\frac{\partial}{\partial x_{\hat i}}a_{\hat ik} (a^{\ts}_{k\hat k} \frac{\partial}{\partial x_{\hat k}} f)\right] \right) \right)\cdots \textbf{II}.
\eeaa
Recall that we denote $a^{\ts}$ to emphasize the transpose of the matrix $a$ and $a^{\ts}_{i\hat i}=a_{\hat i i}$, 
\beaa
\textbf{I}&=&\sum_{\hat i=1}^{n+m}\sum_{k=1}^n\left(\frac{\partial}{\partial x_{\hat i}}a_{\hat ik}\sum_{\hat k=1}^{n+m} a^{\ts}_{k\hat k}\sum_{l=1}^n (\sum_{\hat l=1}^{n+m} a^{\ts}_{l\hat l}\frac{\partial}{\partial x_{\hat l}}f) \frac{\partial}{\partial x_{\hat k}} (\sum_{\hat l=1}^{n+m} a^{\ts}_{l\hat l}\frac{\partial}{\partial x_{\hat l}}f) \right)\\
&=&\sum_{\hat i=1}^{n+m}\sum_{k=1}^n\left(\frac{\partial}{\partial x_{\hat i}}a_{\hat ik}\sum_{\hat k=1}^{n+m} a^{\ts}_{k\hat k}\sum_{l=1}^n (\sum_{\hat l=1}^{n+m} a^{\ts}_{l\hat l}\frac{\partial}{\partial x_{\hat l}}f)  (\sum_{\hat l=1}^{n+m}\frac{\partial}{\partial x_{\hat k}} a^{\ts}_{l\hat l}\frac{\partial}{\partial x_{\hat l}}f) \right)\\
&&+\sum_{\hat i=1}^{n+m}\sum_{k=1}^n\left(\frac{\partial}{\partial x_{\hat i}}a_{\hat ik}\sum_{\hat k=1}^{n+m} a^{\ts}_{k\hat k}\sum_{l=1}^n (\sum_{\hat l=1}^{n+m} a^{\ts}_{l\hat l}\frac{\partial}{\partial x_{\hat l}}f)  (\sum_{\hat l=1}^{n+m} a^{\ts}_{l\hat l}\frac{\partial}{\partial x_{\hat k}}\frac{\partial}{\partial x_{\hat l}}f) \right)\\
&=&\sum_{l=1}^n(\sum_{\hat l=1}^{n+m} a^{\ts}_{l\hat l}\frac{\partial}{\partial x_{\hat l}}f)\sum_{\hat i=1}^{n+m}\sum_{k=1}^n\left(\frac{\partial}{\partial x_{\hat i}}a_{\hat ik}\sum_{\hat k=1}^{n+m} a^{\ts}_{k\hat k}   (\sum_{ l'=1}^{n+m}\frac{\partial}{\partial x_{\hat k}} a^{\ts}_{ll'}\frac{\partial}{\partial x_{l'}}f) \right)\\
&&+\sum_{l=1}^n(\sum_{\hat l=1}^{n+m} a^{\ts}_{l\hat l}\frac{\partial}{\partial x_{\hat l}}f) \sum_{\hat i=1}^{n+m}\sum_{k=1}^n\left(\frac{\partial}{\partial x_{\hat i}}a_{\hat ik}\sum_{\hat k=1}^{n+m} a^{\ts}_{k\hat k}  (\sum_{\hat l=1}^{n+m} a^{\ts}_{l l'}\frac{\partial}{\partial x_{\hat k}}\frac{\partial}{\partial x_{l'}}f) \right),
\eeaa
and 
\beaa
\textbf{II}&=&\sum_{l=1}^n \left((\sum_{\hat l=1}^{n+m}a^{\ts}_{l\hat l}\frac{\partial}{\partial x_{\hat l}} f) \left(\sum_{l'=1}^{n+m}a^{\ts}_{ll'}\frac{\partial}{\partial x_{l'}}\left[  \sum_{\hat i,\hat k=1}^{n+m}\sum_{k=1}^n\frac{\partial}{\partial x_{\hat i}}a_{\hat ik} (a^{\ts}_{k\hat k} \frac{\partial}{\partial x_{\hat k}} f)\right] \right) \right)\\
&=&\sum_{l=1}^n \left((\sum_{\hat l=1}^{n+m}a^{\ts}_{l\hat l}\frac{\partial}{\partial x_{\hat l}} f) \left(\sum_{l'=1}^{n+m}a^{\ts}_{ll'}\left[  \sum_{\hat i,\hat k=1}^{n+m}\sum_{k=1}^n\frac{\partial}{\partial x_{\hat i}}a_{\hat ik} \frac{\partial}{\partial x_{l'}}(a^{\ts}_{k\hat k} \frac{\partial}{\partial x_{\hat k}} f)\right] \right) \right)\\
&&+\sum_{l=1}^n \left((\sum_{\hat l=1}^{n+m}a^{\ts}_{l\hat l}\frac{\partial}{\partial x_{\hat l}} f) \left(\sum_{l'=1}^{n+m}a^{\ts}_{ll'}\left[  \sum_{\hat i,\hat k=1}^{n+m}\sum_{k=1}^n\frac{\partial^2}{\partial x_{\hat i}x_{l'}}a_{\hat ik} (a^{\ts}_{k\hat k} \frac{\partial}{\partial x_{\hat k}} f)\right] \right) \right)\\
&=&\sum_{l=1}^n \left((\sum_{\hat l=1}^{n+m}a^{\ts}_{l\hat l}\frac{\partial}{\partial x_{\hat l}} f) \left(\sum_{l'=1}^{n+m}a^{\ts}_{ll'}\left[  \sum_{\hat i,\hat k=1}^{n+m}\sum_{k=1}^n\frac{\partial}{\partial x_{\hat i}}a_{\hat ik} (\frac{\partial}{\partial x_{l'}}a^{\ts}_{k\hat k} \frac{\partial}{\partial x_{\hat k}} f)\right] \right) \right)\\
&&+\sum_{l=1}^n \left((\sum_{\hat l=1}^{n+m}a^{\ts}_{l\hat l}\frac{\partial}{\partial x_{\hat l}} f) \left(\sum_{l'=1}^{n+m}a^{\ts}_{ll'}\left[  \sum_{\hat i,\hat k=1}^{n+m}\sum_{k=1}^n\frac{\partial}{\partial x_{\hat i}}a_{\hat ik} (a^{\ts}_{k\hat k} \frac{\partial}{\partial x_{l'}}\frac{\partial}{\partial x_{\hat k}} f)\right] \right) \right)\\
&&+\sum_{l=1}^n \left((\sum_{\hat l=1}^{n+m}a^{\ts}_{l\hat l}\frac{\partial}{\partial x_{\hat l}} f) \left(\sum_{l'=1}^{n+m}a^{\ts}_{ll'}\left[  \sum_{\hat i,\hat k=1}^{n+m}\sum_{k=1}^n\frac{\partial^2}{\partial x_{\hat i}x_{l'}}a_{\hat ik} (a^{\ts}_{k\hat k} \frac{\partial}{\partial x_{\hat k}} f)\right] \right) \right).
\eeaa
Subtracting the above two terms, we have
\beaa
&&\textbf{I}-\textbf{
II}\\
&=&-\sum_{l=1}^n \left((\sum_{\hat l=1}^{n+m}a^{\ts}_{l\hat l}\frac{\partial}{\partial x_{\hat l}} f) \left(\sum_{l'=1}^{n+m}a^{\ts}_{ll'}\left[  \sum_{\hat i,\hat k=1}^{n+m}\sum_{k=1}^n\frac{\partial^2}{\partial x_{\hat i}x_{l'}}a_{\hat ik} (a^{\ts}_{k\hat k} \frac{\partial}{\partial x_{\hat k}} f)\right] \right) \right)\\
&&+\sum_{l=1}^n(\sum_{\hat l=1}^{n+m} a^{\ts}_{l\hat l}\frac{\partial}{\partial x_{\hat l}}f)\sum_{\hat i=1}^{n+m}\sum_{k=1}^n\left(\frac{\partial}{\partial x_{\hat i}}a_{\hat ik}\sum_{\hat k=1}^{n+m} a^{\ts}_{k\hat k}   (\sum_{ l'=1}^{n+m}\frac{\partial}{\partial x_{\hat k}} a^{\ts}_{ll'}\frac{\partial}{\partial x_{l'}}f) \right)
\eeaa 
\beaa
&&-\sum_{l=1}^n \left((\sum_{\hat l=1}^{n+m}a^{\ts}_{l\hat l}\frac{\partial}{\partial x_{\hat l}} f) \left(\sum_{l'=1}^{n+m}a^{\ts}_{ll'}\left[  \sum_{\hat i,\hat k=1}^{n+m}\sum_{k=1}^n\frac{\partial}{\partial x_{\hat i}}a_{\hat ik} (\frac{\partial}{\partial x_{l'}}a^{\ts}_{k\hat k} \frac{\partial}{\partial x_{\hat k}} f)\right] \right) \right)\\ 
&=&-\sum_{l=1}^n \left((\sum_{\hat l=1}^{n+m}a^{\ts}_{l\hat l}\frac{\partial}{\partial x_{\hat l}} f) \left(\sum_{l'=1}^{n+m}a^{\ts}_{ll'}\left[  \sum_{\hat i,\hat k=1}^{n+m}\sum_{k=1}^n\frac{\partial^2}{\partial x_{\hat i}x_{l'}}a_{\hat ik} (a^{\ts}_{k\hat k} \frac{\partial}{\partial x_{\hat k}} f)\right] \right) \right)\\
&&+\sum_{l=1}^n(\sum_{\hat l=1}^{n+m} a^{\ts}_{l\hat l}\frac{\partial}{\partial x_{\hat l}}f)\left(\sum_{\hat i,\hat k,l'=1}^{n+m}\sum_{k=1}^n \left(\frac{\partial}{\partial x_{\hat i}}a_{\hat ik} a^{\ts}_{k\hat k}   (\frac{\partial}{\partial x_{\hat k}} a^{\ts}_{ll'}\frac{\partial}{\partial x_{l'}}f) -a^{\ts}_{ll'}  \frac{\partial}{\partial x_{\hat i}}a_{\hat ik} (\frac{\partial}{\partial x_{l'}}a^{\ts}_{k\hat k} \frac{\partial}{\partial x_{\hat k}} f) \right)\right)\\
&=&-\sum_{l=1}^n \left((\sum_{\hat l=1}^{n+m}a^{\ts}_{l\hat l}\frac{\partial}{\partial x_{\hat l}} f) \left(\sum_{l'=1}^{n+m}a^{\ts}_{ll'}\left[  \sum_{\hat i,\hat k=1}^{n+m}\sum_{k=1}^n\frac{\partial^2}{\partial x_{\hat i}x_{l'}}a_{\hat ik} (a^{\ts}_{k\hat k} \frac{\partial}{\partial x_{\hat k}} f)\right] \right) \right)\\
&&+\sum_{l=1}^n(a^{\ts}\nabla f)_l\left(\sum_{\hat i,\hat k,l'=1}^{n+m}\sum_{k=1}^n \left(\frac{\partial}{\partial x_{\hat i}}a_{\hat ik} a^{\ts}_{k\hat k}   (\frac{\partial}{\partial x_{\hat k}} a^{\ts}_{ll'}\frac{\partial}{\partial x_{l'}}f) -a^{\ts}_{ll'}  \frac{\partial}{\partial x_{\hat i}}a_{\hat ik} (\frac{\partial}{\partial x_{l'}}a^{\ts}_{k\hat k} \frac{\partial}{\partial x_{\hat k}} f) \right)\right).
\eeaa

Now, we eventually get the the following step
\beaa
&&\frac{1}{2}\Delta_a \Gamma_{1}(f,f)-\Gamma_{1}(\Delta_a f,f)\\
&=&\frac{1}{2}(a^{\ts}\nabla\circ (a^{\ts}\nabla |a^{\ts}\nabla f|^2 ))-\la a^{\ts} \nabla ((a^{\ts}\nabla) \circ(a^{\ts}\nabla f)),a^{\ts}\nabla f\ra_{\hR^n} \\
&&-\la\sum_{k=1}^n \sum_{l=1}^n\sum_{i=1}^{n+m} \sum_{j'=1}^{n+m} a^{\ts}_{lj'}\frac{\partial^2}{\partial x_ix_j'}a_{ik}  (a^{\ts}\nabla f)_k, a^{\ts}\nabla f\ra\\
&&+\sum_{l=1}^n(a^{\ts}\nabla f)_l\left(\sum_{\hat i,\hat k,l'=1}^{n+m}\sum_{k=1}^n \left(\frac{\partial}{\partial x_{\hat i}}a_{\hat ik} a^{\ts}_{k\hat k}   (\frac{\partial}{\partial x_{\hat k}} a^{\ts}_{ll'}\frac{\partial}{\partial x_{l'}}f) -a^{\ts}_{ll'}  \frac{\partial}{\partial x_{\hat i}}a_{\hat ik} (\frac{\partial}{\partial x_{l'}}a^{\ts}_{k\hat k} \frac{\partial}{\partial x_{\hat k}} f) \right)\right)\\
&=&\frac{1}{2}(a^{\ts}\nabla\circ (a^{\ts}\nabla |a^{\ts}\nabla f|^2 ))-\la a^{\ts} \nabla ((a^{\ts}\nabla) \circ(a^{\ts}\nabla f)),a^{\ts}\nabla f\ra_{\hR^n} -\la B_{n\times n}  a^{\ts}\nabla f, a^{\ts}\nabla f\ra_{\hR^n}\\
&&+\sum_{l=1}^n(a^{\ts}\nabla f)_l\left(\sum_{\hat i,\hat k,l'=1}^{n+m}\sum_{k=1}^n \left(\frac{\partial}{\partial x_{\hat i}}a_{\hat ik} a^{\ts}_{k\hat k}   (\frac{\partial}{\partial x_{\hat k}} a^{\ts}_{ll'}\frac{\partial}{\partial x_{l'}}f) -a^{\ts}_{ll'}  \frac{\partial}{\partial x_{\hat i}}a_{\hat ik} (\frac{\partial}{\partial x_{l'}}a^{\ts}_{k\hat k} \frac{\partial}{\partial x_{\hat k}} f) \right)\right).
\eeaa
Thus the proof is completed. \qed 
 \end{proof}
Below, we further investigate the extra term explicitly in the above Lemma \ref{finite dim Bochner formula}. 
\begin{lem}\label{Gamma_2_h}
	\bea
&&\frac{1}{2}(a^{\ts}\nabla\circ (a^{\ts}\nabla |a^{\ts}\nabla f|^2 ))-\la a^{\ts} \nabla ((a^{\ts}\nabla) \circ(a^{\ts}\nabla f)),a^{\ts}\nabla f\ra_{\hR^n} \nonumber\\
&=&X^{\ts}Q^{\ts}QX+ 2D^{\ts}QX+2C^{\ts}X+D^{\ts}D\nonumber\\
&&+\sum_{i,k=1}^n\sum_{i',\hat i,\hat k=1}^{n+m} \la a^{\ts}_{ii'} (\frac{\partial a^{\ts}_{i \hat i}}{\partial x_{i'}} \frac{\partial a^{\ts}_{k\hat k}}{\partial x_{\hat i}}\frac{\partial f}{\partial x_{\hat k}}) ,(a^{\ts}\nabla)_kf\ra_{\hR^n}  \nonumber\\
&&+\sum_{i,k=1}^n\sum_{i',\hat i,\hat k=1}^{n+m} \la a^{\ts}_{ii'}a^{\ts}_{i \hat i} (\frac{\partial }{\partial x_{i'}} \frac{\partial a^{\ts}_{k\hat k}}{\partial x_{\hat i}})(\frac{\partial f}{\partial x_{\hat k}}) ,(a^{\ts}\nabla)_kf\ra_{\hR^n} \nonumber
\eea 
\bea 
&&-\sum_{i,k=1}^n\sum_{i',\hat i,\hat k=1}^{n+m} \la (a^{\ts}_{k\hat k}\frac{\partial a^{\ts}_{ii'}}{\partial x_{\hat k}} \frac{\partial a^{\ts}_{i \hat i}}{\partial x_{i'}} \frac{\partial f}{\partial x_{\hat i}})  ,(a^{\ts}\nabla)_kf\ra_{\hR^n}  \nonumber\\
&&-\sum_{i,k=1}^n\sum_{i',\hat i,\hat k=1}^{n+m} \la a^{\ts}_{k\hat k} a^{\ts}_{ii'} (\frac{\partial }{\partial x_{\hat k}} \frac{\partial a^{\ts}_{i \hat i}}{\partial x_{i'}}) \frac{\partial f}{\partial x_{\hat i}}  ,(a^{\ts}\nabla)_kf\ra_{\hR^n}.  
\eea 
Recall that  matrix $Q$ and vectors $X$, $C$ and $D$ are defined in Notation \ref{notation}.
\end{lem}
 %%%%%%%%
\begin{proof}
We expand the two terms in lemma \ref{Gamma_2_h}. The first term reads as 
	\beaa
	&&\frac{1}{2}(a^{\ts}\nabla\circ  (a^{\ts}\nabla |a^{\ts}\nabla f|^2 ))\\
	&=&\frac{1}{2}\sum_{i=1}^n\sum_{k=1}^n(a^{\ts}\nabla)_i(a^{\ts}\nabla)_i|(a^{\ts}\nabla)_kf|^2\\
	&=&\sum_{i=1}^n\sum_{k=1}^n(a^{\ts}\nabla)_i\la (a^{\ts}\nabla)_i (a^{\ts}\nabla)_kf,(a^{\ts}\nabla)_kf\ra_{\hR^n}\\
	&=&\sum_{i=1}^n\sum_{k=1}^n\la (a^{\ts}\nabla)_i (a^{\ts}\nabla)_kf,(a^{\ts}\nabla)_i(a^{\ts}\nabla)_kf\ra_{\hR^n}\cdots \textbf{T}_1\\
	&&+\sum_{i=1}^n\sum_{k=1}^n\sum_{i',\hat i,\hat k=1}^{n+m} \la (a^{\ts}_{ii'}\frac{\partial}{\partial x_{i'}}) (a^{\ts}_{i \hat i}\frac{\partial}{\partial x_{\hat i}}) (a^{\ts}_{k\hat k}\frac{\partial}{\partial x_{\hat k}})f ,(a^{\ts}\nabla)_kf\ra_{\hR^n} \cdots \textbf{R}_1.
	\eeaa
And the second term reads as
	\beaa
	&&\la a^{\ts} \nabla ([(a^{\ts}\nabla)\circ (a^{\ts}\nabla f)]),a^{\ts}\nabla f\ra_{\hR^n}\\
	&=&\sum_{i,k=1}^n \la(a^{\ts}\nabla)_k[(a^{\ts}\nabla)_i(a^{\ts}\nabla)_if ],(a^{\ts}\nabla)_kf\ra \\
	&=&\sum_{i,k=1}^n\sum_{i',\hat i,\hat k=1}^{n+m} \la(a^{\ts}_{k\hat k}\frac{\partial}{\partial x_{\hat k}}) [(a^{\ts}_{ii'}\frac{\partial}{\partial x_{i'}}) (a^{\ts}_{i \hat i}\frac{\partial}{\partial x_{\hat i}}) f ],(a^{\ts}\nabla)_kf\ra  \cdots \textbf{R}_2.
	\eeaa
	Next, we expand $\textbf{R}_1$ and $\textbf{R}_2$ completely and get the following:
	\beaa
	\textbf{R}_1&=&\sum_{i,k=1}^n\sum_{i',\hat i,\hat k=1}^{n+m} \la (a^{\ts}_{ii'}\frac{\partial}{\partial x_{i'}}) (a^{\ts}_{i \hat i}\frac{\partial}{\partial x_{\hat i}}) (a^{\ts}_{k\hat k}\frac{\partial f}{\partial x_{\hat k}}) ,(a^{\ts}\nabla)_kf\ra_{\hR^n} \\
%	&=&\sum_{i,k=1}^n\sum_{i',\hat i,k',\hat k=1}^{n+m} \la (a^{\ts}_{ii'}\frac{\partial}{\partial x_{i'}}) (a^{\ts}_{i \hat i} \frac{\partial a^{\ts}_{k\hat k}}{\partial x_{\hat i}}\frac{\partial f}{\partial x_{\hat k}}) ,(a^{\ts}\nabla)_kf\ra_{\hR^n} \\
%	&&+\sum_{i,k=1}^n\sum_{i',\hat i,k',\hat k=1}^{n+m} \la (a^{\ts}_{ii'}\frac{\partial}{\partial x_{i'}}) (a^{\ts}_{i \hat i} a^{\ts}_{k\hat k}\frac{\partial}{\partial x_{\hat i}}\frac{\partial f}{\partial x_{\hat k}}) ,(a^{\ts}\nabla)_kf\ra_{\hR^n} \\
	&=&\sum_{i,k=1}^n\sum_{i',\hat i,\hat k=1}^{n+m} \la a^{\ts}_{ii'} (\frac{\partial a^{\ts}_{i \hat i}}{\partial x_{i'}} \frac{\partial a^{\ts}_{k\hat k}}{\partial x_{\hat i}}\frac{\partial f}{\partial x_{\hat k}}) ,(a^{\ts}\nabla)_kf\ra_{\hR^n} \cdots \textbf{R}_1^1\\
	&&+\sum_{i,k=1}^n\sum_{i',\hat i,\hat k=1}^{n+m} \la a^{\ts}_{ii'}a^{\ts}_{i \hat i} (\frac{\partial }{\partial x_{i'}} \frac{\partial a^{\ts}_{k\hat k}}{\partial x_{\hat i}})(\frac{\partial f}{\partial x_{\hat k}}) ,(a^{\ts}\nabla)_kf\ra_{\hR^n}  \cdots \textbf{R}_1^2\\
	&&+\sum_{i,k=1}^n\sum_{i',\hat i,\hat k=1}^{n+m} \la a^{\ts}_{ii'}a^{\ts}_{i \hat i} ( \frac{\partial a^{\ts}_{k\hat k}}{\partial x_{\hat i}})(\frac{\partial }{\partial x_{i'}}\frac{\partial f}{\partial x_{\hat k}}) ,(a^{\ts}\nabla)_kf\ra_{\hR^n}  \cdots \textbf{R}_1^3\\
	&&+\sum_{i,k=1}^n\sum_{i',\hat i,\hat k=1}^{n+m} \la (a^{\ts}_{ii'}) ((\frac{\partial}{\partial x_{i'}}a^{\ts}_{i \hat i}) a^{\ts}_{k\hat k}\frac{\partial}{\partial x_{\hat i}}\frac{\partial f}{\partial x_{\hat k}}) ,(a^{\ts}\nabla)_kf\ra_{\hR^n}  \cdots \textbf{R}_1^4
	\eeaa 
	\beaa 
	&&+\sum_{i,k=1}^n\sum_{i',\hat i,\hat k=1}^{n+m} \la a^{\ts}_{ii'}a^{\ts}_{i \hat i}(\frac{\partial}{\partial x_{i'}} a^{\ts}_{k\hat k})\frac{\partial}{\partial x_{\hat i}}\frac{\partial f}{\partial x_{\hat k}}) ,(a^{\ts}\nabla)_kf\ra_{\hR^n}  \cdots \textbf{R}_1^5\\
	&&+\sum_{i,k=1}^n\sum_{i',\hat i,\hat k=1}^{n+m} \la a^{\ts}_{ii'} a^{\ts}_{i \hat i}  a^{\ts}_{k\hat k}(\frac{\partial}{\partial x_{i'}}\frac{\partial}{\partial x_{\hat i}}\frac{\partial f}{\partial x_{\hat k}}) ,(a^{\ts}\nabla)_kf\ra_{\hR^n}  \cdots \textbf{R}_1^6.
	\eeaa
And 
	\beaa
\textbf{R}_2&=&	\sum_{i,k=1}^n\sum_{i',\hat i,\hat k=1}^{n+m} \la(a^{\ts}_{k\hat k}\frac{\partial}{\partial x_{\hat k}}) [(a^{\ts}_{ii'}\frac{\partial}{\partial x_{i'}}) (a^{\ts}_{i \hat i}\frac{\partial f}{\partial x_{\hat i}})  ],(a^{\ts}\nabla)_kf\ra \\
%&=&	\sum_{i,k=1}^n\sum_{i',\hat i,k',\hat k=1}^{n+m} \la(a^{\ts}_{k\hat k}\frac{\partial }{\partial x_{\hat k}}) [a^{\ts}_{ii'}\frac{\partial a^{\ts}_{i \hat i}}{\partial x_{i'}} \frac{\partial f}{\partial x_{\hat i}})  ],(a^{\ts}\nabla)_kf\ra \\
%&&+\sum_{i,k=1}^n\sum_{i',\hat i,k',\hat k=1}^{n+m} \la(a^{\ts}_{k\hat k}\frac{\partial }{\partial x_{\hat k}}) [a^{\ts}_{ii'}a^{\ts}_{i \hat i}(\frac{\partial }{\partial x_{i'}} \frac{\partial f}{\partial x_{\hat i}})  ],(a^{\ts}\nabla)_kf\ra \\
&=&\sum_{i,k=1}^n\sum_{i',\hat i,\hat k=1}^{n+m} \la a^{\ts}_{k\hat k}\frac{\partial a^{\ts}_{ii'}}{\partial x_{\hat k}} \frac{\partial a^{\ts}_{i \hat i}}{\partial x_{i'}} \frac{\partial f}{\partial x_{\hat i}})  ,(a^{\ts}\nabla)_kf\ra  \cdots \textbf{R}_2^1\\
&&+\sum_{i,k=1}^n\sum_{i',\hat i,\hat k=1}^{n+m} \la a^{\ts}_{k\hat k} a^{\ts}_{ii'} (\frac{\partial }{\partial x_{\hat k}} \frac{\partial a^{\ts}_{i \hat i}}{\partial x_{i'}}) \frac{\partial f}{\partial x_{\hat i}}  ,(a^{\ts}\nabla)_kf\ra \cdots \textbf{R}_2^2\\
&&+\sum_{i,k=1}^n\sum_{i',\hat i,\hat k=1}^{n+m} \la a^{\ts}_{k\hat k} a^{\ts}_{ii'} \frac{\partial a^{\ts}_{i \hat i}}{\partial x_{i'}}(\frac{\partial }{\partial x_{\hat k}}  \frac{\partial f}{\partial x_{\hat i}})  ,(a^{\ts}\nabla)_kf\ra \cdots \textbf{R}_2^3=\textbf{R}_1^4\\
&&+\sum_{i,k=1}^n\sum_{i',\hat i,\hat k=1}^{n+m} \la a^{\ts}_{k\hat k}\frac{\partial a^{\ts}_{ii'}}{\partial x_{\hat k}} a^{\ts}_{i \hat i}(\frac{\partial }{\partial x_{i'}} \frac{\partial f}{\partial x_{\hat i}})  ,(a^{\ts}\nabla)_kf\ra \cdots \textbf{R}_2^4\\
&&+\sum_{i,k=1}^n\sum_{i',\hat i,\hat k=1}^{n+m} \la a^{\ts}_{k\hat k}a^{\ts}_{ii'}\frac{\partial a^{\ts}_{i \hat i}}{\partial x_{\hat k}} (\frac{\partial }{\partial x_{i'}} \frac{\partial f}{\partial x_{\hat i}})  ,(a^{\ts}\nabla)_kf\ra \cdots \textbf{R}_2^5\\
&&+\sum_{i,k=1}^n\sum_{i',\hat i,\hat k=1}^{n+m} \la a^{\ts}_{k\hat k}a^{\ts}_{ii'}a^{\ts}_{i \hat i}(\frac{\partial }{\partial x_{\hat k}} \frac{\partial }{\partial x_{i'}} \frac{\partial f}{\partial x_{\hat i}})  ,(a^{\ts}\nabla)_kf\ra \cdots \textbf{R}_2^6=\textbf{R}_1^6.
	\eeaa
Our next step is to complete squares for all the above terms. Look at the term $\textbf{T}_1$ first. 
	\beaa
	\textbf{T}_1&=& \sum_{i,k=1}^n\left\la \sum_{\hat i,\hat k=1}^{n+m}a^{\ts}_{i\hat i}a^{\ts}_{k\hat k}\frac{\partial^2 f}{\partial x_{\hat i}\partial x_{ \hat k}}+\sum_{\hat i,\hat k=1}^{n+m}a^{\ts}_{i\hat i}\frac{\partial a^{\ts}_{k\hat k}}{\partial x_{\hat i}}\frac{\partial f}{\partial x_{ \hat k}} , \right.\\
	&&\left.\quad\quad\quad\sum_{ i', k'=1}^{n+m}a^{\ts}_{i i'}a^{\ts}_{k k'}\frac{\partial^2 f}{\partial x_{ i'}\partial x_{  k'}}+\sum_{ i', k'=1}^{n+m}a^{\ts}_{i i'}\frac{\partial a^{\ts}_{k k'}}{\partial x_{ i'}}\frac{\partial f}{\partial x_{ k'}}\right \ra\\
	&=&\sum_{i,k=1}^n\left \la \sum_{\hat i,\hat k=1}^{n+m}a^{\ts}_{i\hat i}a^{\ts}_{k\hat k}\frac{\partial^2 f}{\partial x_{\hat i}\partial x_{ \hat k}},\sum_{ i', k'=1}^{n+m}a^{\ts}_{i i'}a^{\ts}_{k k'}\frac{\partial^2 f}{\partial x_{ i'}\partial x_{  k'}} \right\ra \cdots \textbf{T}_{1a}\\
	&&+\sum_{i,k=1}^n\left\la \sum_{\hat i,\hat k=1}^{n+m}a^{\ts}_{i\hat i}a^{\ts}_{k\hat k}\frac{\partial^2 f}{\partial x_{\hat i}\partial x_{ \hat k}},\sum_{ i', k'=1}^{n+m}a^{\ts}_{i i'}\frac{\partial a^{\ts}_{k k'}}{\partial x_{ i'}}\frac{\partial f}{\partial x_{ k'}} \right\ra\cdots \textbf{T}_{1b}\\
	&&+\sum_{i,k=1}^n\left\la \sum_{\hat i,\hat k=1}^{n+m}a^{\ts}_{i\hat i}\frac{\partial a^{\ts}_{k\hat k}}{\partial x_{\hat i}}\frac{\partial f}{\partial x_{ \hat k}} ,\sum_{ i', k'=1}^{n+m}a^{\ts}_{i i'}a^{\ts}_{k k'}\frac{\partial^2 f}{\partial x_{ i'}\partial x_{  k'}} \right\ra\cdots \textbf{T}_{1c}\\
	&&+\sum_{i,k=1}^n\left\la \sum_{\hat i,\hat k=1}^{n+m}a^{\ts}_{i\hat i}\frac{\partial a^{\ts}_{k\hat k}}{\partial x_{\hat i}}\frac{\partial f}{\partial x_{ \hat k}} ,\sum_{ i', k'=1}^{n+m}a^{\ts}_{i i'}\frac{\partial a^{\ts}_{k k'}}{\partial x_{ i'}}\frac{\partial f}{\partial x_{ k'}} \right\ra\cdots \textbf{T}_{1d}.\\
	\eeaa
The terms $\textbf{T}_{1b}=\textbf{T}_{1c}$, $\textbf{R}_{1}^3=\textbf{R}_1^5$ and $\textbf{R}_2^5=\textbf{R}_2^4$ play the role of crossing terms inside the complete squares. In particular, for convenience, we change the index inside the sum of $\textbf{R}_1^3$ and $\textbf{R}_2^5$, switching $i',\hat i$ for $\textbf{R}_1^3$ and switching $i',\hat k$ for $\textbf{R}_2^5$. Then we get the following. 
	\beaa
	2\textbf{R}_1^3&=& 2\sum_{i,k=1}^n\sum_{i',\hat i,\hat k=1}^{n+m} \left\la a^{\ts}_{i\hat i}a^{\ts}_{i i'} ( \frac{\partial a^{\ts}_{k\hat k}}{\partial x_{i'}})(\frac{\partial }{\partial x_{\hat i}}\frac{\partial f}{\partial x_{\hat k}}) ,(a^{\ts}\nabla)_kf\right\ra \\
	&=&2\sum_{i,k=1}^n\sum_{\hat i,\hat k=1}^{n+m} \sum_{i',l=1}^{n+m} \left( a^{\ts}_{i\hat i}a^{\ts}_{i i'} ( \frac{\partial a^{\ts}_{k\hat k}}{\partial x_{i'}})(\frac{\partial }{\partial x_{\hat i}}\frac{\partial f}{\partial x_{\hat k}})a^{\ts}_{kl} \frac{\pa f}{\pa x_l} \right) \\
	-2 \textbf{R}_2^5&=&-2\sum_{i,k=1}^n\sum_{i',\hat i,\hat k=1}^{n+m}\left  \la a^{\ts}_{k i'}a^{\ts}_{i\hat k}\frac{\partial a^{\ts}_{i \hat i}}{\partial x_{i'}} (\frac{\partial }{\partial x_{\hat k}} \frac{\partial f}{\partial x_{\hat i}})  ,(a^{\ts}\nabla)_kf\right\ra \\
	&=&-2\sum_{i,k=1}^n \sum_{\hat i,\hat k=1}^{n+m}\sum_{i',l=1}^{n+m} \left( a^{\ts}_{k i'}a^{\ts}_{i\hat k}\frac{\partial a^{\ts}_{i \hat i}}{\partial x_{i'}} (\frac{\partial }{\partial x_{\hat k}} \frac{\partial f}{\partial x_{\hat i}})a^{\ts}_{kl} \frac{\pa f}{\pa x_l}\right).
	\eeaa

We denote 
\bea\label{vector z}
\sum_{\hat i,\hat k=1}^{n+m}a^{\ts}_{i\hat i}a^{\ts}_{k\hat k}\frac{\pa^2 f}{\pa x_{\hat i} \pa x_{\hat k}}&=&\gamma_{ik}.
\eea
The above equality \eqref{vector z} can be represented in the following matrix form
\beaa
Q_{n^2\times(n+m)^2}X_{(n+m)^2\times 1}=(\gamma_{11},\cdots,\gamma_{ik},\cdots,\gamma_{nn})^{\ts}_{n^2\times 1},
\eeaa
where $Q$ and $X$ are defined in \eqref{matrix Q} and \eqref{vector X}.
Now, we can represent term $\textbf{T}_{1a}$ as $
\sum_{i,k=1}^n \gamma_{ik}^2= \gamma^{\ts}\gamma=(QX)^{\ts}QX=X^{\ts}Q^{\ts}QX$.
Next we want to represent $\textbf{R}_1^3$ and $\textbf{R}_2^5$ in the following form in terms of vector $X$, 
\beaa
&&2\textbf{R}_1^3-2\textbf{R}_2^5\\
&=&2\sum_{i,k=1}^n\sum_{i',\hat i,\hat k=1}^{n+m}\left( \left\la a^{\ts}_{i\hat i}a^{\ts}_{i i'} ( \frac{\partial a^{\ts}_{k\hat k}}{\partial x_{i'}})(\frac{\partial }{\partial x_{\hat i}}\frac{\partial f}{\partial x_{\hat k}}) ,(a^{\ts}\nabla)_kf\right\ra_{\hR^n}
\right.\\
&&\left.\quad\quad\quad\quad\quad\quad-2\sum_{i,k=1}^n\sum_{i',\hat i,\hat k=1}^{n+m} \left\la a^{\ts}_{k i'}a^{\ts}_{i\hat k}\frac{\partial a^{\ts}_{i \hat i}}{\partial x_{i'}} (\frac{\partial }{\partial x_{\hat k}} \frac{\partial f}{\partial x_{\hat i}})  ,(a^{\ts}\nabla)_kf\right\ra \right)\\
&=&2\sum_{\hat i,\hat k=1}^{n+m}\left[\sum_{i,k=1}^n\sum_{i'=1}^{n+m}\left(\la a^{\ts}_{i\hat i}a^{\ts}_{i i'} ( \frac{\partial a^{\ts}_{k\hat k}}{\partial x_{i'}}) ,(a^{\ts}\nabla)_kf\ra - \la a^{\ts}_{k i'}a^{\ts}_{i\hat k}\frac{\partial a^{\ts}_{i \hat i}}{\partial x_{i'}}   ,(a^{\ts}\nabla)_kf\ra\right)\right](\frac{\partial }{\partial x_{\hat i}}\frac{\partial f}{\partial x_{\hat k}})\\
&=&2C^{\ts}X,
\eeaa
where $C$ is defined in \eqref{vector C}.
Similarly, we can represent $\textbf{T}_{1b}=\textbf{T}_{1c}$ by $X$,
\beaa
\textbf{T}_{1b}=\textbf{T}_{1c}&=&\sum_{i,k=1}^n\la \sum_{\hat i,\hat k=1}^{n+m}a^{\ts}_{i\hat i}\frac{\partial a^{\ts}_{k\hat k}}{\partial x_{\hat i}}\frac{\partial f}{\partial x_{ \hat k}} ,\sum_{ i', k'=1}^{n+m}a^{\ts}_{i i'}a^{\ts}_{k k'}\frac{\partial^2 f}{\partial x_{ i'}\partial x_{  k'}} \ra\\
&=&D^{\ts}QX,
\eeaa
where $D$ is defined in \eqref{notation}.  Summering over the above terms, we have the following quadratic form: 
\bea\label{complete squares}
\textbf{T}_1+2\textbf{R}_1^3-2\textbf{R}_2^5
=X^{\ts}Q^{\ts}QX+ 2D^{\ts}QX+2C^{\ts}X+D^{\ts}D.
\eea
Taking into account the fact that $\textbf{R}_1^6-\textbf{R}_2^6=0$ and $\textbf{R}_1^4-\textbf{R}_2^3=0$, we have 
\beaa
\textbf{T}_1+\textbf{R}_1-\textbf{R}_2
=\textbf{T}_1+2\textbf{R}_1^3-2\textbf{R}_2^5+\textbf{R}_1^1+\textbf{R}_1^2-\textbf{R}_2^1-\textbf{R}_2^2,
\eeaa	
which completes the proof.  \qed 
\end{proof}

\subsection{Proof of Lemma \ref{thm: gamma z}}
\begin{lem}
	\bea
	\Gamma_{2,\widetilde L}^z(f,f)=X^{\ts}P^{\ts}PX+2E^{\ts}PX+2F^{\ts}X+E^{\ts}E+\mathfrak{R}_z(\nabla f,\nabla f).
	\eea
where $\mathfrak R_z$ is defined in Definition \ref{def: Ricci curvature}.
\end{lem}
\begin{proof} The proof follows directly from Lemma \ref{gamma z 1} and Lemma \ref{gamma z 2}. \qed
\end{proof}

\begin{lem}\label{gamma z 1}
	\beaa
&&\frac{1}{2}\widetilde L\Gamma_1^z(f,f)-\Gamma_1^z(\widetilde Lf,f)\\
&=&\frac{1}{2}(a^{\ts}\nabla\circ (a^{\ts}\nabla |z^{\ts}\nabla f|^2 ))-\la z^{\ts} \nabla ((a^{\ts}\nabla) \circ(a^{\ts}\nabla f)),z^{\ts}\nabla f\ra_{\hR^m}.
\eeaa
\end{lem}

\begin{proof}\noindent
\textbf{Step 1:} We first define $\Gamma_1^z=\la z^{\ts}\nabla f , z^{\ts}\nabla f\ra_{\hR^m}$, we have 
\beaa
\widetilde L\Gamma_1^z(f,f)=\D_p	\Gamma_1^z(f,f)-A\nabla\Gamma_1^z(f,f),\quad 
\Gamma_1^z(\widetilde L f,f)=\Gamma_1^z(\D_pf,f)-\Gamma_1^z(A\nabla f,f).
\eeaa
By our definition above, we directly get
\beaa
\Delta_a\Gamma_1^z(f,f)&=&\nabla \cdot (aa^{\ts} \nabla \la z^{\ts}\nabla f,z^{\ts}\nabla f \ra_{\hR^m})=\nabla \cdot (a F^z)\\
&=& \sum_{\hat i=1}^{n+m} \frac{\partial}{\partial x_{\hat i}} (\sum_{k=1}^na_{\hat ik}F^z_k)\\
&=& \sum_{\hat i=1}^{n+m} \sum_{k=1}^n(\frac{\partial}{\partial x_{\hat i}} a_{\hat ik}F^z_k+a_{\hat ik} \frac{\partial}{\partial x_{\hat i}}F^z_k )\\
&=& \sum_{\hat i=1}^{n+m}\sum_{k=1}^n(\frac{\partial}{\partial x_{\hat i}} a_{\hat ik}F^z_k)+a^{\ts}\nabla\circ (a^{\ts}\nabla (z^{\ts}\nabla f)^2 ),
\eeaa
where we denote 
\beaa
F^z&=&a^{\ts} \nabla \la z^{\ts}\nabla f,z^{\ts}\nabla f \ra_{\hR^m}
=  a^{\ts}\nabla\sum_{l=1}^m (\sum_{\hat l=1}^{n+m} z^{\ts}_{l\hat l}\frac{\partial}{\partial x_{\hat l}}f)^2
\\
&=& \left( \sum_{\hat k=1}^{n+m}a^{\ts}_{k\hat k}\frac{\partial}{\partial x_{\hat k}} \sum_{l=1}^m (\sum_{\hat l=1}^{n+m} z^{\ts}_{l\hat l}\frac{\partial}{\partial x_{\hat l}}f)^2\right)_{k=1,\cdots,n}
=(F^z_1,F^z_2,\cdots, F^z_n)^{\ts}.
\eeaa
 we have
 \bea\label{gamma 2 a first term}
 &&\Delta_a\Gamma_1^z (f,f)\nonumber \\
 &=&\sum_{\hat i=1}^{n+m}\sum_{k=1}^n\left(\frac{\partial}{\partial x_{\hat i}} a_{\hat ik}\left( \sum_{\hat k=1}^{n+m}a^{\ts}_{k\hat k}\frac{\partial}{\partial x_{\hat k}} \sum_{l=1}^m (\sum_{\hat l=1}^{n+m} z^{\ts}_{l\hat l}\frac{\partial}{\partial x_{\hat l}}f)^2\right)\right)+a^{\ts}\nabla\circ  (a^{\ts}\nabla (z^{\ts}\nabla f)^2 )) \nonumber\\
 &=&\sum_{k=1}^n\sum_{\hat i=1}^{n+m}\left(\frac{\partial}{\partial x_{\hat i}} a_{\hat ik}\left( \sum_{\hat k=1}^{n+m}a^{\ts}_{k\hat k}\frac{\partial}{\partial x_{\hat k}} (z^{\ts}\nabla f)^2\right)\right) +(a^{\ts}\nabla)\circ (a^{\ts}\nabla (z^{\ts}\nabla f)^2 )\nonumber \\
 &=&\nabla a \circ (a^{\ts}\nabla (z^{\ts}\nabla f)^2)+(a^{\ts}\nabla)\circ (a^{\ts}\nabla (z^{\ts}\nabla f)^2 ).
 \eea
Next, we compute the following quantity. 
\beaa
\Gamma_1^z(\Delta_af,f)&=&\la z^{\ts} \nabla (\nabla\cdot (aa^{\ts}\nabla f) ),z^{\ts}\nabla f\ra_{\hR^m}.
\eeaa
From Lemma \ref{finite dim Bochner formula}, we have
\beaa
\nabla\cdot (aa^{\ts} \nabla f)=\nabla a\circ  (a^{\ts}\nabla f)+(a^{\ts}\nabla)\circ (a^{\ts}\nabla f).
\eeaa
We continue with our computation as below, 
\bea\label{gamma 2 a second term}
&&\Gamma_1^z(\Delta_a f,f)\nonumber \\
&=&\la z^{\ts} \nabla \left[ \sum_{\hat i,\hat k=1}^{n+m}\sum_{k=1}^n\frac{\partial}{\partial x_{\hat i}}a_{\hat ik} (a^{\ts}_{k\hat k} \frac{\partial}{\partial x_{\hat k}} f)+(a^{\ts}\nabla)\circ (a^{\ts}\nabla f)\right],z^{\ts}\nabla f\ra_{\hR^m}\nonumber \\
&=&\la z^{\ts}\nabla \left[  \sum_{\hat i,\hat k=1}^{n+m}\sum_{k=1}^n\frac{\partial}{\partial x_{\hat i}}a_{\hat ik} (a^{\ts}_{k\hat k} \frac{\partial}{\partial x_{\hat k}} f)\right], z^{\ts}\nabla f\ra_{\hR^m}+\la z^{\ts}\nabla ((a^{\ts}\nabla)\circ (a^{\ts}\nabla f)),z^{\ts}\nabla f\ra_{\hR^m}\nonumber\\
&=& \la z^{\ts} \nabla \left(\nabla a\cdot (a^{\ts}\nabla f)\right),z^{\ts}\nabla f\ra_{\hR^m}+\la z^{\ts} \nabla ((a^{\ts}\nabla) \circ(a^{\ts}\nabla f)),z^{\ts}\nabla f\ra_{\hR^m}\nonumber\\
&=&\la  \left(z^{\ts} \nabla\nabla a\cdot (a^{\ts}\nabla f)\right),z^{\ts}\nabla f\ra_{\hR^m}+\la \left(\nabla a\cdot (z^{\ts} \nabla (a^{\ts}\nabla f))\right),z^{\ts}\nabla f\ra_{\hR^m}\nonumber\\
&&+\la z^{\ts} \nabla ((a^{\ts}\nabla)\circ (a^{\ts}\nabla f)),z^{\ts}\nabla f\ra_{\hR^m}.
\eea
From the above, combining \eqref{gamma 2 a first term}  and  \eqref{gamma 2 a second term} we further get 
\beaa
&&\frac{1}{2}\Delta_a \Gamma_1^z (f,f)-\Gamma_1^z (\Delta_a f,f)\\
&=&\frac{1}{2}(a^{\ts}\nabla\circ (a^{\ts}\nabla |z^{\ts}\nabla f|^2 ))-\la z^{\ts} \nabla ((a^{\ts}\nabla)\circ (a^{\ts}\nabla f)),z^{\ts}\nabla f\ra_{\hR^m} \\
&&+\frac{1}{2} \sum_{\hat i=1}^{n+m}\sum_{k=1}^n\left(\frac{\partial}{\partial x_{\hat i}} a_{\hat ik}\left( \sum_{\hat k=1}^{n+m}a^{\ts}_{k\hat k}\frac{\partial}{\partial x_{\hat j}} \sum_{l=1}^n (\sum_{\hat l=1}^{n+m} z^{\ts}_{l\hat l}\frac{\partial}{\partial x_{\hat l}}f)^2\right)\right)
\eeaa 
\beaa 
&&-\la z^{\ts}\nabla \left[  \sum_{\hat i,\hat k=1}^{n+m}\sum_{k=1}^n\frac{\partial}{\partial x_{\hat i}}a_{\hat ik} (a^{\ts}_{k\hat k} \frac{\partial}{\partial x_{\hat k}} f)\right], z^{\ts}\nabla f\ra_{\hR^m}\\ 
&=&\frac{1}{2}(a^{\ts}\nabla\circ (a^{\ts}\nabla |z^{\ts}\nabla f|^2 ))-\la z^{\ts} \nabla ((a^{\ts}\nabla) \circ(a^{\ts}\nabla f)),z^{\ts}\nabla f\ra_{\hR^m} \\
&&+\sum_{\hat i=1}^{n+m}\sum_{k=1}^n\left(\frac{\partial}{\partial x_{\hat i}}a_{\hat ik}\sum_{\hat k=1}^{n+m} a^{\ts}_{k\hat k}\sum_{l=1}^m (\sum_{\hat l=1}^{n+m} z^{\ts}_{l\hat l}\frac{\partial}{\partial x_{\hat l}}f) \frac{\partial}{\partial x_{\hat k}} (\sum_{\hat l=1}^{n+m} z^{\ts}_{l\hat l}\frac{\partial}{\partial x_{\hat l}}f) \right)\cdots \textbf{I}\\
&&-\sum_{l=1}^m \left((\sum_{\hat l=1}^{n+m}z^{\ts}_{l\hat l}\frac{\partial}{\partial x_{\hat l}} f) \left(\sum_{l'=1}^{n+m}z^{\ts}_{ll'}\frac{\partial}{\partial x_{l'}}\left[  \sum_{\hat i,\hat k=1}^{n+m}\sum_{k=1}^n\frac{\partial}{\partial x_{\hat i}}a_{\hat ik} (a^{\ts}_{k\hat k} \frac{\partial}{\partial x_{\hat k}} f)\right] \right) \right)\cdots \textbf{II}.
\eeaa
Recall here, we denote $a^{\ts}$ to emphasize the transpose of the matrix $a$ and $a^{\ts}_{i\hat i}=a_{\hat i i}$, 
\beaa
\textbf{I}&=&\sum_{\hat i=1}^{n+m}\sum_{k=1}^n\left(\frac{\partial}{\partial x_{\hat i}}a_{\hat ik}\sum_{\hat k=1}^{n+m} a^{\ts}_{k\hat k}\sum_{l=1}^m(\sum_{\hat l=1}^{n+m} z^{\ts}_{l\hat l}\frac{\partial}{\partial x_{\hat l}}f) \frac{\partial}{\partial x_{\hat k}} (\sum_{\hat l=1}^{n+m} z^{\ts}_{l\hat l}\frac{\partial}{\partial x_{\hat l}}f) \right)\\
&=&\sum_{\hat i=1}^{n+m}\sum_{k=1}^n\left(\frac{\partial}{\partial x_{\hat i}}a_{\hat ik}\sum_{\hat k=1}^{n+m} a^{\ts}_{k\hat k}\sum_{l=1}^m (\sum_{\hat l=1}^{n+m} z^{\ts}_{l\hat l}\frac{\partial}{\partial x_{\hat l}}f)  (\sum_{\hat l=1}^{n+m}\frac{\partial}{\partial x_{\hat k}} z^{\ts}_{l\hat l}\frac{\partial}{\partial x_{\hat l}}f) \right)\\
&&+\sum_{\hat i=1}^{n+m}\sum_{k=1}^n\left(\frac{\partial}{\partial x_{\hat i}}a_{\hat ik}\sum_{\hat k=1}^{n+m} a^{\ts}_{k\hat k}\sum_{l=1}^m (\sum_{\hat l=1}^{n+m} z^{\ts}_{l\hat l}\frac{\partial}{\partial x_{\hat l}}f)  (\sum_{\hat l=1}^{n+m} z^{\ts}_{l\hat l}\frac{\partial}{\partial x_{\hat k}}\frac{\partial}{\partial x_{\hat l}}f) \right)\\
&=&\sum_{l=1}^m(\sum_{\hat l=1}^{n+m} z^{\ts}_{l\hat l}\frac{\partial}{\partial x_{\hat l}}f)\sum_{\hat i=1}^{n+m}\sum_{k=1}^n\left(\frac{\partial}{\partial x_{\hat i}}a_{\hat ik}\sum_{\hat k=1}^{n+m} a^{\ts}_{k\hat k}   (\sum_{ l'=1}^{n+m}\frac{\partial}{\partial x_{\hat k}} z^{\ts}_{ll'}\frac{\partial}{\partial x_{l'}}f) \right)\\
&&+\sum_{l=1}^m(\sum_{\hat l=1}^{n+m} z^{\ts}_{l\hat l}\frac{\partial}{\partial x_{\hat l}}f) \sum_{\hat i=1}^{n+m}\sum_{k=1}^n\left(\frac{\partial}{\partial x_{\hat i}}a_{\hat ik}\sum_{\hat k=1}^{n+m} a^{\ts}_{k\hat k}  (\sum_{\hat l=1}^{n+m} z^{\ts}_{l l'}\frac{\partial}{\partial x_{\hat k}}\frac{\partial}{\partial x_{l'}}f) \right);\\ 
\textbf{II}&=&\sum_{l=1}^m \left((\sum_{\hat l=1}^{n+m}z^{\ts}_{l\hat l}\frac{\partial}{\partial x_{\hat l}} f) \left(\sum_{l'=1}^{n+m}z^{\ts}_{ll'}\frac{\partial}{\partial x_{l'}}\left[  \sum_{\hat i,\hat k=1}^{n+m}\sum_{k=1}^n\frac{\partial}{\partial x_{\hat i}}a_{\hat ik} (a^{\ts}_{k\hat k} \frac{\partial}{\partial x_{\hat k}} f)\right] \right) \right)\\
&=&\sum_{l=1}^m \left((\sum_{\hat l=1}^{n+m}z^{\ts}_{l\hat l}\frac{\partial}{\partial x_{\hat l}} f) \left(\sum_{l'=1}^{n+m}z^{\ts}_{ll'}\left[  \sum_{\hat i,\hat k=1}^{n+m}\sum_{k=1}^n\frac{\partial}{\partial x_{\hat i}}a_{\hat ik} \frac{\partial}{\partial x_{l'}}(a^{\ts}_{k\hat k} \frac{\partial}{\partial x_{\hat k}} f)\right] \right) \right)\\
&&+\sum_{l=1}^m \left((\sum_{\hat l=1}^{n+m}z^{\ts}_{l\hat l}\frac{\partial}{\partial x_{\hat l}} f) \left(\sum_{l'=1}^{n+m}z^{\ts}_{ll'}\left[  \sum_{\hat i,\hat k=1}^{n+m}\sum_{k=1}^n\frac{\partial^2}{\partial x_{\hat i}x_{l'}}a_{\hat ik} (a^{\ts}_{k\hat k} \frac{\partial}{\partial x_{\hat k}} f)\right] \right) \right)\\ 
&=&\sum_{l=1}^m \left((\sum_{\hat l=1}^{n+m}z^{\ts}_{l\hat l}\frac{\partial}{\partial x_{\hat l}} f) \left(\sum_{l'=1}^{n+m}z^{\ts}_{ll'}\left[  \sum_{\hat i,\hat k=1}^{n+m}\sum_{k=1}^n\frac{\partial}{\partial x_{\hat i}}a_{\hat ik} (\frac{\partial}{\partial x_{l'}}a^{\ts}_{k\hat k} \frac{\partial}{\partial x_{\hat k}} f)\right] \right) \right)\\
&&+\sum_{l=1}^m \left((\sum_{\hat l=1}^{n+m}z^{\ts}_{l\hat l}\frac{\partial}{\partial x_{\hat l}} f) \left(\sum_{l'=1}^{n+m}z^{\ts}_{ll'}\left[  \sum_{\hat i,\hat k=1}^{n+m}\sum_{k=1}^n\frac{\partial}{\partial x_{\hat i}}a_{\hat ik} (a^{\ts}_{k\hat k} \frac{\partial}{\partial x_{l'}}\frac{\partial}{\partial x_{\hat k}} f)\right] \right) \right)\\
&&+\sum_{l=1}^n \left((\sum_{\hat l=1}^{n+m}z^{\ts}_{l\hat l}\frac{\partial}{\partial x_{\hat l}} f) \left(\sum_{l'=1}^{n+m}z^{\ts}_{ll'}\left[  \sum_{\hat i,\hat k=1}^{n+m}\sum_{k=1}^n\frac{\partial^2}{\partial x_{\hat i}x_{l'}}a_{\hat ik} (a^{\ts}_{k\hat k} \frac{\partial}{\partial x_{\hat k}} f)\right] \right) \right).
\eeaa
Subtracting the above two terms, we obtain the following
\beaa
\textbf{I}-\textbf{II}&=&-\sum_{l=1}^m \left((\sum_{\hat l=1}^{n+m}z^{\ts}_{l\hat l}\frac{\partial}{\partial x_{\hat l}} f) \left(\sum_{l'=1}^{n+m}z^{\ts}_{ll'}\left[  \sum_{\hat i,\hat k=1}^{n+m}\sum_{k=1}^n\frac{\partial}{\partial x_{\hat i}}a_{\hat ik} (\frac{\partial}{\partial x_{l'}}a^{\ts}_{k\hat k} \frac{\partial}{\partial x_{\hat k}} f)\right] \right) \right)\\
&&-\sum_{l=1}^m \left((\sum_{\hat l=1}^{n+m}z^{\ts}_{l\hat l}\frac{\partial}{\partial x_{\hat l}} f) \left(\sum_{l'=1}^{n+m}z^{\ts}_{ll'}\left[  \sum_{\hat i,\hat k=1}^{n+m}\sum_{k=1}^n\frac{\partial^2}{\partial x_{\hat i}x_{l'}}a_{\hat ik} (a^{\ts}_{k\hat k} \frac{\partial}{\partial x_{\hat k}} f)\right] \right) \right)\\
&&+\sum_{l=1}^m(\sum_{\hat l=1}^{n+m} z^{\ts}_{l\hat l}\frac{\partial}{\partial x_{\hat l}}f)\sum_{\hat i=1}^{n+m}\sum_{k=1}^n\left(\frac{\partial}{\partial x_{\hat i}}a_{\hat ik}\sum_{\hat k=1}^{n+m} a^{\ts}_{k\hat k}   (\sum_{ l'=1}^{n+m}\frac{\partial}{\partial x_{\hat k}} z^{\ts}_{ll'}\frac{\partial}{\partial x_{l'}}f) \right).
\eeaa

Now, we eventually end up with the following formula, 
\beaa
&&\frac{1}{2}\Delta_a \Gamma_1^z(f,f)-\Gamma_1^z(\Delta_a f,f)\\
&=&\frac{1}{2}(a^{\ts}\nabla\circ (a^{\ts}\nabla |z^{\ts}\nabla f|^2 ))-\la z^{\ts} \nabla ((a^{\ts}\nabla) \circ(a^{\ts}\nabla f)),z^{\ts}\nabla f\ra_{\hR^m} \\
&&-\sum_{l=1}^m \left((\sum_{\hat l=1}^{n+m}z^{\ts}_{l\hat l}\frac{\partial}{\partial x_{\hat l}} f) \left(\sum_{l'=1}^{n+m}z^{\ts}_{ll'}\left[  \sum_{\hat i,\hat k=1}^{n+m}\sum_{k=1}^n\frac{\partial}{\partial x_{\hat i}}a_{\hat ik} (\frac{\partial}{\partial x_{l'}}a^{\ts}_{k\hat k} \frac{\partial}{\partial x_{\hat k}} f)\right] \right) \right)\\
&&-\sum_{l=1}^m \left((\sum_{\hat l=1}^{n+m}z^{\ts}_{l\hat l}\frac{\partial}{\partial x_{\hat l}} f) \left(\sum_{l'=1}^{n+m}z^{\ts}_{ll'}\left[  \sum_{\hat i,\hat k=1}^{n+m}\sum_{k=1}^n\frac{\partial^2}{\partial x_{\hat i}x_{l'}}a_{\hat ik} (a^{\ts}_{k\hat k} \frac{\partial}{\partial x_{\hat k}} f)\right] \right) \right)\\
&&+\sum_{l=1}^m(\sum_{\hat l=1}^{n+m} z^{\ts}_{l\hat l}\frac{\partial}{\partial x_{\hat l}}f)\sum_{\hat i=1}^{n+m}\sum_{k=1}^n\left(\frac{\partial}{\partial x_{\hat i}}a_{\hat ik}\sum_{\hat k=1}^{n+m} a^{\ts}_{k\hat k}   (\sum_{ l'=1}^{n+m}\frac{\partial}{\partial x_{\hat k}} z^{\ts}_{ll'}\frac{\partial}{\partial x_{l'}}f) \right).
\eeaa

\noindent
\textbf{Step 2:} Computation of
$-\frac{1}{2}A\nabla\Gamma_1^z(f,f)+\Gamma_1^z(A\nabla f,f)$.
Now we compute the last two terms of the above equation, with $A=a\otimes\nabla a$,
\beaa
-\frac{1}{2}A\nabla \Gamma_1^z(f,f)&=& -\frac{1}{2}\sum_{\hat k=1}^{n+m}A_{\hat k}\nabla_{\frac{\partial}{\partial x_{\hat k}}}\la z^{\ts}\nabla f,z^{\ts}\nabla f\ra_{\hR^m}\\
&=&-\sum_{\hat k=1}^{n+m}\la A_{\hat k} (\nabla_{\frac{\partial}{\partial x_{\hat k}}} z^{\ts})\nabla f,z^{\ts}\nabla f\ra_{\hR^m}-\sum_{\hat k=1}^{n+m}\la A_{\hat k} z^{\ts}(\nabla_{\frac{\partial}{\partial x_{\hat k}}} \nabla f),z^{\ts}\nabla f\ra_{\hR^m}\\
&=&\tilde{\textbf{J}}_1+\tilde{\textbf{J}}_2,\eeaa 
\beaa 
\Gamma_1^z(A\nabla f,f)&=&\la z^{\ts}\nabla (\sum_{\hat k=1}^{n+m}A_{\hat k}\nabla_{\frac{\partial}{\partial x_{\hat k}}} f ) ,z^{\ts}\nabla f\ra_{\hR^m}\\
&=& \la z^{\ts}(\sum_{\hat k=1}^{n+m}A_{\hat k}\nabla \nabla_{\frac{\partial}{\partial x_{\hat k}}} f ) ,z^{\ts}\nabla f\ra_{\hR^m}+ \la z^{\ts}(\sum_{\hat k=1}^{n+m}\nabla A_{\hat k} \nabla_{\frac{\partial}{\partial x_{\hat k}}} f ) ,z^{\ts}\nabla f\ra_{\hR^m}\\
&=& \tilde{\textbf{J}}_3+\tilde{\textbf{J}}_4.
\eeaa
It is easy to see $\tilde{\textbf{J}}_2+\tilde{\textbf{J}}_3=0.$
We now expand $\tilde{\textbf{J}}_1$ and $\tilde{\textbf{J}}_4$ into local coordinates, 
\bea\label{tilde J 1 term}
\tilde{\textbf{J}}_1&=&-\sum_{l=1}^{m}\left(z^{\ts}\nabla f \right)_l\left(\sum_{l',\hat k=1}^{n+m} \sum_{k=1}^n\sum_{k'=1}^{n+m}a_{\hat kk} \nabla_{\frac{\partial}{\partial x_{k'}}}a_{k'k}  \nabla_{\frac{\partial}{\partial x_{\hat k}}} z_{l l'}  \nabla_{\frac{\partial}{\partial x_{l'}}}f\right),
\eea
\bea 
\tilde{\textbf{J}}_4&=&\sum_{l=1}^m(z^{\ts}\nabla f)_l  \left(\sum_{l'=1}^{n+m} z^{\ts}_{ll'}(\sum_{\hat k=1}^{n+m}\nabla_{\frac{\partial}{\partial x_{l'}}}(\sum_{k=1}^n\sum_{k'=1}^{n+m}a_{\hat kk} \nabla_{\frac{\partial}{\partial x_{k'}}}a_{k'k})\nabla_{\frac{\partial}{\partial x_{\hat k}}} f ) \right) \nonumber \\
&=&\sum_{l=1}^m(z^{\ts}\nabla f)_l\left(\sum_{k=1}^n\sum_{l'=1}^{n+m}\sum_{\hat k,k'=1}^{n+m} z^{\ts}_{ll'}\nabla_{\frac{\partial}{\partial x_{l'}}}a_{\hat kk}  \nabla_{\frac{\partial}{\partial x_{k'}}}a_{k'k}\nabla_{\frac{\partial}{\partial x_{\hat k}}}f \right)\nonumber\\
&&+\sum_{l=1}^m(z^{\ts}\nabla f)_l\left(\sum_{k=1}^n\sum_{l'=1}^{n+m}\sum_{\hat k,k'=1}^{n+m}z^{\ts}_{ll'}a_{\hat kk} (\nabla_{\frac{\partial}{\partial x_{l'}}} \nabla_{\frac{\partial}{\partial x_{k'}}}a_{k'k})\nabla_{\frac{\partial}{\partial x_{\hat k}}}f \right).
\eea
Combine the above two steps, we thus get 
\beaa
\frac{1}{2}\widetilde L\Gamma_1^z(f,f)-\Gamma_1^z(\widetilde Lf,f)=\frac{1}{2}(a^{\ts}\nabla\circ (a^{\ts}\nabla |z^{\ts}\nabla f|^2 ))-\la z^{\ts} \nabla ((a^{\ts}\nabla) \circ(a^{\ts}\nabla f)),z^{\ts}\nabla f\ra_{\hR^m}.
\eeaa \qed 
\end{proof}
\begin{lem}\label{gamma z 2}
	\beaa
	&&\frac{1}{2}(a^{\ts}\nabla\circ (a^{\ts}\nabla |z^{\ts}\nabla f|^2 ))-\la z^{\ts} \nabla ((a^{\ts}\nabla) \circ(a^{\ts}\nabla f)),z^{\ts}\nabla f\ra_{\hR^m}\\
	&=&X^{\ts}P^{\ts}PX+2E^{\ts}PX+2F^{\ts}X+E^{\ts}E+\mathfrak R_z(\nabla f,\nabla f).
	\eeaa
\end{lem}
\begin{proof}
We expand the two terms in lemma \ref{gamma z 2} .
	\beaa
	&&\frac{1}{2}(a^{\ts}\nabla\circ  (a^{\ts}\nabla |z^{\ts}\nabla f|^2 ))\\
	&=&\frac{1}{2}\sum_{i=1}^n\sum_{k=1}^m(a^{\ts}\nabla)_i(a^{\ts}\nabla)_i|(z^{\ts}\nabla)_kf|^2\\
	&=&\sum_{i=1}^n\sum_{k=1}^m(a^{\ts}\nabla)_i\la (a^{\ts}\nabla)_i (z^{\ts}\nabla)_kf,(z^{\ts}\nabla)_kf\ra_{\hR^m}\\
	&=&\sum_{i=1}^n\sum_{k=1}^m\la (a^{\ts}\nabla)_i (z^{\ts}\nabla)_kf,(a^{\ts}\nabla)_i(z^{\ts}\nabla)_kf\ra_{\hR^m}\cdots \widetilde{\textbf{T}}_1\\
	&&+\sum_{i=1}^n\sum_{k=1}^m \la (a^{\ts}_{ii'}\frac{\partial}{\partial x_{i'}}) (a^{\ts}_{i \hat i}\frac{\partial}{\partial x_{\hat i}}) (z^{\ts}_{k\hat k}\frac{\partial}{\partial x_{\hat k}})f ,(z^{\ts}\nabla)_kf\ra_{\hR^m} \cdots \widetilde{\textbf{R}}_1.
	\eeaa
	\beaa
	&&\la z^{\ts} \nabla ([(a^{\ts}\nabla)\circ (a^{\ts}\nabla f)]),z^{\ts}\nabla f\ra_{\hR^m}\\
	&=&\sum_{i=1}^n \sum_{k=1}^m\la(z^{\ts}\nabla)_k[(a^{\ts}\nabla)_i(a^{\ts}\nabla)_if ],(z^{\ts}\nabla)_kf\ra_{\hR^m} \\
	&=&\sum_{i=1}^n\sum_{k=1}^m \la(z^{\ts}_{k\hat k}\frac{\partial}{\partial x_{\hat k}}) [(a^{\ts}_{ii'}\frac{\partial}{\partial x_{i'}}) (a^{\ts}_{i \hat i}\frac{\partial}{\partial x_{\hat i}}) f ],(z^{\ts}\nabla)_kf\ra_{\hR^m}  \cdots \widetilde{\textbf{R}}_2.
	\eeaa
	Next, we expand $\widetilde{\textbf{R}}_1$ and $\widetilde{\textbf{R}}_2$ completely and get the following,
	\beaa
	\widetilde{\textbf{R}}_1&=&\sum_{i=1}^n\sum_{k=1}^m\sum_{i',\hat i,\hat k=1}^{n+m} \la (a^{\ts}_{ii'}\frac{\partial}{\partial x_{i'}}) (a^{\ts}_{i \hat i}\frac{\partial}{\partial x_{\hat i}}) (z^{\ts}_{k\hat k}\frac{\partial f}{\partial x_{\hat k}}) ,(z^{\ts}\nabla)_kf\ra_{\hR^m} \\
	&=&\sum_{i=1}^n\sum_{k=1}^m\sum_{i',\hat i,\hat k=1}^{n+m} \la a^{\ts}_{ii'} (\frac{\partial a^{\ts}_{i \hat i}}{\partial x_{i'}} \frac{\partial z^{\ts}_{k\hat k}}{\partial x_{\hat i}}\frac{\partial f}{\partial x_{\hat k}}) ,(z^{\ts}\nabla)_kf\ra_{\hR^m} \cdots \widetilde{\textbf{R}}_1^1
	\eeaa 
	\beaa 
	&&+\sum_{i=1}^n\sum_{k=1}^m\sum_{i',\hat i,\hat k=1}^{n+m} \la a^{\ts}_{ii'}a^{\ts}_{i \hat i} (\frac{\partial }{\partial x_{i'}} \frac{\partial z^{\ts}_{k\hat k}}{\partial x_{\hat i}})(\frac{\partial f}{\partial x_{\hat k}}) ,(z^{\ts}\nabla)_kf\ra_{\hR^m}  \cdots \widetilde{\textbf{R}}_1^2\\
	&&+\sum_{i=1}^n\sum_{k=1}^m\sum_{i',\hat i,\hat k=1}^{n+m} \la a^{\ts}_{ii'}a^{\ts}_{i \hat i} ( \frac{\partial z^{\ts}_{k\hat k}}{\partial x_{\hat i}})(\frac{\partial }{\partial x_{i'}}\frac{\partial f}{\partial x_{\hat k}}) ,(z^{\ts}\nabla)_kf\ra_{\hR^m}  \cdots \widetilde{\textbf{R}}_1^3\\
	&&+\sum_{i=1}^n\sum_{k=1}^m\sum_{i',\hat i,\hat k=1}^{n+m} \la (a^{\ts}_{ii'}) ((\frac{\partial}{\partial x_{i'}}a^{\ts}_{i \hat i}) z^{\ts}_{k\hat k}\frac{\partial}{\partial x_{\hat i}}\frac{\partial f}{\partial x_{\hat k}}) ,(z^{\ts}\nabla)_kf\ra_{\hR^m}  \cdots \widetilde{\textbf{R}}_1^4\\
	&&+\sum_{i=1}^n\sum_{k=1}^m\sum_{i',\hat i,\hat k=1}^{n+m} \la a^{\ts}_{ii'}a^{\ts}_{i \hat i}(\frac{\partial}{\partial x_{i'}} z^{\ts}_{k\hat k})\frac{\partial}{\partial x_{\hat i}}\frac{\partial f}{\partial x_{\hat k}}) ,(z^{\ts}\nabla)_kf\ra_{\hR^m}  \cdots \widetilde{\textbf{R}}_1^5\\
	&&+\sum_{i=1}^n\sum_{k=1}^m\sum_{i',\hat i,\hat k=1}^{n+m} \la a^{\ts}_{ii'} a^{\ts}_{i \hat i}  z^{\ts}_{k\hat k}(\frac{\partial}{\partial x_{i'}}\frac{\partial}{\partial x_{\hat i}}\frac{\partial f}{\partial x_{\hat k}}) ,(z^{\ts}\nabla)_kf\ra_{\hR^m}  \cdots \widetilde{\textbf{R}}_1^6\\
	\eeaa
	\beaa
\widetilde{\textbf{R}}_2&=&	\sum_{i=1}^n\sum_{k=1}^m\sum_{i',\hat i,\hat k=1}^{n+m} \la(z^{\ts}_{k\hat k}\frac{\partial}{\partial x_{\hat k}}) [(a^{\ts}_{ii'}\frac{\partial}{\partial x_{i'}}) (a^{\ts}_{i \hat i}\frac{\partial f}{\partial x_{\hat i}})  ],(z^{\ts}\nabla)_kf\ra_{\hR^m} \\
&=&\sum_{i=1}^n\sum_{k=1}^m\sum_{i',\hat i,\hat k=1}^{n+m} \la z^{\ts}_{k\hat k}\frac{\partial a^{\ts}_{ii'}}{\partial x_{\hat k}} \frac{\partial a^{\ts}_{i \hat i}}{\partial x_{i'}} \frac{\partial f}{\partial x_{\hat i}})  ,(z^{\ts}\nabla)_kf\ra_{\hR^m}  \cdots \widetilde{\textbf{R}}_2^1\\
&&+\sum_{i=1}^n\sum_{k=1}^m\sum_{i',\hat i,\hat k=1}^{n+m} \la z^{\ts}_{k\hat k} a^{\ts}_{ii'} (\frac{\partial }{\partial x_{\hat k}} \frac{\partial a^{\ts}_{i \hat i}}{\partial x_{i'}}) \frac{\partial f}{\partial x_{\hat i}}  ,(z^{\ts}\nabla)_kf\ra_{\hR^m} \cdots \widetilde{\textbf{R}}_2^2\\
&&+\sum_{i=1}^n\sum_{k=1}^m\sum_{i',\hat i,\hat k=1}^{n+m} \la z^{\ts}_{k\hat k} a^{\ts}_{ii'} \frac{\partial a^{\ts}_{i \hat i}}{\partial x_{i'}}(\frac{\partial }{\partial x_{\hat k}}  \frac{\partial f}{\partial x_{\hat i}})  ,(z^{\ts}\nabla)_kf\ra_{\hR^m} \cdots \widetilde{\textbf{R}}_2^3=\widetilde{\textbf{R}}_1^4\\
&&+\sum_{i=1}^n\sum_{k=1}^m\sum_{i',\hat i,\hat k=1}^{n+m} \la z^{\ts}_{k\hat k}\frac{\partial a^{\ts}_{ii'}}{\partial x_{\hat k}} a^{\ts}_{i \hat i}(\frac{\partial }{\partial x_{i'}} \frac{\partial f}{\partial x_{\hat i}})  ,(z^{\ts}\nabla)_kf\ra_{\hR^m} \cdots \widetilde{\textbf{R}}_2^4\\ 
&&+\sum_{i=1}^n\sum_{k=1}^m\sum_{i',\hat i,\hat k=1}^{n+m} \la z^{\ts}_{k\hat k}a^{\ts}_{ii'}\frac{\partial a^{\ts}_{i \hat i}}{\partial x_{\hat k}} (\frac{\partial }{\partial x_{i'}} \frac{\partial f}{\partial x_{\hat i}})  ,(z^{\ts}\nabla)_kf\ra_{\hR^m} \cdots \widetilde{\textbf{R}}_2^5\\ 
&&+\sum_{i=1}^n\sum_{k=1}^m\sum_{i',\hat i,\hat k=1}^{n+m} \la z^{\ts}_{k\hat k}a^{\ts}_{ii'}a^{\ts}_{i \hat i}(\frac{\partial }{\partial x_{\hat k}} \frac{\partial }{\partial x_{i'}} \frac{\partial f}{\partial x_{\hat i}})  ,(z^{\ts}\nabla)_kf\ra_{\hR^m} \cdots \widetilde{\textbf{R}}_2^6=\widetilde{\textbf{R}}_1^6
	\eeaa
Our next step is to complete squares for all the above terms. We look at term $\widetilde{\textbf{T}}_1$ first. 
	\beaa
	\widetilde{\textbf{T}}_1&=&\sum_{i=1}^n\sum_{k=1}^m\left\la \sum_{\hat i,\hat k=1}^{n+m}a^{\ts}_{i\hat i}z^{\ts}_{k\hat k}\frac{\partial^2 f}{\partial x_{\hat i}\partial x_{ \hat k}}+\sum_{\hat i,\hat k=1}^{n+m}a^{\ts}_{i\hat i}\frac{\partial z^{\ts}_{k\hat k}}{\partial x_{\hat i}}\frac{\partial f}{\partial x_{ \hat k}},\right.\\
	&&\left.\quad\quad\quad\quad\sum_{ i', k'=1}^{n+m}a^{\ts}_{i i'}z^{\ts}_{k k'}\frac{\partial^2 f}{\partial x_{ i'}\partial x_{  k'}}+\sum_{ i', k'=1}^{n+m}a^{\ts}_{i i'}\frac{\partial z^{\ts}_{k k'}}{\partial x_{ i'}}\frac{\partial f}{\partial x_{ k'}} \right\ra\\
	&=&\sum_{i=1}^n\sum_{k=1}^m\sum_{k=1}^m\left\la \sum_{\hat i,\hat k=1}^{n+m}a^{\ts}_{i\hat i}z^{\ts}_{k\hat k}\frac{\partial^2 f}{\partial x_{\hat i}\partial x_{ \hat k}},\sum_{ i', k'=1}^{n+m}a^{\ts}_{i i'}z^{\ts}_{k k'}\frac{\partial^2 f}{\partial x_{ i'}\partial x_{  k'}} \right\ra \cdots \widetilde{\textbf{T}}_{1a}
		\eeaa 
	\beaa
	&&+\sum_{i=1}^n\sum_{k=1}^m\left\la \sum_{\hat i,\hat k=1}^{n+m}a^{\ts}_{i\hat i}z^{\ts}_{k\hat k}\frac{\partial^2 f}{\partial x_{\hat i}\partial x_{ \hat k}},\sum_{ i', k'=1}^{n+m}a^{\ts}_{i i'}\frac{\partial z^{\ts}_{k k'}}{\partial x_{ i'}}\frac{\partial f}{\partial x_{ k'}} \right\ra\cdots \widetilde{\textbf{T}}_{1b}\\ 
	&&+\sum_{i=1}^n\sum_{k=1}^m\left\la \sum_{\hat i,\hat k=1}^{n+m}a^{\ts}_{i\hat i}\frac{\partial z^{\ts}_{k\hat k}}{\partial x_{\hat i}}\frac{\partial f}{\partial x_{ \hat k}} ,\sum_{ i', k'=1}^{n+m}a^{\ts}_{i i'}z^{\ts}_{k k'}\frac{\partial^2 f}{\partial x_{ i'}\partial x_{  k'}}\right\ra\cdots \widetilde{\textbf{T}}_{1c}\\
	&&+\sum_{i=1}^n\sum_{k=1}^m\left\la \sum_{\hat i,\hat k=1}^{n+m}a^{\ts}_{i\hat i}\frac{\partial z^{\ts}_{k\hat k}}{\partial x_{\hat i}}\frac{\partial f}{\partial x_{ \hat k}} ,\sum_{ i', k'=1}^{n+m}a^{\ts}_{i i'}\frac{\partial z^{\ts}_{k k'}}{\partial x_{ i'}}\frac{\partial f}{\partial x_{ k'}} \right\ra\cdots \widetilde{\textbf{T}}_{1d}.
	\eeaa

The terms $\widetilde{\textbf{T}}_{1b}=\widetilde{\textbf{T}}_{1c}$, $\widetilde{\textbf{R}}_{1}^3=\widetilde{\textbf{R}}_1^5$ and $\widetilde{\textbf{R}}_2^5=\widetilde{\textbf{R}}_2^4$ plays the role of crossing terms inside the complete squares. In particular, for convenience, we change the index inside the sum of $\widetilde{\textbf{R}}_1^3$ and $\widetilde{\textbf{R}}_2^5$, switch $i',\hat i$ for $\widetilde{\textbf{R}}_1^3$ and switch $i',\hat k$ for $\widetilde{\textbf{R}}_2^5$, then we get following. 
	\beaa
	2\widetilde{\textbf{R}}_1^3&=& 2\sum_{i=1}^n\sum_{k=1}^m\sum_{i',\hat i,\hat k=1}^{n+m} \la a^{\ts}_{i\hat i}a^{\ts}_{i i'} ( \frac{\partial z^{\ts}_{k\hat k}}{\partial x_{i'}})(\frac{\partial }{\partial x_{\hat i}}\frac{\partial f}{\partial x_{\hat k}}) ,(z^{\ts}\nabla)_kf\ra_{\hR^n} \\
	&=&2\sum_{i=1}^n\sum_{k=1}^m\sum_{\hat i,\hat k=1}^{n+m} \sum_{i',l=1}^{n+m} \left( a^{\ts}_{i\hat i}a^{\ts}_{i i'} ( \frac{\partial z^{\ts}_{k\hat k}}{\partial x_{i'}})(\frac{\partial }{\partial x_{\hat i}}\frac{\partial f}{\partial x_{\hat k}})z^{\ts}_{kl} \frac{\pa f}{\pa x_l} \right) \\
	-2 \widetilde{\textbf{R}}_2^5&=&-2\sum_{i=1}^n\sum_{k=1}^m\sum_{i',\hat i,\hat k=1}^{n+m} \la z^{\ts}_{k i'}a^{\ts}_{i\hat k}\frac{\partial a^{\ts}_{i \hat i}}{\partial x_{i'}} (\frac{\partial }{\partial x_{\hat k}} \frac{\partial f}{\partial x_{\hat i}})  ,(z^{\ts}\nabla)_kf\ra \\
	&=&-2\sum_{i=1}^n\sum_{k=1}^m \sum_{\hat i,\hat k=1}^{n+m}\sum_{i',l=1}^{n+m} \left( z^{\ts}_{k i'}a^{\ts}_{i\hat k}\frac{\partial a^{\ts}_{i \hat i}}{\partial x_{i'}} (\frac{\partial }{\partial x_{\hat k}} \frac{\partial f}{\partial x_{\hat i}})z^{\ts}_{kl} \frac{\pa f}{\pa x_l}\right)
	\eeaa

We denote 
\bea\label{vector z 2}
\sum_{\hat i,\hat k=1}^{n+m}a^{\ts}_{i\hat i}z^{\ts}_{k\hat k}\frac{\pa^2 f}{\pa x_{\hat i} \pa x_{\hat k}}&=&\omega_{ik}.
\eea
The above equality \eqref{vector z 2} can be represented in the following matrix form
\beaa
P_{(n*m)\times(n+m)^2}X_{(n+m)^2\times 1}=(\omega_{11},\cdots,\omega_{ik},\cdots,\omega_{nm})^{\ts}_{(n*m)\times 1}
\eeaa
where $P$ and $X$ are defined in \eqref{matrix P} and \eqref{vector X}.
Now, we can represent term $\widetilde{\textbf{T}}_{1a}$ as $
\sum_{i=1}^n\sum_{k=1}^m \omega_{ik}^2= \omega^{\ts}\omega=(PX)^{\ts}PX=X^{\ts}P^{\ts}PX$.
Next we want to represent $\widetilde{\textbf{R}}_1^3$ and $\widetilde{\textbf{R}}_2^5$ in the following form in terms of vector $X$, 
\beaa
&&2\widetilde{\textbf{R}}_1^3-2\widetilde{\textbf{R}}_2^5\\
&=&2\sum_{i,k=1}^n\sum_{i',\hat i,\hat k=1}^{n+m} \la a^{\ts}_{i\hat i}a^{\ts}_{i i'} ( \frac{\partial z^{\ts}_{k\hat k}}{\partial x_{i'}})(\frac{\partial }{\partial x_{\hat i}}\frac{\partial f}{\partial x_{\hat k}}) ,(z^{\ts}\nabla)_kf\ra_{\hR^n}\\
&&-2\sum_{i,k=1}^n\sum_{i',\hat i,\hat k=1}^{n+m} \la z^{\ts}_{k i'}a^{\ts}_{i\hat k}\frac{\partial a^{\ts}_{i \hat i}}{\partial x_{i'}} (\frac{\partial }{\partial x_{\hat k}} \frac{\partial f}{\partial x_{\hat i}})  ,(z^{\ts}\nabla)_kf\ra 
\eeaa 
\beaa 
&=&2\sum_{\hat i,\hat k=1}^{n+m}\left[\sum_{i,k=1}^n\sum_{i'=1}^{n+m}\left(\la a^{\ts}_{i\hat i}a^{\ts}_{i i'} ( \frac{\partial z^{\ts}_{k\hat k}}{\partial x_{i'}}) ,(z^{\ts}\nabla)_kf\ra - \la z^{\ts}_{k i'}a^{\ts}_{i\hat k}\frac{\partial a^{\ts}_{i \hat i}}{\partial x_{i'}}   ,(z^{\ts}\nabla)_kf\ra\right)\right](\frac{\partial }{\partial x_{\hat i}}\frac{\partial f}{\partial x_{\hat k}})\\
&=&2F^{\ts}X,
\eeaa
where $F$ is defined in \eqref{vector F}.
Similarly, we can represent $\widetilde{\textbf{T}}_{1b}=\widetilde{\textbf{T}}_{1c}$ by $X$,
\beaa
\widetilde{\textbf{T}}_{1b}=\widetilde{\textbf{T}}_{1c}&=&\sum_{i,k=1}^n\la \sum_{\hat i,\hat k=1}^{n+m}a^{\ts}_{i\hat i}\frac{\partial z^{\ts}_{k\hat k}}{\partial x_{\hat i}}\frac{\partial f}{\partial x_{ \hat k}} ,\sum_{ i', k'=1}^{n+m}a^{\ts}_{i i'}z^{\ts}_{k k'}\frac{\partial^2 f}{\partial x_{ i'}\partial x_{  k'}} \ra=E^{\ts}PX
\eeaa
where $E$ is defined in \eqref{vector E}. We thus have the following form,  
\beaa
\widetilde{\textbf{T}}_1+2\widetilde{\textbf{R}}_1^3-2\widetilde{\textbf{R}}_2^5=X^{\ts}P^{\ts}PX+2E^{\ts}PX+2F^{\ts}X+E^{\ts}E
\eeaa
Taking into account the fact that $\textbf{R}_1^6-\textbf{R}_2^6=0$ and $\textbf{R}_1^4-\textbf{R}_2^3=0$, we have 
\beaa
\widetilde{\textbf{T}}_1+\widetilde{\textbf{R}}_1-\widetilde{\textbf{R}}_2=\widetilde{\textbf{T}}_1+2\widetilde{\textbf{R}}_1^3-2\widetilde{\textbf{R}}_2^5+\widetilde{\textbf{R}}_1^1+\widetilde{\textbf{R}}_1^2-\widetilde{\textbf{R}}_2^1-\widetilde{\textbf{R}}_2^2,
\eeaa	
which completes the proof.  \qed
\end{proof}

%%%%%%%%%%%%%%

\section{Further discussions on other inequalities}\label{sec:func inq}
In this section, we apply the generalized Gamma calculus to study the entropic inequality for the semi-group $P_t$ associated with the drift-diffusion process. With a little abuse of notations, we denote the generator of the semi-group $P_t$ as $\frac{1}{2}L$ instead of $L$ and we denote $X_t$ as the corresponding diffusion process.
\begin{defn}\label{def: diffusion semi group}
We define the semigroup $P_t=e^{\frac{1}{2}tL}$, where $L$ is invariant w.r.t the invariant measure $d\mu=\pi(x)dx$. We denote $P_tf(x)=\hE(f(X_t))$, and 
\beaa
\hE(f(X_t))&=&\int_{\hM^{n+m}} f(y)p(t,x,y)d\mu(y)
=\int_{\hM^{n+m}}f(y)\rho(t,x,y)dy,
\eeaa
where the infinitesimal generator of this process $X_t$ is $\frac{1}{2}L$ and we denote $\rho(t,\cdot,\cdot)$ as the product of the transition kernel $p(t,\cdot,\cdot)$ and the volume measure $\Vol$.
\end{defn}
\begin{rem}
Following the standard treatment as in \cite{BaudoinGarofalo09}[Section 5], whenever we consider differentiating operation on $P_tf$, we shall always consider $P_tf_{\varepsilon}$ first with $f_{\varepsilon}=f+\varepsilon$, for $\forall \varepsilon>0$. Then we take the limit as $\varepsilon\rightarrow 0$. Throughout this section, we will directly use $P_tf$ instead of $P_tf_{\varepsilon}$ for convenience. 
\end{rem}
\begin{rem}
In the standard sub-Riemannian setting, the semi-groups is in general defined with respect to the invariant measure $d\mu(y)$. In this paper, we formulate the semi-group and the transition kernel with respect to the Lebesgue measure $dy$.
\end{rem}{}
Following the framework in \cite{BaudoinGarofalo09}, we also need the following assumption which is necessary to rigorously justify computations on functionals of the heat semigroup.
\begin{assum}\label{stochastic complete}
    The semigroup $P_t$ is stochastically complete that is, for $t\ge 0$, $P_t \textbf{1}=\textbf{1}$ and for any $T>0$ and $f\in\mathcal C^{\infty}(\hM^{n+m})$ with compact support, we assume that
    \bea
    \sup_{t\in [0,T]}\|\Gamma(P_tf)\|_{\infty}+\|\Gamma_1^z(P_tf)\|_{\infty}<+\infty.
    \eea
\end{assum}{}
We believe that the above Assumption \ref{stochastic complete} should follow from the the assumption $\mathfrak R\ge\kappa ( \Gamma_1+\Gamma_1^z)$ if we assume the appropriate lower bound $\kappa$. We leave this for further studies.  Related gradient estimates are presented in order below.  For the infinitesimal generator $\frac{1}{2}L$ associated with linear semi-group $P_t$, we have the following property.
\begin{prop}For all smooth function $f$, we have 
\begin{itemize}
    \item $P_0=Id$; 
    \item For all functions $f\in \cC_b(\hR^{n+m})$, the map $t\mapsto P_tf$ is continuous from $\hR^{+}$ to $L^2(d\mu)$;
    \item For all $s,t\ge 0,$ one has $P_t\circ P_s=P_{t+s}$;
 %   \item $P_t\textbf{1}=\textbf{1}$ and $P_tf\ge 0$ if $f\ge 0$;
    \item $\forall x\in \hR^{n+m}$, $\forall t\ge 0,$ $\frac{\partial}{\partial_t}P_tf(x)=\frac{1}{2}L(P_tf)(x)=\frac{1}{2}P_t(Lf)(x)$.
\end{itemize}
\end{prop}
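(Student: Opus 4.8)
The four assertions are the standard properties of the diffusion semigroup generated by $\frac{1}{2}L$, and the plan is to deduce them from two structural facts already at our disposal. First, by H\"ormander's theorem the process $X_t$ of \eqref{SDE framework} admits a unique smooth transition density, so that (Definition \ref{def: diffusion semi group}) $P_tf(x)=\int_{\hM^{n+m}}f(y)\rho(t,x,y)\,dy$ with $\rho(t,\cdot,\cdot)=p(t,\cdot,\cdot)\Vol$. Second, the Dirichlet form identity proved above, $-\int_{\hM^{n+m}}fLg\,d\mu=\int_{\hM^{n+m}}\la a^{\ts}\nabla f,a^{\ts}\nabla g\ra_{\hR^n}\,d\mu$, shows that $L$ is symmetric and $-L\succeq 0$ on $L^2(d\mu)$; hence $\frac{1}{2}L$ generates a strongly continuous contraction semigroup via its self-adjoint (Friedrichs) extension and the spectral theorem. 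I would organize the proof so that (1) and (3) come from the probabilistic picture and (2) and (4) from the functional-analytic one.

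The two easy points are $P_0=\mathrm{Id}$ and the semigroup law. Since $X_0=x$ almost surely, $P_0f(x)=\hE(f(X_0))=f(x)$, i.e. $\rho(0,x,\cdot)$ is the Dirac mass at $x$; this is assertion (1). Assertion (3) is the Chapman--Kolmogorov relation and follows from the Markov property together with time-homogeneity: conditioning on the position at time $s$,
\[
P_{t+s}f(x)=\hE_x\big(f(X_{t+s})\big)=\hE_x\big((P_tf)(X_s)\big)=P_s(P_tf)(x),
\]
which in kernel form reads $\int\rho(s,x,z)\rho(t,z,y)\,dz=\rho(t+s,x,y)$, and at the level of the spectral calculus is simply $e^{\frac{1}{2}sL}e^{\frac{1}{2}tL}=e^{\frac{1}{2}(s+t)L}$.

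For the strong continuity (2) I would use the spectral resolution $-L=\int_0^\infty\lambda\,dE_\lambda$ of the self-adjoint extension, so that $P_tf=\int_0^\infty e^{-\frac{1}{2}t\lambda}\,dE_\lambda f$ and
\[
\|P_tf-f\|^2_{L^2(d\mu)}=\int_0^\infty\big|e^{-\frac{1}{2}t\lambda}-1\big|^2\,d\|E_\lambda f\|^2\longrightarrow 0\qquad(t\to 0)
\]
by dominated convergence; continuity at an arbitrary $t_0$ then follows from (3). For (4), on smooth $f$ of compact support the H\"ormander smoothness of $p(t,x,y)$ permits differentiation under the integral sign, and the Kolmogorov backward equation $\partial_t p=\frac{1}{2}L_x p$ yields $\partial_t P_tf=\frac{1}{2}LP_tf$; the commutation $\frac{1}{2}LP_tf=\frac{1}{2}P_tLf$ follows because $L$ commutes with its own spectral projections, equivalently by differentiating $P_{t+s}=P_tP_s$ in $s$ at $s=0$ and invoking (3).

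The genuinely delicate points are (2) and (4); (1) and (3) are routine. The main obstacle is the rigorous passage to the self-adjoint spectral calculus and the justification of interchanging $\partial_t$, $L$, and the integral. Concretely one must know that the symmetric operator $L$, a priori defined on $C_c^\infty(\hM^{n+m})$, is essentially self-adjoint (or fix once and for all its Friedrichs extension), and that the smooth kernel decays enough to legitimize differentiation under the integral; the latter is exactly what the stochastic completeness hypothesis (Assumption \ref{stochastic complete}) together with the sub-elliptic regularity is designed to guarantee. Rather than reprove essential self-adjointness and the kernel estimates, I would invoke the standard sub-Laplacian and Dirichlet-form theory used in \cite{BaudoinGarofalo09} and the references therein.
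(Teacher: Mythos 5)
The paper records this proposition with no proof at all: it is simply the list of standard properties of the diffusion semigroup $P_t=e^{\frac{1}{2}tL}$, with the analytic substance deferred to the stochastic-completeness assumption and to the cited literature. There is therefore no argument in the text to compare yours against; what you supply is the standard proof, and it is essentially sound. The identification $P_0=\mathrm{Id}$ and the Chapman--Kolmogorov relation are exactly as you say; the symmetry and nonpositivity of $L$ on $L^2(d\mu)$ do follow from the Dirichlet-form identity proved in the invariant-measure subsection; and the spectral-calculus derivation of strong continuity and of $\partial_tP_tf=\frac{1}{2}LP_tf=\frac{1}{2}P_tLf$ is the usual one. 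You also correctly isolate the genuinely delicate points (essential self-adjointness, i.e.\ fixing the Friedrichs extension once and for all, and differentiation under the integral sign), which the paper likewise does not address beyond invoking its stochastic-completeness hypothesis and the references. The one mismatch worth flagging is with the statement rather than with your argument: for $f\in\cC_b(\hR^{n+m})$ and $\mu$ an infinite measure, $f$ need not lie in $L^2(d\mu)$, so the spectral resolution cannot be applied to $f$ directly; this is repaired either by assuming $\mu$ finite (as in the compact examples the paper treats) or by proving continuity of $t\mapsto P_tf(x)$ pointwise from path continuity of $X_t$ and dominated convergence, and then upgrading to $L^2$ where the statement makes sense.
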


Next, we present the entropic inequality under the Assumption \ref{assumption:main result}. We follow closely the framework introduced in \cite{BaudoinGarofalo09} and define the following two functionals, 
\beaa
\phi_a(x,t)&=P_{T-t}f\Gamma_{1}(\log P_{T-t}f)(x),\quad \text{and}\quad \phi_z(x,t)&=P_{T-t}f\Gamma_1^z(\log P_{T-t}f)(x).
\eeaa
\begin{lem}
	We have the following relation
	\bea\label{phi z} \frac{1}{2}L\phi_a+\frac{\pa}{\pa t}\phi_a&=&(P_{T-t}f)(x)\Gamma_2(\log P_{T-t}f,\log P_{T-t}f)(x),\\
	\label{phi a}
	 \frac{1}{2}L\phi_z+\frac{\pa}{\pa t}\phi_z&=&(P_{T-t}f)(x)\Gamma_2^z(\log P_{T-t}f,\log P_{T-t}f)(x)\nonumber \\
	 &&+(P_{T-t}f)(x)\Gamma_{1}(\log P_{T-t}f,\Gamma_1^z(P_{T-t}f,P_{T-t}f))(x)\nonumber \\
	 &&-(P_{T-t}f)(x)\Gamma_1^z(\log P_{T-t}f,\Gamma_{1}(P_{T-t}f,P_{T-t}f))(x). 
\eea
\end{lem}
\begin{proof}
Denote $g(t,x)=P_{T-t}f(x)=\int \rho(t,x,\tilde x)f(\tilde x)d\tilde x$, we have the following relation
\beaa 
L(\log g)=-\frac{\Gamma_{1}(g,g)}{(g)^2}-2\frac{\pa_t g}{g}. 
\eeaa
By direct computation, one obtains
\beaa
\pa_t\phi_a&=&\pa_t g\Gamma_{1}(\log g,\log g)+2g\la a^{\ts}\nabla \log g, a^{\ts}\nabla(\frac{\pa_t g}{g})\ra_{\hR^n}\\
&=&-\frac{1}{2}Lg\Gamma_{1}(\log g,\log g)-g\Gamma_{1}(\log g,L\log g)-g\Gamma_{1}(\log g, \Gamma_{1}(\log g,\log g)),\\
\frac{1}{2}L\phi_a&=&\frac{1}{2}Lg\Gamma_{1}(\log g,\log g)+\frac{1}{2}gL\Gamma_{1}(\log g,\log g)+\Gamma_{1}(g,\Gamma_{1}(\log g,\log g)),
\eeaa
where we have $\Gamma_{1}(g,\Gamma_{1}(\log g,\log g))=g\Gamma_{1}(\log g,\Gamma_{1}(\log g,\log g))$, thus \eqref{phi a} is proved. Similarly, we obtain the following for $\phi_z$
\beaa
\pa_t\phi_z&=&\pa_t g\Gamma_1^z(\log g,\log g)+2g\la z^{\ts}\nabla \log g, z^{\ts}\nabla(\frac{\pa_t g}{g})\ra_{\hR^m}\\
&=&-\frac{1}{2}Lg\Gamma_1^z(\log g,\log g)-g\Gamma_1^z(\log g,L\log g)-g\Gamma_1^z(\log g, \Gamma_{1}(\log g,\log g)),\\
\frac{1}{2}L\phi_z&=&\frac{1}{2}Lg\Gamma_1^z(\log g,\log g)+\frac{1}{2}gL\Gamma_1^z(\log g,\log g)+\Gamma_{1}(g,\Gamma_1^z(\log g,\log g)).
\eeaa
The proof then follows. \qed 
\end{proof}
Now, we are ready to present the following important lemma which prepares us to prove the new entropy inequality without the assumption: $$\Gamma_{1}(\log P_{T-t}f,\Gamma_1^z(P_{T-t}f,P_{T-t}f))(x)=\Gamma_1^z(\log P_{T-t}f,\Gamma_{1}(P_{T-t}f,P_{T-t}f))(x).$$
\begin{lem}\label{non-commutative lemma} For any $0<s<T$, we denote $\rho(s,x,y)=p(s,x,y)\Vol(y)$ as the transition kernel of diffusion process $X^x_s$ starting at $x$ defined in Definition \ref{def: diffusion semi group}, the following equality is satisfied
	\beaa
	&&\mathbb E[g\Gamma_{1}(\log g,\Gamma_1^z(\log g,\log g))-g\Gamma_1^z(\log g,\Gamma_{1}(\log g,\log g)) ]\\
	&=&\int \frac{\nabla\cdot (\rho(s,x,y)zz^{\ts}\Gamma_{\nabla(aa^{\ts})}(\log g(s,y),\log g(s,y)))}{\rho(s,x,y)} g(s,y)\rho(s,x,y)dy\\
	&&-\int \frac{\nabla\cdot (\rho(s,x,y) aa^{\ts}\Gamma_{\nabla(zz^{\ts})}(\log g(s,y),\log g(s,y)))}{\rho(s,x,y)} g(s,y)\rho(s,x,y)dy.
	\eeaa
Here we denote	$g(s,y)=P_{T-s}f(y)=\int \rho(s,y,\tilde y)f(\tilde y)d\tilde y$ and 
\beaa
&&\mathbb E[g\Gamma_{1}(\log g,\Gamma_1^z(\log g,\log g))]\\
&=&\mathbb E[g(s,X_s)\Gamma_{1}(\log g(s,X^x_s),\Gamma_1^z(\log g(s,X^x_s),\log g(s,X^x_s)))]\\
&=&\mathbb \int g(s,y)\Gamma_{1}(\log g(s,y),\Gamma_1^z(\log g(s,y),\log g(s,y)))\rho(s,x,y)dy.
\eeaa

\end{lem}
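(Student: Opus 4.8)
The plan is to reduce the left-hand side to a purely first-order expression in $\log g$ by exploiting the symmetry of the Hessian, and then to recover the divergence form on the right-hand side by a single integration by parts against the weight $g\,\rho$. Throughout I write $h:=\log g(s,\cd)$ and suppress the arguments $(s,y)$; recall that $\Gamma_1(f,\phi)=\la\nabla f,aa^{\ts}\nabla\phi\ra$ and $\Gamma_1^z(f,\phi)=\la\nabla f,zz^{\ts}\nabla\phi\ra$, while the vectors $\Gamma_{\nabla(aa^{\ts})}(h,h)$ and $\Gamma_{\nabla(zz^{\ts})}(h,h)$ have $\hat k$-th components $\la\nabla h,\frac{\pa}{\pa x_{\hat k}}(aa^{\ts})\nabla h\ra$ and $\la\nabla h,\frac{\pa}{\pa x_{\hat k}}(zz^{\ts})\nabla h\ra$, as in Definition \ref{defn:tilde gamma 2 znew}.

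First I would expand the pointwise integrand. Applying the product rule to $\nabla\Gamma_1^z(h,h)=\nabla\la\nabla h,zz^{\ts}\nabla h\ra$ produces in each coordinate two Hessian contractions, summing to $2\,(\nabla^2 h)\,zz^{\ts}\nabla h$, together with the vector $\Gamma_{\nabla(zz^{\ts})}(h,h)$ arising from differentiating the matrix $zz^{\ts}$. Pairing with $aa^{\ts}\nabla h$ gives
\begin{equation*}
\Gamma_1(h,\Gamma_1^z(h,h))=2\,(aa^{\ts}\nabla h)^{\ts}(\nabla^2 h)(zz^{\ts}\nabla h)+\la aa^{\ts}\nabla h,\Gamma_{\nabla(zz^{\ts})}(h,h)\ra,
\end{equation*}
and symmetrically $\Gamma_1^z(h,\Gamma_1(h,h))=2\,(zz^{\ts}\nabla h)^{\ts}(\nabla^2 h)(aa^{\ts}\nabla h)+\la zz^{\ts}\nabla h,\Gamma_{\nabla(aa^{\ts})}(h,h)\ra$. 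The crucial observation is that the two Hessian terms coincide: since $\nabla^2 h$ is symmetric one has $u^{\ts}(\nabla^2 h)v=v^{\ts}(\nabla^2 h)u$ with $u=aa^{\ts}\nabla h$ and $v=zz^{\ts}\nabla h$. Hence they cancel in the difference, leaving the first-order identity
\begin{equation*}
\Gamma_1(h,\Gamma_1^z(h,h))-\Gamma_1^z(h,\Gamma_1(h,h))=\la aa^{\ts}\nabla h,\Gamma_{\nabla(zz^{\ts})}(h,h)\ra-\la zz^{\ts}\nabla h,\Gamma_{\nabla(aa^{\ts})}(h,h)\ra.
\end{equation*}

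Next I would start from the right-hand side of the lemma and run the computation in the opposite direction. Cancelling $\tfrac{1}{\rho}\cd\rho=1$, each summand becomes $\int\nabla\cdot(\rho\,zz^{\ts}\Gamma_{\nabla(aa^{\ts})}(h,h))\,g\,dy$ (resp.\ with $a,z$ interchanged). Integrating by parts in $y$ and using the symmetry of $zz^{\ts}$ transfers the gradient onto $g$, and the elementary relation $\nabla g=g\,\nabla h$ turns $zz^{\ts}\nabla g$ into $g\,zz^{\ts}\nabla h$; this reproduces exactly $-\int g\,\la zz^{\ts}\nabla h,\Gamma_{\nabla(aa^{\ts})}(h,h)\ra\rho\,dy$, and likewise the other term yields $+\int g\,\la aa^{\ts}\nabla h,\Gamma_{\nabla(zz^{\ts})}(h,h)\ra\rho\,dy$. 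Multiplying the pointwise identity of the previous paragraph by $g\,\rho(s,x,y)$ and integrating in $y$, i.e.\ taking the expectation form recorded in the statement, gives precisely the same expression, which closes the equality.

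The one point requiring genuine care, as opposed to routine calculation, is the vanishing of the boundary contribution in the integration by parts. Because the integration is against the heat kernel $\rho(s,x,\cd)$ on $\hM^{n+m}$ and $g=P_{T-s}f$ with $f$ smooth of compact support, I would justify it from the stochastic completeness hypothesis (Assumption \ref{stochastic complete}) together with the integrability and decay of $\rho$ and its first derivatives, exactly as in the semigroup manipulations of \cite{BaudoinGarofalo09}. The algebraic heart of the argument is the Hessian cancellation above: it is what identifies the non-commutative term $\Gamma_1(h,\Gamma_1^z(h,h))-\Gamma_1^z(h,\Gamma_1(h,h))$, in the weak sense against $\rho$, with the new divergence term \eqref{new term} of the generalized Gamma $z$ calculus.
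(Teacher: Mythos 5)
Your proposal is correct, and it takes a genuinely different --- and more economical --- route than the paper's. The paper's proof proceeds in two claims: it first substitutes $\Delta_a\log g=\Delta_a g/g-\Gamma_{1}(\log g,\log g)$ (and its $z$-analogue) to trade the left-hand side for four integrals involving $\Gamma_1^z(h,\Delta_a h)$, $\Gamma_1^z(h,\Delta_a g/g)$ and their mirror images, and then integrates each by parts; along the way the mixed Hessian terms $\int \nabla^2h(aa^{\ts}\nabla h,zz^{\ts}\nabla h)\rho g\,dy$ and the terms $\int\frac{\rho}{g}\Delta_a g\,\Delta_z g\,dy$, $\int\frac{1}{g}\Delta_a g\la\nabla\rho,zz^{\ts}\nabla g\ra dy$ appear symmetrically and cancel only at the integral level, leaving the two $\Gamma_{\nabla(aa^{\ts})}$, $\Gamma_{\nabla(zz^{\ts})}$ integrals, which one final integration by parts puts into divergence form. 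You instead observe that all second-order contributions already cancel \emph{pointwise}: expanding $\nabla\Gamma_1^z(h,h)$ by the product rule and invoking the symmetry of $\nabla^2 h$ shows that $\Gamma_{1}(h,\Gamma_1^z(h,h))-\Gamma_1^z(h,\Gamma_{1}(h,h))$ is a purely first-order expression, so that a single integration by parts on the right-hand side (using $\nabla g=g\nabla h$ and the symmetry of $zz^{\ts}$, $aa^{\ts}$) closes the identity. This buys a shorter argument and makes transparent exactly where the statement is a weak rather than pointwise identity --- namely in that one integration by parts against $g\rho$ --- which is the content of Remark \ref{iterative Gamma conditon}; the paper's longer route has the incidental benefit of exhibiting the intermediate carr\'e-du-champ quantities $\Gamma_1^z(h,\Delta_a h)$, $\Gamma_{1}(h,\Delta_z h)$ that tie the computation to the semigroup formalism. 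Your handling of the boundary terms, via Assumption \ref{stochastic complete} and the decay of the kernel $\rho(s,x,\cdot)$, is at least as careful as the paper's, which performs the same integrations by parts without comment.
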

\begin{proof}
We first expand in the following integral form. 
\beaa
&&\mathbb E[g\Gamma_{1}(\log g,\Gamma_1^z(\log g,\log g))-g\Gamma_1^z(\log g,\Gamma_{1}(\log g,\log g)) ]\\
&=&\int g(s,y)\Gamma_{1}(\log g(s,y),\Gamma_1^z(\log g(s,y),\log g(s,y)))\rho(s,x,y) dy\\
&&-\int g(s,y)\Gamma_1^z(\log g(s,y),\Gamma_{1}(\log g(s,y),\log g(s,y)))\rho(s,x,y) dy.
\eeaa
We skip $x,y,s$ for simplicity.  Take $\log g=h$, 

\noindent\textbf{Claim 1}:
\beaa
&&\int \Gamma_{1}(h,\Gamma_1^z(h,h))\rho gdy-\int \Gamma_1^z(h,\Gamma_{1}(h,h))\rho gdy\\
&=&\int \Gamma_1^z(h,\Delta_a h )\rho gdy-\int \Gamma_1^z(h,\frac{\Delta_a g}{g})\rho gdy 
-\int \Gamma_{1}(h,\Delta_z h )\rho gdy+\int \Gamma_{1}(h,\frac{\Delta_z g}{g})\rho gdy.
\eeaa
Recall that we denote $\Delta_a=\nabla\cdot(aa^{\ts}\nabla)$ and $\Delta_z=\nabla\cdot(zz^{\ts}\nabla)$. Use the following identity 
\beaa
\Delta_a h=\frac{\Delta_a g}{g}-\frac{\Gamma_{1}(g,g)}{g^2},\quad \text{and}\quad 
\Delta_z h=\frac{\Delta_z g}{g}-\frac{\Gamma_1^z(g,g)}{g^2}.
\eeaa
We then get 
\beaa
\int \Gamma_1^z(h,\Delta_a h) \rho g dy&=&\int \Gamma_1^z\left(h,\frac{\Delta_a g}{g}-\frac{\Gamma_{1}(g,g)}{g^2}\right) \rho g dy\\
&=&-\int \Gamma_1^z(h,\Gamma_{1}(h,h) )\rho gdy+\int \Gamma_1^z(h,\frac{\Delta_a g}{g})\rho gdy.
\eeaa
Similarly, the other equality is satisfied. 

\noindent \textbf{Claim 2:}
\beaa
&&\int \Gamma_1^z(h,\Delta_a h )\rho gdy-\int \Gamma_1^z(h,\frac{\Delta_a g}{g})\rho gdy-\int \Gamma_{1}(h,\Delta_z h )\rho gdy+\int \Gamma_{1}(h,\frac{\Delta_z g}{g})\rho gdy\\
&=&\int \frac{\nabla\cdot (\rho zz^{\ts}\Gamma_{\nabla(aa^{\ts})}(h,h))}{\rho} g\rho dy-\int \frac{\nabla\cdot (\rho aa^{\ts}\Gamma_{\nabla(zz^{\ts})}(h,h))}{\rho} g\rho dy.
\eeaa
First observe that 
\beaa
\int \Gamma_1^z(h,\frac{\Delta_a g}{g})\rho gdy&=&\int \la zz^{\ts}\nabla h,\nabla ( \frac{\Delta_a g}{g})\ra \rho gdy
=-\int \nabla\cdot (\rho zz^{\ts}\nabla g)\frac{\Delta_a g}{g}dy\\
&=&-\int \frac{\rho}{g}\Delta_a g\Delta_z g dy-\int \la \nabla \rho,zz^{\ts}\nabla g\ra \frac{\Delta_a g}{g}dy.
\eeaa
Similarly, one gets 
\beaa
\int \Gamma_{1}(h,\frac{\Delta_z g}{g})\rho gdy
&=&-\int \frac{\rho }{g}\Delta_a g\Delta_z g dy-\int \la \nabla \rho ,aa^{\ts}\nabla g\ra \frac{\Delta_z g}{g}dy.
\eeaa
For the next term, one obtains 
\beaa
&&\int \Gamma_1^z(h,\Delta_a h) \rho g dy\\
&=&\int \la \nabla(\nabla\cdot(aa^{\ts}\nabla h)),zz^{\ts}\nabla h\ra \rho g dy
=-\int[\nabla\cdot(aa^{\ts}\nabla h)][\nabla\cdot (\rho gzz^{\ts}\nabla h)]dy\\
&=&-\int [\nabla\cdot (aa^{\ts} \frac{1}{g}\nabla g) ]\nabla\cdot (\rho zz^{\ts}\nabla g)dy
=-\int \left(\la \nabla \frac{1}{g},aa^{\ts}\nabla g\ra +\frac{1}{g}\Delta_a g \right)\nabla\cdot (\rho zz^{\ts}\nabla g)dy\\
&=&\int \la  \frac{1}{g^2}\nabla g,aa^{\ts}\nabla g\ra(\nabla\cdot (\rho zz^{\ts}\nabla g))dy -\int\frac{1}{g}\Delta_a g( \nabla\cdot (\rho zz^{\ts}\nabla g))dy\\
&=&-2\int \nabla^2h(aa^{\ts}\nabla h,zz^{\ts}\nabla h)\rho gdy-\int \la  \la \nabla h, \nabla(aa^{\ts})\nabla h\ra , zz^{\ts}\nabla h\ra  \rho gdy\\
&&-\int\frac{1}{g}\Delta_a g \la\nabla \rho ,zz^{\ts}\nabla g\ra dy-\int\frac{\rho }{g}\Delta_a g \Delta_z gdy,
\eeaa
where the last equality follows from integration by parts for the first term and direct expansion of the divergence for the second term. Similarly, we obtain 
\beaa
\int \Gamma_{1}(h,\Delta_z h) \rho g dy&=&-2\int \nabla^2h(zz^{\ts}\nabla h,aa^{\ts}\nabla h)\rho gdy-\int \la  \la \nabla h, \nabla(zz^{\ts})\nabla h\ra , aa^{\ts}\nabla h\ra  \rho gdy\\
&&-\int\frac{1}{g}\Delta_z g \la\nabla \rho ,aa^{\ts}\nabla g\ra dy-\int\frac{\rho }{g}\Delta_a g \Delta_z gdy.
\eeaa
Observing that by integration by parts, we get 
\beaa
&&-\int \la  \la \nabla h, \nabla(aa^{\ts})\nabla h\ra , zz^{\ts}\nabla h\ra  \rho gdy+\int \la  \la \nabla h, \nabla(zz^{\ts})\nabla h\ra , aa^{\ts}\nabla h\ra  \rho gdy\\
&=&\int \frac{\nabla\cdot (\rho zz^{\ts}\Gamma_{\nabla(aa^{\ts})}(h,h))}{\rho } g\rho dy-\int \frac{\nabla\cdot (\rho aa^{\ts}\Gamma_{\nabla(zz^{\ts})}(h,h))}{\rho } g\rho dy.
\eeaa
Combining the above formulas, the proof is completed. 
	\qed
\end{proof}
With the above Lemma in hand, we are ready to prove the following entropic inequality. We first define the following engery form
\beaa
\Phi_a(x,t)&=P_t\left(P_{T-t}f\Gamma_{1}(\log P_{T-t}f)\right)(x),\quad 
\Phi_z(x,t)&=P_t\left(P_{T-t}f\Gamma_1^z(\log P_{T-t}f)\right)(x).
\eeaa
Recall that, we define 
\beaa
\phi_a(x,t)&=P_{T-t}f\Gamma_{1}(\log P_{T-t}f)(x),\quad \text{and}\quad \phi_z(x,t)&=P_{T-t}f\Gamma_1^z(\log P_{T-t}f)(x).
\eeaa
\begin{thm}\label{thm:entropic inequality}
Denote $\phi=\phi_a+\phi_z$, if the following condition is satisfied 
\begin{equation*}
\mathfrak{R}\succeq \kappa ( \Gamma_1+\Gamma_1^z),
\end{equation*}
we then conclude 
	\bea
	P_T(\phi(\cdot,T))(x)&\ge &\phi(x,0)+ \int_0^{\ts}\kappa_s(\Phi_a(x,s)+\Phi_z(x,s))ds,
	\eea
	where $\kappa_s$ depends on the estimate of transition kernel $\nabla \log \rho(s,\cdot,\cdot)$ associated with semi-group $P_s($see Definition \ref{def: diffusion semi group}$)$.
\end{thm}

\begin{rem}
	Based on Theorem \ref{thm: generalized gamma 2 z with drift}, we can also prove the above theorem for operator $\widetilde L$ with drift term involved. Since the proof is similar, we skip the proof here.  
\end{rem}
\begin{proof}
Take $\phi=\phi_a+\phi_z$. Let $(X_t^x)_{t\ge 0}$ be the diffusion Markov process with semigroup $P_t$.(Similar proofs are referred to \cite{BaudoinGarofalo09}[Proposition 4.5]) Let smooth function $u:\hR^{n+m}\rightarrow \hR$ be such that for every $T>0$, $\sup_{t\in [0,T]}\|u(t,\cdot)\|_{\infty}<\infty$ and $\sup_{t\in[0,T]}\|\frac{1}{2}Lu(t,\cdot)+\pa_t u(t,\cdot)\|_{\infty}<\infty$. We have for every $t>0$,
\beaa
u(t,X_t^x)&=&u(0,x)+\int_0^{\ts}(\frac{1}{2}Lu+\pa_s u)(s,X_s^x)ds+M_t,
\eeaa
where $(M_t)_{t\ge 0}$ is a local martngale. Let $T_n,n\in \hN$ be an increasing sequence of stopping times such that almost surely $T_n\rightarrow \infty$ and $(M_{t\wedge T_n})_{t\ge 0}$ is a martingale. We get
\beaa
\mathbb E[u(t\wedge T_n,X_{t\wedge T_n}^{x})]&=&u(0,x)+\mathbb E[\int_0^{t\wedge T_n}(\frac{1}{2}Lu+\pa_s u)(s,X_s^x)ds].
\eeaa
By using the dominated convergence theorem, we get
\beaa
\mathbb E[u(t,X_{t}^{x})]&=&u(0,x)+\mathbb E[\int_0^{t}(\frac{1}{2}Lu+\pa_s u)(s,X_s^x)ds].
\eeaa
Applying the above equality to $\phi(t,X_t^x)$, we obtain  
\beaa
\mathbb E[\phi(t,X_t^x)]&=&\phi(0,x)+\mathbb E[\int_0^{\ts}(\frac{1}{2}L\phi+\pa_s \phi)(s,X_s^x)ds]\\
&=&\phi(0,x)+\int_0^{\ts} \mathbb E[ (\frac{1}{2}L\phi+\pa_s \phi)(s,X_s^x)]ds.
\eeaa
We now look at the term $ \mathbb E[ (\frac{1}{2}L\phi+\pa_s \phi)(s,X_s^x)]$ with $g(s,x)=(P_{T-s}f)(x)=\mathbb E[f(X_t^x)]=\int \rho(x,y,s)f(y)dy$, 
\beaa
 \mathbb E[ (\frac{1}{2}L\phi+\pa_s \phi)(s,X_s^x)]&=&\mathbb E[g\Gamma_2(\log g,\log g)+g\Gamma^z_{2}(\log g,\log g) ]\\
 &&+\mathbb E[g\Gamma_{1}(\log g,\Gamma_1^z(\log g,\log g))-g\Gamma_1^z(\log g,\Gamma_{1}(\log g,\log g)) ].
\eeaa
By using the above Lemma \ref{non-commutative lemma}, let $h=\log g$ we get 
\beaa
&&\mathbb E[ (\frac{1}{2}L\phi+\pa_s \phi)(s,X_s^x)]\\
&=&\int g\rho \left( \Gamma_2(h,h)+\Gamma_2^z(h,h)  + \frac{\nabla\cdot (\rho zz^{\ts}\Gamma_{\nabla(aa^{\ts})}(h,h))}{\rho } -\frac{\nabla\cdot (\rho aa^{\ts}\Gamma_{\nabla(zz^{\ts})}(h,h))}{\rho } \right)dy\\
&=&\int g\rho \left( \Gamma_2(h,h)+\tilde\Gamma_2^{z,\rho }(h,h)\right)dy.
\eeaa
Applying Theorem \ref{thm: generalized gamma 2 z with drift} here with $\pi=\rho (s,\cdot,\cdot)$ as the transition kernel function, we get a time dependent version of Theorem \ref{thm: generalized gamma 2 z with drift}. Assuming that the following bound 
is satisfied where the bound $\kappa_s$ depends on kernel $\rho (s,\cdot,\cdot)$,
\beaa
\mathfrak{R}(\nabla f,\nabla f)\ge \kappa_s(\Gamma_{1}(f,f)+\Gamma_1^z(f,f) ).
\eeaa 
We then conclude with the following bound
\beaa
\mathbb E[ (\frac{1}{2}L\phi+\pa_s \phi)(s,X_s^x)]&\ge &\int \rho(s,x,y)g\kappa_s(\Gamma_{1}(h,h)(y)+\Gamma_1^z(h,h)(y))dy\\
&=&\int p(s,x,y)g\kappa_s(\Gamma_{1}(h,h)(y)+\Gamma_1^z(h,h)(y))\Vol(y)dy \\
&\ge &P_s(\kappa_sg(\Gamma_{1}(\log g,\log g)+\Gamma_1^z(\log g,\log g) )).
\eeaa
Plugging into the time integral $\int_0^{\ts} \mathbb E[ (\frac{1}{2}L\phi+\pa_s \phi)(s,X_s^x)]ds$, the proof follows.
	\qed
\end{proof}
\begin{rem}
 We prove the entropic inequality Theorem \ref{thm:entropic inequality} in this section without the the assumption: $\Gamma_1(f,\Gamma_1^z(f,f))=\Gamma_1^z(f,\Gamma_1(f,f))$. A similar entropic inequality under the assumption $\Gamma_1(f,\Gamma_1^z(f,f))=\Gamma_1^z(f,\Gamma_1(f,f))$ is first proved in \cite{BaudoinGarofalo09}[Proposition 4.5] and [Theorem 5.2]. With this new inequality Theorem \ref{thm:entropic inequality} in hand, similar gradient estimates and other inequalities from \cite{BaudoinGarofalo09} follow. We leave them for future studies. Proposition 4.5 in \cite{BaudoinGarofalo09} is based on a point-wise estimate given the commutative assumption of $\Gamma_1$ and $\Gamma_1^z$. We remove the commutative assumption and our estimate is in a weak form, which is presented in the above Lemma \ref{non-commutative lemma}.
\end{rem}{}

\appendix
\section{Appendix: Degenerate SDEs and Sub-Riemannian manifold}\label{section:Ricci}
In appendix, we briefly illustrate the formulation of degenerate diffusion process and sub-Riemannian geometry. 

For a smooth connected $n+m$ dimensional Riemannian manifold $\mathbb M^{n+m}$, we denote $T\mathbb M^{n+m}$ as the tangent bundle of $\mathbb M^{n+m}$ and denote $\tau$ as a sub-bundle of $T\mathbb M^{n+m}$. The sub-Riemannian structure associated with the sub-bundle $\tau$ on $\mathbb M^{n+m}$ is denoted as $(\tau,g_{\tau})$, where $g_{\tau}(\cdot,\cdot)$ is the metric associated with the sub-bundle $\tau$. In particular, if we take distribution $\tau$ to be the horizontal sub-bundle, denoted as $\mathcal H$, of the tangent bundle $T\mathbb M^{n+m}$ (see \cite{BaudoinGarofalo09, baudoin2014book} for more details), then we denote the sub-Riemannian structure as $(\mathbb M^{n+m},\cH,g_{\cH})$. In this paper, we will not distinguish distribution $\tau$ and  $\mathcal H$ and call it horizontal sub-bundle. We will assume that the horizontal distribution $\cH$ is bracket generating (with any steps). And the distribution $\cH$ has dimension $n$. 

For a vector field $b\in\hR^{n+m}$ and a general matrix $a\in \hR^{(n+m)\times n}$, we denote $a=(a_1,a_2,\cdots,a_n)$ with each $a_i, i=1\cdots,n$, as a $n+m$-dimensional column vector. For any Stratonovich SDE, 
\bea\label{SDE framework}
dX_t=b(X_t)dt+\sqrt{2}\sum_{i=1}^n  a_i(X_t)\circ dB_t^i,
\eea
 where $(B_t^1,B_t^2,\cdots, B_t^n)$ is a $n$-dimensional Brownian motion in $\hR^n$ and $a_i$ has local coordinates  $a_i(x)=\sum_{\hat i=1}^{n+m}a_{\hat ii}(x)\frac{\partial}{\partial x_{\hat i}}$.
We consider \eqref{SDE framework} as the SDE associated with a given sub-Riemannian structure, which is defined through the Lie algebra spanned by the driving vector fields of the SDE $\{a_1,a_2,\cdots,a_n\}$. In general, we assume that $\mathcal H:=\{a_1,a_2,\cdots,a_n\}$ is of rank $n$ and satisfies the bracket generating condition (or H\"ormander condition). To be precise, for any $x\in \mathbb M^{n+m}$, the Lie brackets of $\{a_1(x),a_2(x)$, $\cdots,a_n(x)\}$ spans the whole tangent space at $x$ with dimension $n+m$. We define the manifold $\mathbb M^{n+m}$ as the subspace of $\hR^{n+m}$, where the diffusion process $X_t$ lives on. This spaces is described as the triple $(\mathbb M^{n+m},\cH,g_{\cH})$, and we denote  $\cH$ as the $n$-dimensional horizontal distribution of the tangent bundle $T\mathbb M^{n+m}$ generated by the vector fields $\{(a_1(x),a_2(x),\cdots,a_n(x)\}$. 
In this paper, we consider the case where the generator of the diffusion process \eqref{SDE framework} coincides with the horizontal Laplacian operator (or sub-Laplacian operator) associated with the sub-Riemannian structure $(\mathbb M^{n+m},\cH,g_{\cH})$. Furthermore, we assume that there exists symmetric and invariant  volume measure associated with the horizontal Laplacian operator. The Stratonovich SDE \eqref{SDE framework} without drift ($dt$) term could be treated as a special case, where the horizontal Laplacian can be presented as the sum of squares of the horizontal vector fields in $\cH$. In particular, we consider the precise metric defined through the diffusion matrix $a$, which could be seen as an analogue for non-degenerate SDEs on Riemannian manifolds. The problem is that the rank of $aa^{\ts}$ is $n$, thus the $(n+m)\times(n+m)$ matrix $aa^{\ts}$ is  degenerate and cannot serve as a metric.
We thus introduce the following metric, which is to formulate this sub-Riemannian structure in Euclidean space. 
 \begin{defn}
 	Consider an orthonormal basis $c=\{c_{n+1}(x),\cdots, c_{n+m}(x)\}$ in $\mathbb{R}^{n+m}$, such that $a_i^{\ts}c_j=0$ for any $1\leq i\leq n$, $n+1\leq j\leq m+n$. We define a metric $g=(aa^\ts+cc^\ts)^{-1}=(aa^{\ts})^{\dd}+cc^{\ts}$, and metric on the horizontal sub-bundle $g_\tau=(aa^{\ts})^{\dd}$, the pseudo-inverse of  matrix $aa^{\ts}$, on manifold $\mathbb M^{n+m}$.
 \end{defn}
 The above definition is based on the following lemma.
\begin{lem}
	The metric is $g=(aa^\ts+cc^\ts)^{-1}=(aa^{\ts})^{\dd}+cc^{\ts}$.
\end{lem}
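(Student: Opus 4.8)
The plan is to prove the identity $(aa^\ts+cc^\ts)^{-1}=(aa^\ts)^\dd+cc^\ts$ by direct verification: I would show that the product of $aa^\ts+cc^\ts$ with $(aa^\ts)^\dd+cc^\ts$ equals the identity matrix, which simultaneously establishes that $aa^\ts+cc^\ts$ is invertible and identifies its inverse. The only inputs needed are the defining properties of the frame $c=\{c_{n+1},\dots,c_{n+m}\}$ together with the standard properties of the Moore--Penrose pseudo-inverse. First I would record the algebraic relations coming from the definition of $c$: orthonormality gives $c^\ts c=I_m$, while the condition $a_i^\ts c_j=0$ gives $a^\ts c=0$ and, by transposition, $c^\ts a=0$. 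Consequently $cc^\ts$ is the orthogonal projection onto the vertical subspace $V:=\mathrm{span}\{c_{n+1},\dots,c_{n+m}\}$.

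Next I would set up the relevant properties of $(aa^\ts)^\dd$. Since $aa^\ts$ is a symmetric positive semi-definite $(n+m)\times(n+m)$ matrix of rank $n$, its range equals the horizontal space $H:=\mathrm{range}(a)$ and its kernel equals $V$. The pseudo-inverse $(aa^\ts)^\dd$ is then symmetric, has the same range $H$, vanishes on $V$, and satisfies $aa^\ts(aa^\ts)^\dd=(aa^\ts)^\dd aa^\ts=P_H$, the orthogonal projection onto $H$. Because the $m$ vectors $c_j$ are orthonormal and orthogonal to $H$, the spaces $H$ and $V$ are orthogonal complements in $\mathbb{R}^{n+m}$, so $P_H=I-cc^\ts$.

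Now I would expand the product into four terms, $(aa^\ts+cc^\ts)\big((aa^\ts)^\dd+cc^\ts\big)=aa^\ts(aa^\ts)^\dd+aa^\ts cc^\ts+cc^\ts(aa^\ts)^\dd+cc^\ts cc^\ts$, and evaluate each. The first term is $P_H=I-cc^\ts$. The second vanishes since $aa^\ts cc^\ts=a(a^\ts c)c^\ts=0$. The third vanishes because $\mathrm{range}((aa^\ts)^\dd)=H$ while $c^\ts$ annihilates $H$ (as $c^\ts a=0$), so $c^\ts(aa^\ts)^\dd=0$. The fourth is $c(c^\ts c)c^\ts=cc^\ts$. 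Summing yields $(I-cc^\ts)+0+0+cc^\ts=I$, which is exactly the claim.

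This argument is essentially routine linear algebra; the only point requiring care is the pseudo-inverse bookkeeping, namely that $aa^\ts(aa^\ts)^\dd$ is precisely the projection $I-cc^\ts$ and that $(aa^\ts)^\dd$ maps into $H$, so that the two cross terms drop out. An equally clean alternative would diagonalize $aa^\ts=\sum_{i=1}^n\lambda_i v_iv_i^\ts$ in an orthonormal eigenbasis $\{v_1,\dots,v_n\}$ of $H$ (with $\lambda_i>0$) completed by $\{c_{n+1},\dots,c_{n+m}\}$ on $V$; then $(aa^\ts)^\dd=\sum_{i=1}^n\lambda_i^{-1}v_iv_i^\ts$, and both sides of the claimed identity act as multiplication by $\lambda_i^{-1}$ on each $v_i$ and by $1$ on each $c_j$, giving the result at once.
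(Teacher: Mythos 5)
Your proof is correct. The only facts you need --- $a^\ts c=0$, $c^\ts c=I_m$, $\mathrm{rank}(aa^\ts)=n$ (which the paper assumes), and hence that $H=\mathrm{range}(a)$ and $V=\mathrm{span}\{c_j\}$ are orthogonal complements so that $aa^\ts(aa^\ts)^\dd=P_H=I-cc^\ts$ and $c^\ts(aa^\ts)^\dd=0$ --- are all available, and the four-term expansion then collapses to the identity; since the matrices are square, a one-sided inverse suffices.

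Your route differs from the paper's in presentation though not in substance. The paper diagonalizes: it writes $aa^\ts=\sum_{i=1}^n\lambda_iV_iV_i^\ts$, assembles the orthogonal matrix $(V_1,\dots,V_n,c_{n+1},\dots,c_{n+m})$, expresses $aa^\ts+cc^\ts$ as conjugation of $\mathrm{diag}(\Lambda_n,\mathbf{I}_m)$ by that matrix, and inverts the diagonal block --- this is exactly the ``equally clean alternative'' you sketch in your last sentence. Your primary argument instead verifies the inverse by direct multiplication, trading the spectral decomposition for the projection identity $aa^\ts(aa^\ts)^\dd=I-cc^\ts$. The multiplication proof is marginally more self-contained (it never needs to name the eigenvalues), while the paper's diagonalization makes the formula $(aa^\ts)^\dd=\sum_i\lambda_i^{-1}V_iV_i^\ts$ explicit, which it reuses implicitly when discussing the metric later. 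Either is acceptable; yours is complete as written.
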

\begin{proof}
	For rank $n$ matrix $aa^{\ts}$, we denote its eigenvalue decomposition and the corresponding pseudo-inverse $(aa^{\ts})^{\dd}$ as 
	\beaa
	aa^{\ts}=\sum_{i=1}^n\lambda_i V_iV_i^{\ts}, \quad (aa^{\ts})^{\dd}=\sum_{i=1}^n\frac{1}{\lambda_i} V_iV_i^{\ts}.
	\eeaa
Thus we have $aa^{\ts}+cc^{\ts}=\sum_{i=1}^n\lambda_i V_iV_i^{\ts}+\sum_{j=n+1}^{n+m}c_jc_j^{\ts}$. Furthermore, we have 
\beaa
aa^{\ts}+cc^{\ts}&=&(V_1,\cdots,V_n,c_{n+1},\cdots,c_{n+m})\begin{pmatrix}
	\Lambda_n &\\
	&\mathbf{I}_m
\end{pmatrix}(V_1,\cdots,V_n,c_{n+1},\cdots,c_{n+m})^{\ts},\\
(aa^{\ts}+cc^{\ts})^{-1}&=&(V_1,\cdots,V_n,c_{n+1},\cdots,c_{n+m})\begin{pmatrix}
	\Lambda_n^{-1} &\\
	&\mathbf{I}_m
\end{pmatrix}(V_1,\cdots,V_n,c_{n+1},\cdots,c_{n+m})^{\ts},
\eeaa
where we denote $\Lambda_n=\mathbf{diag}(\lambda_1,\cdots,\lambda_n)$ as the diagonal matrix for eigenvalues $\lambda_i's$ and $\mathbf{I}_m$ as the $m$-dimensional identity matrix. Thus the proof follows directly with
\beaa
(aa^{\ts}+cc^{\ts})^{-1}=\sum_{i=1}^n\frac{1}{\lambda_i}V_iV_i^{\ts}+\sum_{j=n+1}^{n+m}c_jc^{\ts}_j=(aa^{\ts})^{\dd}+cc^{\ts}.   
\eeaa
 \qed 
\end{proof}
With the new metric introduced above, we have the following lemma.
  \begin{lem}\label{lemma: ON frame}
 	The vectors $\{a_1,\cdots, a_n\}$ are orthonormal basis under the metric $g=(aa^{\ts})^{\dd}+cc^{\ts}$.
 \end{lem}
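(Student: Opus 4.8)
The plan is to verify the defining relation of orthonormality directly, namely that $\la a_i,a_j\ra_g = a_i^\ts g\, a_j = \delta_{ij}$ for all $1\le i,j\le n$, and then to upgrade ``orthonormal'' to ``orthonormal basis'' by a dimension count. Writing the inner product out with $g=(aa^{\ts})^{\dd}+cc^{\ts}$, the computation splits into two pieces, $a_i^\ts(aa^{\ts})^{\dd}a_j$ and $a_i^\ts cc^{\ts}a_j$, so the $n\times n$ matrix $a^\ts g\, a$ equals $a^\ts(aa^{\ts})^{\dd}a + a^\ts cc^{\ts}a$, and it suffices to identify this with $I_n$.

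First I would dispose of the second piece. By construction of the orthonormal family $c=\{c_{n+1},\dots,c_{n+m}\}$ we have $a_i^\ts c_j=0$ for every horizontal index $i$ and vertical index $j$; hence $a^\ts c=0$ as a matrix identity, and therefore $a^\ts cc^{\ts}a=(a^\ts c)(a^\ts c)^\ts=0$. Thus the vertical complement contributes nothing, and the whole inner product reduces to the $(i,j)$ entry of $a^\ts(aa^{\ts})^{\dd}a$. (In particular this also shows that restricting $g$ to horizontal vectors agrees with $g_\tau=(aa^{\ts})^{\dd}$, so the orthonormality statement is consistent whether measured by $g$ or by $g_\tau$.)

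The heart of the argument is to show $a^\ts(aa^{\ts})^{\dd}a=I_n$. I would use the thin singular value decomposition $a=U\Sigma W^\ts$, where $U\in\hR^{(n+m)\times n}$ has orthonormal columns ($U^\ts U=I_n$), $W\in\hR^{n\times n}$ is orthogonal, and $\Sigma=\mathbf{diag}(\sigma_1,\dots,\sigma_n)$ with all $\sigma_i>0$, since $a$ has full column rank $n$ by the standing assumption that $aa^{\ts}$ has rank $n$. This matches the eigendecomposition of the previous lemma via $\lambda_i=\sigma_i^2$ and $V_i=U_i$, so that $(aa^{\ts})^{\dd}=U\Sigma^{-2}U^\ts$. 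Substituting and repeatedly cancelling $U^\ts U=I_n$ gives $a^\ts(aa^{\ts})^{\dd}a = W\Sigma(U^\ts U)\Sigma^{-2}(U^\ts U)\Sigma W^\ts = W\,\Sigma\Sigma^{-2}\Sigma\,W^\ts = WW^\ts = I_n$, which is exactly the required identity. Equivalently, one may argue coordinate-free from the Moore--Penrose relations, using that $(aa^{\ts})^{\dd}aa^{\ts}$ is the orthogonal projection onto $\mathrm{span}(V_1,\dots,V_n)=\mathrm{col}(a)$, which fixes every column of $a$; but the SVD route is the most transparent.

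Combining the two steps yields $a_i^\ts g\, a_j=\delta_{ij}$, so $\{a_1,\dots,a_n\}$ is orthonormal and in particular linearly independent. Since by definition this family generates the $n$-dimensional horizontal sub-bundle $\tau$, $n$ linearly independent generators must form a basis, and we conclude that $\{a_1,\dots,a_n\}$ is an orthonormal basis of $(\tau,g_\tau)$. The only genuinely delicate point is the pseudo-inverse identity $a^\ts(aa^{\ts})^{\dd}a=I_n$, which depends essentially on $a$ having full column rank: were $a$ rank-deficient, the product would instead be a proper orthogonal projection and orthonormality would fail. Everything else is bookkeeping.
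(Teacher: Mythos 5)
Your proof is correct, and on the one step that actually requires an argument---the identity $a^{\ts}(aa^{\ts})^{\dd}a=I_{n\times n}$---you take a genuinely different route from the paper. The paper sets $B=a^{\ts}(aa^{\ts})^{\dd}a$, invokes the single Penrose identity $(aa^{\ts})(aa^{\ts})^{\dd}(aa^{\ts})=aa^{\ts}$ to get $aBa^{\ts}=aa^{\ts}$, and then peels off the outer factors by multiplying on the left by $(a^{\ts}a)^{-1}a^{\ts}$ and on the right by $a(a^{\ts}a)^{-1}$, using that $a^{\ts}a$ is invertible. You instead diagonalize: writing $a=U\Sigma W^{\ts}$ with $U^{\ts}U=I_n$ and $\Sigma$ positive, so $(aa^{\ts})^{\dd}=U\Sigma^{-2}U^{\ts}$, and the product telescopes to $WW^{\ts}=I_n$. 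Both arguments hinge on exactly the same hypothesis (full column rank of $a$), and your SVD is really the same decomposition the paper already uses in the preceding lemma to justify $(aa^{\ts})^{\dd}=\sum_i\lambda_i^{-1}V_iV_i^{\ts}$, so your route arguably integrates more tightly with the surrounding material; the paper's route is slightly more economical in that it never names the eigenvectors. Your handling of the $cc^{\ts}$ term ($a^{\ts}c=0$ forces $a^{\ts}cc^{\ts}a=0$) matches the paper's one-line dismissal, and the closing dimension count upgrading ``orthonormal'' to ``orthonormal basis'' is a small piece of care the paper leaves implicit. Your parenthetical alternative via the projection $(aa^{\ts})^{\dd}aa^{\ts}$ fixing $\mathrm{col}(a)$ needs one more line (it yields $(aa^{\ts})^{\dd}a=a(a^{\ts}a)^{-1}$, whence the claim), but since you only offer it as a remark and your main argument is complete, nothing is missing.
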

 \begin{proof}
 	We just need to prove for $a=(a_1,\cdots,a_n)$ with each $(a_i)_{(n+m)\times 1}$, we have
 	\beaa
a^{\ts}g a=a^{\ts}(aa^{\ts})^{\dd}a=\textbf{Id}_{n\times n}. 	
 	\eeaa
	Notice $a_i^{\ts}c_j=0$, then we only need to prove $a^{\ts}(aa^{\ts})^{\dd}a=\textbf{Id}_{n\times n}$. Let us denote $a^{\ts}(aa^{\ts})^{\dd}a=B$, then we have 
 	\beaa
 	aa^{\ts}(aa^{\ts})^{\dd}aa^{\ts}&=&aBa^{\ts}\\
 	aa^{\ts}&=&aBa^{\ts}\\
 	a^{\ts}aa^{\ts}a&=&a^{\ts}aBa^{\ts}a\\
 	(a^{\ts}a)^{-1}a^{\ts}aa^{\ts}a(a^{\ts}a)^{-1}&=&B\\
 	\textbf{Id}_{n\times n} &=&B,
 	\eeaa
 	where the second equality follows from the property of pseudo-inverse matrix and the last step follows from the fact that $a^{\ts}a$ is a non-degenerate $n\times n$ matrix, hence invertible. The proof then follows directly. \qed
 \end{proof}
  
  We are now ready to introduce the following definition.
 
 \begin{defn}\label{def: sub-Rie str}
  Define $(\mathbb M^{n+m},\tau,g_{\tau})$ as the sub-Riemannian structure associated with the degenerate SDE \eqref{SDE framework},
 where $g_\tau=(aa^{\ts})^{\dd}$ denotes the horizontal metric, i.e. metric $g$ restricts on the horizontal bundle $\tau$. And we denote ${\nabla^{R}}$ as the Levi-Civita connection on $\mathbb M^{n+m}$ associated with our metric $g=(aa^{\ts})^{\dd}+cc^{\ts}$, and let $P^{\tau}\nabla^R$ as the projection of the connection on the horizontal distribution $\tau$.  In particular, in our framework, we have $P^{\tau}\nabla^R f=aa^{\ts}\nabla f$, for any function $f:\mathbb M^{n+m}\rightarrow \hR$. Where $\nabla$ is the Euclidean gradient in $\hR^{n+m}.$
 \end{defn}
\begin{rem}
In Lemma \ref{lemma: ON frame}, we show that $\{a_1,a_2,\cdots,a_n\}$ are orthonormal basis for horizontal distribution $\tau$  under our metric $g$. In particular, we have 
\beaa
aa^T\nabla f=(a_1,\cdots,a_n)\begin{pmatrix}
	a_1\\
	\cdots \\
	a_n
\end{pmatrix}f=\sum_{i=1}^n(a_if)a_i\in\tau,
\eeaa
which gives the local representation of $P^{\tau}\nabla^Rf.$
\end{rem}
 
To demonstrate the definition clearly, we give the following example. On the Heisenberg group $\mathbb H^1$ , we know that $X=\frac{\partial}{\partial x_1}-\frac{1}{2}x_2\frac{\partial}{\partial x_3},\quad Y=\frac{\partial}{\partial x_2}+\frac{1}{2}x_1\frac{\partial}{\partial x_3},\quad Z=\frac{\partial}{\partial x_3}
$ forms an orthonormal basis for the tangent bundle of $\mathbb H^1$. In particular, $X$ and $Y$ generate the horizontal distribution $\tau$. If we start with the following SDE 
\bea\label{Heisenberg SDE}
dW_t=X\circ  dB_t^1+Y\circ dB_t^2,
\eea
then we know $W_t=(B_t^1,B_t^2,\frac{1}{2} \int_0^tB_s^1dB_s^2-B_s^2dB_s^1)$, which 
 is the horizontal Brownian motion on the Heisenberg group $\mathbb H^1$. The generator of the horizontal Brownian motion and the sub-Laplaican operator are the same which is given by $\Delta_{\cH}=X^2+Y^2$, and the volume measure associated with $\Delta_{\cH}$ is the Lebesgue measure on the Heisenberg group with volume element equals 1.  Then $W_t$ is a diffusion process in $\hR^3$. In terms of our general sub-Riemannian structure introduced above, we can define
 \beaa
 a=\begin{pmatrix}
 	1&0\\
 	0&1\\
 	-\frac{x_2}{2}&\frac{x_1}{2}
 \end{pmatrix}=(a_1,a_2)=(X,Y),
\qquad 
c=\frac{1}{\sqrt{\frac{x_1^2}{4}+\frac{x_2^2}{4}+1}}\begin{pmatrix}
\frac{x_2}{2}
\\
-\frac{x_1}{2}\\
1
\end{pmatrix},
 \eeaa
 and 
 \beaa
  g_{\mathbb H^1,\tau}=(aa^{T})^{\dd}=\begin{pmatrix}
 	1&0& -\frac{x_2}{2}\\
 	0&1& \frac{x_1}{2}\\
 	-\frac{x_2}{2}&\frac{x_1}{2}& \frac{x_1^2+x_2^2}{4}
 \end{pmatrix}^{\dd},\quad    g_{\mathbb H^1}=(aa^{T})^{\dd}+cc^{\ts}.
 \eeaa 
 In particular, the horizontal gradient is given by 
 \beaa
 aa^{\ts}\nabla f=\begin{pmatrix}
 	Xf\\
 	Yf\\
 	-\frac{x_2}{2}Xf+\frac{x_1}{2}Yf
 \end{pmatrix}=Xf\begin{pmatrix}
 	1\\
 	0\\
 	-\frac{x_2}{2}
 \end{pmatrix}+Yf\begin{pmatrix}
 	0\\
 	1\\
 	\frac{x_1}{2}
 \end{pmatrix}=(Xf)X+(Yf)Y.
 \eeaa
 Thus the sub-Riemannian structure associated with Stratonovitch SDE \eqref{Heisenberg SDE} is just $(\mathbb H^1,\tau,    g_{\mathbb H^1,\tau})$, where $g_{\mathbb H^1,\tau}$ is the restriction of metric $g_{\mathbb H^1}$ on the horizontal sub-bundle $\tau$. Different from the standard construction of Brownian motion on a given Riemannian (sub-Riemannian) manifold by Ells-Elworthy-Malliavin \cite{elworthy1982, Malliavin}, we can directly define our diffusion on the manifold $\mathbb M^{n+m}$ by \eqref{SDE framework} without doing projection from the orthonormal frame bundles. It is because that the new metric $g=(aa^{\ts})^{\dd}+cc^{\ts}$, and $\{a_1,a_2,\cdots,a_n\}$ are globally defined orthonormal basis of the (horizontal) sub-bundle on the tangent bundle  $T\mathbb M^{n+m}$. Essentially, we first define \eqref{SDE framework} in $\hR^{n+m}$, and then introduce the associated sub-Riemannian structure.
\begin{rem}
	Comparing to the definition of horizontal Brownian motion introduced in \cite{baudoin2019integration},   the sub-Riemannian structure comes first  with a totally geodesic Riemannian foliation structure, and then the SDE \eqref{Heisenberg SDE} is defined on the given totally geodesic Riemannian foliation. In the current setting, we directly define the degenerate diffusion process by a first given matrix $a$, then we define the sub-Riemannian structure by introducing the new metric $(aa^T)^{\dd}+cc^{\ts}$. 
\end{rem}

\subsection{Proof of Gradient flow assumption}\label{section4}
%We focus on the diffusion process \eqref{SDE framework} without drift term first. 
In this subsection, we demonstrate that equation \eqref{FPE} is in fact a Fokker-Planck equation of SDE \eqref{SDE framework}.  
\begin{lem}\label{lem:canonical drift}
    %For a given sub-Riemannian structure $(\hM^{n+m},\tau, (aa^{\ts})^{\dd}|_{\tau})$, where $\tau$ is the horizontal bundle generated by $\{a_1,\cdots, a_n\}$. 
    Consider the drift diffusion process 
    \bea\label{label1}
    dX_t=b(X_t)dt+\sqrt{2} a(X_t)\circ dB_t,
    \eea 
Suppose that $b$, $a$, $\pi$ satisfy
\beaa 
a\otimes \nabla a-b=-aa^{\ts}\nabla \log \Vol.
\eeaa
Then the Fokker-Planck equation of $X_t$ satisfies
\begin{equation*}
    \pa_t \rho(t,x)=\nabla\cdot\Big(\rho(t,x)\Big(a(x)a(x)^{\ts}\Big)\nabla\log\frac{\rho(t,x)}{\pi(x)}\Big). 
\end{equation*} 
\end{lem}{}
\begin{proof}
{ Recall that we denote $\{a_1,\cdots, a_n\}$ as the column vectors of matrix a.}
For Stratonovich SDE \eqref{label1}, we can write 
 \beaa
    dX_t=b(X_t)dt+\sqrt{2} \sum_{i=1}^na_i(X_t)\circ dB_t^i.
    \eeaa 
{ According to \cite{baudoinflow}[Appendix 7], the corresponding It\^o SDE is }
\beaa
dX_t=\sqrt{2}\sum_{i=1}^n a_i dB_t^i+(\sum_{i=1}^n\nabla_{a_i}a_i+b)dt.
\eeaa
Thus the Fokker-Plank equation (Kolmogorov forward equation) satisfies
{\beaa
\partial_t\rho(t,x) &=&\sum_{i'=1}^{n+m}\sum_{j'=1}^{n+m} \frac{\partial^2}{\partial x_{i'}\partial x_{j'}}((aa^{\ts})_{i'j'} \rho)-\nabla\cdot(( \sum_{i=1}^n\nabla_{a_i}a_i +b) \rho)\\
&=&\nabla \cdot (aa^{\ts}\nabla\rho)+\nabla\cdot \left(\rho (\sum_{j'=1}^{n+m}\frac{\partial}{\partial x_{j'}}(aa^{\ts})_{i'j'})_{i'=1}^{n+m}-\rho\sum_{i=1}^n\nabla_{a_i}a_i -b\rho\right)\\
&=&\nabla \cdot (aa^{\ts}\nabla\rho)+\nabla\cdot (\rho (a\otimes \nabla a-b)).
\eeaa}
Namely, we have 
\bea \label{F-P equation 1}
\partial_t\rho(t,x) =\nabla \cdot (aa^{\ts}\nabla\rho)+\nabla\cdot (\rho (a\otimes \nabla a-b)).
\eea
Plugging in the relation $
a\otimes \nabla a-b=-aa^{\ts}\nabla \log \Vol$,
we have
\bea \label{F-P equation 1}
\begin{split}
\partial_t\rho(t,x) =&\nabla \cdot (aa^{\ts}\nabla\rho)-\nabla\cdot (\rho aa^{\ts}\nabla\log\pi)\\
=&\nabla \cdot (\rho aa^{\ts}\nabla\log\rho)-\nabla\cdot (\rho aa^{\ts}\nabla\log\pi)\\
=&\nabla \cdot (\rho aa^{\ts}\nabla\log\frac{\rho}{\pi}).
\end{split}
\eea
Here we use the fact that $$\rho\nabla\log\rho=\nabla\rho.$$ 
This finishes the proof.
\end{proof}
\begin{example}
 The Lie group $\textbf{SU}(2)$ is a compact connected Lie group, diffeomorphic to the 3-sphere $\mathbb S^3$. Following the construction of the left-invariant vector fields in \cite[Section 6.2]{GL2016},
we change the coordinates in terms of coordinate system $(\theta,\phi,\psi)$. We get new left-invariant vector fields on $\textbf{SU}(2)$, with
%%%%%%%%%%%%%%%%%%%%%%%%%%%%%%%%%%
\beaa
X&=&\cos \psi \frac{\partial}{\partial \theta} +\frac{\sin \psi}{\sin \theta}\frac{\partial}{\partial \phi}-\cos \theta \frac{\sin \psi}{\sin \theta}\frac{\partial}{\partial \psi},\\
Y&=& -\sin \psi \frac{\partial}{\partial \theta}+\frac{\cos \psi}{\sin \theta} \frac{\partial}{\partial \phi}-\cos\theta \frac{\cos \psi}{\sin \theta}\frac{\partial}{\partial \psi},\\
Z&=& \frac{\partial}{\partial\psi}.
\eeaa
 Thus we have $a=(a_1,a_2)=(X,Y)$ in the new coordinate system. We define the metric $g=(aa^{\ts})^{\dd}$. Here $X,Y$ are orthonormal basis for the horizontal bundle generated by $X,Y$ under metric $(aa^{\ts})^{\dd}$. According to \cite[Lemma 6.4]{GL2016}, the invariant measure on $\textbf{SU}(2)$ has the form of $\mu=\sin(\theta)d\theta \wedge d\phi \wedge d\psi$. It is easy to check that the above Lemma is satisfied for $b=0$, $\pi=\sin(\theta)$, and
\beaa
aa^{\ts}\nabla \log \Vol=-a\otimes\nabla a=\begin{pmatrix}
	\frac{\cos \theta}{\sin \theta}\\
	0\\
	0
\end{pmatrix},
\eeaa
where
\beaa
a=\begin{pmatrix}
	\cos \psi &  -\sin \psi\\
	\frac{\sin \psi}{\sin \theta}& \frac{\cos \psi}{\sin \theta} \\
	-\cos \theta \frac{\sin \psi}{\sin \theta}& -\cos \theta \frac{\cos \psi}{\sin \theta}
\end{pmatrix}.
\eeaa
\end{example}

\bibliographystyle{alpha}

%\begin{adjustwidth}{-\extralength}{0cm}
%\centering %% If there is a figure in wide page, please release command \centering

%\PublishersNote
%\end{adjustwidth}

\end{document}